\newif\ifpersonal
\numberwithin{equation}{section}
\theoremstyle{plain}
\newtheorem{thm}[equation]{Theorem}
\newtheorem{lem}[equation]{Lemma}
\newtheorem{prop}[equation]{Proposition}
\newtheorem{cor}[equation]{Corollary}
\newtheorem{claim}[equation]{Claim}
\newtheorem{assum}[equation]{Assumption}
\theoremstyle{definition}
\newtheorem{defin}[equation]{Definition}
\theoremstyle{remark}
\newtheorem{rem}[equation]{Remark}
\newcommand{\personal}[1]{\textcolor[rgb]{0,0,1}{(Personal: #1)}}
\newcommand{\personal}[1]{\ignorespaces}
\providecommand{\abs}[1]{\lvert#1\rvert}
\newcommand{\bbC}{\mathbb C}
\newcommand{\bbP}{\mathbb P}
\newcommand{\bbR}{\mathbb R}
\newcommand{\bbZ}{\mathbb Z}
\let\save@mathaccent\mathaccent
\newcommand*\if@single[3]{%
	\setbox0\hbox{${\mathaccent"0362{#1}}^H$}%
	\setbox2\hbox{${\mathaccent"0362{\kern0pt#1}}^H$}%
	\ifdim\ht0=\ht2 #3\else #2\fi
}
\newcommand*\rel@kern[1]{\kern#1\dimexpr\macc@kerna}
\newcommand*\widebar[1]{\@ifnextchar^{{\wide@bar{#1}{0}}}{\wide@bar{#1}{1}}}
\newcommand*\wide@bar[2]{\if@single{#1}{\wide@bar@{#1}{#2}{1}}{\wide@bar@{#1}{#2}{2}}}
\newcommand*\wide@bar@[3]{%
	\begingroup
	\def\mathaccent##1##2{%
		\let\mathaccent\save@mathaccent
		\if#32 \let\macc@nucleus\first@char \fi
		\setbox\z@\hbox{$\macc@style{\macc@nucleus}_{}$}%
		\setbox\tw@\hbox{$\macc@style{\macc@nucleus}{}_{}$}%
		\dimen@\wd\tw@
		\advance\dimen@-\wd\z@
		\divide\dimen@ 3
		\@tempdima\wd\tw@
		\advance\@tempdima-\scriptspace
		\divide\@tempdima 10
		\advance\dimen@-\@tempdima
		\ifdim\dimen@>\z@ \dimen@0pt\fi
		\rel@kern{0.6}\kern-\dimen@
		\if#31
		\overline{\rel@kern{-0.6}\kern\dimen@\macc@nucleus\rel@kern{0.4}\kern\dimen@}%
		\advance\dimen@0.4\dimexpr\macc@kerna
		\let\final@kern#2%
		\ifdim\dimen@<\z@ \let\final@kern1\fi
		\if\final@kern1 \kern-\dimen@\fi
		\else
		\overline{\rel@kern{-0.6}\kern\dimen@#1}%
		\fi
	}%
	\macc@depth\@ne
	\let\math@bgroup\@empty \let\math@egroup\macc@set@skewchar
	\mathsurround\z@ \frozen@everymath{\mathgroup\macc@group\relax}%
	\macc@set@skewchar\relax
	\let\mathaccentV\macc@nested@a
	\if#31
	\macc@nested@a\relax111{#1}%
	\else
	\def\gobble@till@marker##1\endmarker{}%
	\futurelet\first@char\gobble@till@marker#1\endmarker
	\ifcat\noexpand\first@char A\else
	\def\first@char{}%
	\fi
	\macc@nested@a\relax111{\first@char}%
	\fi
	\endgroup
}
\newcommand{\hF}{\widehat F}
\newcommand{\hG}{\widehat G}
\newcommand{\hGamma}{\widehat\Gamma}
\newcommand{\tC}{\widetilde C}
\newcommand{\tX}{\widetilde X}
\DeclareFontFamily{U}{BOONDOX-calo}{\skewchar\font=45 }
\DeclareFontShape{U}{BOONDOX-calo}{m}{n}{<-> s*[1.05] BOONDOX-r-calo}{}
\DeclareFontShape{U}{BOONDOX-calo}{b}{n}{<-> s*[1.05] BOONDOX-b-calo}{}
\DeclareMathAlphabet{\mathcalboondox}{U}{BOONDOX-calo}{m}{n}
\newcommand{\be}{\mathbf e}
\newcommand{\PreLog}{\mathrm{PreLog}}
\newcommand{\Zaffine}{$\bbZ$-affine\xspace}
\newcommand{\inv}{^{-1}}
\DeclareMathOperator{\Coker}{Coker}
\DeclareMathOperator{\Ker}{Ker}
\DeclareMathOperator{\Map}{Map}
\DeclareMathOperator{\Spec}{Spec}
\DeclareMathOperator{\res}{res}
\DeclareMathOperator{\rk}{rk}
\begin{document}
\title[Realization of tropical curves in abelian surfaces]{Realization of tropical curves\\ in abelian surfaces}
\author{Takeo Nishinou}
\address{Takeo Nishinou, Department of Mathematics, Rikkyo University, Nishi-Ikebukuro, Toshima, Tokyo, Japan}
\email{nishinou@rikkyo.ac.jp}
\date{}
\subjclass[2020]{Primary 14N35; Secondary 14K12 14N10 14T90}
\keywords{abelian surface, deformation theory, tropical curve, logarithmic geometry, enumerative geometry}

\begin{abstract}
We construct algebraic curves in abelian surfaces by utilizing tropical curves in real tori.
We give a necessary and sufficient condition for a tropical curve in a real torus to be realizable by an algebraic curve in an abelian surface.
When this condition is satisfied, the number of algebraic curves can be computed by a combinatorial formula.
This provides the algebraic-tropical correspondence theorem for abelian surfaces analogous to Mikhalkin's correspondence theorem for 
 toric surfaces.
Thus, the number of algebraic curves passing through generic points in an abelian surface can be computed purely combinatorially via tropical curves.
\end{abstract}

\maketitle

\personal{Personal comments shown!}

\tableofcontents

\section{Introduction}\label{sec:intro_(realization)}

\subsection{Realization problem}
Let $B^{\times}$ be a punctured disc on the complex plane. 
An analytic family of holomorphic curves in $\bbP^n_{\Bbb C}\times B^{\times}$ 
 gives rise to tropical curves in $\bbR^n$, which are balanced piecewise linear weighted graphs, 
 see Section \ref{sec:tropicalization}.
One idea of tropical geometry is to use such combinatorial gadgets to study holomorphic curves (cf.\ \cite{Mikhalkin_Tropical_ICM_2006,Gathmann_Tropical_2006,Itenberg_Tropical_2009}).
However, not all tropical curves arise from holomorphic curves.
The problem of determining whether a tropical curve arises from holomorphic curves is called the \emph{tropical realization problem}, also known as the \emph{tropical lifting problem}.

For toric surfaces, 
 it is a consequence of Mikhalkin's correspondence theorem 
 \cite{Mikhalkin_Enumerative_2005} that all 3-valent tropical curves in $\bbR^2$ are realizable.
The papers \cite{Nishinou_Toric_2006,Tyomkin_Tropical_2012}
 extended the correspondence theorem to general toric varieties in the case of rational curves.
In \cite{Cheung_Faithful_realizability_2014}, these were further generalized and the correspondence theorem was proved
 for regular (but not necessarily 3-valent) tropical curves satisfying certain technical conditions. 
Finally, in \cite{Nishinou_regular_tropical_curves_2025}, it was proved that 
 all regular tropical curves in $\bbR^n$ are realizable.

In all these studies, the crucial assumption is that the tropical curves are regular.
On the side of holomorphic curves, this corresponds to the vanishing of the cohomology group 
 in which the obstructions to deforming the curves lie, so that the obstructions automatically vanish.
As a result, the deformation theory of tropical curves is directly comparable to that of holomorphic curves.

In this paper, we aim to study the realization problem for the case of abelian surfaces, or more generally
 two-dimensional complex tori.
Although these varieties are not toric, they are closely related to toric varieties through 
 Mumford's construction of degenerating families of complex tori \cite[Section 6]{Mumford_Analytic_1972}.
Holomorphic curves in a Mumford family of complex tori give rise to tropical curves in a real torus, 
 see Section \ref{sec:tropicalization}.
It is natural to ask whether a given tropical curve in a real torus arise from a holomorphic curve.
We provide a necessary and sufficient condition for this realization problem in the case of 3-valent
 tropical curves.
Moreover, when the tropical curve is realizable, 
 the number of associated holomorphic curves can be computed using a simple combinatorial formula.

\begin{rem}
The regularity is a transversality condition for tropical curves.
Namely, in general, a tropical curve can be expressed as a solution to a system of linear equations
 (and inequalities to avoid edges of negative length)
 constructed from the data of the places of vertices and the direction of the edges.
The regularity means the space of solutions to this system has the expected dimension.
A formal definition for the regularity was given in \cite[Definition 4.1]{Cheung_Faithful_realizability_2014},
 see also \cite[Lemma 33]{Nishinou_regular_tropical_curves_2025}.
It can be shown that the regularity is also equivalent to the condition that the map $F$
 introduced in Section \ref{sec:multiplicity} has finite cokernel.
In particular, Corollary \ref{cor:rank}, which states that the cokernel of the map $F$ has rank one 
 for a 3-valent tropical curve on the two dimensional torus, implies that such a tropical curve
 is never regular.
\end{rem}

The most serious difficulty in this study is the fact that a tropical curve in a real torus is never regular.
This issue impacts nearly every aspect of the problem,
 leading to a markedly different perspective compared to previous studies cited above.
There are two primary issues to address.
\begin{enumerate}
\item Given a tropical curve on a real torus, there may not be a corresponding degenerate 
 holomorphic curve.
\item Even if there is a degenerate curve, the existence of its first order deformations is not obvious.
If a first-order deformation exists, it is necessary to verify that the obstructions to higher-order deformations vanish.
\end{enumerate}

The realization problem typically begins with the construction of a singular holomorphic curve
 in the central fiber of a degenerating family of a given ambient space. 
This singular curve is designed to reflect the properties of a given tropical curve.
The central fiber is usually a union of toric varieties glued along toric divisors, and 
 the singular curve is constructed by gluing simple local pieces 
 whose geometry is determined by the combinatorics of the vertices of the tropical curve.
The regularity of the tropical curve ensures the transversality in this gluing process.
Therefore,
 the point (1) above did not pose a major problem in the previous studies, 
 at least when the tropical curve is 3-valent.
In particular, for a regular tropical curve, there is always a corresponding singular curve.
In contrast, since tropical curves in this study are not regular, greater care is required during the gluing process.
In fact, it turns out that there are tropical curves for which no corresponding singular curve exists.
Nevertheless, a careful analysis of the gluing conditions leads to
 a simple necessary and sufficient condition for the existence of a corresponding singular curve.
This will be done in Theorem \ref{thm:pre-log}.

After constructing a singular curve on the central fiber of a degenerating family, 
 the next step is to deform it to a general fiber.
If this is possible, since the singular curve reflects properties of the tropical curve, one can relate 
 tropical curves to holomorphic curves on the original ambient space.
Although the central fiber and the curve on it are singular, working with the log smooth deformation theory
 \cite{Kato_Logarithmic_1989,Kato_Log_smooth_deformation_1996},
 the regularity of the tropical curve ensures that the obstructions to deforming the singular curve
 vanish.
Thus, the point (2) was not a concern in the previous studies, either.

In cases where the cohomology group relevant to the obstruction 
 does not vanish, as in our study, addressing this issue is often challenging.
Namely, for each non-negative integer $k$, 
 given a $k$-th order deformation of a map, 
 the obstruction to deforming it one step further depends non-linearly on the lower order terms.
Fortunately, in the present situation we can make use of results in \cite{Nishinou_Obstruction_2018}
 related to the classical notion of semiregularity \cite{Kodaira_Spencer_Semiregularity_1959}.
Theorems in \cite{Nishinou_Obstruction_2018} claim that if an immersion from a reduced curve
 into a smooth complex surface
 satisfies the so-called semiregularity condition, 
 then it is unobstructed in the sense that any first order deformation
 can be extended to arbitrary high orders.
When the surface has a trivial canonical sheaf, then the semiregularity condition is automatically satisfied. 
The problem is that we are dealing with relative situations, 
 the varieties are singular, and furthermore,  
 the map from a curve is not necessarily an immersion.
Nonetheless, the results in \cite{Nishinou_Obstruction_2018} can be applied to our situation with additional calculations.
Namely, we combine the argument in \cite{Nishinou_Obstruction_2018}
 with a trick which reduces the situation to the `standard' one, where 
 the curves are represented by embeddings, see Subsection \ref{subsec:std}.
Here, the assumption (\ref{eq:assump}) below plays an important role.
In this way, we will be able to show the unobstructedness of the deformation.
 The details of this calculation are presented in Subsection \ref{subsec:maincal}.\\

\subsection{Main results}
We now present the precise formulations of our main results 
 (Theorems \ref{thm:intro_realization} and \ref{thm:intro_multiplicity}).
We work in the analytic category, since we will rely on the results in \cite{Nishinou_Obstruction_2018}
 whose proof partially depends on transcendental method.
Let 
\[
\overline{\Lambda}=\begin{pmatrix}
n_{11} & n_{21}\\ n_{12} & n_{22}
\end{pmatrix}
\]
 be a $2\times 2$ integer matrix with non-zero determinant.
Let 
\[
\alpha_{11},\; \alpha_{12},\; \alpha_{21}, \;\alpha_{22}
\]
 be non-zero complex numbers.
We denote by 
\[
k = \Bbb C\{t\}[t^{-1}]
\]
 the quotient field of the ring of convergent series 
 on a disc around the origin 
 of the complex plane. 

Let $B^{\times}$ be a small punctured disc with the fixed parameter $t$
 and consider the product
\[
(\Bbb C^{\times})^2\times B^{\times}.
\]
When regarded as a bundle over $B^{\times}$, 
 the set of its sections naturally forms a group under fiberwise multiplication.
Let $\Lambda$ be a subgroup of this group of sections.
Assume that $\Lambda$ is isomorphic to $\Bbb Z^2$ and forms a discrete subgroup on each fiber.
The quotient of $(\Bbb C^{\times})^2\times B^{\times}/\Lambda$ defines a family $\mathcal X^{\times}$
 of compact surfaces over $B^{\times}$.
In this paper, we
 assume that $\Lambda$ is of the following special form.
 
\begin{equation}\label{eq:assump}
\begin{split}
\Lambda \text{ is generated by }\hspace{2.5in} \\
\lambda_1\coloneqq(\alpha_{11} t^{n_{11}}, \alpha_{12} t^{n_{12}})
\text{ and }
\lambda_2\coloneqq(\alpha_{21} t^{n_{21}}, \alpha_{22} t^{n_{22}}).
\end{split}
\end{equation}

This family can be compactified to a family $\mathcal X$ over the disc $B$, see \cite{Mumford_Analytic_1972}.
Such a compactified family is referred to as a \emph{Mumford family}.
We can also obtain a family of compact surfaces if $\alpha_{ij}t^{n_{ij}}$ is replaced by a more general series,
 such as $\alpha_{ij}t^{n_{ij}}+ \alpha_{ij, 1}t^{n_{ij}+1} + \cdots$.
However, in general, the associated surface may not contain any
 curve.


Let $N=\bbZ^2$ and 
\[
S=N_\bbR/\overline{\Lambda}.
\]
Here, by an abuse of notation, $\overline{\Lambda}$ denotes the subgroup of $N$ generated by the column vectors
\begin{equation}\label{eq:oLambda}
\overline{\lambda}_1= \begin{pmatrix}
 n_{11} \\ n_{12}
 \end{pmatrix},\;\;
\overline{\lambda}_2 = \begin{pmatrix}
 n_{21}\\  n_{22}
 \end{pmatrix}.
\end{equation}
We fix an orientation of $N_{\Bbb R}$,
It induces an orientation on $S$.

\begin{rem}\label{rem:troptorus}
$S$ is called a tropical torus in \cite{Mikhalkin_2008} emphasizing the integral affine structure on it
 induced from $N\subset N_{\Bbb R}$.
\end{rem}
%

A \emph{parametrized holomorphic curve}
 in $\mathcal X^{\times}$ consists of a finite field extension $k\hookrightarrow k'$, 
 a smooth projective curve $\mathcal C^{\times}$ over $k'$ and a map 
\[
f\colon \mathcal C^{\times}\to \mathcal X_{k'}^{\times},
\]
 where $\mathcal X_{k'}^{\times}$
 is the base change of $\mathcal X^{\times}$ corresponding to the field extension.
A \emph{parametrized tropical curve} in $S$ consists of a metric graph $\Gamma$ and a \Zaffine immersion  
\[
h\colon\Gamma\to S
\]
 that is balanced at every vertex of $\Gamma$.
See Definition \ref{def:parametrized_tropical_curve_in_R^2} and \ref{def:parametrized_tropical_curve_in_S} for the details.
We assume moreover that the image of every vertex of $\Gamma$ is an integer point in $S$.
Given a parametrized holomorphic curve $f\colon \mathcal C^{\times}
 \to \mathcal X_{k'}^{\times}$, we can associate with it  
 a parametrized tropical curve $h\colon \Gamma\to S$ in $S$, see \cref{sec:tropicalization}.

\begin{defin}\label{def:realizable} 
A parametrized tropical curve $h\colon\Gamma\to S$ is called \emph{realizable}
 if it comes from a parametrized holomorphic curve in $\mathcal X$ via the above tropicalization process.
\end{defin}
Our main goal is to find necessary and sufficient conditions for realizability.\\

Given a parametrized tropical curve $h\colon\Gamma\to S$, we can lift it to a $\overline{\Lambda}$-periodic tropical curve
\[
\widetilde h\colon\widetilde{\Gamma}\to N_\bbR.
\]
Let 
\[
\Delta\subset N_\bbR
\]
 be a parallelogram fundamental domain of the $\overline{\Lambda}$-action with four sides 
\[
B_1,B_2, B_3, B_4
\]
 such that 
\[
\overline{\lambda}_1(B_3)=B_1,\;\;\overline{\lambda}_2(B_4)=B_2.
\]
We assume that $\widetilde h(\widetilde{\Gamma})$ intersects the boundary $\partial\Delta$ transversally, i.e.\ the intersections occur away from the vertices of $\widetilde{\Gamma}$ and the corners of $\partial\Delta$.
This assumption is justified by the translation invariance of the problem, allowing us to make it without loss of generality.


Let 
\[
\widetilde e_1,\dots,\widetilde e_l\;\; (\text{resp.}\ \widetilde f_1,\dots,\widetilde f_m)
\]
 be the edges of $\widetilde{\Gamma}$ whose images intersect $B_1$ (resp.\ $B_2$).
Let 
\[
(a_i,b_i)\;\; (\text{resp.}\ (c_j,d_j))
\]
 be the weight vector (i.e.\ derivative) of $\widetilde h$ at 
 $\widetilde e_i$ (resp.\ $\widetilde f_j$) pointing from the inside to the outside of $\Delta$.

The \emph{weight} $W_e$ of an edge $e$ of $\Gamma$ is the multiplicity of the weight vector.
Let $\delta$ be the greatest common divisor of the weights of all edges of $\Gamma$.
The \emph{weight} $W_v$ of a 3-valent vertex $v$ of $\Gamma$ is the norm of the cross product of the weight vectors of any two of the three edges connected to $v$.
It is independent of the choice by the balancing condition.

We introduce the following quantity
\begin{align*}
\sigma&\coloneqq\prod_{i=1}^l (\alpha_{12}^{a_i/\delta}\alpha_{11}^{-b_i/\delta})\cdot\prod_{j=1}^m(\alpha_{22}^{c_j/\delta}\alpha_{21}^{-d_j/\delta}).
\end{align*}
It follows from the balancing condition that $\sigma$ does
 not depend on the choice of the fundamental domain $\Delta\subset N_\bbR$.

\begin{thm}\label{thm:intro_realization}
	Let $h\colon\Gamma\to S$ be a 3-valent parametrized tropical curve in $S$.
	It is realizable by a parametrized holomorphic curve in $\mathcal X^{\times}$ if and only if the 
		quantity $\sigma$ is equal to $(-1)^{\sum W_v/\delta}$.
\end{thm}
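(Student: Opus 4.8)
The plan is to reduce realizability to the existence of a singular (pre-log) curve on the central fiber of a Mumford model, and then to deform that curve to the general fiber. Both directions pass through \cref{thm:pre-log}, which provides a necessary and sufficient combinatorial criterion for the existence of such a pre-log curve; for $3$-valent tropical curves that criterion is precisely the equality $\sigma=(-1)^{\sum w_v/\delta}$. Since, as the deformation argument below shows, every pre-log curve of the relevant combinatorial type can be deformed, ``realizable'' becomes equivalent to ``a pre-log curve of the combinatorial type of $h$ exists,'' and one concludes by \cref{thm:pre-log}. I would organize the argument along these lines.

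For the ``only if'' direction I would start from a realization $f\colon C\to\cX_{k'}^{\times}$ whose closure is an irreducible surface. After a finite base change of $B$ and passage to a suitable semistable model, the closure $\oC$ of the image of $f$ has reduced special fiber mapping into the central fiber $\cX_0$ of a Mumford model; recall from \cref{sec:tropicalization} that $\cX_0$ is locally a union of toric surfaces glued along toric divisors dual to the polyhedral subdivision of $N_\bbR$ carried by the lift $\widetilde h$ of $h$. The limit curve $C_0\subset\cX_0$ is then a pre-log curve whose combinatorial type is precisely that of $h$, by compatibility of tropicalization with degeneration; hence a pre-log curve of this type exists, and \cref{thm:pre-log} forces $\sigma=(-1)^{\sum w_v/\delta}$.

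For the ``if'' direction, assume $\sigma=(-1)^{\sum w_v/\delta}$. First I would lift $h$ to the $\oLambda$-periodic curve $\widetilde h\colon\tGamma\to N_\bbR$, fix a parallelogram fundamental domain $\Delta$ as above, and apply \cref{thm:pre-log} to produce a pre-log curve $C_0$ on the central fiber $\cX_0$ of a Mumford family $\cX\to B$ of the combinatorial type of $h$: it is obtained by gluing the local models at the $3$-valent vertices of $\Gamma$ compatibly with the identifications across $B_1,B_3$ and $B_2,B_4$ coming from the $\oLambda$-action, and $\sigma=(-1)^{\sum w_v/\delta}$ is exactly what makes this gluing possible. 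Then I would deform $C_0$ in the log smooth deformation theory of \cite{Kato_Logarithmic_1989,Kato_Log_smooth_deformation_1996}, in two steps. For the first order deformation of $C_0\to\cX_0$ I would compute the obstruction via \cref{prop:coupling} as a sum of residues, evaluate these after reducing to configurations of linear curves as in \cref{subsec:std}, and conclude vanishing by \cref{prop:firstorder}. For the higher order obstructions I would invoke the semiregularity mechanism of \cite{Kodaira_Spencer_Semiregularity_1959,Nishinou_Obstruction_2018}: the smooth fibers of $\cX$ are complex tori, so they have trivial canonical sheaf and semiregularity holds automatically; although the present situation is relative and $\cX_0$ is singular, the arguments of \cite{Nishinou_Obstruction_2018} carry over --- here the special form $(*)$ of $\Lambda$ is used --- to show, as in \cref{subsec:maincal}, that each obstruction to extending the deformation one order further vanishes. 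Hence $C_0\to\cX_0$ extends to an analytic family of maps over a finite cover of a neighborhood of $0\in B$, whose general fiber is a parametrized algebraic curve in $\cX$ tropicalizing to $h$; so $h$ is realizable.

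The hard part, and what makes this genuinely different from Mikhalkin's correspondence, is the systematic failure of regularity: tropical curves in a real torus are never regular, so the cohomology group carrying the obstructions does not vanish, and neither the existence of the first order deformation nor the vanishing of the higher obstructions comes for free. The first issue I expect to handle by the residue computation behind \cref{prop:firstorder} together with the standardization of \cref{subsec:std}; the second by the nonlinear analysis of \cref{subsec:maincal} built on semiregularity and the hypothesis $(*)$. The same failure of regularity is why gluing the pre-log curve already imposes a genuine numerical constraint, namely $\sigma=(-1)^{\sum w_v/\delta}$ via \cref{thm:pre-log} --- a feature with no analogue in the toric case.
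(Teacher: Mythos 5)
Your proposal is correct and follows exactly the paper's (implicitly assembled) proof: the ``if'' direction is \cref{thm:pre-log} combined with \cref{thm:main} (itself built from \cref{prop:coupling}, the standardization of \cref{subsec:std}, \cref{prop:firstorder}, and the higher-order cancellation of \cref{subsec:maincal} under hypothesis $(*)$), and the ``only if'' direction is the tropicalization/degeneration argument of \cref{sec:tropicalization} feeding back into the necessity half of \cref{thm:pre-log}. No meaningful deviation from the paper's route.
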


After this paper has been posted, 
 Blomme \cite[Lemma 4.6]{Blomme_Enumeration_of_curves_1} has shown that the exponent 
 $\sum W_v/\delta$ is always even, implying that $(-1)^{\sum W_v/\delta}$ is in fact equal to one,
 see Remark \ref{rem:exponent}.
Given Theorem \ref{thm:intro_realization}, 
 it is natural to ask how many parametrized holomorphic curves in $\mathcal X^{\times}$ correspond to a given realizable parametrized tropical curve in $S$.
To obtain a finite count, it is necessary to impose constraints that reduce the dimension of the associated moduli spaces.

Let $h\colon\Gamma\to S$ be a 3-valent realizable parametrized tropical curve of genus $g$ 
 passing through $g$ integer points 
\[
p_1,\dots,p_g\in S.
\]
\begin{rem}\label{rem:expdim}
Note that given an immersion $\phi\colon C\to X$ from an irreducible
 nonsingular curve to a two-dimensional complex torus,
 the map $\phi$ deforms in a $g$-dimensional family, where $g$ is the genus of $C$.
The sketch of proof is as follows.
First, from the fact that the canonical sheaf of $X$ is trivial, 
 adjunction shows that the normal sheaf $\mathcal N_{\phi}$ of the map $\phi$ is isomorphic to the canonical sheaf of $C$. 
In particular, $\dim H^0(C, \mathcal N_{\phi}) = g$.
Suppose we have constructed a $k$-th order deformation $\phi_k$ of $\phi$ for some positive integer $k$.
An infinitesimal parallel transportation provides a $(k+1)$-th order deformation of $\phi$ extending $\phi_k$.
In particular, $\phi_k$ is unobstructed and its extension to $(k+1)$-th order maps composes a $g$-dimensional family.
Repeating this, we obtain a $g$-dimensional family of formal deformations of $\phi$, and applying 
 implicit function theorem \cite[Proposition 1.5]{Kosarew_1991}, we obtain a $g$-dimensional analytic family.
On the other hand, a 3-valent parametrized tropical curve on $S$ also has $g$-dimensional deformations,
 where $g$ is the genus of the graph $\Gamma$, see \cite[Proposition 3.20]{Blomme_Enumeration_of_curves_1}.
Therefore, in both cases, the counting curves with $g$ point constraints make sense.
\end{rem}

Assume that the preimage $h\inv(p_i)$ 
 lies in the interiors of the edges of $\Gamma$.
Furthermore, assume the constraints make the tropical curve rigid.
Choose a point $v_i$ from $h^{-1}(p_i)$ for each $i = 1, \dot, g$.
Let $\Gamma'$ denote the graph $\Gamma$ subdivided by $v_i$.

\begin{rem}
As we will see in Remark \ref{rem:double_edge}, 
 even if the constraints $p_1, \dots, p_g$ are generic, there are tropical curves where $h^{-1}(p_i)$
 consists of multiple points.
In this case, there are multiple edge $e_1, \dots, e_l$ whose images by $h$ contain $p_i$.
In particular, the images of these edges overlap.
Since the constraints $p_1, \dots, p_g$ make the tropical curve rigid, this implies that this overlap
 cannot be resolved by deforming the map $h$.
The point $v_i$ can lie on any of these edges.
\end{rem}

Let 
\[
x_1,\dots,x_g
\]
 be 
 $k$-rational points in $\mathcal X^{\times}$ such that, 
 possibly after a finite base field extension, they can be extended to sections of $\mathcal X\to B$, 
 and each $x_i$ specializes to a general point on the irreducible component of the central fiber of $\mathcal X$
 corresponding to $p_i$.
See Section \ref{sec:multiplicity} for a construction of such sections.

We define a map
\begin{align*}
G\colon \Map(V(\Gamma'),N)
&\xrightarrow{\mathmakebox[3ex]{}} \bigoplus_{e \in E(\Gamma')} (N/N_{e})\ \oplus\ \bigoplus_{i=1}^g N\\
\phi &\xmapsto{\mathmakebox[3ex]{}}
\Big(\big(\phi(\partial^+e)-\phi(\partial^-e)\big)_e,\big(\phi(v_i)\big)_i\Big)
\end{align*}
where $V(-)$ denotes the set of vertices, $E(-)$ denotes the set of edges, $N_e$ denotes the sublattice generated by the weight vector of $e$, and $\partial^- e$, $\partial^+ e$ denote the two endpoints of an edge $e$ according to a fixed orientation.
Consider the tensor product of $G$ with $\bbC^*$ over $\bbZ$:
\[G_{\bbC^*}\colon \Map\big(V(\Gamma'),N_{\bbC^*}\big)
\longrightarrow \bigoplus_{e \in E(\Gamma')} (N/N_{e})_{\bbC^*} \oplus\ \bigoplus_{i=1}^g N_{\bbC^*}.\]

\begin{thm}\label{thm:intro_multiplicity}
	The number of parametrized holomorphic
    curves in $\mathcal X^{\times}$ (up to equivalence given by base field extension) passing through $x_1,\dots,x_g$ that give rise to the 3-valent realizable parametrized tropical curve $h\colon\Gamma\to S$ is equal to the product
	\[\abs{\Ker G_{\bbC^*}}\cdot\prod_{e\in E(\Gamma')} W_e.\]
\end{thm}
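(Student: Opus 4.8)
The plan is to realise the desired number as a count of degenerate curves on the special fibre of the Mumford family, each deforming in a controlled way to the general fibre. By \cite{Mumford_Analytic_1972} the family $\mathcal X\to B$ has special fibre $X_0$ a union of toric surfaces glued along toric divisors, whose dual intersection complex is the real torus $S$ carrying a polyhedral decomposition dual to (a refinement of) the combinatorial type of $h$; the $2$-cells are indexed by $V(\Gamma)$ and the $1$-cells by $E(\Gamma)$, so each edge $e$ gives a stratum $D_e\cong\bbP^1$. Equipping $X_0$ and $\mathcal X$ with their canonical log structures over the standard log point, each $x_i$ specialises to a point $x_{i,0}\in D_{e}$, where $e$ is the edge with $h\inv(p_i)\in e$. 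I would then proceed in two steps: (I) set up a bijection between parametrized algebraic curves through the $x_i$ that tropicalise to $h$ and suitably decorated pre-log curves on $X_0$, and (II) count the latter.

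For step (I) I would argue, following the correspondence method of \cite{Nishinou_Toric_2006,Nishinou_Graphs_2015} and the log smooth deformation theory of \cite{Kato_Logarithmic_1989,Kato_Log_smooth_deformation_1996}, that such algebraic curves are in bijection with pairs consisting of a pre-log curve $\varphi_0\colon C_0\to X_0$ whose dual graph, marked by the point constraints, is $\Gamma'$ and with $\varphi_0$ meeting $D_{e}$ at $x_{i,0}$ above each $v_i$, together with a choice, at each node of $C_0$ — these are indexed by $E(\Gamma)$, and at the node over $e$ the two branches meet $D_e$ with contact order $w_e$ — of one of $w_e$ admissible smoothing parameters. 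The substance is that, given such a pair, the associated log smooth deformation exists over a suitable ramified base change of $B$ because the relevant obstruction groups vanish — this is exactly \cref{prop:firstorder} together with the unobstructedness established in \cref{subsec:maincal} — and is unique because the hypothesis that the $g$ points make $h$ rigid forces the deformation space to be a reduced point; conversely, an algebraic curve recovers the pair from its tropicalisation and leading terms, and the $\prod_{e}w_e$ smoothing choices give pairwise inequivalent curves, since Galois conjugates over $k$ are not identified under base change. Hence the number sought equals $\bigl(\#\{\text{admissible pre-log curves}\}\bigr)\cdot\prod_{e\in E(\Gamma)}w_e$.

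For step (II) I would analyse an admissible pre-log curve componentwise. Over a $3$-valent vertex $v$ the curve $C_0\cap Y_v$ is a ``line'' in the toric surface $Y_v$, whose equation has three monomial terms dual to the three edges at $v$; such a line is unique up to the action of the dense torus $T_v\cong N_{\bbC^*}$, and the bivalent marked vertices behave similarly. Thus an admissible pre-log curve amounts to a point $(z_v)_{v\in V(\Gamma')}\in\Map(V(\Gamma'),N_{\bbC^*})$ subject to: (a) for each edge $e\in E(\Gamma')$, the two adjacent pieces meet $D_e$ at the same point, which — after accounting for the $\oLambda$-twist by $\lambda_1,\lambda_2$ across the edges crossing $\partial\Delta$, through which the scalars $\alpha_{ij}$ enter — says that $z_{\partial^+e}$ and $z_{\partial^-e}$ coincide in $(N/N_e)_{\bbC^*}$ up to a fixed factor, and (b) $z_{v_i}$ equals a fixed element prescribed by $x_{i,0}$. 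Conditions (a) and (b) are precisely the statement that $G_{\bbC^*}\bigl((z_v)_v\bigr)=\xi$ for a fixed element $\xi$ of the target assembled from the $\alpha_{ij}$ and the $x_{i,0}$. Rigidity of $h$ means $G$ is injective, so $\Ker G_{\bbC^*}$ is finite; and the obstruction to $\xi$ lying in the image of $G_{\bbC^*}$ is its class in $\Coker G_{\bbC^*}$, which I would identify with the ratio $\sigma/(-1)^{\sum w_v/\delta}$ — equal to $1$ by \cref{thm:intro_realization} and the standing realizability hypothesis. Hence the admissible pre-log curves form a torsor under $\Ker G_{\bbC^*}$, so there are $\abs{\Ker G_{\bbC^*}}$ of them, and together with step (I) this gives the stated product.

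The main obstacle, I expect, is step (I): turning the heuristic correspondence into an honest bijection in this relative, singular, logarithmic setting, and in particular showing that each decorated pre-log curve deforms to a \emph{unique} parametrized algebraic curve using only the inputs available — \cref{prop:firstorder}, the semiregularity-type arguments of \cref{subsec:maincal}, and the rigidity of $h$ — while tracking base-field extensions so that the $\prod_{e\in E(\Gamma)}w_e$ smoothings are counted with the correct multiplicity. A secondary difficulty is the bookkeeping in step (II): the precise translation of the glueing conditions, including the $\oLambda$-twist, into $G_{\bbC^*}$, and the identification of the image obstruction with the scalar $\sigma$.
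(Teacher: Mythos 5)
Your proposal follows essentially the same route as the paper: the count factors as (number of pre-log curves on $X_0$ meeting the specialised constraints) $\times$ (number of log-smoothing data per pre-log curve, which is $\prod_e w_e$) $\times$ (one, by existence and uniqueness of the deformation to the general fibre via the unobstructedness of Subsection \ref{subsec:maincal} and the rigidity hypothesis, made precise in the paper by the order-by-order argument of \cref{prop:smoothing_with_constraints} using \cref{lem:Psi}), with the first factor identified as $\abs{\Ker G_{\bbC^*}}$ through a torsor structure exactly as in \cref{prop:pre-log_torsor}. The only divergence is cosmetic: you obtain nonemptiness of the constrained pre-log set by arguing the class of $\xi$ in $\Coker G_{\bbC^*}$ is trivial under the realizability hypothesis, whereas the paper gets it from \cref{thm:pre-log} plus the transitive action of $\Ker\hF_{\bbC^*}$ on the evaluation torus (\cref{rem:rigidity}), which sidesteps the torsion bookkeeping in $\Coker G$ that your identification with $\sigma/(-1)^{\sum w_v/\delta}$ would otherwise require.
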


On the other hand, given a family of holomorphic curves in $\mathcal X^{\times}$
 passing through a set of general sections $x_1,\dots,x_g$ of
 $\mathcal X^{\times}\to B^{\times}$, it induces a tropical curve on $S$ by 
 the construction in \cref{sec:tropicalization}.
This tropical curve passes through a set of $g$ general integer points on $S$, and then it must be 3-valent, 
 see Proposition \ref{prop:rev}.
By combining this fact with Theorem \ref{thm:intro_realization},
 we obtain the following.
\begin{cor}
Given general constraints 
\[
p_1, \dots, p_g\in S
\]
 and 
\[
x_1, \dots, x_g\in \mathcal X^{\times},
\] 
 the number of 3-valent realizable parametrized tropical curves of genus $g$ passing through $p_1, \dots, p_g$, 
 counted with multiplicity 
\[
\abs{\Ker G_{\bbC^*}}\cdot\prod_{e\in E(\Gamma')} W_e,
\]
 is equal to
 the number of parametrized holomorphic curves passing through $x_1, \dots, x_g$.\qed
\end{cor}

In terms of the lattice index, the number $\abs{\Ker G_{\bbC^*}}$ is equal to the number of elements of the cokernel of the 
 map
\[
\Map(V(\Gamma'),N)
 \xrightarrow{\mathmakebox[3ex]{}} 
 {\rm Im}\, G_{\Bbb R}\bigcap\left(\bigoplus_{e \in E(\Gamma')} (N/N_{e})'\ \oplus\ \bigoplus_{i=1}^g N\right)
\]
 induced by $G$, where $G_{\Bbb R} = G\otimes\Bbb R$ and $(N/N_{e})'$ is the torsion free part of 
 $N/N_{e}$.
Here, ${\rm Im}\, G_{\Bbb R}$ has codimension one in 
\[
\left(\bigoplus_{e \in E(\Gamma')} (N/N_{e})'\ \oplus\ \bigoplus_{i=1}^g N\right)\otimes \Bbb R.
\]
On the other hand, as mentioned in the related works below, 
 Blomme \cite[Theorem 4.9]{Blomme_Enumeration_of_curves_1}
 has established a formula for the multiplicity
 in terms of weights of vertices.

\medskip
\subsection{Related works}\label{subsec:relatedworks}

The realization problem in tropical geometry has been studied extensively in the literature.
Besides the works \cite{Mikhalkin_Enumerative_2005,Nishinou_Toric_2006,Tyomkin_Tropical_2012,Nishinou_Correspondence_2009} already mentioned in the beginning, various interesting results have been obtained in the works by Shustin \cite{Shustin_A_tropical_approach_2005}, Speyer \cite{Speyer_Uniformizing_2007},
Brugall\'{e}-Shaw \cite{Brugalle_Obstructions_to_approximating_2011},
Katz-Payne \cite{Katz_Payne_Realization_2011},
Katz \cite{Katz_Lifting_2012},
Bogart-Katz \cite{Bogart_Obstructions_to_lifting_2012},
Birkmeyer-Gathmann-Schmitz \cite{Gathmann_Realizability_2013}.
We would also like to mention the work of Halle-Rose \cite{Halle_Tropical_count_of_curves_on_abelian_varieties} regarding the enumeration of tropical curves in tropical abelian varieties.		
For the counting problem of Gromov-Witten type, there is a beautiful result by Bryan and Leung \cite{Bryan_Generating_1999}.
Contrary to Gromov-Witten type invariants, our counting is sensitive to the complex structure.
Thus, it gives a combinatorial counting of curves on the family of complex tori satisfying the condition (\ref{eq:assump}) above.
For another example of an 
 obstructed deformation problem of curves on surfaces, see, for example, \cite{Tannenbaum_1982}.

Blomme has developed the enumerative aspects of the results of this paper, 
 \cite{Blomme_Enumeration_of_curves_3,Blomme_Enumeration_of_curves_2,Blomme_Enumeration_of_curves_1}.
As we mentioned above, 
 he gave a formula for the multiplicity in Theorem \ref{thm:intro_multiplicity} as a product of suitable weights 
 of vertices, similar to the one in \cite{Mikhalkin_Enumerative_2005}.
Based on it, he obtained a tropical geometric proof of results of 
 \cite{Bryan_Generating_1999}, and their generalization.
Thus, he revealed the relation between tropical curves on real two dimensional torus and 
 the so-called reduced Gromov-Witten invariants, which was introduced in \cite{Bryan_2000} to define
 counting problems on Calabi-Yau surfaces (the usual Gromov-Witten invariant for these surfaces always vanishes).
These are results about so-called curves with primitive homology classes.
In \cite{Bryan_Counting_on_abelian_surface_2018, Oberdieck_GW_theory_2022}, 
 enumerative study of curves of general classes on Abelian surfaces was initiated.
In \cite{Blomme_Enumeration_of_curves_3,Blomme_Enumeration_of_curves_2}, Blomme also showed that
 tropical method can recover these results as well.

\begin{rem}
One can also study the realization problem for curves in higher-dimensional abelian varieties by tropical method.
There are several issues that need to be addressed in this study.
Obviously, we need to prove the analogue of Theorem \ref{thm:pre-log}, and to
 extend the results of \cite{Nishinou_Obstruction_2018} to higher codimensional cases.
However, the most significant issue will be the superabundance of tropical curves.
For tropical curves in tropical tori (see Remark \ref{rem:troptorus}), all of them are superabundant in the sense that 
 their parameter spaces have larger dimensions than expected, even in the case of 
 tropical curves in two-dimensional tori.
For example, when the target space is a three-dimensional tropical torus or
 three-dimensional abelian variety, the expected dimension of a curve is always zero, 
 whereas there are obvious three-dimensional freedom of translation.

On the other hand, plane tropical curves are regular, 
 that is, their parameter spaces have the expected dimension.
However, when the target space has dimension three or higher, then superabundant tropical curves
 generically appear even when the target space is an affine space.
Thus, curves in higher-dimensional ambient spaces will have large obstruction spaces, which makes the
 deformation theory more intricate.
\end{rem}


\subsection{Notation}\label{subsec:notation}
We will work in the complex analytic category.
In this paper, we will study non-constant maps $\varphi_0\colon C_0\to X_0$ from a curve $C_0$
 to a complex analytic surface $X_0$ and their deformations.
Here, the domain $C_0$ is a reduced, connected and nodal curve.
The surface $X_0$ is also reduced, and it is the central fiber of a family of surfaces $\mathcal X$ over a disc $B$
 (a Mumford family, see \cite[Corollary 6.6]{Mumford_Analytic_1972} and comments after it).
A deformation of $\varphi_0$ over $\Spec\Bbb C[t]/(t^{n+1})$ will be written as 
\[
\varphi_n\colon C_n\to X_n = \mathcal X\times_B\Spec\Bbb C[t]/(t^{n+1}).
\]
By the image of a map $\varphi_0$ or $\varphi_n$, we mean the analytic locally ringed space
 with the annihilator structure, see \cite[Chapter I, Definition 1.45]{Greuel_Lossen_Shustin_book2007}.
That is, if $U$ is an open subset of $C_n$ with the induced structure of an analytic locally ringed space, 
 and $V$ is an open subset of $X_n$ such that 
 $\varphi_n(U)$ is closed in $V$, we associate the structure sheaf 
\[
\mathcal O_V/\mathcal{A}nn_{\mathcal O_V}((\varphi_n)_*\mathcal O_U)
\]
 to the image $\varphi_n(U)$.

\subsubsection{Glossary}
Here we compile the notations which are frequently used in this paper.

\begin{longtable}{p{4cm}p{7cm}}
${\Gamma}$ & A metric graph (see Definition \ref{def:metric_graph}).\\
$V_{\infty}(\Gamma)$ & The set of infinite vertices of $\overline{\Gamma}$ (see Definition \ref{def:metric_graph}).\\
$V(\Gamma), E(\Gamma)$ & The set of vertices and edges of $\Gamma$ (see Definition \ref{def:metric_graph}).\\
$N$, $N_{\Bbb R}$ & A finitely generated free abelian group, and 
 the vector space associated with it by extension of scalars.
 In this paper, $N$ is almost always of rank two.
We fix an orientation of $N_{\Bbb R}$.\\
$\overline{\Lambda}$ & A full rank subgroup of $N$.\\
$S = N_{\Bbb R}/\overline{\Lambda}$ & A real torus.\\
$h\colon \Gamma\to S$ & A parametrized tropical curve in $S$, also denoted by $(\Gamma, h)$.
 Here, $\Gamma$ does not have infinite vertices (see Definition \ref{def:parametrized_tropical_curve_in_S}).\\
$w_{v, e}$ & The weight vector of the edge $e$ at $v$ of a parametrized tropical curve $(\Gamma, h)$
 (see Definition \ref{def:parametrized_tropical_curve_in_R^2}).\\
$W_e$ & The scalar weight of the edge $e$ defined as the positive integer $W_e$
 such that $w_{v, e}/W_e$ is a primitive vector of $N$ (see Definition \ref{def:weight}).\\
$W_v$ & The weight of the vertex $v$ (see Definition \ref{def:weight}).\\
$\mathcal D$ & A polyhedral subdivision of $S$ whose 1-skeleton contains the image $h(\Gamma)$.\\
$\mathcal D^0, \mathcal D^1$ & The set of 0-cells and 1-cells of $\mathcal D$, respectively.\\
$\mathcal X\to B$ & A Mumford family of degenerating complex tori of dimension two over a small disc $B$.
 It is determined by $\mathcal D$ (see Definition \ref{def:M1}).\\
$X_0$ & The central fiber of $\mathcal X$.\\
$X_{0, \rho}$ & The irreducible component of $X_0$ corresponding to $\rho\in \mathcal D^0$.\\
$X_{0, \sigma}$ & The codimension one stratum of $X_0$ corresponding to $\sigma\in\mathcal D^1$.\\
$\widetilde{\mathcal D}$ & The $\overline{\Lambda}$-periodic polyhedral subdivision of $N_{\Bbb R}$ induced by $\mathcal D$ (see Definition \ref{def:M2}).\\
$\widetilde{\mathcal X}$ & A toric variety over $\Spec\Bbb C[t]$ of infinite type associated with $\widetilde{\mathcal D}$.
 $\mathcal X$ is a quotient of the restriction of $\widetilde{\mathcal X}$ over $B\subset \Spec\Bbb C[t]$
 by a lattice $\Lambda$ of rank two (see Definition \ref{def:M2}).\\
$\varphi_0\colon C_0\to X_0$ & A pre-log curve in $X_0$ associated with a parametrized tropical curve
 $h\colon\Gamma\to S$ (see Definition \ref{def:pre-log}).\\
$C_{0, v}$ & The irreducible component of $C_0$ corresponding to $v\in V(\Gamma)$.\\

$\{\mathscr U_i\}_{i\in I}$ & An open covering of $C_0$ with certain properties (see Definition \ref{def:scrU}).\\ 
$\Psi_{\mathcal X}, \Psi_{X_0}$ & The normalized relative complex volume form of $\mathcal X$ and
 its restriction to $X_0$ (see Definition \ref{def:volumeform}).\\
$\Psi_{C_0}$ & The pullback of $\Psi_{X_0}$ to $C_0$ by $\varphi_0$.\\
$(\Gamma_s, h_s)$ & The standard two-vertex tropical curve (see Definition \ref{def:std}).\\
 $V_1$, $V_2$ & The two vertices of $\Gamma_s$.\\
 $E$ & The edge of $\Gamma_s$ connecting $V_1$ and $V_2$.\\
 $\Gamma_{v_1, v_2}$ & The graph associated with an open neighborhood of the edge $e_{v_1v_2}$ connecting 
adjacent vertices $v_1$ and $v_2$ of $\Gamma$.\\
 $(\Gamma_{v_1v_2}, h_{v_1v_2})$ &  The two-vertex tropical curve associated with the 
 restriction of $h$ to $\Gamma_{v_1, v_2}$.
 As graphs, $\Gamma_s$ and $\Gamma_{v_1, v_2}$ are identified.\\
$\Phi, \Phi_{\Bbb R}$ & The integral linear map and its extension of scalars
 satisfying $\Phi_{\Bbb R}\circ h_s = h_{v_1v_2}$.\\
$\overline{\mathcal C}_s$ & The closed cone in $\Bbb R^3$ associated with the standard two-vertex tropical 
 curve $(\Gamma_s, h_s)$.\\
$\overline{\mathcal C}_{v_1v_2}$ & The analogue of $\overline{\mathcal C}_s$
 for $(\Gamma_{v_1v_2}, h_{v_1v_2})$.\\
$\overline{\mathcal Y}_{V_1V_2}^s$ & The toric variety associated with the cone $\overline{\mathcal C}_s$.
It is a degenerating family of toric varieties over $\Bbb C$ (see Definition \ref{def:mathcalY}).\\
$\mathcal Y_{V_1V_2}^s$ & An open toric subvariety of 
 $\overline{\mathcal Y}_{V_1V_2}^s$ (see Definition \ref{def:mathcalY}).\\
$\overline{Y}_{0, V_1V_2}^s$ & The central fiber of $\overline{\mathcal Y}_{V_1V_2}^s$ (see Definition \ref{def:Y}).\\
$Y_{0, V_1V_2}^s$ &  The central fiber of $\mathcal Y_{V_1V_2}^s$ (see Definition \ref{def:Y}).\\
$\overline{\mathcal Y}_{v_1v_2}$ & The toric variety associated with the cone
$\overline{\mathcal C}_{v_1v_2}$ (see Definition \ref{def:mathcalY}).\\
$\mathcal Y_{v_1v_2}$ & The analogue of $\mathcal Y_{V_1V_2}^s$. 
 It is a subvariety of $\mathcal X$ (see Definition \ref{def:mathcalY}).\\
$\overline{Y}_{0, v_1v_2}$ & The central fiber of $\overline{\mathcal Y}_{v_1v_2}$ (see Definition \ref{def:Y}).\\
$Y_{0, v_1v_2}$ &The central fiber of $\mathcal Y_{v_1v_2}$.
The varieties $\overline{Y}_{0, v_1v_2}$ and $Y_{0, v_1v_2}$ are subvarieties of $\mathcal X$ (see Definition \ref{def:Y}).\\
$P_{\Phi}$ & The toric map $\overline{\mathcal Y}_{V_1V_2}^s\to 
 \overline{\mathcal Y}_{v_1v_2}$ associated with $\Phi$.
 Its restriction $\mathcal Y_{V_1V_2}^s\to \mathcal Y_{v_1v_2}$ is also denoted by $P_{\Phi}$
 (see Definition \ref{def:Pphi}).\\
$\phi_{0, v_1v_2}\colon D_0\to \overline{Y}_{0, v_1v_2}$ & A pre-log curve of type $(\Gamma_{v_1v_2}, h_{v_1v_2})$
 (see Lemma \ref{lem:v_1v_2}).
 $D_0$ is the chain of rational curves with length two (i.e., it has two irreducible components and one node).\\
$\psi_{0, V_1V_2}\colon D_0\to \overline{Y}_{0, V_1V_2}^s$ & A pre-log curve of type $(\Gamma_s, h_s)$
 which satisfies $\phi_{0, v_1v_2} = P_{\Phi}\circ \psi_{0, V_1V_2}$ (see Lemma \ref{lem:v_1v_2}).
 We denote the restriction $\psi_{0, V_1V_2}|_{\mathscr D_{v_1v_2}}$ also by $\psi_{0, V_1V_2}$.\\
$\mathscr D$, $\mathscr D_{v_1v_2}$
 & An affine open subset of $D_0$ isomorphic to $\Spec\Bbb C[s, u]/(su)$ or 
 $\Spec\Bbb C[s, \frac{1}{s-\alpha}, u]/(su)$, 
 where $s$ and $u$ are coordinates of the branches of $D_0$ that vanish at the node, 
 and $\alpha$ is a non-zero complex number
 (see Definitions \ref{def:scrD} and \ref{def:Dv1v2}).\\
$\{\mathscr W_k\}_{k\in I'}$ & An open covering of $X_0$, whose index set $I'$ contains the index set $I$
 of the covering $\{\mathscr U_i\}_{i\in I}$ of $C_0$, such that for each $i\in I$, 
 $\varphi_0(\mathscr U_i)$ is an
 analytic subset of $\mathscr W_i$ (see Definition \ref{def:scrW}).\\
$\mathscr U_{i, n}$ & Given a deformation $C_n$ of $C_0$ over $\Spec\Bbb C[t]/(t^{n+1})$, the restriction of the structure
 of an analytic space to the open subset $\mathscr U_i$ of $C_0$.
 See Definition \ref{def:u_inw_in}.\\
$\mathscr W_{i,n}$ & The restriction of the structure of an analytic space of 
 $X_n = \mathcal X\otimes_{\Spec\Bbb C[t]}\Spec\Bbb C[t]/(t^{n+1})$ to the open subset 
 $\mathscr W_i$ of $X_0$.
 See Definition \ref{def:u_inw_in}.\\
$\mathscr W_{k, V_1V_2}^s$ & Given $\mathscr W_k\subset X_0$ and 
 $P_{\Phi}|_{Y_{0, V_1V_2}^s}\colon Y_{0, V_1V_2}^s\to X_0$
 whose image contains $\mathscr W_k$, 
 the connected component of 
  $P_{\Phi}|_{Y_{0, V_1V_2}^s}^{-1}(\mathscr W_k)$ containing the image
  of $\psi_{0, V_1V_2}|_{\mathscr U_k}$. See \cref{eq:scrW}.\\
$\mathscr W_{k, n, V_1V_2}^s$ & The restriction of the structure of an analytic space of 
 $\mathcal Y_{V_1V_2}^s\otimes_{\Spec\Bbb C[t]}\Spec\Bbb C[t]/(t^{n+1})$ to
 $\mathscr W_{k, V_1V_2}^s$.
 In particular, $\mathscr W_{k, V_1V_2}^s = \mathscr W_{k, 0, V_1V_2}^s$.
See \cref{eq:scrW2}.\\
$\mathscr U_{i, X}, \mathscr U_{i, Y}$ & Irreducible components of $\mathscr U_i$ when it contains a node of $C_0$.
See \cref{eq:scrUXY}.\\
$Y_{X, V_1V_2}^s, Y_{Y, V_1V_2}^s$ & Irreducible components of $Y_{0, V_1V_2}^s$.
See \cref{eq:scrWYX} and \cref{eq:scrWYY}.\\
$\mathscr W_{i, X, V_1V_2}^s, \mathscr W_{i, Y, V_1V_2}^s$ & 
Irreducible components of $\mathscr W_{i, V_1V_2}^s$ when it is the union of product of discs.
See \cref{eq:scrWYX} and \cref{eq:scrWYY}.\\
$\psi_{n, i}$ & The restriction of $\psi_{n, V_1V_2}$ to $\mathscr U_i$.
See Subsection \ref{subsec:maincal}.
\end{longtable}

\medskip
\paragraph{\bfseries Acknowledgements}
We are very grateful to Pierrick Bousseau and Tony Yue Yu for inspiring discussions.
This paper is originally conceived as a joint project with Tony Yue Yu.
There are many places where he made invaluable contributions, especially in the proof of Theorem \ref{thm:pre-log}
 and in the most part of Section \ref{sec:multiplicity}.
We would like to deeply thank him.
We also thank the reviewers for their detailed comments.
The author is supported by JSPS KAKENHI Grant Number 18K03313.

\section{Tropicalization of curves in complex tori}\label{sec:tropicalization}

In this section, we give precise definitions concerning tropical curves and describe the tropicalization procedure for curves 
 in complex tori.
We use the setup in the introduction.

\begin{defin} \label{def:metric_graph}
	A \emph{metric graph} $\Gamma$ is a metric space whose underlying topological space is a graph, and the metric on each edge is modeled on an interval $[0, l]$ for $l\in(0,+\infty]$.
	\begin{itemize}
	\item A vertex $v$ of $\Gamma$ is called an infinite vertex if any neighborhood of $v$ has infinite length.
	 Otherwise, it is called a finite vertex.
	\item We assume that all the infinite vertices are 1-valent and all vertices have finite valence.
	\end{itemize}
The graph $\Gamma$ can be an infinite graph, that is, $\Gamma$ may have infinite number of vertices and edges
 (e.g. the honeycomb graph in the Euclidean plane).
	We use the following notations:
	\begin{itemize}
	\item $V(\Gamma)$: The set of vertices of $\Gamma$.
	\item $V_\infty(\Gamma)$: The set of infinite vertices of $\Gamma$.
	\item $E(\Gamma)$: The set of edges of $\Gamma$.
	\end{itemize}
	For each vertex $v$ of $\Gamma$, we denote by $e_{v,1}, e_{v,2}, \dots$ the edges of $\Gamma$ connected to $v$.
	For each edge $e$ of $\Gamma$, we denote by $e^\circ$ its interior, i.e.\ the edge minus its two endpoints.
\end{defin}

\begin{defin} \label{def:parametrized_tropical_curve_in_R^2}
	A \emph{parametrized tropical curve in $N_\bbR$} consists of a metric graph $\Gamma$ and a continuous immersion $h\colon \Gamma\setminus V_\infty(\Gamma)\to N_\bbR$ satisfying the following conditions:
	\begin{enumerate}
	\item The image of every finite vertex in $\Gamma$ is an integer point.
	\item For each finite vertex $v$ of $\Gamma$ and each edge $e$ connected to $v$, the restriction $h|_{e^\circ}$ is linear with integer derivative $w_{v,e}\in N$, defined with respect to the unit tangent vector pointing from $v$ to $e$.
	We call $w_{v,e}$ the \emph{weight vector} of the edge $e$ at $v$.
	\item (Balancing condition) For each finite vertex $v$, we require that
	\[\sum_{e\ni v} w_{v,e}=0\]
	holds, 
	where the sum is over all edges connected to $v$.
	\end{enumerate}
	For	 ease of notation, we will denote the parametrized tropical curve above by $h\colon\Gamma\to N_\bbR$, omitting $V_\infty(\Gamma)$ unless explicitly required.
	The \emph{genus} of a parametrized tropical curve is the first Betti number of the graph $\Gamma$.
\end{defin}
\begin{rem}
In some definitions of tropical curves, the map $h$ may be allowed to contract some edges of $\Gamma$ to points.
In such cases, the balancing condition must be adjusted accordingly.
In this paper, since all tropical curves are immersions, we adopt the above definition.
\end{rem}

\begin{defin} \label{def:parametrized_tropical_curve_in_S}
	A \emph{parametrized tropical curve in $S=N_\bbR/\overline{\Lambda}$} consists of a metric graph $\Gamma$ without infinite vertices whose underlying graph is finite, and a continuous immersion $h\colon\Gamma\to S$ satisfying the same conditions as in \cref{def:parametrized_tropical_curve_in_R^2}.
\end{defin}
In particular, all edges of $\Gamma$ have finite length in this case.

\begin{defin} \label{def:weight}
	For an edge $e$ with an endpoint $v$ of a parametrized tropical curve, the \emph{scalar weight}, or simply
	 the \emph{weight}, of $e$, 
	denoted by $W_e$, is the multiplicity of the integral vector $w_{v,e}$.
	For any vertex $v$, we denote by $\gamma_v$ the greatest common divisor of the weights of all the edges connected to $v$.
	For a 3-valent vertex $v$, the \emph{weight} of $v$, denoted by $W_v$, is the norm of the cross product of the weight vectors of any two of the three edges connected to $v$.
	It is independent of the choice of the two edges by the balancing condition.
	For a 2-valent vertex $v$, we define its weight $W_v$ to be 1.
\end{defin}

A tropical curve in a real two-dimensional torus can be obtained from a suitable family of 
 holomorphic curves in a degenerating family of complex tori.  
This procedure is not the main focus of this paper, as the purpose here is 
 to construct a family of holomorphic curves from a given tropical curve.  
Therefore, only a brief outline is provided. 

Consider a Mumford family $\mathcal X \to B$ of degenerating complex tori of dimension two, 
 as described in \cite[Corollary 6.6]{Mumford_Analytic_1972} (see also the comments after it).  
Here, $B$ denotes a small disc in $\mathbb{C}$ centered at $0 \in \mathbb{C}$.  
The Mumford family is associated with a polyhedral decomposition of a real two-dimensional torus.  
Its central fiber, $X_0$, is a reduced union of toric surfaces, glued along their toric divisors.  

Suppose there is a family of irreducible holomorphic curves
\[
\psi^{\times}\colon\mathcal C^{\times}\to \mathcal X^{\times}
\]
 defined over the punctured disc $B^{\times} = B\setminus\{0\}$.
We assume that the map $\psi^{\times}$ is generically injective, so that the image is reduced, 
 which is the case relevant to us.
The image 
\[
\mathcal I^{\times} = \psi^{\times}(\mathcal C^{\times})
\]
 is an analytic subvariety of $\mathcal X$, and
 by taking its closure, we obtain a family $\mathcal D$ of curves in $\mathcal X$ over $B$.
Explicitly, for any $p\in X_0$, take a neighborhood $U_p$ of $p$ in $\mathcal X$ so that 
 $U_p\cap \mathcal I^{\times}$ is defined by an analytic function over $k = \Bbb C\{t\}[t^{-1}]$.
By multiplying by a power of $t$ if necessary, we can assume that the defining function contains
 no negative power of $t$, 
 and that its constant term (with respect to $t$) is non-zero.
Then, $\mathcal I\cap U_p$ is defined by taking $t = 0$ in the defining function.
Since different defining functions differ only by invertible functions, the analytic structure of 
\[
I_0 = \mathcal I\cap X_0
\]
 is well-defined.

By \cite[Proposition 6.2]{Nishinou_Toric_2006}, after performing toric blow-ups of $\mathcal X$ if necessary, 
 $I_0$ can be made disjoint from the torus fixed points of $X_0$.
This process is well-defined because every point of $X_0$ has a neighborhood in $\mathcal X$ 
 that is canonically isomorphic to an analytic open subset of a toric variety, making toric blow-ups applicable.  
The toric blow-ups of $\mathcal X$ induce a refinement $\mathcal D$ of 
 the original polyhedral decomposition of the real two-dimensional torus.  
Then, each irreducible component of $I_0$ gives a torically transverse curve in an irreducible component of $X_0$
 which is a toric surface, and its intersection multiplicities with the toric divisors 
 determine a tropical curve with a single finite vertex whose underlying graph is a subgraph of 
 $\mathcal D$.
These pieces can be glued into a global tropical curve on the real torus.

\begin{rem}
In this construction, the curve $I_0$ can be highly singular, non-reduced, and may have 
 geometric genus lower than that of the curve in the generic fiber.
Also, it may have singular points on the toric divisors of the components of $X_0$.
Nevertheless, the above construction gives a tropical curve on the torus.
This is because the weights of edges and the balancing condition at vertices
 are determined by topological data as follows.

Let $p\in I_0$ be a point on a toric divisor $Z$ 
 of an irreducible component of $X_0$.
Let $X_{0, 1}$ and $X_{0, 2}$ be the irreducible components of $X_0$ such that $X_{0, 1}\cap X_{0, 2} = Z$.
Let 
\[
I_{0, i_1}, \dots, I_{0, i_k}
\]
 and
\[
I_{0, j_1}, \dots, I_{0, j_l}
\]
 be the irreducible components 
 of $I_0$ contained in $X_{0, 1}$ and $X_{0, 2}$, respectively.
We consider them with the analytic structure induced from that of $I_0$.
In particular, we put appropriate multiplicities to them if they are non-reduced.
Now, consider a neighborhood $U_p$ of $p$ in $\mathcal X$ and take its intersection with the fiber $X_t$ of 
 $\mathcal X$ over $t\in B$, where $|t|$ is small but not zero.
Taking $U_p$ appropriately, the set $U_p\cap X_t$ is homeomorphic to 
\[
\mathscr C\times \Delta,
\] 
 where
\[
\mathscr C = S^1\times (-1, 1)
\]
 is a cylinder and $\Delta$ is a disc.
On the other hand, the intersection $X_0\cap U_p$ is homeomorphic to 
\[
\mathscr E\times \Delta,
\] 
 where $\mathscr E$ is the union of two cones formed by collapsing the circle $S^1\times \{0\}$
 in $\mathscr C$ to a point, see Figure \ref{fig:cone}.
Moreover, $X_0\cap U_p$ is a deformation retract of $U_p$, and
 the set $\{e\}\times \Delta$, where $e$ is the apex of $\mathscr E$, is an open subset of the toric divisor 
 $Z$. 
Let $a$ be the homology class of the intersection of $\mathcal I^{\times}$ with $X_t\cap U_p$, where 
\[
H_1(X_t\cap U_p, \Bbb Z)\cong H_1(\mathscr C, \Bbb Z) \cong \Bbb Z.
\] 
This homology class $a$ does not depend on $t$ when $|t|$ is sufficiently small.
There may be several connected components in $\mathcal I^{\times}\cap (X_t\cap U_p)$, but it does not affect the
 argument.
By choosing the identification $H_1(X_t\cap U_p, \Bbb Z)\cong \Bbb Z$
 appropriately, $a$ is equal to the sum of the 
 intersection multiplicities of the components $I_{0, i_1}, \dots, I_{0, i_k}$ with $Z$ at $p$, 
 and it is also equal to the corresponding sum associated with the components 
 $I_{0, j_1}, \dots, I_{0, j_l}$.
This shows that the weights of the edges of the associated tropical curve are well-defined.
Moreover, the balancing condition is the requirement needed for the claim that each component of $I_0$ 
 is a cycle without boundary.
 
\begin{figure}
\includegraphics[height=12cm]{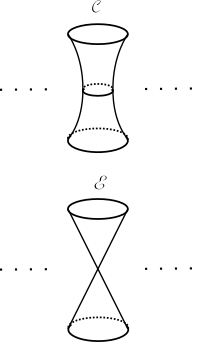}
\caption{Upper figure: An illustration of $U_p\cap X_t \cong \mathscr C\times \Delta$ for $t\neq 0$.
It retracts to $\mathscr C$.
If $r$ denotes the retraction from $U_p\cap X_t$ to $\mathscr C$, the composition of 
 the inclusion $\iota\colon \mathcal I\cap U_p\cap X_t\to U_p\cap X_t$ with $r$ is homotopic to a covering map of 
 $\mathscr C$ for small $t\neq 0$.
The degree of the covering is $a$.
Lower figure:  An illustration of $U_p\cap X_0 \cong \mathscr E\times \Delta$.
By taking the limit $t\to 0$, 
 the map $r\circ \iota$ degenerates to a map which is
 homotopic to a branched covering map
 of $\mathscr E$ of degree $a$ relative to the boundary 
 (that is, it maps the boundary of the domain to the boundary of $\mathscr E$).
This degree $a$ contributes to the weight of the corresponding edge of the tropical curve.
The total weight of the edge is the sum of such $a$ over all intersections of $I_0$ with
 the divisor $Z = X_{0, 1}\cap X_{0, 2}$.
}\label{fig:cone}
\end{figure} 
 
\end{rem}

\section{Construction of pre-log curves in the special fiber} \label{sec:pre-log}

The goal of this section is to study the existence of pre-log curves associated with a given 3-valent parametrized tropical curve $h\colon\Gamma\to S$.
The main result is Theorem \ref{thm:pre-log}.

Without loss of generality, we impose the following assumptions on $\Gamma$ for later purpose. 
\begin{assum}\label{assum:1}
\begin{enumerate}
\item $\Gamma$ is connected.
\item The lattice length of the image of each edge $e$ of $\Gamma$
 is an integer multiple of its weight $W_e$.
\end{enumerate}
\end{assum}
The condition $(2)$ is equivalent to stating that the length of 
 each edge of the metric graph $\Gamma$ is a positive integer.

Let $\mathcal D$ be a polyhedral subdivision of $S$ whose 1-skeleton includes the image $h(\Gamma)$.
We denote by $\mathcal D^0$ and $\mathcal D^1$ the set of 0-cells and 1-cells of $\mathcal D$, respectively.
Since finite base field extensions rescale the lattice $N\subset N_\bbR$, we can assume, 
 after a finite base field extension, that $\mathcal D$ is integral, i.e.\ $\mathcal D^0$ consists solely of lattice points.
The preimage $h\inv(\mathcal D^0)$ induces a subdivision of $\Gamma$, which we still denote by $\Gamma$.
Note that, under this subdivision, $\Gamma$ may now contain 2-valent vertices.
Here, we summarize notations concerning Mumford's family of degenerating tori.

\begin{defin}\label{def:M1}
The subdivision $\mathcal D$ induces a degeneration $\mathcal X$ of a complex torus over a small disc $B$
 (\cite[Section 6]{Mumford_Analytic_1972}).
For a non-negative integer $n$, let 
\[
X_n\coloneqq\mathcal X\times_B\Spec\Bbb C[t]/(t^{n+1}),
\] 
 where $t$ is the coordinate on the disc.
In particular, $X_0$ is the special fiber of $\mathcal X$.
For $\rho\in\mathcal D^0$, let $X_{0,\rho}$ be the irreducible component of $X_0$
 corresponding to $\rho$, which is a toric variety over $\bbC$.
We denote its toric boundary by $\partial X_{0,\rho}$.
For $\sigma\in\mathcal D^1$, let $X_{0,\sigma}$ denote the corresponding 1-stratum of $X_0$.
\end{defin}

\begin{defin}\label{def:M2}
The subdivision $\mathcal D$ also induces a $\overline{\Lambda}$-periodic polyhedral subdivision $\widetilde{\mathcal D}$ of $N_\bbR$.
It induces a toric variety $\widetilde{\mathcal X}$ over $\Spec\Bbb C[t]$ of infinite type, whose restriction
 to the disc $B$ is a covering of $\mathcal X$ with the covering group $\Lambda$.
Let $\tX_0$ denote the special fiber of $\widetilde{\mathcal X}$.
\end{defin}

Now, we introduce degenerate curves on $X_0$.

\begin{defin} \label{def:pre-log}
	A \emph{pre-log curve} in $X_0$ associated with the parametrized tropical curve $h\colon\Gamma\to S$ consists of a proper holomorphic
    curve $C_0$ over $\bbC$ and a map $\varphi_0\colon C_0\to X_0$ satisfying the following conditions:
	\begin{enumerate}
	\item The curve $C_0$ has at worst nodal singularities.
	\item The dual intersection graph of $C_0$ is isomorphic to $\Gamma$.
	\item All irreducible components of $C_0$ are isomorphic to $\bbP^1_\bbC$.
	\item For all $v\in V(\Gamma)$, we have 
	\[
	\varphi_0(C_{0,v})\subset X_{0,h(v)},
	\]
	 where $C_{0,v}$ is the irreducible component of $C_0$ corresponding to $v$.
	\item For all $e\in E(\Gamma)$, we have 
	\[
	\varphi_0(p_e)\in X_{0,h(e)}\setminus \partial X_{0,h(e)},
	\]
	 where $p_e$ denotes the node of $C_0$ associated with $e$.
	\item For all $v\in V(\Gamma)$, we have
	\[({\varphi_0}|_{C_{0,v}})^*(\partial X_{0,h(v)})=\sum_{e\ni v} W_e\cdot p_e.\]
	\end{enumerate}
We call such $\varphi_0$ a pre-log curve \emph{of type $(\Gamma, h)$} or simply of type $\Gamma$
 if $h$ is clear from the context.
Similarly, we define pre-log curves of type $(\Gamma, h)$ for tropical curves $h\colon\Gamma\to N_{\Bbb R}$.
\end{defin}
\noindent
\begin{rem}
A map satisfying these conditions, particularly the conditions (2) and (3) with $\Gamma$ at most 3-valent, 
 is called maximally degenerate pre-log curve in 
 \cite{Nishinou_Toric_2006}.
In this paper, we just call it a pre-log curve for simplicity.
\end{rem}

Our method for constructing pre-log curves is to glue local pieces together, taking into account the boundary conditions.
Let us begin by studying the boundary values of local pieces.

Let $T$ be the metric graph consisting of one finite vertex $v$, $n$ infinite vertices, and $n$ edges $e_1,\dots,e_n$ connecting the infinite vertices to the finite vertex.
Let 
\[
g\colon T\to N_\bbR
\]
 be a parametrized tropical curve in $N_\bbR$ with 
\[
g(v)=0\in N_\bbR.
\]
For $i=1,\dots,n$, let $W_i$ be the weight of the edge $e_i$.
Write 
\[
w_{v,e_i}=W_i(p_i,q_i)\in N,
\]
 expressed using the standard basis of $N\cong \bbZ^2$.
Here, $(p_i, q_i)$ is the primitive integral vector in the direction of $g(e_i)$.

Let $Y$ denote the (non-proper) toric variety over $\bbC$ associated with the fan determined by the image $g(T)$.

\begin{defin}\label{def:pre-logcurve}
	A \emph{pre-log curve in $Y$ associated with the tropical curve $g\colon T\to N_\bbR$}
	 is a map $\phi_0\colon\bbP^1\to Y$ such that the pullback of the toric divisor corresponding to the ray $g(e_i)$ is exactly one point with multiplicity $W_i$ for $i=1,\dots,n$.
\end{defin}

Now, we restrict ourselves to the case where $T$ contains exactly three edges.
Moreover, let 
\[
g_0\colon T\to N_\bbR
\]
 denote the special case of $g$ where the three weight vectors are equal to 
\[
\begin{pmatrix}
1 \\ 0
\end{pmatrix},\;\; 
\begin{pmatrix}
0 \\ 1
\end{pmatrix},\;\; 
\begin{pmatrix}
-1 \\ -1
\end{pmatrix},
\] 
 respectively.
Let $Y_0$ denote the (non-proper) toric variety over $\bbC$ associated with the fan 
 determined by the image $g_0(T)$.
This variety is isomorphic to $\bbP^2$ minus three points.

The image $g(T)$ is equal to the image of the composition of $g_0$ and the linear map defined by the matrix
\[
L = \begin{pmatrix}
W_1 p_1 & W_2 p_2\\
W_1 q_1 & W_2 q_2
\end{pmatrix}.
\]
This linear map induces a toric map
\[
\Phi_L\colon Y_0\to Y
\]
between toric surfaces.
Note that the weight $W_v$ of the vertex $v$ is equal to $\abs{\det L}$ (cf.\  \cref{def:weight}). 

Let $(\be_1^{\vee}, \be_2^{\vee})$ be the dual standard basis of the dual space $N^\vee$.
Let $x_1, x_2$ be the corresponding rational functions on $Y_0$.
Consider the vectors in $N^\vee$ with coordinates
\[
\begin{pmatrix}
-q_1 \\ p_1
\end{pmatrix},\;\; 
\begin{pmatrix}
-q_2 \\ p_2
\end{pmatrix},\;\; 
\begin{pmatrix}
-q_3 \\ p_3
\end{pmatrix}.
\]
They are primitive integer vectors in the annihilator subspaces $g(e_1)^\perp$,  $g(e_2)^\perp$, and $g(e_3)^\perp$, respectively.
Let $X_1, X_2$ and $X_3$ be the corresponding rational functions on $Y$.

\begin{lem}\label{lem:Phi_L}
	The pullbacks of the functions $X_1, X_2$, and $X_3$ by the map $\Phi_L$ are equal to
	\[
	x_2^{\frac{\det L}{W_1}},\quad x_1^{-\frac{\det L}{W_2}},\quad\left(\frac{x_1}{x_2}\right)^{\frac{\det L}{W_3}}
	\]
	respectively.
\end{lem}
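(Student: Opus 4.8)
The plan is to compute each pullback $\Phi_L^* X_i$ by working at the level of characters of the torus and using functoriality of the toric-variety construction. The map $\Phi_L\colon Y_0\to Y$ is induced by the lattice homomorphism $L\colon N\to N$ (with $Y_0$ associated to the fan spanned by $(1,0),(0,1),(-1,-1)$ and $Y$ to its image under $L$), so on the dense tori it is the homomorphism $T_{N}=N\otimes\bbC^\times\to T_N$ given by $L$, and hence the pullback on characters is the dual map $L^\vee\colon N^\vee\to N^\vee$. Concretely, a rational function on $Y$ attached to a character $u\in N^\vee$ pulls back to the rational function on $Y_0$ attached to $L^\vee u = u\circ L\in N^\vee$. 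So the whole lemma reduces to three elementary computations of $L^\vee$ applied to the vectors $(-q_1,p_1)$, $(-q_2,p_2)$, $(-q_3,p_3)$, and then re-expressing the result in the basis $(\be_1^\vee,\be_2^\vee)$ whose associated functions are $x_1,x_2$.

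First I would record that, with $L=\left(\begin{smallmatrix} w_1p_1 & w_2p_2\\ w_1q_1 & w_2q_2\end{smallmatrix}\right)$, the transpose is $L^\vee = \left(\begin{smallmatrix} w_1p_1 & w_1q_1\\ w_2p_2 & w_2q_2\end{smallmatrix}\right)$, so that $L^\vee$ sends a covector $(s,t)$ (in the $\be^\vee$-basis of the source $N^\vee$, i.e.\ the copy living over $Y$) to $(w_1 p_1 s + w_1 q_1 t,\ w_2 p_2 s + w_2 q_2 t)$ in the $\be^\vee$-basis over $Y_0$. Applying this to $(-q_1,p_1)$ gives $(w_1(p_1(-q_1)+q_1 p_1),\ w_2(p_2(-q_1)+q_2 p_1)) = (0,\ w_2(p_1 q_2 - p_2 q_1))$. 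Now $\det L = w_1 w_2 (p_1 q_2 - p_2 q_1)$, so $w_2(p_1q_2 - p_2q_1) = \det L / w_1$, and $L^\vee(-q_1,p_1) = (0,\det L/w_1)$, which corresponds to the function $x_2^{\det L/w_1}$. The computation for $(-q_2,p_2)$ is symmetric: one gets $L^\vee(-q_2,p_2) = (w_1(p_2 q_1 - p_1 q_2)\cdot(-1)\ \text{wait}$ — more carefully, $(w_1(p_1(-q_2)+q_1 p_2),\ w_2(p_2(-q_2)+q_2 p_2)) = (w_1(p_2 q_1 - p_1 q_2),\ 0) = (-\det L/w_2,\ 0)$, giving $x_1^{-\det L/w_2}$. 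For the third, I would use the balancing relation $w_{v,e_1}+w_{v,e_2}+w_{v,e_3}=0$, i.e.\ $w_3(p_3,q_3) = -w_1(p_1,q_1) - w_2(p_2,q_2)$, which lets me express $(-q_3,p_3)$ and reduce to the previous two cases; alternatively one computes directly that $L^\vee(-q_3,p_3) = (\det L/w_3,\ -\det L/w_3)$ up to sign, corresponding to $(x_1/x_2)^{\det L/w_3}$.

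The only genuinely non-routine point is bookkeeping: making sure the identification between "primitive covectors in $g(e_i)^\perp$" and "monomial functions on the two toric surfaces" is the one induced by $\Phi_L$ in the correct direction, and that the sign conventions ($\det L$ versus $\abs{\det L}$, and the orientation of the rays) match the normalizations of $X_i$ and $x_j$ fixed just before the statement. Once that is pinned down, the lemma is three two-by-two matrix-times-vector computations plus the substitution $p_1q_2-p_2q_1 = \det L/(w_1 w_2)$, and the balancing condition for the $X_3$ case. I expect the main obstacle to be purely notational — confirming that the three displayed exponents come out with exactly the signs stated and not their negatives — rather than any conceptual difficulty.
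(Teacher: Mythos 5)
Your proposal is correct and follows essentially the same route as the paper: the paper's proof is exactly the computation $L^{\vee}\cdot(-q_1,p_1)^{T}=(0,\det L/w_1)^{T}$ (written as the transpose matrix acting on the covector), followed by ``similar computations apply to the other two functions.'' Your extra justification via functoriality of characters, and your explicit verification of the second and third cases (including the use of the balancing condition for $X_3$, which does come out to $(\det L/w_3,\,-\det L/w_3)$ exactly, not just up to sign), only fill in details the paper leaves implicit.
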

\begin{proof}
	We have
	\[\begin{pmatrix}
	W_1 p_1 & W_1 q_1 \\
	W_2 p_2 & W_2 q_2
	\end{pmatrix}\cdot
	\begin{pmatrix}
	-q_1\\
	p_1
	\end{pmatrix}=
	\begin{pmatrix}
	0\\
	-W_2 p_2 q_1 + W_2 q_2 p_1
	\end{pmatrix}=
	\begin{pmatrix}
	0\\
	\frac{\det L}{W_1}
	\end{pmatrix}.\]
	Thus, the pullback of $X_1$ by the map $\Phi_L$ is equal to $x_2^\frac{\det L}{W_1}$.
	Similar computations apply to the other two functions.
\end{proof}

\begin{lem}\label{lem:prod}(see also \cite[Proposition 5.3]{Mikhalkin_Qindex_2017})
	Let $\gamma$ denote the greatest common divisor of $W_1, W_2, W_3$, and express
	\[
	W_i = \gamma W_i',\quad i = 1, 2, 3.
	\]
	For any pre-log curve $\phi_0\colon \bbP^1\to Y$ associated with the tropical curve $g\colon T\to N_\bbR$, let $\mu_i$ be the value of the function $X_i$ at the intersection of $\phi_0(\bbP^1)$ with the toric divisor corresponding to the ray $g(e_i)$.
	Then, the numbers $\mu_i$ satisfy the relation
	\[
	\mu_1^{W_1'} \mu_2^{W_2'} \mu_3^{W_3'} = (-1)^{\frac{W_v}{\gamma}}.
	\]
	Conversely, for any numbers $\nu_1, \nu_2, \nu_3$ satisfying the relation
	\[
	\nu_1^{W_1'} \nu_2^{W_2'} \nu_3^{W_3'} = (-1)^{\frac{W_v}{\gamma}},
	\]
	there exists a pre-log curve $\phi_0\colon \bbP^1\to Y$ associated with the tropical curve $g\colon T\to N_\bbR$ 
	such that  $\mu_i=\nu_i$, for  $i =1,2,3$.
\end{lem}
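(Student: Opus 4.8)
The idea is to reduce to the standard case $Y_0 \cong \bbP^2 \setminus \{3 \text{ points}\}$ via the map $\Phi_L$, compute the constraint there by hand, and then pull back through Lemma~\ref{lem:Phi_L}.

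First I would handle the case $w_1 = w_2 = w_3 = 1$ (so $Y = Y_0$, $L = \mathrm{id}$, $w_v = 1$, $\gamma = 1$). A pre-log curve here is a map $\phi_0 \colon \bbP^1 \to \bbP^2 \setminus \{3\text{ points}\}$ meeting each of the three coordinate lines transversally in a single point; that is, a line in $\bbP^2$ in general position with respect to the coordinate triangle. Parametrize: after coordinates, $\phi_0$ corresponds to a line $\{a_0 z_0 + a_1 z_1 + a_2 z_2 = 0\}$ with all $a_i \neq 0$, and the three intersection points with $\{z_0 = 0\}$, $\{z_1 = 0\}$, $\{z_2 = 0\}$ are $[0 : a_2 : -a_1]$, $[-a_2 : 0 : a_0]$, $[a_1 : -a_0 : 0]$. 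The functions $X_1, X_2, X_3$ on $Y_0$ are $x_2 = z_2/z_0$, $x_1^{-1} = z_0/z_1$, $x_1/x_2 = z_1/z_2$ (by Lemma~\ref{lem:Phi_L} with $\det L = 1$), so evaluating gives $\mu_1 = -a_1/a_2$, $\mu_2 = -a_0/a_1$, $\mu_3 = -a_2/a_0$, whence $\mu_1 \mu_2 \mu_3 = (-1)^3 = -1$. Conversely, given $\nu_1 \nu_2 \nu_3 = -1$, solve $a_1/a_2 = -\nu_1$, $a_0/a_1 = -\nu_2$ for $a_0, a_1$ in terms of $a_2$; the third relation $a_2/a_0 = -\nu_3$ is then automatic, and all $a_i$ are nonzero, producing the desired line. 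This settles the base case.

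Next, the general case via $\Phi_L$. Given a pre-log curve $\phi_0 \colon \bbP^1 \to Y$, I claim it lifts through $\Phi_L$ to a pre-log curve $\psi_0 \colon \bbP^1 \to Y_0$ after possibly an automorphism of the source $\bbP^1$ — here one uses that $\Phi_L$ is a finite toric morphism of degree $|\det L| = w_v$, and that torically transverse curves pull back to torically transverse curves; the tangency orders get divided by the appropriate index, matching the weights $w_i$ vs.\ the standard weights $1$ of $g_0$. Then Lemma~\ref{lem:Phi_L} gives $\Phi_L^* X_1 = x_2^{\det L / w_1}$, etc. Evaluating at the intersection points: if $\psi_0$ realizes standard values $\bar\mu_1, \bar\mu_2, \bar\mu_3$ with $\bar\mu_1 \bar\mu_2 \bar\mu_3 = -1$ (base case), then $\mu_i = \bar\mu_i^{\det L / w_i}$ up to signs/roots coming from the local structure of $\Phi_L$ along the boundary divisors. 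Raising to the powers $w_i' = w_i/\gamma$ and taking the product:
\[
\mu_1^{w_1'} \mu_2^{w_2'} \mu_3^{w_3'} = \prod_i \bar\mu_i^{(\det L / w_i)(w_i/\gamma)} = \Big(\prod_i \bar\mu_i\Big)^{\det L / \gamma} = (-1)^{\det L / \gamma} = (-1)^{w_v/\gamma},
\]
using $w_v = |\det L|$. For the converse, given $\nu_i$ with $\nu_1^{w_1'}\nu_2^{w_2'}\nu_3^{w_3'} = (-1)^{w_v/\gamma}$, I would choose a $w_v$-th root presentation, find standard values $\bar\nu_i$ for $Y_0$ with product $-1$ (solving the divisibility by hand, using $\gcd(w_1', w_2', w_3') = 1$ to adjust by roots of unity), invoke the base case to get $\psi_0 \colon \bbP^1 \to Y_0$, and push forward to $\phi_0 = \Phi_L \circ \psi_0$.

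**Main obstacle.** The delicate point is tracking the roots of unity and signs in the lift through $\Phi_L$: the map is $w_v$-to-$1$ and its behaviour transverse to each boundary divisor is a cyclic cover of degree $\det L / w_i$, so the relation between $\mu_i$ and $\bar\mu_i^{\det L/w_i}$ is only an equality up to a root of unity, and for the converse one must exhibit a consistent choice of these roots so that the three local pictures glue to a genuine global map $\psi_0$ — this is where $\gcd(w_1',w_2',w_3') = 1$ does the real work. Everything else is the linear-algebra bookkeeping already packaged in Lemma~\ref{lem:Phi_L} and the explicit line computation in the base case.
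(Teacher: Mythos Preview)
Your approach is essentially the same as the paper's: reduce to the standard line in $Y_0$ via $\Phi_L$, compute there, and use Lemma~\ref{lem:Phi_L} to transport the relation; for the converse, adjust by roots of unity using $\gcd(w_1',w_2',w_3')=1$. The paper parametrizes the standard line as $\beta_1 x_1+\beta_2 x_2-1=0$ and cites \cite[Lemma~5.3]{Nishinou_Toric_2006} for the fact that every pre-log curve in $Y$ factors as $\Phi_L\circ\psi_0$, which is the lifting claim you assert.

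Two small corrections. First, in the forward direction there is no ``up to roots of unity'': once a lift $\psi_0$ is fixed, Lemma~\ref{lem:Phi_L} gives $\mu_i=\bar\mu_i^{\det L/w_i}$ \emph{exactly}, and the product computation is clean. The root-of-unity bookkeeping appears only in the converse, where you must choose $\bar\nu_i$ with $\nu_i=\bar\nu_i^{\det L/w_i}$ consistently---the paper does this by fixing $\beta_1,\beta_2$ to match $\nu_1,\nu_2$, observing $\nu_3$ is then off by a $w_3'$-th root of unity, and solving the congruence $\ell w_1-m w_2\equiv -\operatorname{sign}(\det L)\, n\gamma \pmod{w_3}$ to absorb it. Second, your base-case bookkeeping is slightly tangled: $X_1$ (which pulls back to $x_2^{\det L/w_1}$) is evaluated on the divisor for $e_1$, namely $\{x_1=0\}$, not $\{z_0=0\}$; the product still comes out to $-1$, but the individual $\mu_i$ you wrote do not match the functions you named.
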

\begin{proof}
	Let $\psi_0\colon \bbP^1\to Y_0$ be a pre-log curve associated with the tropical curve $g_0\colon T\to N_\bbR$.
	The image of $\psi_0$ is given by an equation of the form
	\[
	\beta_1 x_1+\beta_2 x_2-1 = 0,
	\]
	where $\beta_1$ and $\beta_2$ are non-zero complex numbers.
	It intersects the toric boundary of $Y_0$ at three points with coordinates
	\begin{align*}
	(x_1, x_2) &= \left(\frac{1}{\beta_1}, 0\right),\\
	(x_1, x_2) &= \left(0, \frac{1}{\beta_2}\right),\\
	\left(\frac{1}{x_1},\frac{x_2}{x_1}\right) &= \left(0,-\frac{\beta_1}{\beta_2}\right).
	\end{align*}
	
	Note that the composition $\phi_0\coloneqq\Phi_L\circ\psi_0\colon\bbP^1\to Y$ is a pre-log curve associated with the tropical curve $g\colon T\to N_\bbR$.
	Furthermore, every pre-log curve in $Y$ associated with $g$ arises in this way (cf.\ \cite[Lemma 5.3]{Nishinou_Toric_2006}).
	\personal{See also \cite[Lemma 8.19]{Mikhalkin_Enumerative_2005} or \cite[Lemma 4.19]{Gross_Tropical_2011}.}
	By \cref{lem:Phi_L}, we have
	\begin{equation}\label{eq:c}
	\mu_1 = \beta_2^{-\frac{\det L}{W_1}},\quad
	\mu_2 = {\beta_1^{\frac{\det L}{W_2}}},\quad
	\mu_3 = \left(-\frac{\beta_2}{\beta_1}\right)^{\frac{\det L}{W_3}}.
	\end{equation}
	Thus, we have 
	\[
	\begin{array}{ll}
	\mu_1^{W_1'} \mu_2^{W_2'} \mu_3^{W_3'}  &= \mu_1^\frac{W_1}{\gamma} \mu_2^\frac{W_2}{\gamma} \mu_3^\frac{W_3}{\gamma} \\
	& = \beta_2^{-\frac{\det L}{W_1}\cdot\frac{W_1}{\gamma}} \beta_1^{\frac{\det L}{W_2}\cdot\frac{W_2}{\gamma}} \bigg(-\frac{\beta_2}{\beta_1}\bigg)^{\frac{\det L}{W_3}\cdot\frac{W_3}{\gamma}}\\
	& = (-1)^\frac{\det L}{\gamma}\\
	& = (-1)^{\frac{W_v}{\gamma}}.
	\end{array}
	\]

	Conversely, let $\nu_1, \nu_2, \nu_3$ be complex numbers satisfying the relation
	\[
	\nu_1^{W_1'} \nu_2^{W_2'} \nu_3^{W_3'} = (-1)^{\frac{W_v}{\gamma}}. 
	\]
	By the discussion above, to construct a pre-log curve in $Y$ 
	associated with the tropical curve $g\colon T\to N_\bbR$ satisfying $\mu_i=\nu_i$, for $i=1,2,3$, it suffices to prove that $(\nu_1, \nu_2, \nu_3)$ can be written in the form of \cref{eq:c}.
	Take complex numbers $\beta_1$ and $\beta_2$ so that
	\[
	\nu_1 = \beta_2^{-\frac{\det L}{W_1}},\quad \nu_2 = \beta_1^{\frac{\det L}{W_2}}.
	\]
	Then, 
	\[
	\nu_3 = \zeta_3 \left(-\frac{\beta_2}{\beta_1}\right)^\frac{\det L}{W_3},
	\]
	where $\zeta_3$ is a $W_3'$-th root of unity. 
	Note that $\beta_2$ can be multiplied by any $\frac{W_v}{W_1}$-th root of unity $\zeta_1$, 
	and $\beta_1$ can be multiplied by any $\frac{W_v}{W_2}$-th root of unity $\zeta_2$, without changing $\nu_1$ and $\nu_2$.
	Thus, it suffices to find $\zeta_1$ and $\zeta_2$ such that
	\begin{equation}\label{eq:zeta}
	\left(\frac{\zeta_1}{\zeta_2}\right)^\frac{\det L}{W_3}=\zeta_3\inv.
	\end{equation}
	Write
	\[
	\zeta_1 = e^{2\pi i\frac{lW_1}{W_v}},\quad 
	\zeta_2 = e^{2\pi i\frac{mW_2}{W_v}},\quad
	\zeta_3 = e^{2\pi i\frac{n}{W_3'}},
	\]
	where $l, m$, and $n$ are integers.
	Substituting these into \cref{eq:zeta}, we see that it is enough to have
	\[\frac{l W_1}{W_3}\cdot\frac{\det L}{W_v} - \frac{m W_2}{W_3}\cdot\frac{\det L}{W_v} \equiv -\frac{n}{W_3'} \mod 1,\]
	which is equivalent to
	\begin{equation} \label{eq:lmn}
	l W_1 - m W_2 \equiv -\operatorname{sign}(\det L)\cdot n\gamma \mod W_3,	
	\end{equation}
	where
	\[\operatorname{sign}(\det L)=\begin{cases}
	1 & \text{if }\det L>0,\\
	-1 & \text{if }\det L<0.
	\end{cases}\]
	Since $\gamma$ is the greatest common divisor of $W_1$, $W_2$, and $W_3$, there exist integers 
	$l,m$, and $n$ such that \cref{eq:lmn} holds. 
	This completes the proof of the lemma.
\end{proof}

Now, we begin the construction of pre-log curves associated with a given 3-valent parametrized tropical curve $h\colon\Gamma\to S$.

We first recall some notations from \cref{sec:intro_(realization)}.
We lift $h$ to a $\overline{\Lambda}$-periodic tropical curve 
\[
\widetilde h\colon\widetilde{\Gamma}\to N_\bbR.
\]
Here, $\overline{\Lambda}$ is the free abelian group generated by the vectors $\overline{\lambda}_1$ and
 $\overline{\lambda}_2$ in \cref{eq:oLambda}.
Let
\[
\Delta\subset N_\bbR
\]
 be a parallelogram fundamental domain of the $\overline{\Lambda}$-action, bounded by four sides $B_1,\dots,B_4$, such that
\[
\overline{\lambda}_1(B_3)=B_1,\;\; \overline{\lambda}_2(B_4)=B_2,
\]
 and $\widetilde h(\widetilde{\Gamma})$ intersects the boundary $\partial\Delta$ transversally.
Let $\widetilde e_1,\dots,\widetilde e_l$ (resp.\ $\widetilde f_1,\dots,\widetilde f_m$) 
 be the edges of $\widetilde{\Gamma}$ whose images intersect $B_1$ (resp.\ $B_2$).
Let $(a_i,b_i)$ (resp.\ $(c_j,d_j)$) be the weight vector of 
 $\widetilde e_i$ (resp.\ $\widetilde f_j$) pointing from the inside to the outside of $\Delta$.
Let us define a complex number $\sigma$ by
\[\sigma\coloneqq\prod_{i=1}^l (\alpha_{12}^{a_i/\delta}\alpha_{11}^{-b_i/\delta})\cdot\prod_{j=1}^m(\alpha_{22}^{c_j/\delta}\alpha_{21}^{-d_j/\delta}),\]
 here $\delta$ is the greatest common divisor of all edge weights of $\Gamma$, 
 and the constants $\alpha_{11}, \dots, \alpha_{22}$ are those in \cref{eq:assump}.
\begin{thm} \label{thm:pre-log}
	There exists a pre-log curve associated with $h\colon\Gamma\to S$ if and only if 
	$\sigma=(-1)^{\sum_{v\in V(\Gamma)} W_v/\delta}$.
\end{thm}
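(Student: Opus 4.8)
The plan is to build a pre-log curve component by component, one for each vertex of $\Gamma$, using \cref{lem:prod} to control the boundary values, and then to glue them along the nodes $p_e$. At each $3$-valent vertex $v$, a choice of pre-log curve $\phi_{0,v}\colon\bbP^1\to X_{0,h(v)}$ is, by \cref{lem:prod} applied with the local coordinates on the toric surface $X_{0,h(v)}$, equivalent to a choice of boundary values $\mu_{v,e}$ (one for each edge $e\ni v$) at the three torus-fixed divisors, subject to the single multiplicative constraint $\prod_{e\ni v}\mu_{v,e}^{w_e/\gamma_v}=(-1)^{w_v/\gamma_v}$ (after normalizing the local picture so that the third weight vector is $-$(sum of the other two), which is exactly the setup preceding \cref{lem:prod}). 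For the gluing to produce a genuine curve on $X_0$, along each edge $e=\{v,v'\}$ we need the two local pieces to match at the $1$-stratum $X_{0,h(e)}$: working in the coordinates on $X_{0,h(e)}\cong\bbG_m$ induced from the two toric charts, the matching condition is a relation of the form $\mu_{v,e}\cdot\mu_{v',e}=(\text{explicit monomial in the }\alpha_{ij}\text{'s and in }t^{\bullet}\text{ coming from the transition functions of }\cX)$. For interior edges of $\tGamma$ (those not crossing $\partial\Delta$) the transition functions are trivial, so the condition is simply $\mu_{v,e}\mu_{v',e}=1$; the $\alpha_{ij}$'s enter precisely on the edges $\widetilde e_i$, $\widetilde f_j$ crossing $B_1$, $B_2$, because there the two charts of $\cX$ are glued by the $\oLambda$-translation $\lambda_1$ or $\lambda_2$, which carries the factors $\alpha_{11},\alpha_{12}$ (resp.\ $\alpha_{21},\alpha_{22}$) into the relevant monomial coordinate.

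With this dictionary in hand, the existence of a pre-log curve becomes the solvability of a system of multiplicative equations in the unknowns $\{\mu_{v,e}\}$: one vertex-constraint $\prod_{e\ni v}\mu_{v,e}^{w_e/\gamma_v}=(-1)^{w_v/\gamma_v}$ per vertex $v$, and one edge-constraint $\mu_{v,e}\mu_{v',e}=c_e$ per edge $e$, where $c_e$ is the explicit constant above (equal to $1$ except on the $B_1$- and $B_2$-crossing edges). The key step is to extract the compatibility condition for this system. One eliminates the edge-unknowns: orienting $\Gamma$ and treating $\log\mu_{v,e}$ as variables, the edge relations pin down $\mu_{v',e}$ in terms of $\mu_{v,e}$, and substituting into the product of all the vertex relations, raised to appropriate powers so that every edge appears symmetrically, all the $\mu$'s cancel by construction, leaving exactly one scalar identity. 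A direct bookkeeping — using that $\gcd$ of all edge weights is $\delta$, and that $w_e/\gamma_v$ and the combinatorics of the orientation conspire as in the proof of \cref{lem:prod} — shows this surviving identity is precisely $\prod_e c_e^{1/\delta}$ on one side and $(-1)^{\sum_v w_v/\delta}$ on the other. Since $\prod_e c_e^{1/\delta}$ telescopes: the trivial edges contribute $1$, and the $B_1$-, $B_2$-crossing edges contribute exactly the product $\prod_i(\alpha_{12}^{a_i/\delta}\alpha_{11}^{-b_i/\delta})\prod_j(\alpha_{22}^{c_j/\delta}\alpha_{21}^{-d_j/\delta})=\sigma$, this identity is exactly $\sigma=(-1)^{\sum_v w_v/\delta}$.

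For the converse, assuming $\sigma=(-1)^{\sum_v w_v/\delta}$, one constructs a solution explicitly: choose the $\mu_{v,e}$ freely along a spanning tree of $\Gamma$ so as to satisfy the tree's vertex- and edge-constraints (this is possible since along a tree the system is triangular and each individual constraint is a single solvable multiplicative equation, using the surjectivity statement in \cref{lem:prod} — that any prescribed tuple on the annihilator side satisfying the product relation is realized), then check that the finitely many remaining constraints coming from the non-tree edges and the last vertex are all consequences of the global identity $\sigma=(-1)^{\sum_v w_v/\delta}$, again by the same bookkeeping. Finally one must also arrange that the $\mu_{v,e}$ avoid the degenerate loci (so that \cref{lem:prod} really yields a pre-log curve with distinct, non-torus-fixed intersection points), but this is an open dense condition and costs nothing.

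\textbf{Main obstacle.} The heart of the argument — and the step I expect to be most delicate — is the bookkeeping that identifies the single surviving scalar relation with $\sigma=(-1)^{\sum_v w_v/\delta}$: one must track carefully how the $\oLambda$-transition monomials $\alpha_{1k}t^{n_{1k}}$, $\alpha_{2k}t^{n_{2k}}$ contribute to each crossing edge, verify the $t$-powers all cancel (forced by the balancing condition, exactly the reason $\sigma$ is independent of $\Delta$), confirm the signs $(-1)^{w_v/\gamma_v}$ assemble to $(-1)^{\sum_v w_v/\delta}$ rather than to some other power, and check that dividing all exponents by $\delta$ is legitimate and does not lose information. Getting the normalization conventions (inside-to-outside weight vectors, orientation of edges, choice of primitive annihilator vectors) mutually consistent between \cref{lem:Phi_L}, \cref{lem:prod}, and the gluing maps of $\cX$ is where the real care is needed.
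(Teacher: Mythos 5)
Your setup and your forward direction match the paper's argument: the dictionary between local pre-log pieces and boundary values subject to the constraint of \cref{lem:prod}, the edge-matching relations $\mu_{v,e}\mu_{v',e}=c_e$ with $c_e$ trivial except on the $B_1$-, $B_2$-crossing edges, and the telescoping product yielding $\sigma=(-1)^{\sum_v w_v/\delta}$ are exactly the content of \cref{lem:fill}. The necessity direction is fine.

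The converse, however, has a genuine gap, and it sits precisely at the point you flag as "check that dividing all exponents by $\delta$ is legitimate and does not lose information" — it is not legitimate without further work. The vertex constraint at a vertex $v$ lives with exponents $w_e/\gamma_v$, whereas the global identity $\sigma=(-1)^{\sum_v w_v/\delta}$ is the product of the $\gamma_v/\delta$-th powers of these constraints. So after you fix values along your spanning tree (or, as in the paper, greedily over all but one vertex), the constraint at the last $3$-valent vertex is implied by the global identity only after raising it to the power $\gamma_{v_1}/\delta$; what you actually obtain is $\nu_1^{w_{v_1,1}/\gamma_{v_1}}\nu_2^{w_{v_1,2}/\gamma_{v_1}}\nu_3^{w_{v_1,3}/\gamma_{v_1}}=\zeta\cdot(-1)^{w_{v_1}/\gamma_{v_1}}$ for some $\gamma_{v_1}/\delta$-th root of unity $\zeta$, and you cannot simply absorb $\zeta$ into $\nu_1$ because the $\nu_i$ are already pinned down by the rest of the curve. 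Killing $\zeta$ is the hard part of the paper's proof (\cref{lem:modification}): for each prime $\epsilon$ dividing $\gamma_{v_1}/\delta$ one uses the fact that $\delta$ is the gcd of \emph{all} edge weights to find a vertex $u$ with an edge whose weight divided by $\delta$ is prime to $\epsilon$, and one propagates root-of-unity corrections along a path from $v_1$ to $u$, checking at each intermediate vertex that the correction can be passed along without violating that vertex's own constraint, and that it dies harmlessly at $u$. Your proposal asserts that the leftover constraints "are all consequences of the global identity by the same bookkeeping," but they are not: without the modification argument the construction only produces the required relation up to a root of unity, and the theorem as stated would not follow.
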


\begin{rem}\label{rem:exponent}
As we mentioned in the introduction, the exponent $\sum_{v\in V(\Gamma)} W_v/\delta$ is always
 even, see \cite[Lemma 4.6]{Blomme_Enumeration_of_curves_1}.
Thus, we can replace $(-1)^{\sum_{v\in V(\Gamma)} W_v/\delta}$ by one.
\end{rem}

Since the proof is a little technical, before we begin the proof, we provide an overview of the argument.
The proof begins with any pre-log curve
\[
\psi_v\colon \Bbb P^1\to X_{0, h(v)}\subset X_0
\]
 corresponding to a vertex $v$ of $\Gamma$. 
The main strategy is extending it by adding pre-log curves corresponding to other vertices of $\Gamma$
 one by one.
In this construction,
 if $v$ and $v'$ are adjacent vertices connected by an edge $e$,
 then the images of $\psi_v$ and $\psi_{v'}$ must intersect on the divisor corresponding to 
 the edge $e$.
 
To manage this extension process systematically, we order the vertices $V(\Gamma)$ as $\{v_1', \dots, v_n'\}$
 in the following specific way (in the notation of the proof, $v_i'$ corresponds to $v_{n-i}$).
Namely, let $\Gamma_k'$ be the subgraph of $\Gamma$ formed by 
 the vertices $\{v_1', \dots, v_k'\}$ and all the edges connecting them. 
Note that the graph $\Gamma_k'$ may not be connected.
Then, 
\begin{itemize}
\item there should be at most two edges connecting the vertex $v_{k+1}'$ and $\Gamma_k'$, for all $1\leq k\leq n-2$, and
\item the last vertex $v_n'$ must be a 3-valent vertex.
\end{itemize}

On the side of pre-log curves, this implies the following.
Namely, assume we have constructed a curve 
\[
\psi_k'\colon C_{0,k}'\to X_0,
\] 
 where $C_{0, k}'$ is the pre-stable curve whose dual intersection graph is $\Gamma_k'$, 
 and the restriction of $\psi_k'$ to each irreducible component $C_{0, v_i'}$, $1\leq i\leq k\leq n-2$,
 is a pre-log curve in $X_{0, h(v_i')}$.
We want to extend $\psi_k'$ to a map $\psi_{k+1}'$ from the 
 pre-stable curve $C_{0, k+1}'$ whose dual intersection graph is $\Gamma_{k+1}'$,
 and this is always possible.
The reason is that if $\{e_j\}$ is the set of edges connecting $v_{k+1}'$ and $\Gamma_k'$, 
 then the cardinality of $\{e_j\}$ is at most two.
Therefore, the restriction $\psi_{k+1}'|_{C_{0, k+1}'}$ is constrained by the condition that
 it must intersect a given set of points on $X_{0, h(v_{k+1}')}$ whose cardinality is at most two.
This is always possible, essentially for the same reason that for any given two points in $\Bbb P^2$,
 there is a line through them. 

Thus, the problem reduces to determining the condition for extending $\psi_{n-1}'$ to the last irreducible component
 $C_{0, v_n'}$.
The necessity of the equation $\sigma=(-1)^{\sum_{v\in V(\Gamma)} W_v/\delta}$ follows from repeatedly 
 applying Lemma \ref{lem:prod}.
Proving sufficiency is more technically involved, though essentially proved by 
 Chinese remainder theorem.
See Figure \ref{fig:modify} for an illustration explaining the main point of this part of the proof.

\begin{proof}
	We introduce an ordering $v_1,\dots,v_n$ on $V(\Gamma)$ as follows.
	We start with any 3-valent vertex $v_1\in V(\Gamma)$.
	By subdividing $\Gamma$ if necessary, we assume that the image of each edge connected to $v_1$ does not intersect the image of $\partial\Delta$ under the covering map $N_\bbR\to S$.
	Having constructed the sequence $v_1,\dots,v_i$, 
	 we choose $v_{i+1}\in V(\Gamma)\setminus\{v_1,\dots,v_i\}$ such that $v_{i+1}$ is connected to $v_j$ for some $j=1,\dots, i$.
	
	For $i=1,\dots,n$, let 
	\[
	\Gamma_i\subset\Gamma
	\]
	 be the closed subgraph consisting of 
	the vertices $v_{i},\dots,v_n$, and all the edges between them.
	Let $T$ be the metric graph consisting of one finite 3-valent vertex, 
	 three infinite vertices and three edges connecting the 
	 infinite vertices to the finite vertex.
	If $v_i$ is a 3-valent vertex of $\Gamma$, define
\[
g_{v_i}\colon T\to N_\bbR
\]
 as the parametrized tropical curve in $N_\bbR$ such that the finite vertex of $T$ maps to $0\in N_\bbR$, and the weight vectors associated with the three infinite edges of $T$ are equal to those associated with the three edges of $\Gamma$ connected to $v_i$.
	The case when $v_i$ is 2-valent is straightforward and will be omitted.
	
	Let 
	\[
	\psi_{v_n}\colon P_{v_n}=\bbP^1\to X_{0,h(v_n)}\subset X_0
	\]
	 be a pre-log curve associated with the tropical curve $g_{v_n}\colon T\to N_\bbR$.
	Let us write 
	\[
	C_{0,n}\coloneqq P_{v_n}
	\]
	 and 
	 \[
	 \phi_n\coloneqq\psi_{v_n}.
	 \]
	Suppose we have constructed 
	\[
	\phi_{i+1}\colon C_{0,i+1}\to X_0
	\]
	for some $i\ge 2$, such that the dual graph of $C_{0,i+1}$ is $\Gamma_{i+1}$.
	We now proceed to construct 
	\[
	\phi_i\colon C_{0,i}\to X_0
	\]
	 as follows.
	
	Let $C_{0,i}$ be the union of $C_{0,i+1}$ and $P_{v_i}\coloneqq\bbP^1$, such that the dual graph of $C_{0,i}$ is $\Gamma_i$.
	By the construction of the ordering $v_1,\dots,v_n$, there exists at least one edge connected to $v_i$ that does not belong to $\Gamma_i$.
	Note that for any pre-log curve 
	\[
	\psi_{v_i}\colon \Bbb P^1\to X_{0, h(v_i)}
	\]
	 of type $(T, g_{v_i})$, there is a line (i.e., a curve of degree one)
	\[
	\psi_1\colon \Bbb P^1\to \Bbb P^2
	\]
	 and a natural toric map 
	 \[
	 P\colon \Bbb P^2\to X_{0, h(v_i)}
	 \]
	satisfying
	\[
	\psi_{v_i} = P\circ\psi_1,
	\]
	 see \cite[Lemma 5.3]{Nishinou_Toric_2006}.
	Then, since for any one or two points on $\Bbb P^2$, there is a line through it, 
	 the same property holds on pre-log curves of type $(T, g_{v_i})$.
	Here, we take these point constraints away from the fixed
	 points of the torus action.
	In particular, we can take these points to be the image of the nodes of $C_0$ corresponding to the 
	edges of $\Gamma_i$ attached to $v_i$.
	Therefore,
	 we can extend the map $\phi_{i+1}\colon C_{0,i+1}\to X_0$ to a map $\phi_i\colon C_{0,i}\to X_0$ such that the restriction $\psi_{v_i}\coloneqq\phi_i|_{P_{v_i}}$ is a pre-log curve in $X_{0,h(v_i)}$ associated with the tropical curve $g_{v_i}\colon T\to N_\bbR$.
	
	We iterate this process from $i=n-1$ to $i=2$.
	Finally, let $C_{0,1}$ be the union of $C_{0,2}$ and $P_{v_1}\coloneqq\bbP^1$ such that the dual graph of $C_{0,1}$ is $\Gamma_1=\Gamma$.
	We need to extend the map $\phi_2\colon C_{0,2}\to X_0$ to a map $\phi_1\colon C_{0,1}\to X_0$ such that the restriction $\psi_{v_1}\coloneqq\phi_1|_{P_{v_1}}$ is a pre-log curve in $X_{0,h(v_1)}$ associated with the tropical curve $g_{v_1}\colon T\to N_\bbR$.
	
	For $i=2,\dots,n$, let $\{e_{v_i, j}\}$ be the set of edges incident to $v_i$.
	Here, $j$ runs over $\{1, 2\}$ or $\{1, 2, 3\}$ according to whether $v_i$ is 2-valent or 3-valent.
	Let $\mu_{v_i,j}$ be the coordinate of the intersection point between $\psi_{v_i}(P_{v_i})$ and the stratum $X_{0,h(e_{v_i,j})}$, as in \cref{lem:prod}.	
	
\begin{lem}\label{lem:fill}
	Let $\nu_1, \nu_2, \nu_3$ be the coordinates of the intersection points between $\phi_2(C_{0, 2})$ and the three boundary components of $X_{0,h(v_1)}$.
	These satisfy the relation
	\begin{equation} \label{eq:fill}
	\sigma\cdot \nu_1^{W_{e_{v_1, 1}}/\delta}\nu_2^{W_{e_{v_1, 2}}/\delta}\nu_3^{W_{e_{v_1, 3}}/\delta}
	 = \prod_{i=2}^n (-1)^{W_{v_i}/\delta}.
	\end{equation}
\end{lem}
\begin{proof}
	Let $e$ be an edge of $\Gamma$ not connected to $v_1$, with endpoints $v_i$ and $v_{i'}$.
	Assume that $e=e_{v_i,j}=e_{v_{i'},j'}$.
	If $h(e)$ does not intersect the image of $\partial\Delta$ under the covering map $N_\bbR\to S$, we have 
	\[
	\mu_{v_i,j}\mu_{v_{i'},j'}=1.
	\]
	Otherwise, the product $\mu_{v_i,j}\mu_{v_{i'},j'}$ incorporates the coordinate transformations
	 induced by the action of $\Lambda$ on $\widetilde{\mathcal X}$.
	More precisely, let $\widetilde e$ be the representative of $e$ in $\widetilde{\Gamma}$ that intersects the boundary $B_1$ or $B_2$ of $\Delta$. 
	Let $W_e(p,q)\in N$ be the weight vector of $e$, pointing from the inside to the outside of $\Delta$.
	In the case where $\widetilde e$ intersects $B_1$, we have 
	\[
	\mu_{v_i,j}\mu_{v_{i'},j'}=\alpha_{12}^p\alpha_{11}^{-q}.
	\]
	 Therefore, 
	\[
	\mu_{v_i,j}^{W_e/\delta}\mu_{v_{i'},j'}^{W_e/\delta}=\alpha_{12}^{W_e p/\delta}\alpha_{11}^{-W_e q/\delta}.
	\]
	In the case where $\widetilde e$ intersects $B_2$, we have
	\[
	\mu_{v_i,j}\mu_{v_{i'},j'}=\alpha_{22}^p\alpha_{21}^{-q}.
	\]
	Therefore,
	\[
	\mu_{v_i,j}^{W_e/\delta}\mu_{v_{i'},j'}^{W_e/\delta}=\alpha_{22}^{W_e p/\delta}\alpha_{21}^{-W_e q/\delta}.
	\]	
	Taking the product of $\mu_{v_i,j}^{W_e/\delta}\mu_{v_{i'},j'}^{W_e/\delta}$ over all edges of $\Gamma$ except $e_{v_1,1}, e_{v_1,2}, e_{v_1,3}$, we obtain
	\[\prod_{i=2}^n \prod_j \mu_{v_i,j}^{W_{e_{v_i, j}}/\delta} = \sigma\cdot \nu_1^{W_{e_{v_1, 1}}/\delta}\nu_2^{W_{e_{v_1, 2}}/\delta}\nu_3^{W_{e_{v_1, 3}}/\delta}.\]
	
	On the other hand, if $v_i$ is 2-valent, we have 
	\[
	\mu_{v_i,1}\mu_{v_i,2}=1.
	\]
	If $v_i$ is 3-valent, by \cref{lem:prod}, we have 
	\[
	\mu_{v_i,1}^{W_{e_{v_i, 1}}/\delta} \mu_{v_i,2}^{W_{e_{v_i, 2}}/\delta} \mu_{v_i,3}^{W_{e_{v_i, 3}}/\delta} 
	  =  (-1)^{W_{v_i}/\delta}.
	\]
	Taking the product over all vertices $v_i$ for $i = 2, \dots, n$, we have
	\[\prod_{i=2}^n \prod_j \mu_{v_i,j}^{W_{e_{v_i, j}}/\delta} = \prod_{i=2}^n (-1)^{W_{v_i}/\delta}.\]
	Combining the two equations above, we obtain \cref{eq:fill}.
\end{proof}

Let $\gamma_{v_1}$ be the greatest common divisor of the edge weights 
 $W_{e_{v_1, 1}}$, $W_{e_{v_1, 2}}$, and $W_{e_{v_1, 3}}$.
By \cref{lem:prod}, we can fill in the pre-log curve $\psi_{v_1}\colon P_{v_1}\to X_{0,h(v_1)}$ 
 if and only if the numbers $\nu_1, \nu_2, \nu_3$ from \cref{lem:fill} satisfy the equation
\begin{equation} \label{eq:product_desired}
\nu_1^{W_{e_{v_1, 1}}/\gamma_{v_1}}\nu_2^{W_{e_{v_1, 2}}/\gamma_{v_1}}\nu_3^{W_{e_{v_1, 3}}/\gamma_{v_1}} = (-1)^{W_{v_1}/\gamma_{v_1}}. 
\end{equation}
By substituting the $\gamma_{v_1}/\delta$-th power of (\ref{eq:product_desired})
 to (\ref{eq:fill}), we see that the existence of a pre-log curve associated with the map 
 $h\colon\Gamma\to S$ implies the equation $\sigma=(-1)^{\sum_{v\in V(\Gamma)} W_v/\delta}$.
This proves one direction of the claim in \cref{thm:pre-log}.\\

Now, for the other direction, let us assume that $\sigma=(-1)^{\sum_{v\in V(\Gamma)} W_v/\delta}$ holds,
 and we aim to fill in the last piece 
\[
\psi_{v_1}\colon P_{v_1}\to X_{0,h(v_1)}.
\]
\cref{lem:fill} implies that the numbers $\nu_1, \nu_2, \nu_3$ satisfy the equation
\begin{equation} \label{eq:product_v_1}
\nu_1^{W_{e_{v_1, 1}}/\gamma_{v_1}}\nu_2^{W_{e_{v_1, 2}}/\gamma_{v_1}}\nu_3^{W_{e_{v_1, 3}}/\gamma_{v_1}} = \zeta\cdot (-1)^{W_{v_1}/\gamma_{v_1}}, 
\end{equation}
for some $\gamma_{v_1}/\delta$-th root of unity $\zeta$.

\begin{lem}\label{lem:modification}
	We can modify the map $\phi_2\colon C_{0, 2}\to X_0$ so that after the modification, 
	 the equation \eqref{eq:product_desired} holds.
\end{lem}
\begin{proof}
The modification of $\phi_2$ is achieved by adjusting each $\psi_{v_i}\colon P_{v_i}\to X_{0, h(v_i)}$ for
 $i = 2, \dots, n$, while preserving the incidence relations of curves.
Let 
\[
\gamma_{v_1}/\delta = \epsilon_1^{m_1}\cdots \epsilon_k^{m_k}
\]
 denote the prime decomposition of $\gamma_{v_1}/\delta$.
Accordingly, the multiplier $\zeta$ factors as
\[
\zeta_1\cdots \zeta_k,
\]
 where $\zeta_j$ is an $\epsilon_j^{m_j}$-th root of unity.
\personal{By Chinese reminder theorem.}

Consider the prime $\epsilon_j$.
Since $\delta$ is the greatest common divisor of all the edge weights of $\Gamma$, 
 there is a vertex $u$ of $\Gamma$ and an edge $e$ incident to $u$ such that $W_e/\delta$ is not divisible by $\epsilon_j$.
We choose $u$ such that there is a simple path $\mathcal P$ in $\Gamma$ 
 from  $v_1$ to $u$, consisting entirely of edges whose weights are divisible by $\delta\epsilon_j$.
Note that since $\gamma_{v_1}$ is the greatest common divisor of the edge weights 
 $W_{e_{v_1, 1}}$, $W_{e_{v_1, 2}}$, and $W_{e_{v_1, 3}}$ of the edges
 incident to $v_1$, the weight of any edge $e_{v_1, i}$ is divisible by $\delta\epsilon_j$.
In particular, there is such a simple path $\mathcal P$.

Let 
\[
u_1=v_1, \, u_2,\,  u_3,\,  \dots,\,  u_{l-1},\,  u_l=u
\]
 be the vertices on the path $\mathcal P$.
Note that $u_l$ must be a 3-valent vertex.
Let 
\[
e_1,\dots,e_{l-1}
\]
 be the edges on the path $\mathcal P$.
By reordering the labels of the edges incident to the vertices $u_1,\dots,u_l$ if necessary, we may assume
\[e_1=e_{u_1,1}=e_{u_2,1},\quad e_2=e_{u_2,2}=e_{u_3,1},\quad \dots,\quad e_{l-1}=e_{u_{l-1},2}=e_{u_l,1}.\]

We are equipped with pre-log curves $\psi_{u_i}\colon P_{u_i}\to X_{0,h(u_i)}$ for $i=2,\dots,l$.
Let $\mu_{u_i,1}, \mu_{u_i,2}, \dots$ denote 
 the coordinates of the intersection between $\psi_{u_i}(P_{u_i})$ 
 and the boundary components of $X_{0,h(u_i)}$ as described in \cref{lem:prod}.

\begin{figure}
\includegraphics[height=12cm]{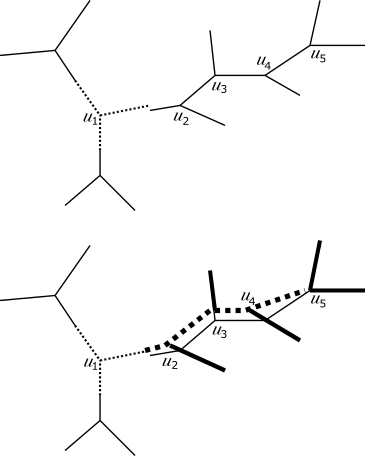}
\caption{Illustrations of the modification process for the case $l = 5$.
These illustrations do \emph{not} represent tropical curves,
 but instead demonstrate the intersection properties of the irreducible components
 of $C_0$ using the dual graph.
Upper picture: The solid part corresponds to the image of $C_{0, 2}$. 
We aim to fill in the last component
 $C_{0, u_1}$ so that the three intersections with the toric divisors of $X_{0, u_1}$ align with 
 those of the image of $C_{0, 2}$.
However, in general, only two of these conditions can be satisfied simultaneously, while the third can not.
This is reflected in the picture by the misaligned edges between the vertices $u_1$ and $u_2$.
Lower picture: The situation is corrected by modifying the maps $\psi_{u_i}$, $i = 2,3,4,5$.
For $\psi_{u_i}$ with $i = 2,3,4$, two of the intersections between the image $\psi_{u_i}(C_{0, u_i})$ and
 the toric divisors of $X_{0, u_i}$ are allowed to move (depicted by the bold dotted lines), while the remaining 
 intersection stays in place without moving (pictured by the bold solid line).
For $\psi_{u_5}$, we can modify it so that only one of the intersection between $\psi_{u_5}(C_{0, u_5})$ and
 the toric divisors of $X_{0, u_5}$ moves, while the remaining two intersections stay in place.
The map on the remaining part of  $C_{0, 2}$ is left unchanged.}\label{fig:modify}
\end{figure}

\begin{claim}\label{claim:phi2}
We can modify the map $\phi_2\colon C_{0, 2}\to X_0$ so that the modified map $\widetilde{\phi}_2$
 satisfies the following conditions:
\begin{itemize}
\item $\phi_2 = \widetilde{\phi}_2$ on the complement of the irreducible components $P_{v_2}, \dots, P_{v_l}$ of $C_{0, 2}$.
\item Let $\widetilde{\nu}_i$, for $i = 1, 2, 3$, be the coordinates of the intersection points between 
 $\widetilde{\phi}_2(C_{0, 2})$ and the three boundary components of $X_{0,h(v_1)}$.
	Then, the equations
	\[
	\widetilde{\nu}_1^{W_{e_{v_1, 1}}/\gamma_{v_1}} = \zeta_j^{-1}\nu_1^{W_{e_{v_1, 1}}/\gamma_{v_1}},\;\;
	 \widetilde{\nu}_2 = \nu_2,\;\; \widetilde{\nu}_3 = \nu_3
	\]
	hold.
\end{itemize}
\end{claim}
\proof
Note that $u_1=v_1$, $\gamma_{v_1}=\gamma_{u_1}$, $\nu_1=\mu_{u_2,1}$, and $W_{e_{v_1, 1}}=W_{e_{u_2, 1}}$.
It suffices to show that the maps $\psi_{u_i}\colon P_{u_i}\to X_{0,h(u_i)}$ for $i=2, \dots, l$, can be modified
 so that the following properties hold.
Namely, if $\widetilde{\mu}_{u_i, j}$ denotes the analogue of $\mu_{u_i, j}$ for the new maps, 
 the equations
\begin{enumerate}
\item[(i)] $\widetilde{\mu}_{u_2, 1}^{W_{e_{v_1, 1}}/\gamma_{v_1}} = \zeta_j^{-1}\mu_{u_2, 1}^{W_{e_{v_1, 1}}/\gamma_{v_1}}$,
\item[(ii)] $\widetilde{\mu}_{u_i, 2} = \widetilde{\mu}_{u_{i+1},1}$, for $i = 2, \dots, l-1$, 
\item[(iii)] $\widetilde{\mu}_{u_i, 3} = \mu_{u_i, 3}$, for $i = 2, \dots, l-1$, if $u_i$ is a 3-valent vertex of $\Gamma$, and
\item[(iv)] $\widetilde{\mu}_{u_l, 2} = \mu_{u_l, 2}$ and $\widetilde{\mu}_{u_l, 3} = \mu_{u_l, 3}$
\end{enumerate}
 hold.
The equation (i) is equivalent to 
 $\widetilde{\nu}_1^{W_{e_{v_1, 1}}/\gamma_{v_1}} = \zeta_j^{-1}\nu_1^{W_{e_{v_1, 1}}/\gamma_{v_1}}$.
The equations in (ii) are part of the conditions required for the modified maps to extend to a map $\widetilde{\phi}_2$
 from $C_{0, 2}$ to $X_0$.
Namely, they ensure that the modified maps are compatible at the intersections of their domains.
The equations (iii) and (iv) are also the conditions for the new maps to extend to $\widetilde{\phi}_2$.
Specifically, these conditions ensure that the new maps glue with the non-modified part, 
 that is, the restriction of $\phi_2$ to 
 the complement of the irreducible components $P_{v_2}, \dots, P_{v_l}$ of $C_{0, 2}$.
See Figure \ref{fig:modify} for the illustration of these conditions.
 

For any integer $z$, let $p(z)$ denote the power of $\epsilon_j$ in $z$, 
 that is, $z/\epsilon_j^{p(z)}$ is an integer indivisible by $\epsilon_j$.
We first aim to change $\mu_{u_2,1}$ so that the power $\mu_{u_2,1}^{W_{e_{u_2, 1}}/\gamma_{u_1}}$ is multiplied by $\zeta_j\inv$.
Note that $\zeta_j\inv$ is an $\epsilon_j^{m_j}$-th root of unity, and
\[
p(W_{e_{u_2, 1}}/\gamma_{u_1})+m_j=p(W_{e_{u_2, 1}}/\gamma_{u_1})+p(\gamma_{u_1}/\delta)=p(W_{e_{u_2, 1}}/\delta).
\]
Therefore, it suffices to multiply $\mu_{u_2,1}$ by a suitable $\epsilon_j^{p(W_{e_{u_2, 1}}/\delta)}$-th root of unity, which we denote by $\xi_1$.

If $u_2$ is 2-valent, the existence of the pre-log curve $\psi_{u_2}\colon P_{u_2}\to X_{0,h(u_2)}$ requires that $\mu_{u_2,1}\mu_{u_2,2}=1$.
In order to keep this equation while multiplying $\mu_{u_2,1}$ by $\xi_1$, it is sufficient to multiply $\mu_{u_2,2}$ by $\xi_2\coloneqq\xi_1\inv$.
If $u_2$ is 3-valent, the existence of the pre-log curve $\psi_{u_2}\colon P_{u_2}\to X_{0,h(u_2)}$ requires
 that the coordinates $\mu_{u_2,1}, \mu_{u_2,2}$ and $\mu_{u_2,3}$ satisfy the equation
\begin{equation} \label{eq:product_u_2}
\mu_{u_2,1}^{W_{e_{u_2, 1}}/\gamma_{u_2}}\mu_{u_2,2}^{W_{e_{u_2, 2}}/\gamma_{u_2}}\mu_{u_2,3}^{W_{e_{u_2, 3}}/\gamma_{u_2}}=(-1)^{W_{u_2}/\gamma_{u_2}},
\end{equation}
 as stated in Lemma \ref{lem:prod}.
Multiplying $\mu_{u_2,1}$ by $\xi_1$, that is, taking 
\[
\widetilde{\mu}_{u_2, 1} = \xi_1\mu_{u_2, 1}
\]
 changes the left hand side 
 of the equation by $\xi_1^{W_{e_{u_2, 1}}/\gamma_{u_2}}$.
Since $\xi_1$ is an $\epsilon_j^{p(W_{e_{u_2, 1}}/\delta)}$-th root of unity, and we have the equations
\[p(W_{e_{u_2, 1}}/\delta)-p(W_{e_{u_2, 1}}/\gamma_{u_2})=p(\gamma_{u_2}/\delta),\]
\[p(\gamma_{u_2}/\delta)+p(W_{e_{u_2, 2}}/\gamma_{u_2})=p(W_{e_{u_2, 2}}/\delta),\]
in order to preserve the equation \eqref{eq:product_u_2} and the condition (iii) above, 
 it suffices to multiply $\mu_{u_2,2}$ by a 
 suitable $\epsilon_j^{p(W_{e_{u_2, 2}}/\delta)}$-th root of unity, which we denote by $\xi_2$.
Thus, we take 
\[
\widetilde{\mu}_{u_2, 2} = \xi_2\mu_{u_2, 2}.
\]
The number $\mu_{u_2, 3}$ is unchanged: 
\[
\widetilde{\mu}_{u_2, 3} = \mu_{u_2, 3}.
\]
These choices determine the map $\psi_{u_2}\colon P_{u_2}\to X_{0,h(u_2)}$.

The condition (ii) above is equivalent to the condition that
\[
\mu_{u_2,2}\mu_{u_3,1} = \widetilde{\mu}_{u_2,2}\widetilde{\mu}_{u_3,1}
\]
 is a fixed constant, as in the proof of \cref{lem:fill}.
Therefore, $\mu_{u_3,1}$ must be multiplied by $\xi_2\inv$.
Next, we change $\mu_{u_3,2}$ while keeping $\mu_{u_3, 3}$ fixed.
This ensures the existence of the pre-log curve $\psi_{u_3}\colon P_{u_3}\to X_{0,h(u_3)}$
 and the condition (iii) above.

By iterating this process for $i=3,\dots, l-1$, we multiply $\mu_{u_i,1}$ 
 by some $\epsilon_i^{p(W_{e_{u_i, 1}}/\delta)}$-th root of unity, and $\mu_{u_i,2}$ 
 by some $\epsilon_i^{p(W_{e_{u_i, 2}}/\delta)}$-th root of unity, while keeping $\mu_{u_i, 3}$, if it exists, unchanged.
In this way, we construct the maps $\psi_{u_i}\colon P_{u_i}\to X_{0,h(u_i)}$, $i = 2, \dots, l-1$,
 satisfying the conditions (i), (ii), and (iii) above.

For $i=l$, consider the equation
\begin{equation} \label{eq:product_u_l}
\mu_{u_l,1}^{W_{e_{u_l, 1}}/\gamma_{u_l}}\mu_{u_l,2}^{W_{e_{u_l, 2}}/\gamma_{u_l}}\mu_{u_l,3}^{W_{e_{u_l, 3}}/\gamma_{u_l}}=(-1)^{W_{u_l}/\gamma_{u_l}}.
\end{equation}
Note that $\mu_{u_l,1}$ is multiplied by an $\epsilon_j^{p(W_{e_{u_{l-1}, 2}}/\delta)}$-th root of unity.
By the choice of the vertex $u_l=u$, $\gamma_{u_l}/\delta$ is not divisible by $\epsilon_j$.
Therefore, we have
\[p(W_{e_{u_{l-1}, 2}}/\delta)-p(W_{e_{u_l, 1}}/\gamma_{u_l})=p(\gamma_{u_l}/\delta)=0,\]
 since $W_{e_{u_{l-1},2}} = W_{e_{u_l, 1}}$.
This implies that the power $\mu_{u_l,1}^{W_{e_{u_l, 1}}/\gamma_{u_l}}$ in \eqref{eq:product_u_l} remains unchanged.
In other words, the equation \eqref{eq:product_u_l} holds without requiring 
 any changes to $\mu_{u_l,2}$ or $\mu_{u_l,3}$.
This determines the map $\psi_{u_l}\colon P_{u_l}\to X_{0,h(u_l)}$. 
By gluing the maps
 $\psi_{u_i}\colon P_{u_i}\to X_{0,h(u_i)}$, $i = 2, \dots, l$, with the restriction of $\phi_2$
 to the complement of the irreducible components $P_{v_2}, \dots, P_{v_l}$ of $C_{0, 2}$, 
 we obtain the map $\widetilde{\phi}_2$.
Note that since $\phi_2 = \widetilde{\phi}_2$ 
 on the complement of $P_{v_2}, \dots, P_{v_l}$,
 the conditions $\widetilde{\nu}_2 = \nu_2$ and $\widetilde{\nu}_3 = \nu_3$ hold.
This proves Claim \ref{claim:phi2}.\qed\\


Repeating this procedure for each $\zeta_j$, $j=1,\dots,n$, we obtain the desired equation
 \eqref{eq:product_desired},  thereby completing the proof of \cref{lem:modification}.
\end{proof}

Finally, we conclude the proof of \cref{thm:pre-log} by \cref{lem:modification} and \cref{lem:prod}.
\end{proof}

\section{Deformations to general fibers}\label{sec:deformation}

In this section, we study the deformations of pre-log curves constructed in Theorem \ref{thm:pre-log}.
We use logarithmic deformation theory
 \cite{Kato_Logarithmic_1989,Kato_Log_smooth_deformation_1996}.
The crucial point is the calculation of the obstruction based on the local study of semiregular subvarieties
 developed in \cite{Nishinou_Obstruction_2018}.

\subsection{Log smooth deformation theory}\label{subsec:logdeformation}
Let $\mathcal X\to B$ be a Mumford family of degenerating complex tori of dimension two, 
 and suppose there is a torically transverse map
\[
\varphi_0\colon C_0\to X_0,
\]
 where $C_0$ is a pre-stable curve and $X_0$ is the central fiber of $\mathcal X$, 
 see Section \ref{sec:tropicalization}.
In the previous section, we studied the condition for the existence of $\varphi_0$.
In this section, we study the deformation of $\varphi_0$ to general fibers.
When the map $\varphi_0$ is not an immersion at the nodes of $C_0$, 
 the formalism of log deformation theory is particularly useful.
Let $k$ be a non-negative integer.
Let 
\[
O_n = (\Spec \Bbb C[t]/(t^{n+1}), \mathcal M_{O_n})
\]
 be the (thickened) log point, where
\[
\mathcal M_{O_n} =\Bbb N\oplus (\Bbb C[t]/(t^{n+1}))^{\times}
\]
 and
 the structure map 
\[
\mathcal M_{O_n}\to \Bbb C[t]/(t^{n+1})
\] 
 sends the generator $1\in \Bbb N$ to $t\in \Bbb C[t]/(t^{n+1})$.

Recall that a toric variety has a natural enhancement to a log variety given by the toric divisors.
Since any point on $\mathcal X$ has an open neighborhood
 which is naturally analytically isomorphic to an open subset of a toric variety, 
 $\mathcal X$ also has a natural structure of a log variety.
By viewing the base $B$ as an open subset of the toric variety $\Bbb A^1$, 
 the map $\mathcal X\to B$ is log smooth.
For $C_0$, we equip it with a log structure satisfying the following two conditions:
\begin{itemize}
\item It is log smooth and integral over $O_0$.
Moreover, it is strict over $O_0$ away from the nodes (that is, 
 the log structure is isomorphic to that pulled back from $O_0$).
\item The map $\varphi_0$ lifts to a map between log varieties over $O_0$.
\end{itemize}

Explicitly, let $e$ be an edge whose image  $h(e)$ has the integral length $m$.
Let $X_{0, h(e)}$ be the divisor of $X_0$ corresponding to the edge $e$.
Then, at the generic point $\eta$ of $X_{0, h(e)}$, the log structure is given by 
 the monoid 
\[
\mathcal M_{\eta} = S_m\oplus \mathcal O_{\eta}^{\times}.
\]
Here, 
\[
S_m =\Bbb N^2\oplus_{\Bbb N}\Bbb N,
\]
 where the generator $1$ of the 
 monoid $\Bbb N$ acts on the first factor by adding $(1, 1)$ and on the second factor by adding $m$.
In other words, the element $(1, 1)$ of the first factor is identified with the element $m$ of the second.
At $\eta$, the divisor $X_{0, h(e)}$ is defined by the equation $xy = t^m$ using toric charts of $\mathcal X$, 
 and the structure map is defined by 
\[
\Bbb N^2\oplus_{\Bbb N}\Bbb N\ni ((a, b), c)\mapsto x^ay^bt^c.
\]

Then, the log structure of $C_0$ at a node $p$ which maps to $X_{0, h(e)}$ is described as follows.
Namely, the monoid is given by 
\[
\mathcal M_p = S_{m/W_e}\oplus \mathcal O_{p}^{\times},
\]
 where $W_e$ denotes the weight of the edge $e$.
Note that $W_e$ is a divisor of $m$ by Assumption \ref{assum:1}.
It is equal to the ramification index of the restriction of $\varphi_0$ to each branch of $C_0$ at $p$. 
Specifically, there exist local coordinates $z$ and $w$ for the branches of $C_0$ at the node $p$ such that
 the equations
\[
\varphi_0^*x = z^{W_e},\;\; \varphi_0^*y = w^{W_e}
\]
 hold.
The structure map for $\mathcal M_p$ is defined by
\begin{equation}\label{eq:logcurve}
(a, b, c)\mapsto (\zeta^{-1}z)^aw^bt^c,
\end{equation}
 where $\zeta$ is a $W_e$-th root of unity.
The map from $\varphi_0^*\mathcal M_{\eta}$ to $\mathcal M_p$ is induced by the monoid map 
 from $S_m$ to $S_{m/W_e}$, which is the multiplication by $W_e$ on the first factor, and the
 identity on the second factor.
The choice of a root of unity at each node introduces non-isomorphic log structures on $C_0$.
In particular, there are $\prod_{e\in E(\Gamma)} W_e$ possible choices of log structures, 
 see \cite[Proposition 7.1]{Nishinou_Toric_2006} and \cite[Proposition 4.23]{Gross_Tropical_2011} for details.
 
\begin{defin}\label{def:lognormal}
The log normal sheaf of the map $\varphi_0$
 is defined as the quotient sheaf
\[
\mathcal N_{\varphi_0} = \varphi_0^*\Theta_{X_0}/\Theta_{C_0}.
\]
Here, $\Theta_{X_0}$ and $\Theta_{C_0}$ are log tangent sheaves of $X_0$ and $C_0$, respectively.
\end{defin}
Since $X_0$ is locally naturally isomorphic to a toric variety, the sheaf $\Theta_{X_0}$ is 
 free, and can be naturally identified with $N\otimes\mathcal O_{X_0}$.
The sheaf $\Theta_{C_0}$ is also locally free, and we have the following.
\begin{prop}\label{prop:geninj}
The sheaf $\mathcal N_{\varphi_0} = \varphi_0^*\Theta_{X_0}/\Theta_{C_0}$ is 
 an invertible sheaf.
\end{prop}
\proof
At a node $p$ of $C_0$, it suffices to show that the map between the dual sheaves
\[
\varphi_0^*\Omega_{X_0}:= (\varphi_0^*(\Theta_{X_0}))^{\vee} \to \Omega_{C_0}:=(\Theta_{C_0})^{\vee}
\]
 is surjective at $p$.
Using the notation introduced above, the stalk $(\Omega_{C_0})_p$ is generated by 
 the logarithmic 1-form 
\[
\frac{dz}{z} = -\frac{dw}{w}.
\]
Then, the logarithmic 1-form $\frac{dx}{x}$, 
 which is a section of $\varphi_0^*\Omega_{X_0}$, is sent to $W_e\frac{dz}{z}$ by the above map.
This proves the claim.

Away from the nodes, it suffices to prove that the map $\varphi_0$ is an immersion.
Let $v$ be a vertex of $\Gamma$, and let 
\[
\varphi_0|_{C_{0, v}}\colon C_{0, v}\to X_{0, h(v)}
\]
 be the restriction of $\varphi_0$ 
 (see Definition \ref{def:pre-log} for the notation).
First, we assume that the vertex $v$ is 3-valent.
The restriction of $\Gamma$ to a small open neighborhood of $v$ naturally extends to 
 a tropical curve 
\[
g_v\colon T\to \Bbb R^2
\]
 with one vertex and three unbounded edges.
Assume the vertex of $T$ is mapped to the origin in $\Bbb R^2$.
Then, the map $\varphi_0|_{C_{0, v}}$ defines a pre-log curve in $Y$ associated with 
 the tropical curve $(T, g_v)$, 
 where $Y$ is the non-proper toric variety associated with the fan determined by the image $g_v(T)$,
 see Definition \ref{def:pre-logcurve}.

On the other hand, let 
\[
g_0\colon T\to \Bbb R^2
\]
 be the tropical curve whose vertex is
 mapped to the origin, and the weight vectors of the edges are 
\[
(1, 0), \;\;
(0, 1), \;\;
(-1, -1).
\]
As in Section \ref{sec:pre-log}, there is an integral linear map $L\colon \Bbb R^2\to \Bbb R^2$
 satisfying 
\[
g_v = L\circ g_0.
\]
If $Y_0$ is the toric variety associated with the fan determined by the image $g_0(T)$,
 the map $L$ induces a toric map 
\[
P_L\colon Y_0\to Y.
\]
The map $P_L$ is unramified away from the toric divisors of $Y_0$.
Moreover, for any pre-log curve 
\[
\phi\colon \Bbb P^1\to Y
\]
 associated with $g_v$, there is a pre-log curve 
\[
\psi\colon \Bbb P^1\to Y_0
\]
 associated with $g_0$ such that
\[
\phi = P_L\circ\psi
\]
 holds (\cite[Lemma 5.3]{Nishinou_Toric_2006}).
In particular, the map $\varphi_0|_{C_{0, v}}$ factors through a map to $Y_0$, 
 and this map to $Y_0$ is an embedding whose image is a line (note that $Y_0$
 is an open subset of $\Bbb P^2$).
It follows that the map $\varphi_0|_{C_{0, v}}$ is an immersion away from the nodes of $C_0$.

When the vertex $v$ is 2-valent, the map $\varphi_0|_{C_{0, v}}$ is a ramified covering onto its image,
 where the ramification occurs at the two nodes of $C_0$ contained in $C_{0, v}$.
It follows that the restriction $\varphi_0|_{C_{0, v}}$ is an immersion away from the nodes.
 \qed\\ 

Suppose we have a deformation $\varphi_n\colon C_n\to \mathcal X$ of the log morphism 
 $\varphi_0\colon C_0\to \mathcal X$,
 where $C_n$ is a deformation of $C_0$ over $O_n$.
Then, our purpose is to extend it to the diagram
\[
\xymatrix{
C_n\ar[r]\ar[d] & C_{n+1}\ar[r]\ar[d] & \mathcal X \ar[d]\\
O_n\ar[r] & O_{n+1} \ar[r] & B
}
\]

This set up is similar to that of \cite[Section 7]{Nishinou_Toric_2006},
 and if we can show the existence of some log morphism $\varphi_{n+1}\colon C_{n+1}\to \mathcal X$, 
 then by the general theory of log smooth deformations, the space of such deformations is 
 a torsor under the group $H^0(C_0, \mathcal N_{\varphi_0})$, see \cite[Lemma 7.2]{Nishinou_Toric_2006}.
Here, 
 $\mathcal N_{\varphi_0} = \varphi_0^*\Theta_{X_0}/\Theta_{C_0}$ is the log normal sheaf 
 (see Definition \ref{def:lognormal}).

\begin{rem}
As we mentioned after \cref{eq:logcurve}, there are choices of log structures on $C_0$.
However, since the log normal sheaf $\mathcal N_{\varphi_0}$ does not depend on these choices, 
 the choice of a log structure on $C_0$ makes little impact on the argument that follows.
See also Remark \ref{rem:lift_log}.
\end{rem}

In \cite{Nishinou_Toric_2006} the existence of $\varphi_{n+1}$ is almost immediate, given that
 $\varphi_0$ is a map from a rational pre-stable curve to a degenerate toric variety.
In fact, by general deformation theory, the obstruction to the existence of such a map belongs to the cohomology 
 group $H^1(C_0, \mathcal N_{\varphi_0})$, which vanishes in the context of \cite{Nishinou_Toric_2006}.
In contrast, in the present case, the group $H^1(C_0, \mathcal N_{\varphi_0})$ is one-dimensional.
This implies that, given a deformation $\varphi_n\colon C_n\to \mathcal X$ of $\varphi_0$, 
 there is a potential obstruction to extending it to $\varphi_{n+1}$.
Thus, we need to check that the obstruction actually vanishes at every step $n$.
This section is dedicated to addressing this issue.
In particular, the argument presented here shares little overlap with that of \cite{Nishinou_Toric_2006}.

\subsection{Outline of the proof}
Since the proof of the existence of a deformation is somewhat lengthy, 
 we provide an outline of the argument in this subsection.
In Subsection \ref{subsec:N}, we compute the cohomology group $H^1(C_0, \mathcal N_{\varphi_0})$,
 where the potential obstruction to the deformation resides.
Specifically, we compute its dual, since it is easier to handle.
By Serre duality for nodal curves, the dual space of $H^1(C_0, \mathcal N_{\varphi_0})$ is naturally isomorphic to 
 the group 
\[
H^0(C_0, \mathcal N_{\varphi_0}^{\vee}\otimes \omega_{C_0}),
\] 
 where $\omega_{C_0}$ is the dualizing sheaf of $C_0$.
It turns out that the group $H^0(C_0, \mathcal N_{\varphi_0}^{\vee}\otimes \omega_{C_0})$
 is one-dimensional (see Proposition \ref{prop:dimension_of_obstruction}), and generated by 
 the pull-back of the relative logarithmic complex volume form of $\mathcal X$ (see Definition \ref{def:volumeform}).
Since we need to calculate the obstruction explicitly, having an explicit generator of the 
 group $H^0(C_0, \mathcal N_{\varphi_0}^{\vee}\otimes \omega_{C_0})$ is crucial.
Subsection \ref{subsec:pairing} recalls a method from \cite{Nishinou_Obstruction_2018}
 to compute the obstructions, given such an 
 explicit generator of $H^0(C_0, \mathcal N_{\varphi_0}^{\vee}\otimes \omega_{C_0})$, 
 see Proposition \ref{prop:coupling}.
 
After these preparations, we proceed with the calculation of the obstruction.
Assume we have a deformation $\varphi_n\colon C_n\to \mathcal X$ of $\varphi_0$.
To apply Proposition \ref{prop:coupling}, we need not only a representative of 
 a generator of the group $H^0(C_0, \mathcal N_{\varphi_0}^{\vee}\otimes \omega_{C_0})$,
 but also the obstruction cocycle itself, which belongs to 
 $H^1(C_0, \mathcal N_{\varphi_0})$.
A standard way to construct such a cocycle is as follows.
Let $\{\mathscr U_i\}$ be an open covering of the underlying topological space of $C_n$
 which is sufficiently fine so that the restriction $\varphi_0|_{\mathscr U_i}$ is an embedding if 
 $\mathscr U_i$ does not contain a node.
By the log smoothness of $\mathcal X$ over the base, 
 for each $\mathscr U_i$, there is a deformation $\varphi_{n+1}|_{\mathscr U_i}$ of $\varphi_n|_{\mathscr U_i}$.
Then, the difference between these deformations on the intersection $\mathscr U_i\cap \mathscr U_j$
 naturally gives a section of $\mathcal N_{\varphi_0}|_{\mathscr U_i\cap \mathscr U_j}$, 
 and the set of these sections forms the desired cocycle.

In general, computing the obstruction cocycle based on this construction is difficult.
However, there is a special case where this can be done, namely, the so-called semi-regular case.
For embedded curves on a smooth surface, the semiregularity means that 
 the sections of the dual space of the obstructions are obtained by restricting sections of
 the canonical sheaf of the ambient surface.
This is the case in our situation, since a generator of the dual space 
 $H^0(C_0, \mathcal N_{\varphi_0}^{\vee}\otimes \omega_{C_0})$ is the restriction of the logarithmic volume form of $X_0$.
However, we must take into account that the surface $X_0$ is singular and deforms,
 as well as the fact that the map $\varphi_0$
 is not an embedding.

To address this point, in Subsection \ref{subsec:std}, 
 we introduce a specific type of open covering of $C_0$, distinct from the covering $\{\mathscr U_i\}$ mentioned earlier, 
 which is associated with adjacent two vertices of the graph $\Gamma$.
By rescaling and subdivision, we can assume that for any 3-valent vertex of $\Gamma$, 
 all adjacent vertices are 2-valent, as discussed at the beginning of Subsection \ref{subsec:std}.
Consider two adjacent vertices $v_1$ and $v_2$ of $\Gamma$, where one of them is 3-valent.
Let $e_{v_1v_2}$ be the 
 edge connecting them.
A small neighborhood of $e_{v_1v_2}$ naturally extends to a tropical curve $(\Gamma_{v_1v_2}, h_{v_1v_2})$,
 which consists of one bounded edge $e_{v_1v_2}$ and four unbounded edges.
We make use of the fact that, up to isomorphisms, there is a unique tropical curve with two vertices
 $(\Gamma_s, h_s)$ in $\Bbb R^2$, such that any $(\Gamma_{v_1v_2}, h_{v_1v_2})$
 is the image of $(\Gamma_s, h_s)$ by an integral linear map $\Phi$ (see Lemma \ref{lem:Phi}).
There are degenerating families of toric varieties $\overline{\mathcal Y}_{v_1v_2}$ and 
 $\overline{\mathcal Y}_{V_1V_2}^s$ associated with the tropical curves 
 $(\Gamma_{v_1v_2}, h_{v_1v_2})$ and $(\Gamma_s, h_s)$, respectively.
Moreover, there is a map 
\[
P_{\Phi}\colon \overline{\mathcal Y}_{V_1V_2}^s\to \overline{\mathcal Y}_{v_1v_2}
\]
 induced by $\Phi$ (see Definition \ref{def:Pphi}).
An open subset $\mathcal Y_{v_1v_2}$ of the variety $\overline{\mathcal Y}_{v_1v_2}$ naturally embeds in $\mathcal X$.
Thus, we can restrict the map $\varphi_0$ to a suitable open subset of $C_0$ so that its image is contained in 
 $\mathcal Y_{v_1v_2}$.
Let us write it as 
\[
\varphi_0|_{\mathscr D}\colon \mathscr D\to \mathcal Y_{v_1v_2},
\] 
 where $\mathscr D$ is an open subset of the nodal union of two copies of regular rational curves.
An important observation is that the map $\varphi_0|_{\mathscr D}$ factors through 
 an open subset $\mathcal Y_{V_1V_2}^s$ of $\overline{\mathcal Y}_{V_1V_2}^s$.
Furthermore, if we have a deformation $\varphi_n\colon C_n\to \mathcal X$ of $\varphi_0$, 
 then the restriction of $\varphi_n$ to $\mathscr D$ also factors through $\mathcal Y_{V_1V_2}^s$ 
 (see Lemma \ref{lem:v_1v_2} and Proposition \ref{prop:liftmap}):

\[
\xymatrix{
C_n\supset \mathscr D \ar[rr]^{\psi_{n, V_1V_2}}\ar[rrd]_{\varphi_n|_{\mathscr D}} 
 && \mathcal Y_{V_1V_2}^s \ar[d]^{P_{\Phi}}\\
 && \mathcal Y_{v_1v_2} 
}
\]

The merit of lifting the map $\varphi_n$ is that the map $\psi_{n, V_1V_2}$ becomes an embedding,
 allowing the argument of \cite{Nishinou_Obstruction_2018} to be applied.
Note that if $v_3$ is another vertex adjacent to $v_1$, a similar diagram
 exists involving another open subset $\mathscr D'\subset C_0$ and the map $\psi_{n, V_1V_3}$.
To construct a deformation of $\varphi_n$, it suffices to construct 
 deformations of its restrictions $\varphi_n|_{\mathscr D}$ compatibly in the sense that, on the intersection
 of open subsets such as $\mathscr D\cap \mathscr D'$, the deformations coincide.
On the other hand, if we can deform $\psi_{n, V_1V_2}$ compatibly in a similar sense, 
 then these deformations also give a deformation of $\varphi_n$. 
 
In Subsection \ref{subsec:lift}, we formalize the precise meaning of the compatibility of the maps
 $\psi_{n, V_1V_2}$ (see Proposition \ref{prop:compatibility} and the paragraphs that follow).
Here, the assumption (\ref{eq:assump}) in the introduction is invoked.
Based on this, in Subsections \ref{subsec:obstcocycle},
 we show that the obstruction cocycle can be computed by taking deformations of $\psi_{n, V_1V_2}$
 and their differences on the intersection.
For technical convenience, these deformations are considered on smaller open subsets 
 $\{\mathscr U_i\}$ of $\mathscr D$.
We verify that the cocycle defined in this way is well-defined in Lemma \ref{lem:cocycle},
 and show that it indeed represents the obstruction to deforming $\varphi_n$ in Proposition \ref{prop:locallifts}.
These are the most technical parts of the proof, and we stated the motivation for these constructions 
 at the beginning of Subsection \ref{subsec:lift}.

Thus, the problem reduces to computing the obstruction class defined via the lifts of 
 $\varphi_n$ to $\psi_{n,V_1V_2}^s$.
Although $X_0$ is singular, the image of the open subsets of the form $\mathscr D\cap \mathscr D'$
 does not intersect the singular locus of $X_0$.
Consequently, the argument from \cite{Nishinou_Obstruction_2018}
 can be applied in this context.
This computation is carried out in Subsection \ref{subsec:maincal}, and we conclude that the obstruction vanishes 
 (see Theorem \ref{thm:main}).

\subsection{Description of the dual of obstruction space}\label{subsec:N}
Let $\varphi_0\colon C_0\to X_0$ be a pre-log curve in $X_0$,
 associated with a given connected,
 at most\footnote{A 3-valent tropical curve we start with may acquire 2-valent vertices 
 due to subdivisions in the process of construction of pre-log curves.} 3-valent parametrized tropical curve $h\colon\Gamma\to S$, as described in \cref{sec:pre-log}.
We use the notations introduced at the beginning of \cref{sec:pre-log}.
The toroidal scheme $\mathcal X$ induces a log smooth structure on $X_0$ over $(B, 0)$.
Here, $B$ is a small disc centered at $0$, equipped with a log structure associated with the divisor $0$.
The curve $C_0$ is equipped with a natural log structure such that the map $\varphi_0$ extends to a log morphism
 over $B$, as described in Subsection \ref{subsec:logdeformation}.
Let 
$
\mathcal N_{\varphi_0} = \varphi_0^*\Theta_{X_0}/\Theta_{C_0}
$
be the log normal sheaf of $\varphi_0$, see Definition \ref{def:lognormal}.

\begin{defin}
A \emph{flag} of the graph $\Gamma$ is a pair $(v,e)$ consisting of a vertex $v$ and an edge $e$ incident to $v$.
Let $F(\Gamma)$ denote the set of flags of $\Gamma$.
\end{defin}

The following is a special case of the result proved in \cite[Theorem 57]{Nishinou_2025}.
For the reader's convenience, we provide a proof.
\begin{lem}\label{lem:description_of_dual}
	Consider the cohomology group 
	$H^0(C_0,\mathcal N_{\varphi_0}^\vee\otimes\omega_{C_0})$, where $\omega_{C_0}$ denotes the dualizing sheaf of $C_0$.
	It is naturally isomorphic to the group of maps
	\[
	u\colon F(\Gamma)\longrightarrow N^\vee_\bbC, \qquad (v,e)\longmapsto u_{v,e}
	\]
	satisfying the following conditions:
	\begin{enumerate}
		\item \label{lem:description_of_dual:u} For every flag $(v, e)$, we have
		$u_{v,e}\in w_{v,e}^{\perp}\subset N^\vee_\bbC$, 
		 where $w_{v,e}$ is the weight vector of the edge $e$ at $v$,
		 see Definition \ref{def:parametrized_tropical_curve_in_R^2}.
		\item \label{lem:description_of_dual:e} For every edge $e$ of $\Gamma$ with endpoints $v,v'$,
		we have
		\[u_{v,e}+u_{v', e} = 0.\]
		\item \label{lem:description_of_dual:v} For every vertex $v$ of $\Gamma$, we have
		\[\sum_{e\ni v} u_{v,e} = 0,\]
		where the sum is taken over all the edges incident to $v$.
	\end{enumerate}
	Here, the group structure on the maps is given by the pointwise addition.
\end{lem}
\begin{proof}
	Let $\tC_0\to C_0$ be the normalization map.
	Then, the dualizing sheaf $\omega_{C_0}$
	is the invertible subsheaf of the pushforward of the sheaf of 1-forms on $\tC_0$
	 with logarithmic poles at the points lying over the nodes of $C_0$.
	Specifically, $\omega_0$ corresponds to those meromorphic forms 
	 whose sum of residues at the two points lying over every node of $C_0$ is zero 
	 (cf.\ \cite[\S 10.2]{Arbarello_Geometry_of_algebraic_curves_II} 
	 and \cite[\S II.6]{Barth_Compact_complex_surfaces}).
	Let us take  be an element 
	\[
	\Psi\in H^0(C_0, \mathcal N_{\varphi_0}^{\vee}\otimes\omega_{C_0}).
	\]
	For a vertex $v$ of $\Gamma$, let $\{e_j\}_{j\in J}$ be the edges incident to $v$, where
	 $J = \{1, 2\}$ or $\{1, 2, 3\}$.
	Denote by $C_{0,v}$ the irreducible component of $C_0$ corresponding to $v$, 
	 and by $\{p_{e_j}\}_{j\in J}$ the nodes corresponding to the edges $\{e_j\}_{j\in J}$.
	Recall that $\mathcal X$ is the quotient of the infinite type toric variety $\widetilde{\mathcal X}$
	 (see Definition \ref{def:M2}), and we have 
	\[
	\Theta_{\widetilde{\mathcal X}/B}\simeq N\otimes\mathcal O_{\widetilde{\mathcal X}}.
	\]
	Since the action of $\Lambda$ in \cref{eq:assump} induces the identity map on the sections of
	 the constant subsheaf $N\otimes 1$ of the sheaf of relative logarithmic  
	vector fields  $\Theta_{\widetilde{\mathcal X}/B}$, we obtain
	\[
	\Theta_{\mathcal X/B}\simeq N\otimes\mathcal O_{\mathcal X}.
	\]
	This property holds more generally for any $\Lambda$,
	not necessary restricted to the specific form given in \cref{eq:assump}.
	In particular, by restricting this isomorphism to the special fiber, we have
	\[
	\Theta_{X_0}\simeq N\otimes\mathcal O_{X_0}.
	\]
	
	Recall that the sheaf $\mathcal N_{\varphi_0}$ is a quotient sheaf of $\varphi_0^*\Theta_{X_0}$ defined by the 
	 following exact sequence of locally free sheaves on $C_0$:
	\[
	0\to \Theta_{C_0}\to \varphi_0\Theta_{X_0}\to \mathcal N_{\varphi_0}\to 0.
	\]
	Dualizing it and tensoring $\omega_{C_0}$, we have
	\[
	0\to \mathcal N_{\varphi_0}^{\vee}\otimes \omega_0\to \varphi_0^*(\Theta_{X_0})^{\vee}\otimes\omega_{C_0}
	 \to (\Theta_{C_0})^{\vee}\otimes_{C_0}\to 0.
	\]
	Here, by the above argument, we have
	\[
	\varphi_0^*(\Theta_{X_0})^{\vee}\cong N^{\vee}\otimes \mathcal O_{C_0}.
	\]
	Thus, the restriction $\Psi|_{C_{0,v}}$ can be viewed as a differential form on $C_{0,v}$ 
	with values in $N^\vee_\bbC$, allowing logarithmic poles at the nodes $\{p_{e_j}\}_{j\in J}$.
	For $j\in J$,
	 define $u_{v,e_j}$ to be the vector valued residue of $\Psi|_{C_{0,v}}$ at the node $p_{e_j}$.

	Since the fiber of $\Theta_{C_0}\subset \varphi_0^*\Theta_{X_0}$ at $p_{e_j}$ is generated by the weight vector $w_{v,e_j}\in N$, the exact sequence implies that the fiber at $p_{e_j}$ is generated by an annihilator of
	 the vector $w_{v,e_j}$.
	This is precisely the condition (\ref{lem:description_of_dual:u}) for $u_{v, e_j}$.
	The condition (\ref{lem:description_of_dual:e}) follows from the explicit description of the dualizing sheaf $\omega_{C_0}$
	given above.
	Namely, the sum of the residues at the two points lying over a node must be zero.
	The condition (\ref{lem:description_of_dual:v}) follows from the residue theorem.
	
	Conversely, the same reasoning shows that any map $u\colon F(\Gamma)\to N^\vee_\bbC$ satisfying the conditions (1-3) uniquely determines an element of $H^0(C_0, \mathcal N_{\varphi_0}^{\vee}\otimes\omega_{C_0})$.
	Thus, the lemma is proved.
\end{proof}

Now, we provide an explicit description of elements of $H^0(C_0,\mathcal N_{\varphi_0}^\vee\otimes\omega_{C_0})$.
Let $\{a_1, a_2\}$ be any basis of $N^{\vee}$, and let $x$ and $y$ denote the corresponding functions on 
$\widetilde{\mathcal X}$.
The 2-form $\frac{dx}{x}\wedge\frac{dy}{y}$ defined on $\widetilde{\mathcal X}$ is the normalized
 logarithmic complex volume form relative to the base.
This form is independent of the choice of toric coordinates, provided that an orientation of $N_{\Bbb R}^{\vee}$ is fixed
 and the chosen basis respects this orientation.
It is holomorphic on the complement of the central fiber, and has logarithmic poles
 along the singular locus of the central fiber. 
Additionally, it descends to $\mathcal X$ since it is invariant under the action of $\Lambda$.

\begin{defin}\label{def:volumeform}
We write this form by $\Psi_{{\mathcal X}}$
 and its restriction to $X_0$ by $\Psi_{X_0}$.
Let $\Psi_{C_0}$ be the pullback of $\Psi_{X_0}$ to $C_0$ by $\varphi_0$.
\end{defin}

Let $\Omega_{X_0} = (\Theta_{X_0})^{\vee}$ be the sheaf of logarithmic 1-forms on $X_0$.
\begin{lem}\label{lem:adj}
The pullback $\varphi_0^*(\wedge^2\Omega_{X_0})$ is isomorphic to the sheaf 
 $\mathcal N_{\varphi_0}^{\vee}\otimes \omega_{C_0}$.
In particular, $\Psi_{C_0}$ provides a non-zero section of $\mathcal N_{\varphi_0}^{\vee}\otimes \omega_{C_0}$.
\end{lem}
\proof
By the description of the dualizing sheaf $\omega_{C_0}$, it follows that $\omega_{C_0}$ is 
 isomorphic to the dual of $\Theta_{C_0}$.
Then, dualizing the defining exact sequence 
\[
0\to \Theta_{C_0}\to \varphi_0^*\Theta_{X_0}\to \mathcal N_{\varphi_0}\to 0,
\] 
 as in the proof of the previous proposition, we obtain
\[
0\to \mathcal N_{\varphi_0}^{\vee}\to \varphi_0^*\Omega_{X_0}\to \omega_{C_0}\to 0.
\]
The claim follows from this.\qed

\begin{prop} \label{prop:dimension_of_obstruction}
	The space $H^0(C_0,\mathcal N_{\varphi_0}^\vee\otimes\omega_{C_0})$ has dimension one, and
	 generated by $\Psi_{C_0}$.
\end{prop}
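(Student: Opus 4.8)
The plan is to invoke \cref{lem:description_of_dual} and turn the statement into a finite-dimensional linear-algebra count over the combinatorics of $\Gamma$. Identifying $N\cong\bbZ^2$, for $w\in N_\bbC$ set $w^{\flat}:=\det(w,\cdot)\in N^\vee_\bbC$, so that $w\mapsto w^\flat$ is linear, $w^\flat\in w^\perp$ (as $w^\flat(w)=\det(w,w)=0$), and $w^\flat\neq 0$ whenever $w\neq 0$; since $N^\vee_\bbC$ is two-dimensional, $w^\perp=\bbC\cdot w^\flat$ for every nonzero $w$. By \cref{lem:description_of_dual} it then suffices to prove that the space $V$ of maps $u\colon F(\Gamma)\to N^\vee_\bbC$, $(v,e)\mapsto u_{v,e}$, satisfying conditions (1)--(3) there is one-dimensional.

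For the lower bound I would check that, for each scalar $c\in\bbC$, the assignment $u^{(c)}_{v,e}:=c\,w_{v,e}^\flat$ lies in $V$: condition (1) holds since $w_{v,e}^\flat\in w_{v,e}^\perp$; condition (2) holds because the weight vectors at the two ends of an edge are opposite, so $u^{(c)}_{v,e}+u^{(c)}_{v',e}=c\,(w_{v,e}+w_{v',e})^\flat=0$; and condition (3) holds because $\sum_{e\ni v}u^{(c)}_{v,e}=c\,\big(\sum_{e\ni v}w_{v,e}\big)^\flat=0$ by the balancing condition. Hence $\dim V\geq 1$.

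For the upper bound, take any $u\in V$. Condition (1) lets me write $u_{v,e}=c_{v,e}\,w_{v,e}^\flat$ for a unique scalar $c_{v,e}\in\bbC$. I claim $c_{v,e}$ depends only on $v$. At a $2$-valent vertex $v$ with edges $e,e'$ we have $w_{v,e}=-w_{v,e'}$, so condition (3) reads $(c_{v,e}-c_{v,e'})\,w_{v,e}^\flat=0$, forcing $c_{v,e}=c_{v,e'}$. At a $3$-valent vertex $v$ with edges $e_1,e_2,e_3$, the three weight vectors are pairwise linearly independent: if two were parallel, the balancing condition would force all three collinear, so the images of the three edges would not be distinct rays at $h(v)$, contradicting that $h$ is an immersion (equivalently $w_v\neq 0$, cf.\ \cref{def:weight}). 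Condition (3) then reads $\big(\sum_i c_{v,e_i}w_{v,e_i}\big)^\flat=0$, hence $\sum_i c_{v,e_i}w_{v,e_i}=0$; subtracting $c_{v,e_3}$ times the balancing relation $\sum_i w_{v,e_i}=0$ and using that $w_{v,e_1},w_{v,e_2}$ are independent gives $c_{v,e_1}=c_{v,e_2}=c_{v,e_3}$. Write $c_v$ for this common value. Finally, condition (2) along an edge $e$ with endpoints $v,v'$ reads $(c_v-c_{v'})\,w_{v,e}^\flat=0$, so $c_v=c_{v'}$; as $\Gamma$ is connected, all $c_v$ equal a single $c$, whence $u=u^{(c)}$. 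Therefore $V=\{u^{(c)}:c\in\bbC\}$ is exactly one-dimensional, and by \cref{lem:description_of_dual} so is $H^0(C_0,\cN^\vee\otimes\omega_{C_0})$.

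The only point that is not pure bookkeeping is the non-degeneracy of $3$-valent vertices, i.e.\ $w_v\neq 0$: this is what guarantees that the two vertex equations at such a vertex cut the naive three-dimensional space of admissible flag values down to dimension one, and it is the step I would be most careful about, since it relies on $h$ being an immersion together with the balancing condition. All remaining steps are linear identities propagated along the connected graph $\Gamma$.
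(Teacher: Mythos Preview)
Your proof is correct and follows essentially the same approach as the paper's: both invoke \cref{lem:description_of_dual}, exhibit the same explicit nonzero element (your $w_{v,e}^\flat$ is precisely the paper's $(-b,a)$ when $w_{v,e}=(a,b)$), and obtain the upper bound by propagating a single value across the connected graph using conditions (2)--(3) together with the non-degeneracy $w_v\neq 0$ at $3$-valent vertices. Your version is just a bit more explicit, parametrising each $u_{v,e}$ by a scalar $c_{v,e}$ and equating them, whereas the paper phrases it as ``one $u_{v,e}$ determines all the others''; the substance is the same.
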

\begin{proof}
Since we have 
\[
\mathcal N_{\varphi_0}^\vee\otimes\omega_{C_0}\cong \varphi_0^*(\wedge^2\Omega_{X_0})\cong
 \wedge^2 N_{\Bbb C}\otimes \mathcal O_{C_0}
\]
 by Lemma \ref{lem:adj}, the claim immediately follows.
However, since it is better to be acquainted with the explicit form of the sections of
 $\mathcal N_{\varphi_0}^\vee\otimes\omega_{C_0}$ for understanding the arguments in the following sections, 
 we provide a proof based on the description in Lemma \ref{lem:description_of_dual}.
 
	As the graph $\Gamma$ is at most 3-valent, \cref{lem:description_of_dual}(\ref{lem:description_of_dual:u})
	 and (\ref{lem:description_of_dual:v})
	 imply that, for every vertex $v$, any one of the vectors $\{u_{v,e_i}\}$ determines the other vectors uniquely.
Namely, note that by \cref{lem:description_of_dual}(\ref{lem:description_of_dual:u}), 
 the direction of the vectors $u_{v, e_i}$ are fixed.
Thus, it suffices to determine their coefficients.
The claim is obvious when the vertex is 2-valent by \cref{lem:description_of_dual}(\ref{lem:description_of_dual:v}).
When the vertex is 3-valent, it is likewise straightforward to see that the condition 
 \cref{lem:description_of_dual}(\ref{lem:description_of_dual:v}) implies that fixing the coefficient of one of 
 the vectors determines those of the other two vectors as well.

	Combined with the condition \cref{lem:description_of_dual}(\ref{lem:description_of_dual:e}), the connectedness of $\Gamma$
	 ensures that specifying $u_{v, e}$ for a single flag $(v, e)$ 
	 determines the map $u\colon F(\Gamma)\to N^\vee_\bbC$ completely.
	Thus, the space $H^0(C_0,\mathcal N_{\varphi_0}^\vee\otimes\omega_{C_0})$ has dimension at most one.
	By Lemma \ref{lem:adj}, it is non-zero and generated by $\Psi_{C_0}$.
\end{proof}

\subsection{The pairing between the obstruction class and its dual}\label{subsec:pairing}
Now, we turn to the calculation of the obstruction class via \v{C}ech cohomology.
First, we introduce a representation of \v{C}ech 1-cocycles with coefficients in
 invertible sheaves
 on nodal curves, tailored for our purpose.
Let $C_0$ be a pre-stable curve.
We introduce an open covering $\{\mathscr U_i\}$ of $C_0$ as follows.
\begin{defin}\label{def:scrU}
Let 
$
\{\mathscr U_i\}_{i\in I}
$
 be an open covering of $C_0$ satisfying the following properties:
\begin{itemize}
\item Each $\mathscr U_i$ is homeomorphic either to a disc or to the union of two discs attached
 at their centers.
For the latter case, denote the irreducible components of $\mathscr U_i$
 by $\mathscr U_{i}^{(1)}$ and $\mathscr U_{i}^{(2)}$.
These subsets are locally closed in $C_0$.
\item 
Every node of $C_0$ is contained in an exactly one open subset in $\{\mathscr U_i\}$.
\item If $\mathscr U_i \cap \mathscr U_j \neq \emptyset$ for $\mathscr U_i \neq \mathscr U_j$, then $\mathscr U_i \cap \mathscr U_j$ is homeomorphic to a disc.
\end{itemize}
\end{defin}

Let $\mathcal L$ be an invertible sheaf on $C_0$.
An $\mathcal L$-valued \v{C}ech 1-cocycle is given by a collection of holomorphic sections defined on
 the intersections $\mathscr U_i\cap \mathscr U_j$.
If these sections can be expressed as the differences of meromorphic sections on $\mathscr U_i$
 or on the locally closed subsets $\mathscr U_{i}^{(j)}$,
  this representation facilitates the computation of the cohomology class that the cocycle represents.
Specifically, the cohomology class can be calculated from the data of poles of the meromorphic sections.
In the following, we utilize this observation to calculate the obstruction.

In this subsection, we provide a precise statement of the above claim.
Let 
\[
\xi_i
\]
 be a meromorphic section of $\mathcal L|_{\mathscr U_i}$ 
 when $\mathscr U_i$ is a disc.
Let 
\[
\xi_{i}^{(j)},\;\; j = 1, 2
\]
 be meromorphic sections of $\mathcal L|_{\mathscr U_{i}^{(j)}}$, 
 when $\mathscr U_i$ is the union of discs.
We require the following condition:
\begin{equation}\label{eq:xi}
\text{The differences $\xi_i-\xi_j$ and $\xi_i-\xi_{k}^{(j)}$ are holomorphic.}
\end{equation}
These differences are considered on the intersection $\mathscr U_i\cap \mathscr U_j$ and
 $\mathscr U_i\cap \mathscr U_{k}^{(j)}$, respectively.

\begin{lem}\label{lem:xi}
The set of sections $\{\xi_i\}$ and $\{\xi_{i}^{(j)}\}$ define a \v{C}ech 1-cocycle $\xi$ 
 with values in $\mathcal L$ associated with the covering $\{\mathscr U_i\}_{i\in I}$.
\end{lem}
\proof
If both $\mathscr U_i$ and $\mathscr U_j$ are discs, then 
 assign the section $\xi_i-\xi_j$ to $\mathscr U_i\cap\mathscr U_j$.
If $\mathscr U_i$ is a disc and $\mathscr U_k$ is a union of discs such that 
 $\mathscr U_i\cap \mathscr U_k\neq \emptyset$, then there is a unique $j\in \{1, 2\}$ such that 
\[
\mathscr U_i\cap \mathscr U_k = \mathscr U_i\cap \mathscr U_{k}^{(j)}.
\]
In this case, assign the section $\xi_i-\xi_{k}^{(j)}$ to $\mathscr U_i\cap \mathscr U_k$.
The cocycle condition follows directly from the construction. \qed\\

Now, let $\Psi$ be an element of $H^0(C_0, \mathcal L^{\vee}\otimes \omega_{C_0})$, 
 the dual space of $H^1(C_0, \mathcal L)$.
It can be shown that 
 the pairing between the cohomology class associated with $\xi = \{\xi_i\}\cup \{\xi_{i}^{(j)}\}$ and $\Psi$ is 
 calculated as follows.
Namely, let 
\[
\{q_{\alpha}\}_{\alpha\in A}\subset C_0
\]
 be the set of points where some $\xi_i$ or $\xi_{i, j}$ has a pole, excluding the nodes of $C_0$.
For each $\alpha$, choose any locally closed subset $\mathscr U_i$ or $\mathscr U_{i}^{(j)}$
 which contains $q_{\alpha}$, 
 and denote the corresponding local section by $\xi_{\alpha}$.
Let 
$
\{p_i\}
$
 be the set of nodes of $C_0$.
By assumption, each of the $p_i$ is contained in a unique open subset $\mathscr U_i$.
Let 
\[
p_{i}^{(1)}, \;\; p_{i}^{(2)}
\]
 be the corresponding points on the irreducible components
 $\mathscr U_{i}^{(1)}$ and $\mathscr U_{i}^{(2)}$, respectively. 
These components have the associated local sections $\xi_{i}^{(1)}$ and $\xi_{i}^{(2)}$.
For the given meromorphic sections $\xi_{\alpha}$ and $\xi_{i}^{(j)}$ on the corresponding locally closed subsets
 of $C_0$, 
 the fiberwise pairing between them and $\Psi$ yield  meromorphic 1-forms $\Psi(\xi_{\alpha})$
 and $\Psi(\xi_{i}^{(j)})$ on the respective domains.
Then, the following proposition holds.
\begin{prop}\cite[Proposition 10]{Nishinou_Obstruction_2018}\label{prop:coupling}
The pairing between the classes $\xi\in H^1(C_0, \mathcal L)$ and 
$\Psi\in H^0(C_0, \mathcal L^{\vee}\otimes \omega_{C_0})$ is given by the following formula:
\[
\langle\Psi, \xi\rangle
 = \sum_{\alpha\in A} \res_{q_{\alpha}}\Psi(\xi_{\alpha}) + \sum_{p_{i}^{(j)}} \res_{p_{i}^{(j)}}\Psi(\xi_{i}^{(j)}).
\] 
Here, $\res_p \phi$ denotes the residue of a meromorphic 1-form $\phi$ at the point $p$, and
 $\{q_{\alpha}\}_{\alpha\in A}\subset C_0$ is the set of poles of the local sections 
  $\xi_i$ which are not the nodes of $C_0$, as described above.
\qed
\end{prop}
Several remarks regarding this formula are provided below.
\begin{rem}\label{rem:coupling}
\begin{enumerate}
\item By the condition (\ref{eq:xi}), the sum in Proposition \ref{prop:coupling} is
 independent of the choice of $\xi_{\alpha}$ associated with a pole $q_{\alpha}$.
\item Since sections of the dualizing sheaf $\omega_{C_0}$ may exhibit logarithmic singularities at the nodes of $C_0$,
 there can be contributions to the sum even if $\xi_{i}^{(j)}$ is holomorphic at $p_{i}^{(j)}$.
\item It can be shown that this definition induces the natural non-degenerate pairing between 
 $H^0(C_0, \mathcal L^{\vee}\otimes \omega_{C_0})$ and $H^1(C_0, \mathcal L)$.
In particular, a class $\xi$ in $H^1(C_0, \mathcal L)$ vanishes if and only if the pairing is zero for any class
 $\Psi$ in $H^0(C_0, \mathcal L^{\vee}\otimes \omega_{C_0})$.
Moreover, any class in $H^1(C_0, \mathcal L)$ can be represented by some set of local 
 sections $\{\xi_i\}$ and $\{\xi_{i}^{(j)}\}$ as described above.
See \cite{Nishinou_Obstruction_2018} for details.
\item In this paper, since $H^0(C_0, \mathcal L^{\vee}\otimes \omega_{C_0})$ is one-dimensional 
 for the sheaf $\mathcal L = \mathcal N_{\varphi_0}$ relevant to us, a class $\xi\in H^1(C_0, \mathcal N_{\varphi_0})$
 vanishes if and only if its pairing with the generator $\Psi_{C_0}$ of
 $H^0(C_0, \mathcal N_{\varphi_0}^{\vee}\otimes \omega_{C_0})$ vanishes, 
 see Proposition \ref{prop:dimension_of_obstruction}.
\end{enumerate}
\end{rem}

\subsection{Standard tropical curve with two vertices}\label{subsec:std}
To simplify the calculation of the obstructions to deforming the given map $\varphi_0\colon C_0\to X_0$,
 we modify the associated tropical curve slightly.
Specifically, by applying a base change, introducing 2-valent vertices, and refining the subdivision of the
 2-torus $S = N_{\Bbb R}/\overline{\Lambda}$, we can assume the following about the tropical curve $(\Gamma, h)$:
\begin{itemize}
\item For every 3-valent vertex, all the adjacent vertices are 2-valent.
\item The integral length of the image of any edge of $\Gamma$ is equal to its weight.
In other words, in the metric graph $\Gamma$, 
 the length of each edge is one, in terms of Definition \ref{def:metric_graph}.
\end{itemize}
Note that since $(\Gamma, h)$ is a tropical curve on $S$, there is no infinite vertex.

Let 
\[
h_s\colon \Gamma_s\setminus V_{\infty}(\Gamma_s)\to \Bbb R^2
\]
 be the tropical curve defined as follows.
Namely, the connected open graph $\Gamma_s\setminus V_{\infty}(\Gamma_s)$
 has two vertices $V_1$ and $V_2$.
Here, both vertices are 2-valent, or $V_1$ is 3-valent and $V_2$ is 2-valent.
In particular, there is a single bounded edge $E$ connecting $V_1$ and $V_2$.
All the edges have weight one.
The map $h_s$ satisfies the following conditions:
\begin{itemize}
\item $h_s$ maps $V_1$ to the origin, and $V_2$ to $(1, 0)$.
\item When $V_1$ is 3-valent, the unbounded edges attached to $V_1$ have weight vectors $(0, 1)$ and $(-1, -1)$. 
\end{itemize}
These conditions, together with the balancing condition, uniquely determine $h_s$.
We fix an orientation of the target space $\Bbb R^2$ of $h_s$
so that the ordered set of vectors 
\[
(1, 0),\;\; (0, 1), \;\; (-1, -1) 
\]
is compatible with the orientation.
\begin{defin}\label{def:std}
	We call this tropical curve $(\Gamma_s, h_s)$ with two vertices \emph{standard}.
\end{defin}

Now, take a pair of adjacent vertices $v_1$ and $v_2$ of $\Gamma$.
Here, $v_1$ is 2- or 3-valent, while $v_2$ is always 2-valent.
Then, the restriction of $h$ to a neighborhood $\mathscr V_{v_1v_2}$ of the edge $e_{v_1v_2}$ connecting $v_1$ and $v_2$
gives a tropical curve 
\[
h_{v_1v_2}\colon \Gamma_{v_1v_2}\to N_{\Bbb R}\cong\Bbb R^2
\]
in a natural manner, determined uniquely up to 
integral affine transformations on $\Bbb R^2$.
Here, $\Gamma_{v_1v_2}$ is an open graph with two vertices obtained from $\mathscr V_{v_1v_2}$
by extending the edges other than $e_{v_1v_2}$ to infinity.
By applying parallel transportation, we assume that the vertex $v_1$ is placed at the origin $(0, 0)\in N_{\Bbb R}$.
Recall that we have fixed an orientation on $N_{\Bbb R}$, hence also on $S$.

\begin{lem}\label{lem:Phi}
	There is an isomorphism between $\Gamma_s$ and $\Gamma_{v_1v_2}$
	as non-weighted metric graphs, so that under the identification of them by this isomorphism,
	the following holds. 
	There exists a unique orientation-preserving 
	integral linear map $\Phi\in {\rm Hom}(\Bbb Z^2, N)$ satisfying
	\[
	\Phi_{\Bbb R}\circ h_s = h_{v_1v_2},
	\]
	where $\Phi_{\Bbb R}\colon \Bbb R^2\to N_{\Bbb R}$
	is the map naturally induced by $\Phi$.
	Moreover, when both $v_1$ and $v_2$ are 2-valent, then we can take $\Phi$ so that 
	\[
	\sharp\Coker \Phi = W_{e_{v_1v_2}}, 
	\]
	where $W_{e_{v_1v_2}}$ is the weight of the edge $e_{v_1v_2}$ (see Definition \ref{def:weight}).
\end{lem}
\proof
When $v_1$ is 3-valent, 
let $e_1$ and $e_2$ be unbounded edges of $\Gamma_{v_1v_2}$ such that the ordered set of 
edges
\[
e_{v_1v_2}, \;\; e_1,\;\; e_2
\]
is compatible with the orientation on $S$.
Then, 
the map $\Phi$ is uniquely determined by the condition that it sends the vectors $(0, 1)$ and $(-1, -1)$
to the weight vectors of the unbounded edges $e_1$ and $e_2$ attached to $v_1$, respectively.
The balancing condition forces that the weight vector of $E$, which is $(1, 0)$, is sent to 
the weight vector of $e_{v_1v_2}$. 
The claim for the case where both $v_1$ and $v_2$ are 2-valent is obvious.\qed

\begin{rem}
	The orientations on the domain and the target of $\Phi_{\Bbb R}$ are introduced 
	to uniquely fix $\Phi_{\Bbb R}$.
	Although this choice has little impact to the arguments in this paper, we usually select this choice.
\end{rem}

For notational simplicity, we denote $\Phi_{\Bbb R}$ by $\Phi$.
Note that the map $\Phi$ also induces a corresponding map in the setting of holomorphic curves.
Specifically, consider the image $h_{v_1v_2}(\Gamma_{v_1v_2})$ 
 placed on the affine plane $\Bbb R^2\times \{1\}\subset \Bbb R^3$,
 and let $\overline{\mathcal C}_{v_1v_2}$ denote the closure of the cone over this image.
It naturally carries the structure of an incomplete fan whose one-dimensional cones are the rays spanned
 by the vertices of $h_{v_1v_2}(\Gamma_{v_1v_2})$ as well as those contained in 
 $\overline{\mathcal C}_{v_1v_2}\cap (\Bbb R^2\times\{0\})$.
Similarly, let $\overline{\mathcal C}_s$ be the cone in $\Bbb R^3$ obtained from $(\Gamma_s, h_s)$ 
 by the same construction as $\overline{\mathcal C}_{v_1v_2}$.
\begin{defin}\label{def:mathcalY}
Let $\overline{\mathcal Y}_{v_1v_2}$ be the toric variety associated with the cone $\overline{\mathcal C}_{v_1v_2}$.
Let $\mathcal Y_{v_1v_2}$
 be the open toric subvariety of $\overline{\mathcal Y}_{v_1v_2}$,
 obtained by removing toric divisors corresponding to the rays in 
 $\overline{\mathcal C}_{v_1v_2}\cap \Bbb R^2\times\{0\}$.
Similarly, let $\mathcal Y_{V_1V_2}^s\subset \overline{\mathcal Y}_{V_1V_2}^s$ be the toric varieties 
 associated with the cone $\overline{\mathcal C}_s$ analogous to 
 $\mathcal Y_{v_1v_2} \subset
 \overline{\mathcal Y}_{v_1v_2}$.
\end{defin}
Note that $\mathcal Y_{v_1v_2}$
 has a natural embedding into $\mathcal X$.
There are natural toric maps $\overline{\mathcal Y}_{v_1v_2}\to\Spec\Bbb C[t]$
 and $\overline{\mathcal Y}_{V_1V_2}^s\to \Spec\Bbb C[t]$.

\begin{defin}\label{def:Y}
Let $\overline Y_{0, v_1v_2}$ be the fiber of $\overline{\mathcal Y}_{v_1v_2}\to\Spec\Bbb C[t]$ over 
 $0\in \Spec\Bbb C[t]$, the point corresponding to the maximal ideal $(t)$.
This variety $\overline Y_{0, v_1v_2}$ naturally embeds into $\mathcal X$.
Let $Y_{0, v_1v_2}$ be the fiber of 
 $\mathcal Y_{v_1v_2}\to\Spec\Bbb C[t]$ over $0\in \Spec\Bbb C[t]$.
Similarly, let $\overline Y_{0, V_1V_2}^s$ and $Y_{0, V_1V_2}^s$ be the central fibers
 of $\overline{\mathcal Y}_{V_1V_2}^s$ and $\mathcal Y_{V_1V_2}^s$, respectively.
\end{defin}

Note that the cone $\overline{\mathcal C}_{v_1v_2}$ is obtained from 
 $\overline{\mathcal C}_s$ by the integral linear map
\begin{equation}\label{eq:pmat}
\begin{pmatrix}
a_{11} & a_{12} & 0\\
a_{21} & a_{22} & 0\\
0 & 0 & 1
\end{pmatrix},
\end{equation}
 where $\Phi = \begin{pmatrix}
   a_{11} & a_{12}\\
   a_{21} & a_{22}
  \end{pmatrix}$.

\begin{defin}\label{def:Pphi}
Let
\[
P_{\Phi}\colon\overline{\mathcal Y}_{V_1V_2}^s\to \overline{\mathcal Y}_{v_1v_2}
\]
 be the toric map associated with the integral linear map (\ref{eq:pmat}).
We also denote its restriction to ${\mathcal Y}_{V_1V_2}^s$ by $P_{\Phi}$.
\end{defin}

Let $D_0$ be the nodal union
\[
C_{0, v_1}\cup C_{0, v_2}
\]
 of irreducible components of $C_0$ corresponding to the vertices $v_1, v_2\in V(\Gamma)$.
Let 
\[
p = C_{0, v_1}\cap C_{0, v_2}
\]
 be the node of $D_0$.
We assume $D_{0, v_1}$ has one or two marked points, distinct from $p$, corresponding to the nodes
 in $C_0$.
Similarly, we assume $D_{0, v_2}$ has a single marked point distinct from $p$.
Then, the dual intersection graph of $D_0$ is $\Gamma_{v_1v_2}$.
Here, recall that $v_1$ is either 2- or 3-valent, while $v_2$ is always 2-valent.
The following can be easily verified.

\begin{lem}\label{lem:v_1v_2}
Any pre-log curve 
\[
\phi_{0, v_1v_2}\colon D_0\to \overline Y_{0, v_1v_2}
\]
 of type $(\Gamma_{v_1v_2}, h_{v_1v_2})$ (see Definition \ref{def:pre-log})
 can be written as the composition
\[
P_{\Phi}\circ \psi_{0, V_1V_2},
\] 
 where $\psi_{0, V_1V_2}\colon D_0\to \overline Y_{0, V_1V_2}^s$ is a pre-log curve of type  $(\Gamma_s, h_s)$. 
More precisely, let $q$ be any element of $P_{\Phi}^{-1}(\phi_{0, v_1v_2}(p))$.
Let $W_{e_{v_1v_2}}$ be the weight of the unique bounded edge of $\Gamma_{v_1v_2}$.
Then, there are $W_{e_{v_1v_2}}^2$ distinct curves
\[
\psi_{0, V_1V_2}^{(i)}\colon D_0\to \overline Y_{0, V_1V_2}^s,\;\; i = 1, \dots, W_{e_{v_1v_2}}^2,
\] 
 satisfying the conditions
\begin{itemize}
\item $\phi_{0, v_1v_2} = P_{\Phi}\circ \psi_{0, V_1V_2}^{(i)}$, and
\item $\psi_{0, V_1V_2}^{(i)}(p) = q$.
\end{itemize} 
\end{lem}

\proof
Let $\overline Y_{0, 1}$ and $\overline Y_{0, 2}$ be the irreducible components of 
 $\overline Y_{0, v_1v_2}$. 
Assume that 
 the component $C_{0, v_i}$ is mapped to $\overline Y_{0, i}$ by $\phi_{0, v_1v_2}$, for $i = 1, 2$.
Similarly, let $\overline Y_{0, i}^s$ be the irreducible components of 
 $\overline Y_{0, V_1V_2}^s$ to which $C_{0, v_i}$
 is mapped by $\psi_{0, V_1V_2}$.

A map $\psi_{0,V_1V_2}$ satisfying the conditions
\[
\phi_{0, v_1v_2} = P_{\Phi}\circ \psi_{0, V_1V_2}, \;\; \psi_{0, V_1V_2}(p) = q
\]
 is 
 determined by the data of maps
\[
\psi_1\colon C_{0, v_1}\to \overline Y_{0, 1}^s,\;\; \psi_2\colon C_{0, v_2}\to \overline Y_{0, 2}^s,
\]
 satisfying 
\[
\phi_{0, v_1v_2}|_{C_{0, v_1}} 
 = P_{\Phi}\circ \psi_1,\;\; \phi_{0, v_1v_2}|_{C_{0, v_2}} = P_{\Phi}\circ \psi_2, \;\; \psi_1(p) = \psi_2(p) = q.
\]
Note that the image of $\psi_2$ is the closure of
 an orbit of the one-dimensional subtorus corresponding to the 
 weight vector $(1, 0)$ of the bounded edge of $\Gamma_s$.
 The map $\psi_2$ is an isomorphism onto it.
The restriction of $P_{\Phi}$ to the image of $\psi_2$ is a branched covering of covering degree 
 $W_{e_{v_1v_2}}$, branched at $q$ and at the point at infinity.
Therefore, $\psi_2$ is uniquely determined by the condition $\psi_2(p) = q$, up to isomorphisms.
There are $W_{e_{v_1v_2}}$ such isomorphisms given by the action of the subgroup of the one-dimensional subtorus
 consisting of the $W_{e_{v_1v_2}}$-th roots of unity.

When $v_1$ is 2-valent, the map $\psi_1$ is also determined by the condition $\psi_1(p) = q$ up to 
 $W_{e_{v_1v_2}}$ isomorphisms.
Therefore, there are $W_{e_{v_1v_2}}^2$ maps $\psi_{0, V_1V_2}$ satisfying the required conditions.

On the other hand, when $v_1$ is 3-valent, the restriction of $P_{\Phi}$ to $\overline Y_{0, 1}^s$ is a branched covering with 
 covering degree $\det \Phi$.
Therefore, there are $\det \Phi$ curves in $\overline Y_{0, 1}^s$ which projects to 
 $\phi_{0, v_1v_2}(D_{0, 1})$.
Since the map $P_{\Phi}$ ramifies $W_{e_{v_1v_2}}$ times at $q$, the set $P_{\Phi}^{-1}(\phi_{0, v_1v_2}(p))$ contains 
\[
\frac{\det \Phi}{W_{e_{v_1v_2}}}
\]
 points.
By symmetry, out of the $\det \Phi$ curves mentioned above, only $W_{e_{v_1v_2}}$ of them pass through $q$.
Taking into account the $W_{e_{v_1v_2}}$ choices of $\psi_2$ described above, we obtain the desired result.\qed

\begin{rem}\label{rem:lift_log}
\begin{enumerate}
\item Recall that at each node of $C_0$, there are $W_e$ choices of log structures, where $W_e$
 is the weight of the corresponding edge $e$ of $\Gamma$, see \cref{eq:logcurve}.
On the other hand, the set of $W_e^2$ curves described in the above lemma (here, $e = e_{v_1v_2}$)
 is preserved under the action of a symmetry subgroup of the ambient toric variety
 $\overline{\mathcal Y}_{V_1V_2}^s$.
Namely, the multiplication by the $W_e$-th roots of unity in the one-dimensional subtorus 
 corresponding to the weight vector $(1, 0)$ of the bounded edge $E$ of $\Gamma_s$
 acts on $\overline{\mathcal Y}_{V_1V_2}^s$, and also on
 the set of $W_e^2$ curves.
Note that this action is different from the action on the set of the maps $\psi_2$ described in the proof.
The latter action fixes $\psi_1$, and does not extend to an action on $\overline{\mathcal Y}_{V_1V_2}^s$.
\item The quotient space under this action consists of $W_e$ classes of curves.
By direct calculation, 
 these correspond to the choices of the log structures mentioned above.
Explicitly, let $\mathscr D$ be the open subset of $D_0$ defined in Definition \ref{def:scrD} below.
Then, deformations of a representative $\psi_{0, V_1V_2}|_{\mathscr D}$ of an element of the quotient set, 
 when composed with $P_{\Phi}$, 
 yield
 log deformations of $\phi_{0, v_1v_2}|_{\mathscr D}$ associated with one of the log structures in \cref{eq:logcurve}
 (precisely, the corresponding deformations of the underlying analytic curve).
Thus, the construction of deformations of $\varphi_0$ via the lifts such as $\psi_{0, V_1V_2}$ naturally incorporates 
 the data of log structures.
For this reason, the choice of log structures does not explicitly appear in our argument hereafter,
 but it is implicitly encoded in the choice of the lifts.
\end{enumerate}
\end{rem}

The following result is straightforward.
We write 
\[
m = W_{e_{v_1v_2}}
\]
 for notational simplicity.

\begin{lem}\label{lem:mringY}
The toric variety $\mathcal Y_{v_1v_2}$
 is isomorphic to 
\[
\Spec\Bbb C[x, y, z^{\pm 1}, t]/(xy - t^m).
\]
Here, the functions $x, y, z$, and $t$ are characters of the torus acting on 
 $\mathcal Y_{v_1v_2}$.
Similarly, ${\mathcal Y}_{V_1V_2}^s$ is isomorphic to 
\[
\Spec\Bbb C[X, Y, Z^{\pm 1}, t]/(XY - t).
\]
These functions can be taken so that the pullback by the map $P_{\Phi}$ induces the following correspondence:
\[
x\mapsto X^{m}Z^R,\;\; y\mapsto Y^{m}Z^{-R},\;\; z\mapsto Z^{R'},\;\; t\mapsto t.
\]
Here, $R$ and $R'$ are integers.
When both $v_1$ and $v_2$ are 2-valent, we can assume $R = 0 $ and $R' = 1$.\qed
\end{lem}

We take an affine neighborhood $\mathscr D$ of $p$ in $D_0 = C_{0, v_1}\cup C_{0, v_2}$ as follows.
\begin{defin}\label{def:scrD}
Let $\mathscr D$ be the open subvariety of $D_0$ defined by 
\[
\mathscr D = \psi_{0, V_1V_2}^{-1}(Y_{0, V_1V_2}^s).
\]
\end{defin}

When both $v_1$ and $v_2$ are 2-valent, 
 the variety $\mathscr D$ is isomorphic to 
\[
\Spec\Bbb C[s, u]/(su).
\]
While when $v_1$ is 3-valent, it is isomorphic to 
\[
\Spec\Bbb C[s, \frac{1}{s-\alpha}, u]/(su),
\]
 where $\alpha$ is a non-zero complex number.
Namely, 
 by suitably taking the coordinates $X, Y$ on $Y_{0, V_1V_2}^s$ and $s, u$ on $D_0$,
 a lift $\psi_{0, V_1V_2}|_{\mathscr D}$ of $\phi_{0, v_1v_2}|_{\mathscr D}$ 
 gives 
 \[
 X\mapsto s,\;\; Y\mapsto u,\;\; Z\mapsto a+bs,
 \]
 and $\phi_{0, v_1v_2}|_{\mathscr D}$ gives
 \[
 x\mapsto s^m(a+bs)^R,\;\; y\mapsto a^{-R}u^m,\;\; z\mapsto (a+bs)^{R'},
 \]
 where $a$ and $b$ are non-zero complex numbers satisfying
\[
-\frac{a}{b} = \alpha.
\]

Recall that the toric variety $\mathcal Y_{v_1v_2}$
 naturally embeds in $\mathcal X$.
Also, $\overline Y_{0, v_1v_2}$ embeds in $\mathcal X$.
For simplicity, we identify the embedded images with these varieties.
Then, the restriction of the given map $\varphi_0\colon C_0\to X_0$
 to the irreducible components of $C_0$ corresponding to the vertices $v_1$ and $v_2$ of $\Gamma$
 gives a pre-log curve $\phi_{0, v_1v_2}$ into $\overline Y_{0, v_1v_2}$ as described in Lemma \ref{lem:v_1v_2}.
Thus, we identify the open subset $\mathscr D$ of $D_0$ introduced above with an open subset of 
 $C_0$, and also identify $\phi_{0, v_1v_2}|_{\mathscr D}$ above with $\varphi_0|_{\mathscr D}$.
\begin{defin}\label{def:Dv1v2}
When we identify $D_0$ with $C_{0, v_1}\cup C_{0, v_2}$, we write the corresponding open subset 
 $\mathscr D$ as $\mathscr D_{v_1v_2}$.
For simplicity, we denote the restriction $\psi_{0, V_1V_2}|_{\mathscr D_{v_1v_2}}$ also by $\psi_{0, V_1V_2}$.
\end{defin}

Assume we have a deformation
\[
\varphi_n\colon C_n\to 
 X_n = \mathcal X
     \times_{\Spec\Bbb C[t]}\Spec\Bbb C[t]/(t^{n+1})
\]
 of $\varphi_0\colon C_0\to X_0$. 
Then, the restriction of the structure of the analytic space 
 $C_n$ to the topological space underlying  $\mathscr D_{v_1v_2}$ is 
 isomorphic to 
\[
\Spec\Bbb C[s, u, t]/(su-t, t^{n+1}),
\]
 when both $v_1$ and $v_2$ are 2-valent, and to
\[
\Spec\Bbb C[s, \frac{1}{s-\alpha}, u, t]/(su-t, t^{n+1}),
\]
 when $v_1$ is 3-valent.
 
When $v_1$ is 3-valent, 
 the pullback by the map $\varphi_n$
 induces following correspondences:
\[
\begin{array}{l}
x\mapsto s^m(a+bs)^R + tf(s, u, t),\\ 
y\mapsto u^m(a+bs)^{-R}+tg(s, u, t),\\ 
z\mapsto (a+bs)^{R'} + th(s, u, t),
\end{array}
\]
 where $f, g$, and $h$ are functions in $\Bbb C[s, \frac{1}{s-\alpha}, u, t]/(su-t, t^{n+1})$.
When both $v_1$ and $v_2$ are 2-valent, we can put $R = 0$, $R' = 1$, and $b = 0$ in the above expression.
We now state the following lifting property.
\begin{prop}\label{prop:liftmap}
The map $\varphi_n|_{\mathscr D_{v_1v_2}}$ lifts to a map
\[
\psi_{n, V_1V_2}|_{\mathscr D_{v_1v_2}}\colon C_n|_{\mathscr D_{v_1v_2}}\to Y_{n, V_1V_2}^s,
\] 
 so that 
 the equation 
\[
\varphi_n|_{\mathscr D_{v_1v_2}} = P_{\Phi}\circ \psi_{n, V_1V_2}|_{\mathscr D_{v_1v_2}}
\]
 holds.
Here, 
\[
Y_{n, V_1V_2}^s = \mathcal Y_{V_1V_2}^s\times_{\Spec\Bbb C[t]}\Spec\Bbb C[t]/(t^{n+1}),
\]
 and $P_{\Phi}\colon {\mathcal Y}_{V_1V_2}^s\to 
  \mathcal Y_{v_1v_2}$
  is the map introduced in Definition \ref{def:Pphi}.
\end{prop}
\proof
We assume that $v_1$ is 3-valent.
When both $v_1$ and $v_2$ are 2-valent, setting $R = 0$, $R' = 1$ and $b = 0$ 
 in the following argument proves the claim.
It suffices to prove that the correspondences
\[
\begin{array}{l}
x\mapsto s^m(a+bs)^R + tf(s, u, t),\\
y\mapsto u^m(a+bs)^{-R}+tg(s, u, t),\\
z\mapsto (a+bs)^{R'} + th(s, u, t)
\end{array}
\]
 lift to suitable correspondences
\[
\begin{array}{l}
X\mapsto s + tF(s, u, t),\\
Y\mapsto u + tG(s, u, t),\\
Z\mapsto a+bs + tH(s, u, t),
\end{array}
\]
 so that the relations
\[
\begin{array}{l}
(s + tF(s, u, t))^m(a+bs + tH(s, u, t))^R = s^m(a+bs)^R + tf(s, u, t),\\
(u + tG(s, u, t))^m(a + bs + tH(s, u, t))^{-R} = u^m(a+bs)^{-R}+tg(s, u, t),\\
(a + bs + tH(s, u, t))^{R'} = (a+bs)^{R'}+th(s, u, t)
\end{array}
\]
 hold modulo $t^{n+1}$.
Here, $F, G$, and $H$ are analytic functions.
It also suffices to prove the existence of such a lift on an analytic neighborhood of the node $p$
 of $\mathscr D_{v_1v_2}$, 
 since away from its image, the map $P_{\Phi}|_{{\mathcal Y}_{V_1V_2}^s}$ is unramified.
Namely, let $\mathscr D_1$ be one of the connected components of $\mathscr D_{v_1v_2}\setminus\{p\}$.
Then, there exists an analytic neighborhood $\mathscr B$ of the image $\psi_{0, V_1V_2}(\mathscr D_1)$
 in $\mathcal Y_{V_1V_2}^s$ such that the restriction of $P_{\Phi}$ to $\mathscr B$ is an immersion.
Thus, the deformation of $\varphi_n|_{\mathscr D_1}$ induces a deformation of $\psi_{n, V_1V_2}|_{\mathscr D_1}$.
In particular, if a lift of $\varphi_n$ exists on some open subset of $\mathscr D_1$, then it extends to the whole 
 $\mathscr D_1$ .

Now, we can take an $R'$-th root of $(a+bs)^{R'}+th(s, u, t)$, 
 which is $a+bs$ modulo $t$. 
This root determines $H$.
By dividing $s^m(a+bs)^R + tf(s, u, t)$ by $(a+bs + tH(s, u, t))^R$, 
 we obtain a function of the form 
\[
s^m + t\widetilde f(s, u, t),
\]
 where $\widetilde f$ is an analytic function.
Similarly, by dividing $u^m(a+bs)^{-R}+tg(s, u, t)$ by $(a+bs + tH(s, u, t))^{-R}$, 
 we obtain 
\[
u^m + t\widetilde g(s, u, t)
\]
 where $\widetilde g$ is another analytic function.
By expanding $\frac{1}{s-\alpha}$ around $s = 0$, we regard $\widetilde f$ and $\widetilde g$
 as polynomials in $t$, whose coefficients are series in $s$ and $u$.
 
To prove the proposition, it suffices to show that $s^m+t\widetilde f(s, u, t)$ can be written in the form
\begin{equation}\label{eq:F}
s^m+t\widetilde f(s, u, t) = (s + tF(s, u, t)^m,\;\;\;\; \text{mod $t^{n+1}$},
\end{equation}
 and similarly, $u^m + t\widetilde g(s, u, t)$ can be written in the form
\begin{equation}\label{eq:G}
u^m + t\widetilde g(s, u, t) = (u+tG(s, u, t))^m, \;\;\;\; \text{mod $t^{n+1}$},
\end{equation}
 for some $F$ and $G$.

Since $xy = t^m$, we have 
\[
(s^m + t\widetilde f(s, u, t))(u^m+t\widetilde g(s, u, t)) = t^m, \;\;\;\; \text{mod $t^{n+1}$}.
\]
Expanding and rearranging terms, it follows that
\begin{equation}\label{eq:highestterm}
t(s^m\widetilde g(s, u, t) + u^m\widetilde f(s, u, t)) +t^2 \widetilde f(s, u, t)\widetilde g(s, u, t)= 0\;\;\;\; \text{mod $t^{n+1}$},
\end{equation}
From this equation, we observe the following.
\begin{lem}
Every monomial of $\widetilde f$ contains $s^m$ as a factor, 
 after rewriting $t = su$ if necessary.
Similarly, every monomial of $\widetilde g$ contains $u^m$ as a factor.
\end{lem}
\proof
Recall that $\widetilde f$ and $\widetilde g$ are polynomials in $t$ whose coefficients are series in $s$ and $u$.
Using the relation $su = t$, we can eliminate any term containing $su$ as a factor in $\widetilde f$ and $\widetilde g$.
Then, let us write
\[
\widetilde f(s, u, t) = sf_0(s) + \overline f_0(u) + t(sf_1(s) + \overline f_1(u)) + \cdots,
\]
 and
\[
\widetilde g(s, u, t) = g_0(s) + u\overline g_0(u) + t(g_1(s) + u\overline g_1(u)) + \cdots,
\]
 where 
\begin{itemize}
\item $sf_0(s) + \overline f_0(u)$ is the constant term (with respect to $t$) of $\widetilde f(s, u, t)$.
\item $\overline f_0(0) = \widetilde f(0,0,0)$, ensuring $sf_0(s)$ is divisible by $s$.
\item 
For each $i$ ($0\leq i\leq m-1$), the constant term in the coefficient of $t^i$ in $\widetilde f(s, u, t)$
 is collected in $\overline f_i(u)$.
\end{itemize}
Similarly, for $\widetilde g(s, u, t)$:
\begin{itemize}
\item The constants are collected in $g_i(s)$.
\end{itemize}

Substitute these expressions to \cref{eq:highestterm}. 
By eliminating $su$ using the relation $su=t$, 
 the coefficient of $t$ in \cref{eq:highestterm} becomes
\[
s^mg_0(s) + u^m\overline f_0(u).
\] 
It follows that $g_0(s) = \overline f_0(u) = 0$.

Next, consider the coefficient of $t^2$ in \cref{eq:highestterm}.
It is given by 
\[
s^{m-1}\overline g_0(0) + s^mg_1(s) + u^{m-1}f_0(0) + u^m\overline f_1(u).
\]
Since $s^mg_1(s)$ only contains terms divisible by $s^m$, 
 it follows that $g_1(s) = 0$.
Similarly, we have $\overline f_1(u) = 0$.
Moreover, we have $\overline g_0(0) = f_0(0) = 0$.
Thus, $f_0(s)$ is divisible by $s$, and $\overline g_0(u)$ is divisible by $u$.
Therefore, we can now write 
\[
\widetilde f(s, u, t) = s^2f_0(s) + tsf_1(s) + t^2(sf_2(s)+\overline f_2(u)) + \cdots,
\]
 and
\[
\widetilde g(s, u, t) = u^2\overline g_0(u) + tu\overline g_1(u) + t^2(g_2(s) + u\overline g_2(u)) + \cdots.
\]
Here, we use the same symbols $f_0(s), \overline g_0(u)$ as before to save letters.

Considering the coefficients of $t^3$ in \cref{eq:highestterm} and repeating the preceding argument, we have
\[
\overline f_2(u) = g_2(s) = 0,
\]
 and also 
\[
f_0(0) = f_1(0) = \overline g_0(0) = \overline g_1(0) = 0.
\]
By iterating this argument up to the coefficient of $t^{m-1}$, we can write 
\[
\widetilde f(s, u, t) = \sum_{i=0}^{m-1} t^is^{m-i}f_i(s) + \sum_{i=m}^nt^if_i(s, u, t),
\]
 and 
\[
\widetilde g(s, u, t) = \sum_{i=0}^{m-1} t^iu^{m-i}\overline g_i(u) + \sum_{i=m}^nt^ig_i(s, u, t).
\]
It follows that if we replace $t$ by $su$, all the terms of $\widetilde f(s, u, t)$ can be divided by $s^m$, 
 and all the terms of $\widetilde g(s, u, t)$ can be divided by $u^m$.\qed\\

Thus, we can write $\widetilde f(s, u, t) = s^m \widetilde F(s, u, t)$ for some $\widetilde F$.
Then, we have
\[
s^m + t\widetilde f(s, u, t) = s^m(1+t\widetilde F(s, u, t)) = s^m(1+t\overline F(s, u, t))^m
\]
 for some $\overline F$.
Similarly, we can write 
\[
u^m+t\widetilde g(s, u, t) = u^m(1+t\overline G(s, u, t))^m.
\]
By taking $F = s\overline F$ and $G = u\overline G$ in Eqs.(\ref{eq:F}) and (\ref{eq:G}),
 the proposition is proved.\qed

\subsection{Lifting defining equations of curves to $\mathcal Y_s$}\label{subsec:lift}
In \cite{Nishinou_Obstruction_2018}, 
 the obstructions to deforming maps between varieties are calculated in the case where the target space is 
 a fixed smooth complex manifold, the domain has a dimension one less than the target,
 and the map is an immersion.
The calculation of the obstruction in this paper is based on that in  \cite{Nishinou_Obstruction_2018},
 but special attention is required due to the following factors:
\begin{enumerate}
\item $X_0$ is singular and the complex structure deforms in the family $\mathcal X$, and
\item the map $\varphi_0$ is not necessarily an immersion. 
\end{enumerate}
To address the second issue, we reduce the calculation to the standard case introduced in the previous subsection.
This subsection primarily deals with the potential complications arising from the deformation of $X_0$.
However, the second issue also plays a role in the argument.

\begin{rem}
For clarity, we distinguish the terms used in relation to the map $\varphi_n$.
The term \emph{lift} refers to maps $\psi_{n, V_1V_2}$ introduced in the previous subsection.
In contrast, (local) \emph{deformation} refers to maps defined over $\Bbb C[t]/(t^{n+2})$
 which reduce to $\varphi_n$ over $\Bbb C[t]/(t^{n+1})$.
\end{rem}

Assume we have constructed a deformation $\varphi_n\colon C_n\to X_n$ of $\varphi_0\colon C_0\to X_0$.
To construct an obstruction cocycle suitable for calculation, we utilize defining
 equations of the images of open subsets of $C_n$ (see Subsection \ref{subsubsec:stdcocycle} for details).
Here, the image of $\varphi_n$ is given a structure of an analytic space called the
 annihilator structure, see Subsection \ref{subsec:notation}.
For an open subset $\mathscr U_i$ of $C_n$ introduced 
 in Definition \ref{def:scrU}, the image by $\varphi_n$ is given by an equation of the form
\[
\sum_{i=0}^n t^i f_i(\bold x) = 0,
\]
 where each $f_i$ is an analytic function, and
 $\bold x$ is a fixed coordinate system on some open subset $\mathscr W_i$ of $X_n$ (here, we temporarily
 disregard the singularities of $X_n$
 for the ease of explanation).
To construct an obstruction cocycle, we need to deform the map $\varphi_n$ on such open subsets.
In a favorable situation (e.g., when $\varphi_n$ is locally an embedding from a non-singular domain),
 such a deformation can be determined, 
 up to isomorphisms, by a defining equation of the image of the deformed map.
Therefore, any equation of the form
\[
\sum_{i=0}^{n+1} t^i f_i(\bold x) = 0,
\]
 gives a local deformation of $\varphi_n$.
A natural and convenient choice for $f_{n+1}$ is $f_{n+1} = 0$.
On another open subset $\mathscr U_j$ of $C_n$, the image by $\varphi_n$ is represented by an equation on 
 a different open subset $\mathscr W_j$ of $X_n$. 
The differences between these local deformations of $\varphi_n$ then
 form a \v{C}ech 1-cocycle representing the obstruction class.
Usually, for each open subset $\mathscr U_j$ of 
 $C_n$, it suffices to construct a local deformation of $\varphi_n$ on \emph{some} open subset $\mathscr W_j$ of $X_n$.

In the present case, however, the second issue mentioned above introduces additional complications.
To address this, we utilize two open coverings, $\{\mathscr U_i\}$ and 
 $\{\mathscr D_{v_1v_2}\}$ of $C_0$.
The covering $\{\mathscr D_{v_1v_2}\}$ 
 is used to locally lift $\varphi_n$ to maps of the form $\psi_{n, V_1V_2}$ introduced in the previous subsection.
This is necessary to apply the arguments in \cite{Nishinou_Obstruction_2018}.
On the other hand, the covering $\{\mathscr U_i\}$ is better suited for calculations of cohomology classes.
For example, since we can assume that the map $\varphi_n$ is an embedding on $\mathscr U_i$ away from the
 nodes (and, as for the map $\psi_{n, V_1V_2}$, it is also an embedding even on a neighborhood of nodes),
 it is reasonable to expect that we can construct an obstruction cocycle using this covering, by the method using defining equations
 explained above.

However, contrary to the case explained above, we need to consider deformations of $\varphi_n|_{\mathscr U_i}$ 
 on \emph{every} open subset of the form $\mathcal Y_{v_1v_2}\subset \mathcal X$ containing the image
 $\varphi_n(\mathscr U_i)$.
This requirement arises for the following reason.
The lifts $\psi_{n, V_1V_2}$ of $\varphi_n$, which we need to take to address the second issue, depend on the choice of these
 $\mathcal Y_{v_1v_2}$.
To obtain a well-defined obstruction cocycle, we must ensure that the deformations 
 defined for each lift are compatible in an appropriate sense.
 
To understand the issue in this argument,
 suppose the image $\varphi_n(\mathscr U_i)$ is represented by an equation $F_1 = 0$ using the coordinates
 on $\mathcal Y_{v_1v_2}\times_{\Spec \Bbb C[t]}\Spec \Bbb C[t]/(t^{n+1})$.
Assume there is another $\mathcal Y_{v_3v_4}$ containing $\varphi_n(\mathscr U_i)$, and $F_1$ is transformed 
 to $G_1$ by the coordinate transformation between 
 $\mathcal Y_{v_1v_2}\times_{\Spec \Bbb C[t]}\Spec \Bbb C[t]/(t^{n+1})$ and
 $\mathcal Y_{v_3v_4}\times_{\Spec \Bbb C[t]}\Spec \Bbb C[t]/(t^{n+1})$.
In general, when the function $F_1$ is regarded as a function on 
 $\mathcal Y_{v_1v_2}\times_{\Spec \Bbb C[t]}\Spec \Bbb C[t]/(t^{n+2})$
 by some prescribed rule, such as taking $f_{n+1} = 0$ as discussed above, 
 it does not necessarily coincide with $G_1$ regarded as a function on 
 $\mathcal Y_{v_3v_4}\times_{\Spec \Bbb C[t]}\Spec \Bbb C[t]/(t^{n+2})$
 by the same rule.
We will see that under the assumption (\ref{eq:assump}), this can be avoided.

%
 
The compatibility of deformations is proved 
 in this subsection (see Corollary \ref{cor:welldef}),
using the assumption (\ref{eq:assump}).
This statement concerns $\varphi_n$ rather than the lifts $\psi_{n, V_1V_2}$.
However, a similar compatibility for these lifts $\psi_{n, V_1V_2}$ follows easily from the statement for $\varphi_n$.
Using this, we construct an obstruction cocycle to deforming $\varphi_n$
 from the data of the lifts $\psi_{n, V_1V_2}$ in the next subsection.

\subsubsection{Lifting functions over $\Bbb C[t]/(t^{n+1})$ to $\Bbb C[t]/(t^{n+2})$}\label{subsec:split}
Let 
$
\{\mathscr U_i\}_{i\in I}
$
 be an open covering of $C_0$ as described in Subsection \ref{subsec:pairing}.
Since the map $\varphi_0$ is an immersion away from the nodes (see the proof of 
 Proposition \ref{prop:geninj}), 
 by taking these open subsets sufficiently small, we can assume that $\varphi_0|_{\mathscr U_i}$ is an embedding if 
 $\mathscr U_i$
 does not contain a node.
Additionally, we can assume 
 that if both $\mathscr U_i$ and $\mathscr U_j$ contain nodes, then 
 $\mathscr U_i\cap \mathscr U_j = \emptyset$ unless $\mathscr U_i = \mathscr U_j$.
Then, 
 the restriction
\[
\varphi_0|_{\mathscr U_i\cap \mathscr U_j}\colon \mathscr U_i\cap \mathscr U_j\to X_0
\]
 is an embedding for any $\mathscr U_i$ and $\mathscr U_j$ such that $\mathscr U_i\neq \mathscr U_j$. 
We also define an open covering 
$
\{\mathscr W_k\}_{k\in I'}
$
 of $X_0$ as follows.
\begin{defin}\label{def:scrW}
Let $\{\mathscr W_k\}_{k\in I'}$ be an open covering of $X_0$ with the following properties:
\begin{itemize}
\item The index set $I'$ contains the index set $I$ for the covering $\{\mathscr U_i\}$ of $C_0$.
\item For each $\mathscr U_i$, its image $\varphi_0(\mathscr U_i)$ is 
 contained in $\mathscr W_i$.
By shrinking $\mathscr W_i$ if necessary, we assume $\varphi_0(\mathscr U_i)$ is the zero locus of 
 an analytic function on $\mathscr W_i$.
Moreover, when $\mathscr U_i$ does not contain a node, then $\mathscr W_i$ does not intersect 
 the toric boundary of any irreducible component of $X_0$.
\item We take each $\mathscr W_i$ to be isomorphic to the product of two discs when $\mathscr U_i$ does not
 contain a node, and to be isomorphic a normal crossing union of products of two discs,
 which embeds into a suitable neighborhood of the origin in 
\[
\{xy = 0\}\subset \Bbb C^3 = \{(x, y, z)\;|\; x, y, z\in\Bbb C\}
\]
 when $\mathscr U_i$ contains a node.
\end{itemize}
\end{defin}

Assume that an $n$-th order deformation $\varphi_n\colon C_n\to X_n$ of the pre-log curve 
$\varphi_0\colon C_0\to X_0$ exists, where $n\geq 0$. 
\begin{defin}\label{def:u_inw_in}
Let $\mathscr U_{i, n}$ denote the ringed space obtained by restricting $C_n$ to $\mathscr U_i$.
Similarly, 
 let $\mathscr W_{i,n}$ denote the ringed space obtained by restricting $X_n$ to $\mathscr W_i$. 
\end{defin}
Let 
\[
F_{i, n}=0
\]
 be a defining equation of the image $\varphi_{n}(\mathscr U_{i, n})$ in $\mathscr W_{i,n}$.
When $\mathscr U_i$ contains a node of $C_0$, by shrinking the open subsets if necessary, 
 we can assume that the ring of functions on $\mathscr W_{i,n}$ contains the ring
\[
\Bbb C[x, y, z^{\pm 1}, t]/(xy-t^m, t^{n+1})
\]
 using the notation in the previous subsection.
This is the reduction over $\Bbb C[t]/(t^{n+1})$ of the 
 ring of functions on a toric open subset containing $\mathscr W_{i,n}$.
The defining function $F_{i, n}$ can be chosen from this subring. 

For the calculation of the obstruction below, it is necessary to regard the function $F_{i, n}$, which is defined
 over $\Bbb C[t]/(t^{n+1})$, as a function defined over $\Bbb C[t]/(t^{n+2})$.
This is achieved via a splitting of the exact sequence
\begin{equation}\label{eq:splitexact}
\begin{split}
0\to t^{n+1}\Bbb C[x, y, z^{\pm 1}]/(xy)\to &\Bbb C[x, y, z^{\pm 1}, t]/(xy-t^m, t^{n+2}) \\ 
 & \to  \Bbb C[x, y, z^{\pm 1}, t]/(xy-t^m, t^{n+1})\to 0
\end{split}
\end{equation}
 of rings.
There is an obvious splitting (as $\Bbb C$-modules)
\[
\Bbb C[x, y, z^{\pm 1}, t]/(xy-t^m, t^{n+1})\to  \Bbb C[x, y, z^{\pm 1}, t]/(xy-t^m, t^{n+2}).
\]
Namely, given an element 
\[
F(x, y, z, t) \in \Bbb C[x, y, z^{\pm 1}, t]/(xy-t^m, t^{n+1}),
\] 
 represent it in a way which does not contain $xy$ or $t^{n+1}$, 
 and regard the result as an element of $\Bbb C[x, y, z^{\pm 1}, t]/(xy-t^m, t^{n+2})$.

\begin{rem}
Although there are other splittings of the sequence \cref{eq:splitexact},
 the one described above appears to be the most natural one.
We may obtain the same result by other choices, but it will complicate the argument.
\end{rem}

When $\mathscr U_i$ does not contain a node of $C_0$, we can assume that
 the ring of functions on $\mathscr W_{i,n}$ contains
 either 
\[
\Bbb C[x^{\pm 1}, z^{\pm 1}, t]/(t^{n+1})\;\;
 \text{or} \;\;
 \Bbb C[y^{\pm 1}, z^{\pm 1}, t]/(t^{n+1}).
\]
Assume that it contains the former.
Consider the exact sequence
\[
0\to \Bbb C[x^{\pm 1}, z^{\pm 1}]\to 
 \Bbb C[x^{\pm 1}, z^{\pm 1}, t]/(t^{n+2})\to \Bbb C[x^{\pm 1}, z^{\pm 1}, t]/(t^{n+1})\to 0.
\]
Again, there is an obvious splitting 
\[
\Bbb C[x^{\pm 1}, z^{\pm 1}, t]/(t^{n+1})\to \Bbb C[x^{\pm 1}, z^{\pm 1}, t]/(t^{n+2})
\]
 given by writing an element of $\Bbb C[x^{\pm 1}, z^{\pm 1}, t]/(t^{n+1})$ so that it does not
 contain $t^{n+1}$, and regarding it as an element of $\Bbb C[x^{\pm 1}, z^{\pm 1}, t]/(t^{n+2})$.

In this case, the defining function of $\varphi_n(\mathscr U_{i, n})$ in $\mathscr W_{i, n}$ is not contained in 
 $\Bbb C[x^{\pm 1}, z^{\pm 1}, t]/(t^{n+1})$ in general, but is contained in a larger ring.
This is because, at the image $\varphi_0(a)$ of some point $a\in \mathscr U_i$, 
 there may be several branches of $\varphi_0(C_0)$.
If $\mathscr U_i$ is sufficiently small, $\varphi_0(\mathscr U_i)$ is contained in one of these branches.
Therefore, the defining functions of $\varphi_0(\mathscr U_i)$ and of $\varphi_n(\mathscr U_{i, n})$ are not, 
 in general, polynomials in $x, z$ and $t$, 
 but rather polynomials in $t$ whose coefficients are analytic functions of $x$ and $z$.
Nevertheless, the natural extension of the splitting described above provides a unique lift of the defining functions to 
 $\Bbb C[t]/(t^{n+2})$.

\subsubsection{The behavior of lifts of functions under coordinate changes}
We note that there is some subtlety here, and this is the place where the assumption (\ref{eq:assump}) 
 in the introduction becomes relevant.
Namely, there are several open subsets $\mathcal Y_{v_1v_2} \hookrightarrow \mathcal X$
 introduced in Definition \ref{def:mathcalY},
 whose images contain $\varphi_0(\mathscr U_i)$.
The toric coordinates $x$, $y$, and $z^{\pm 1}$, which are regular on $\mathcal Y_{v_1v_2}$,
 depend on the choice of the vertices $v_1$ and $v_2$ of $\Gamma$.
Suppose that the open subset $\mathcal Y_{v_2v_3}$ also contains $\varphi_0(\mathscr U_i)$.
Since the above splitting process involves choosing an explicit representative for an element of a quotient ring, 
 this process can, in general, depend on the choice of these coordinates.
However, this does not occur in the present situation, see Corollary \ref{cor:welldef}.
The following simple result is key to showing this.
\begin{prop}\label{prop:compatibility}
Under the assumption (\ref{eq:assump}), the coordinate transformation
 between $\mathcal Y_{v_1v_2}$ and $\mathcal Y_{v_2v_3}$
 does not depend on the parameter $t$.
\end{prop} 
\proof
Let $\Delta$ be the parallelogram fundamental domain of the 
 $\overline{\Lambda}$-action on $N_{\Bbb R}\cong \Bbb R^2$ as in the
 introduction.
Assume all the vertices $v_1, v_2$ and $v_3$ and the edge connecting them are contained in $\Delta$.
As we noted in Lemma \ref{lem:mringY}, the toric variety $\mathcal Y_{v_1v_2}$
 is isomorphic to $\Spec\Bbb C[x, y, z^{\pm 1}, t]/(xy-t^m)$, and
 $\mathcal Y_{v_2v_3}$
 is isomorphic to $\Spec\Bbb C[x', y', z'^{\pm 1}, t]/(x'y'-t^m)$ 
 for some characters $x, y, z, x', y'$, and $z'$ of the torus acting on these varieties.
If $y, z$ and $y', z'$ restrict to coordinates on an open subset of $X_{0, v_2}$, 
 then it is straightforward to see that $y$ and $z$ are Laurent monomials of $y'$ and $z'$, and vice versa. 
Thus, the coordinate transformation between $\mathcal Y_{v_1v_2}$ and $\mathcal Y_{v_2v_3}$
 does not depend on the parameter $t$.

On the other hand, we also need to verify that the coordinate transformation induced by the action of $\Lambda$
 does not depend on $t$, either.
Let $\lambda_1 = (\alpha_{11}t^{n_{11}}, \alpha_{12}t^{n_{12}})$ be one of the generators of 
 $\Lambda$ as in (\ref{eq:assump}).
On the tropical curve, the corresponding action is the translation by the vector $\overline{\lambda}_1 = (n_{11}, n_{12})$.
Let $\widetilde v_i = (a_i, b_i)\in N$ be a vertex of the $\overline{\Lambda}$-periodic tropical curve
 (see Section \ref{sec:intro_(realization)})
\[
\widetilde h\colon\widetilde{\Gamma}\to N_{\Bbb R},
\]
 which projects to $v_i$.
We assume $\{\widetilde v_1, \widetilde v_2\}$ and $\{\widetilde v_2, \widetilde v_3\}$ are pairs of adjacent vertices.

The tropical curve $(\widetilde{\Gamma}, \widetilde h)$ induces a degeneration of 
 the family of tori $(\Bbb C^{\times})^2\times B^{\times}$, as described in Section \ref{sec:intro_(realization)}, to 
 a union of toric varieties $\widetilde X_0$.
Let 
\[
\widetilde{\mathcal X}\to B
\]
 be the extended family.
Then, the action of $\Lambda$
 extends to $\widetilde{\mathcal X}$ and, the quotient gives the degeneration $\mathcal X\to B$.

As in the case of $\mathcal X$, there are open subvarieties 
\[
\widetilde{\mathcal Y}_{\widetilde v_1\widetilde v_2},\;\;
 \widetilde{\mathcal Y}_{\widetilde v_2\widetilde v_3}
\]
 of $\widetilde{\mathcal X}$.
Let 
$
\Bbb C[x, y, z^{\pm 1}, t]/(xy-t^m)$ and $\Bbb C[x', y', z'^{\pm 1}, t]/(x'y'-t^{m'})
$
 be the rings of functions on $\widetilde{\mathcal Y}_{\widetilde v_1\widetilde v_2}$ and $\widetilde{\mathcal Y}_{\widetilde v_2\widetilde v_3}$,
 respectively, as above.
Similarly, for the pair of adjacent vertices $\{\widetilde v_1+\overline{\lambda}_1, \widetilde v_2+\overline{\lambda}_1\}$, 
 we have an open subvariety 
\[
\widetilde{\mathcal Y}_{(\widetilde v_1+\overline{\lambda}_1)(\widetilde v_2+\overline{\lambda}_1)}
\] 
 of $\widetilde{\mathcal X}$, 
 whose ring of functions can be written as 
 $\Bbb C[\overline x, \overline y, \overline z^{\pm 1}, t]/(\overline x\overline y-t^m)$, where
 $\overline x, \overline y$, and $\overline z$ are characters.
Under the assumption (\ref{eq:assump}),
 these functions $\overline x$, $\overline y$, and $\overline z$ can be chosen so that
 the action of $\lambda_1$ pulls them back to $x, y$, and $z$, respectively.
Therefore, the coordinate transformation between $\widetilde{\mathcal Y}_{\widetilde v_2\widetilde v_3}$ and 
 $\widetilde{\mathcal Y}_{(\widetilde v_1+\overline{\lambda}_1)(\widetilde v_2+\overline{\lambda}_1)}$ does not 
 depend on the parameter $t$, either.
This proves the claim.\qed\\

The key observation in the proof is that the generators of the ring of functions 
 $\Spec\Bbb C[\overline x, \overline y, \overline z^{\pm 1}, t]/(\overline x\overline y-t^m)$ of 
 $\widetilde{\mathcal Y}_{(\widetilde v_1+\overline{\lambda}_1)(\widetilde v_2+\overline{\lambda}_1)}$
 are Laurent monomials of the generators $x, y, z$, and $t$ of the ring of functions on 
 $\widetilde{\mathcal Y}_{\widetilde v_1\widetilde v_2}$.
This follows from the fact that both sets of generators are characters of the torus acting on 
 $\mathcal X$.
Specifically, $\overline x$ can be expressed as the product of $x$ and some (possibly negative) power of $t$,
 and similar expressions hold for $\overline y$ and $\overline z$.
Therefore, if the action of $\Lambda$ maps monomials to monomials, 
 the coordinate transformation does not depend on the parameter $t$.
However, if the action of $\Lambda$ does not satisfy the assumption (\ref{eq:assump}), 
 and a generator of $\Lambda$, such that $\lambda_1$, is of the form 
\[
\lambda_1 =
 (\alpha_{11}t^{n_{11}} + \alpha_{11, 1}t^{n_{11}+1} + \cdots, \alpha_{12}t^{n_{12}} + \alpha_{12,1}t^{n_{12}+1}+ \cdots ), 
\]
 then, 
 it takes $\overline y$ to a function of the form $(c_0+c_1t + c_2 t^2 + \cdots)y$.
In this case, the coordinate transformation between 
 $\widetilde{\mathcal Y}_{\widetilde v_2\widetilde v_3}$ and 
  $\widetilde{\mathcal Y}_{(\widetilde v_1+\overline{\lambda}_1)(\widetilde v_2+\overline{\lambda}_1)}$ depends on the parameter $t$.\\

From this proposition, it follows that the splitting of \cref{eq:splitexact} is well-defined, that is, 
 it is independent of the choice of 
 $\mathcal Y_{v_1v_2} \hookrightarrow \mathcal X$.
To elaborate, suppose we have a function 
\[
f(x, y, z, z^{-1}, t) = \sum_{i=0}^n f_i(x, y, z. z^{-1})t^i,
\]
 defined on an open subset $\mathscr W_{i,n}$ of $\mathcal Y_{v_1v_2}$,
 which is expressed using characters of the torus acting on $\mathcal X$.
Here, $f_i(x, y, z, z^{-1})$ are analytic functions in $x, y, z$, and $z^{-1}$.
We regard $f$ as a function on $\mathscr W_{i,n+1}$ by considering it as a function whose coefficient of
 $t^{n+1}$ is zero.
If $\varphi_0$ is an immersion, it provides a deformation of $\varphi_n$ on $\mathscr U_{i,n}$.
In general, $\varphi_0$ is not an immersion, but since the lifts $\psi_{n, V_1V_2}$ are immersions, 
 this issue can be circumvented (see Subsection \ref{subsec:obstcocycle} below). 
 
If $\mathscr W_{i,n}$ is contained in another open subset $\mathcal Y_{v_1v_3}$, whose ring of functions is
 $\Bbb C[x', y', (z')^{\pm 1}, t]/(x'y' - t^{m'})$, 
 we can express the function $f(x, y, z,z^{-1}, t)$ in terms of $x', y'$, and $(z')^{\pm 1}$ 
 using the coordinate transformation
 between $\mathcal Y_{v_1v_2}$ and $\mathcal Y_{v_1v_3}$.
Let $f(x', y', z', (z')^{-1}, t)$ be the function obtained in this way.
It is a function over $\Bbb C[t]/(t^{n+1})$.
Following the recipe in Subsection \ref{subsec:split}, we again regard it as a function on $\mathscr W_{i,n+1}$.
Then, by Proposition \ref{prop:compatibility}, we have the following.
\begin{cor}\label{cor:welldef}
The coordinate transformation between the varieties
 $\mathcal Y_{v_1v_2}$ and $\mathcal Y_{v_1v_3}$, considered over $\Bbb C[t]/(t^{n+2})$, 
 sends the function $f(x, y, z, z^{-1}, t)$
 into $f(x', y', z', (z')^{-1}, t)$.\qed
\end{cor}

Thus, the local deformation of $\varphi_n$ on $\mathscr U_{i, n}$ 
 using the defining equations is well-defined, and does not depend on
 the choice of $\mathcal Y_{v_1v_2}$.

\begin{rem}\label{rem:compati} 
This issue does not arise in the case where $\mathscr U_i$ contains a node, since, in this case there is a unique 
 $\mathcal Y_{v_1v_2} \hookrightarrow \mathcal X$ whose image contains $\varphi_0(\mathscr U_i)$.
\end{rem}

\subsection{A representative of the obstruction cocycle}\label{subsec:obstcocycle}
In this subsection, we first construct an obstruction cocycle to deforming $\varphi_n$ under the assumption that
 $\varphi_0$ is an immersion, 
 following a standard approach,
 see Lemma \ref{lem:obst}.
In general situations, however, $\varphi_0$ is not an immersion.
Therefore,
 we instead utilize its local lifts $\psi_{n, V_1V_2}$ introduced in Subsection \ref{subsec:std}.
Recall that the obstruction to deforming $\varphi_n$ belongs to the cohomology group $H^1(C_0, \mathcal N_{\varphi_0})$.
Therefore, we need to construct an $\mathcal N_{\varphi_0}$-valued cocycle from the data of $\psi_{n, V_1V_2}$.
For this purpose, we construct another invertible sheaf $\mathcal N$ associated with $\psi_{n, V_1V_2}$ in
 Subsections \ref{subsec:toriccover} and \ref{subsec:psinormal}.
The sheaf $\mathcal N$ turns out to be isomorphic to $\mathcal N_{\varphi_0}$.
Then, in Subsection \ref{subsec:constcocycle}, we construct an obstruction cocycle to deforming 
 a general $\varphi_n$
 based on the lifts $\psi_{n, V_1V_2}$.
Based on the calculations in Subsection \ref{subsec:lift}, 
 such a cocycle can be defined by tracing the standard arguments
 in Subsection \ref{subsubsec:stdcocycle}.
Specifically, the well-definedness of the cocycle is shown in Lemma \ref{lem:cocycle} using Corollary \ref{cor:welldef}.
The fact that this cocycle actually represents the obstruction class is shown in 
 Proposition \ref{prop:locallifts}.

\subsubsection{A construction of an obstruction cocycle to deforming $\varphi_n$
 when $\varphi_0$ is an immersion}\label{subsubsec:stdcocycle}
First, we note the following:
\[
\text{\it In Subsection \ref{subsubsec:stdcocycle}, the map $\varphi_0$ is assumed to be an immersion.}
\]
This condition does not hold for a general $\varphi_0$,
 which motivates the consideration of lifts $\psi_{n, V_1V_2}$.
Since the maps $\psi_{n, V_1V_2}$ are immersions, the calculations presented below can be applied to them, 
 in Subsection \ref{subsec:constcocycle}.
For further discussion, see Remark \ref{rem:nonimmersion}.

Let $F_{i, n}=0$ be a defining equation of the image $\varphi_{n}(\mathscr U_{i, n})$ in $\mathscr W_{i,n}$.
Similarly, let $F_{j, n} = 0$ be a defining equation of the image  $\varphi_{n}(\mathscr U_{j, n})$ in $\mathscr W_{j, n}$,
 and assume that $\mathscr U_i\cap \mathscr U_j\neq\emptyset$.
By shrinking these open subsets if necessary, we can further assume that 
 if $\mathscr U_i\cap \mathscr U_j$ is non-empty, we have
 \[
 \varphi_0(\mathscr U_i\cup \mathscr U_j) \cap (\mathscr W_i\cap \mathscr W_j) = \varphi_0(\mathscr U_i\cap \mathscr U_j).
 \]
Under this assumption, on the intersection $\mathscr W_{i,n}\cap \mathscr W_{j, n}$, the defining functions are related by
\[
F_{i, n} = G_{ij, n}F_{j, n},
\]
where $G_{ij, n}$ is a holomorphic function on 
 $\mathscr W_{i,n}\cap \mathscr W_{j, n}$ defined over $\Bbb C[t]/(t^{n+1})$,
whose reduction over $\Bbb C[t]/(t)$ is non-vanishing.
By regarding these functions as defined over $\Bbb C[t]/(t^{n+2})$ as in the previous subsection,
 we define a function $\nu_{ij, n+1}$ by the equation
\begin{equation}\label{eq:nu}
t^{n+1}\nu_{ij, n+1} = F_{i, n} - G_{ij, n}F_{j, n}
\end{equation}
on $\mathscr W_{i, n+1}\cap \mathscr W_{j, n+1}$.
Here, $\nu_{ij, n+1}$ can be regarded as a holomorphic function on $\mathscr W_{i, 0}\cap \mathscr W_{j, 0}$.

Assume that there is another open subset $\mathscr W_k$ containing $\varphi_0(\mathscr U_k)$, 
 such that 
\[
\mathscr U_i\cap \mathscr U_j\cap \mathscr U_k\neq \emptyset,
\]
and define $F_{k, n}$, $G_{jk, n}$, and $G_{ki, n} (= G_{ik, n}^{-1})$ as above.
These functions determine $\nu_{jk, n+1}$, $\nu_{ki, n+1}$ etc. on the relevant intersections.

\begin{lem}\label{lem:obst}
	On $\mathscr W_{i, 0}\cap \mathscr W_{j, 0}\cap \mathscr W_{k, 0} \cap \varphi_0(\mathscr U_i)$, 
	the identity
	\[
	\nu_{ik, n+1} = \nu_{ij, n+1} + G_{ij, 0}\nu_{jk, n+1}
	\]
	holds. 
\end{lem}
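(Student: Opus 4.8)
The plan is to substitute the three defining relations into one another over $\mathbb{C}[t]/t^{n+2}$ and isolate the discrepancy caused by the splitting of Subsection~\ref{subsec:lift}, and then eliminate the leftover term by restricting to $\varphi_0(U_i)$. On the triple overlap we have, all read over $\mathbb{C}[t]/t^{n+2}$ via the chosen splittings,
\[
F_{i,n}=G_{ij,n}F_{j,n}+t^{n+1}\nu_{ij,n+1},\qquad F_{j,n}=G_{jk,n}F_{k,n}+t^{n+1}\nu_{jk,n+1},\qquad F_{i,n}=G_{ik,n}F_{k,n}+t^{n+1}\nu_{ik,n+1}.
\]
Substituting the second relation into the first, noting that in a product with $t^{n+1}$ only the $t=0$ part $G_{ij,0}$ of $G_{ij,n}$ survives, and then comparing with the third relation, yields
\[
t^{n+1}\bigl(\nu_{ik,n+1}-\nu_{ij,n+1}-G_{ij,0}\,\nu_{jk,n+1}\bigr)=\bigl(G_{ij,n}G_{jk,n}-G_{ik,n}\bigr)F_{k,n}
\]
as functions over $\mathbb{C}[t]/t^{n+2}$ on the triple overlap.

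Next I would show that the right-hand side is $t^{n+1}$ times a multiple of $F_{k,0}$. The transition functions satisfy the cocycle identity $G_{ij,n}G_{jk,n}=G_{ik,n}$ \emph{over} $\mathbb{C}[t]/t^{n+1}$: both products carry $F_{k,n}$ to $F_{i,n}$, and $F_{k,n}$ is a non-zero divisor in the ring of functions on $W_{k,n}$, since its reduction $F_{k,0}$ is a non-zero divisor — it defines the one-dimensional hypersurface $\varphi_0(U_k)$, which contains no irreducible component of $W_{k,0}$ (obvious when $W_{k,0}$ is a bidisc, and in the nodal case because $\varphi_0(U_k)$ has a single branch in each of the two components of $\{xy=0\}$) — while $W_{k,n}$ is flat over $\mathbb{C}[t]/t^{n+1}$. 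Hence the lifts of $G_{ij,n}G_{jk,n}$ and $G_{ik,n}$ to $\mathbb{C}[t]/t^{n+2}$ agree modulo $t^{n+1}$; since the splitting is not a ring homomorphism their difference need not vanish, but it is a term $t^{n+1}\delta_{ijk}$ with $\delta_{ijk}$ a holomorphic function on the $t=0$ triple overlap. Dividing the previous display by $t^{n+1}$ therefore gives
\[
\nu_{ik,n+1}-\nu_{ij,n+1}-G_{ij,0}\,\nu_{jk,n+1}=\delta_{ijk}\,F_{k,0}
\]
as functions on $W_{i,0}\cap W_{j,0}\cap W_{k,0}$.

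Finally I would restrict to $\varphi_0(U_i)$. From $F_{i,n}=G_{ik,n}F_{k,n}$ with $G_{ik,n}$ a unit, passing to $t=0$ gives $F_{i,0}=G_{ik,0}F_{k,0}$ with $G_{ik,0}$ a unit on $W_i\cap W_k$, which contains the triple overlap; hence $\varphi_0(U_i)$ and $\varphi_0(U_k)$ define the same subscheme there, so $F_{k,0}$ vanishes along $\varphi_0(U_i)$. Restricting the last display to $W_{i,0}\cap W_{j,0}\cap W_{k,0}\cap\varphi_0(U_i)$ kills the term $\delta_{ijk}F_{k,0}$ and leaves $\nu_{ik,n+1}=\nu_{ij,n+1}+G_{ij,0}\nu_{jk,n+1}$, which is the claim.

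The step I expect to require the most care is the treatment of the splitting: one must keep straight that the cocycle relation for the $G$'s holds \emph{exactly} only over $\mathbb{C}[t]/t^{n+1}$, so that its failure over $\mathbb{C}[t]/t^{n+2}$ contributes precisely the correction $\delta_{ijk}F_{k,0}$, and then recognize that this correction — nonzero on the ambient overlap in general — is exactly what disappears on restriction to the pre-log curve (which is why the lemma is stated only on the locus $\cdots\cap\varphi_0(U_i)$). Checking that $F_{k,n}$ is a non-zero divisor, uniformly across the smooth and nodal local models of Subsections~\ref{subsec:std}--\ref{subsec:lift}, is the one other point where a little attention to the explicit coordinates is needed.
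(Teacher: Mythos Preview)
Your proof is correct and follows essentially the same route as the paper's. The paper's argument is the same substitution-and-cancellation you perform, leading to the identical residual term $(G_{ij,n}G_{jk,n}-G_{ik,n})F_{k,n}$, which is shown to vanish on $\varphi_0(U_i)$ because $G_{ij,n}G_{jk,n}\equiv G_{ik,n}\bmod t^{n+1}$ and $F_{k,0}=0$ on the curve. The only cosmetic difference is that in the nodal case the paper makes explicit the term $H_{i,n+1}$ arising when the lift of $F_{i,n}$ via the nodal splitting is rewritten in the smooth coordinates $x,z,t$ on the overlap (by substituting $y=t^m/x$); this term appears identically in the defining relations for both $\nu_{ij,n+1}$ and $\nu_{ik,n+1}$ and hence cancels in your subtraction, which is why you never need to name it. Your justification of the cocycle identity $G_{ij,n}G_{jk,n}=G_{ik,n}$ over $\Bbb C[t]/t^{n+1}$ via the non-zero-divisor property of $F_{k,n}$ is in fact more detailed than the paper's, which simply asserts the congruence.
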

See \cite[Theorem 1]{Kodaira_Spencer_Semiregularity_1959}, \cite[Proposition 6]{Nishinou_RelativeSemiregularity_2020}
 and \cite[Lemma 11]{Nishinou_Obstruction_2018} for related results.
\proof 
 
%
First, assume that
 none of the open subsets $\mathscr U_i, \mathscr U_j$ and $\mathscr U_k$ contains a node of $C_0$.
By definition of $\nu_{ij, n+1}$, we have
\[
\begin{split}
t^{n+1}\nu_{ik, n+1} & =
 F_{i, n} - G_{ik, n}F_{k, n} \\
 & = F_{i, n} - G_{ij, n}F_{j, n} + G_{ij, n}F_{j, n}  - G_{ik, n}F_{k, n}\\
 & = t^{n+1}\nu_{ij, n+1}  + G_{ij, n}(F_{j, n}  - G_{jk, n}F_{k, n})\\
  & \hspace{2in} + G_{ij, n}G_{jk, n}F_{k, n} - G_{ik, n}F_{k, n}\\
 & = t^{n+1}\nu_{ij, n+1}+ t^{n+1}G_{ij, n}\nu_{jk, n+1}  + (G_{ij, n}G_{jk, n} - G_{ik, n})F_{k, n},
\end{split}
\]
 which is an equation over $\Bbb C[t]/(t^{n+2})$.
Since 
\[
G_{ij, n}G_{jk, n} \equiv G_{ik, n} \;\;\text{mod $t^{n+1}$},
\]
 we have
\[
 (G_{ij, n}G_{jk, n} - G_{ik, n})F_{k, n}
  \equiv  (G_{ij, n}G_{jk, n} - G_{ik, n})F_{k, 0} \;\; \text{mod $t^{n+2}$}.
\]
Since $F_{k, 0} = 0$ on $\mathscr U_i\cap \mathscr U_j\cap \mathscr U_k$, this proves the claim.

Next, assume $\mathscr U_i$ contains a node of $C_0$.
In this case, by the way we take the open covering $\{\mathscr U_i\}$, 
 $\mathscr U_j$ and $\mathscr U_k$ do not contain a node when 
 $\mathscr U_i\cap \mathscr U_j\cap \mathscr U_k\neq \emptyset$.
As in the previous subsection, the ring of functions on $\mathscr W_{i,n}$ contains the ring 
 $\Bbb C[x, y, z^{\pm 1}, t]/(xy - t^m, t^{n+1})$, which is the ring of functions on a toric open subset containing $\mathscr W_{i,n}$.
Let $F_{i, n}=0$ be the defining equation of 
 the image $\varphi_n|_{\mathscr U_{i, n}}$ in $\mathscr W_{i,n}$ as above.
We can take $F_{i, n}$ from the ring $\Bbb C[x, y, z^{\pm 1}, t]/(xy - t^m, t^{n+1})$.
 
Assume that the images of $\mathscr U_j$ and $\mathscr U_k$ are contained in the component given by $y = 0$.
In the defining equation $t^{n+1}\nu_{ij, n+1} = F_{i, n} - G_{ij, n}F_{j, n}$ of $\nu_{ij, n+1}$, 
 we first regard $F_{i, n}$
 as an element of $\Bbb C[x, y, z^{\pm 1}, t]/(xy - t^m, t^{n+2})$ as in the previous subsection. 
Then, substitute 
 $y = \frac{t^m}{x}$ to $F_{i, n}$.
With this modification, the previous calculations remain unchanged. \qed\\



Note that the pullback of the 
 set of functions $\{G_{ij, 0}\}$ to $C_0$ is the set of transition functions for the 
 normal sheaf of $\varphi_0$.
When $\varphi_0$ is an immersion, the normal sheaf of $\varphi_0$ and the log normal sheaf of $\varphi_0$
 are isomorphic.
Thus, the lemma implies that the pullback of $\{\nu_{ij, n+1}\}$ to 
 $C_0$ behaves as a \v{C}ech 1-cocycle 
 with values in the log normal sheaf of $\varphi_0$.
By construction, the \v{C}ech 1-cocycle $\{\nu_{ij, n+1}\}$ represents the obstruction
 to deforming $\varphi_n$ to a map over $\Bbb C[t]/(t^{n+2})$ (see \cite[Theorem 1]{Kodaira_Spencer_Semiregularity_1959}
 and Proposition \ref{prop:locallifts} below).

\begin{rem}\label{rem:nonimmersion}
The assumption that the map $\varphi_0\colon C_0\to X_0$ is an immersion is essential for the following reason.
Namely, suppose $p\in C_0$ is a point where $\varphi_0$ is singular, and let $\mathscr U$ be an open neighborhood
 of $p$.
Assume 
 a deformation $\varphi_n\colon C_n\to X_n$ has been constructed.
Let $F_n = 0$ be the defining equation of the image of $\mathscr U_n$ in a suitable open subset of $X_n$, 
 where $\mathscr U_n$ is the restriction of $C_n$ to $\mathscr U$. 
Then, if we regard $F_n = 0$ as an equation defined over $\Bbb C[t]/t^{n+2}$ as in the main text, 
 it provides a deformation of the subvariety $\varphi_n(\mathscr U_n)$, but it does not necessarily provide
 a deformation of the map $\varphi_n|_{\mathscr U_n}$.
Thus,  to apply this method for constructing obstruction cocycles, it is crucial that the maps are immersions.
This requirement motivates the introduction of the maps $\psi_{n, V_1V_2}$. 
\end{rem}

\subsubsection{Isomorphisms between lifts $\psi_{0, V_1V_2}$}\label{subsec:toriccover}
Since the map $\varphi_0$ is not generally an immersion, we cannot directly work with the cocycle $\{\nu_{ij, n+1}\}$
 constructed in Subsection \ref{subsubsec:stdcocycle}.
Instead, we construct another cocycle using the covering $\{\mathscr D_{v_1v_2}\}$
 introduced in Subsection \ref{subsec:std},
 to handle a general $\varphi_0$.
As shown in Proposition \ref{prop:liftmap}, the map $\varphi_n$ locally lifts to an embedding on this covering.
This property is crucial for applying the method of \cite{Nishinou_Obstruction_2018} 
 to the current setting, as discussed at the beginning of Subsection \ref{subsec:lift}. 
We will construct an obstruction cocycle to deforming $\varphi_n$ using such lifts.
In this subsection and the next, we develop the necessary foundations for this construction.
In this subsection, we study the relation between local lifts.
The main result, Lemma \ref{lem:auto}, shows that these lifts are transformed by the ambient isomorphisms,
 analogous to the case of usual covering transformations.
However, we need to account for the branching in the current setting.

Let $\Sigma$ and $\Sigma'$ be two fans in $\Bbb R^n = \Bbb Z^n\otimes_{\Bbb Z} \Bbb R$,
 and suppose $\Sigma'$ is the image of $\Sigma$ under a linear endomorphism
\[
L\colon \Bbb R^n\to \Bbb R^n
\]
 of maximal rank defined over $\Bbb Z$.
We identify $L$ with its matrix representation with respect to the standard basis of $\Bbb Z^n$.
Then, $L$ induces a branched covering map 
\[
P_L\colon X_{\Sigma}\to X_{\Sigma'}
\]
 between the toric varieties associated with the fans $\Sigma$ and $\Sigma'$.
\begin{defin}\label{def:covertransf}
We call a toric automorphism
\[
Q\colon X_{\Sigma}\to X_{\Sigma}
\]
 a covering transformation relative to $P_L$ if the equation
\[
P_L = P_L\circ Q
\]
 holds.
\end{defin}

Let $\{e_1, \dots, e_n\}$ be the standard basis of $\Bbb Z^n$.  
Let $\{f_1, \dots, f_n\}$ be its dual basis of $(\Bbb Z^n)^{\vee} = {\rm Hom}_{\Bbb Z}(\Bbb Z^n, \Bbb Z)$.
Then, $(\Bbb Z^n)^{\vee}$ is naturally identified with the set of characters of the torus that acts on 
 $X_{\Sigma}$ and $X_{\Sigma'}$.
These characters correspond to rational functions on these toric varieties.
Given an element $f$ of $(\Bbb Z^n)^{\vee}$,
 let
\[
z^f, w^f
\]
 be the corresponding rational functions on $X_{\Sigma}$ and $X_{\Sigma'}$, respectively.
The following should be well-known, but we provide a proof for the reader's convenience.

\begin{lem}
There exist exactly	 $|\det L|$ covering transformations relative to $P_L$.
\end{lem} 
\proof
As a first step, we establish the following claim.
\begin{claim}\label{claim:Q}
Suppose $Q$ is a covering transformation relative to $P_L$.
Then, the pullback of the functions $z^f$ on $X_{\Sigma}$ by $Q$ 
 takes the following form
\[
Q^*(z^{f_i}) = \zeta_i z^{f_i},
\]
 for each $i = 1, \dots, n$, where $\zeta_i$ is a root of unity. 
\end{claim}
\proof
Since $L$ has maximal rank, for each $i$, there is a monomial $w^{g_i}$ on $X_{\Sigma'}$
 for some vector $g_i\in (\Bbb Z^n)^{\vee}$
 such that the equation
\[
P_L^*(w^{g_i}) = z^{m_if_i}
\]
 holds.
Here, $m_i$ is an integer.
Since the map $Q$ satisfies $P_L = P_L\circ Q$, we have
\[
Q^*(z^{m_if_i}) = Q^*\circ P_L^*(w^{g_i}) = P_L^*(w^{g_i}) = z^{m_if_i}.
\]
The claim follows from this.\qed\\

For each $i$, we write
\[
P_L^*(w^{f_i}) = z^{h_i},
\]
 where $h_i\in (\Bbb Z^n)^{\vee}$ is given by
\[
h_i = L^T(f_i).
\]
Then, the set of covering transformations relative to $P_L$ is in bijection with the set of automorphisms $\alpha$
 of the ring of characters
\[
\Bbb C[z^{\pm f_1}, \dots, z^{\pm f_n}]
\]
 such that
\begin{itemize}
\item $\alpha$ is of the form described in Claim \ref{claim:Q}, that is, $\alpha(z^{f_i}) = \zeta_iz^{f_i}$
 for some root of unity $\zeta_i$, for each $i$, and
\item $\alpha(z^{h_i}) = z^{h_i}$ for each $i$.
\end{itemize}
In fact, the second condition suffices, but the first condition helps us analyze $\alpha$.

Write $\zeta_i$ as
\[
\zeta_i = e^{2\pi i q_i},
\]
 where $q_i\in [0, 1)$ is a rational number.
Then, the map $\alpha$ above satisfies the second condition if and only if 
\[
L\begin{pmatrix}
q_1 \\ 
\vdots\\
q_n
\end{pmatrix}
\]
 is an integer vector.
This is equivalent to the claim that the vector 
$
\begin{pmatrix}
q_1 \\ 
\vdots\\
q_n
\end{pmatrix}
$ 
 lies within 
\[
(L^{-1}\Bbb Z^n)\cap [0, 1)^n.
\]
Since we have
\[
\sharp ((L^{-1}\Bbb Z^n)/\Bbb Z^n) = |\det L|,
\]
 the lemma follows.\qed\\

Let $p\in X_{\Sigma'}$ be a general point.
Then, the inverse image $P_L^{-1}(p)$ consists of $|\det L|$ points.
\begin{lem}\label{lem:transitivecover}
The group of covering transformations relative to $P_L$ acts simply transitively on the set $P_L^{-1}(p)$.
\end{lem}
\proof
Let $q$ be a point in $P_L^{-1}(p)$.
Note that since $p$ is a general point, the value of the function $z^{f_i}$ at $q$ is non-zero for all $i$.
Let $Q$ be a covering transformation relative to $P_L$.
Then, if $q$ is fixed by $Q$, 
 the constants $\zeta_i$ in Claim \ref{claim:Q} must be one for all $i$.
Therefore, the map $Q$ is the identity map.
The lemma follows from this observation and the fact that the cardinality of the set $P_L^{-1}(p)$
 and that of the group of covering transformations relative to $P_L$ are the same.\qed\\

In particular, we have the following.
\begin{lem}\label{lem:mapcover}
Let $\phi\colon C\to X_{\Sigma'}$ be a torically transverse map from a connected irreducible curve to $X_{\Sigma'}$
 which is generically injective.
Let $\psi_1\colon C\to X_{\Sigma}$ and $\psi_2\colon C\to X_{\Sigma}$ be maps satisfying
\[
P_L\circ \psi_1 = P_L\circ \psi_2 = \phi.
\]
Then, there is a unique covering transformation $Q$ relative to $P_L$ such that
\[
Q\circ \psi_1 = \psi_2.
\]
\end{lem}
\proof
By the analyticity of the relevant maps,
 it suffices to prove that the equation $Q\circ \psi_1 = \psi_2$ holds on any open subset
 of $C$.
Let $p\in \phi(C)$ be a general point.
Since $\phi$ is generically injective, $\phi^{-1}(p)$ consists of a single point, which we denote by $p'$.
Then, by Lemma \ref{lem:transitivecover}, there is a unique 
 covering transformation $Q$ relative to $P_L$ such that
\[
Q(\psi_1(p')) = \psi_2(p').
\]
Since the map $P_L$ is locally an analytic isomorphism on small neighborhoods of $\psi_1(p')$
 and $\psi_2(p')$, the map $Q$ satisfies the condition $Q\circ \psi_1 = \psi_2$.\qed\\

We will apply Lemma \ref{lem:mapcover} to the local lifts $\psi_{0, V_1V_2}$ of $\varphi_0$.
However, in this case, the open subset $\mathscr D_{v_1v_2}$ of the curve $C_0$ introduced in Definition \ref{def:Dv1v2} 
 is reducible, so the statement of the lemma requires slight modification.
Note that $\mathscr D_{v_1v_2}$ has two irreducible components corresponding to two adjacent vertices of $\Gamma$,
 denoted $v_1$ and $v_2$.
We are assuming that at least one of these vertices, say $v_2$, is 2-valent.
Let $\psi_{0, V_1V_2}^{(i)}$ be a map lifting
 $\varphi_0|_{\mathscr D_{v_1v_2}}$, see Lemma \ref{lem:v_1v_2}.
By construction, the image of $\psi_{0, V_1V_2}^{(i)}$ is
 contained in the variety $Y_{0, V_1V_2}^s$, which has two irreducible components, 
 denoted $Y_{0, 1}^s$ and $Y_{0, 2}^s$.
We assume that the irreducible component of $\mathscr D_{v_1v_2}$ corresponding to $v_2$
 is mapped to $Y_{0, 2}^s$.
Let 
\[
G_{V_1V_2}\cong \Bbb C^*
\]
 be the one-dimensional subtorus of the torus acting on $Y_{0, 2}^s$
 corresponding to the direction of the edge connecting the vertices $V_1$ and $V_2$.
Now, the map given by 
\begin{itemize}
\item the multiplication of an element 
$
g\in G_{V_1V_2}
$ 
 on $Y_{0, 2}^s$, and
\item the identity map on $Y_{0, 1}^s$, 
\end{itemize}
defines an automorphism of $Y_{0, V_1V_2}^s$.

Then,
 as discussed in the proof of 
 Lemma \ref{lem:v_1v_2}, there is an action of a subgroup of $G_{V_1V_2}$
 on the set of maps $\psi_{0, V_1V_2}$ which lift $\varphi_0$ on $\mathscr D_{v_1v_2}$.
It is straightforward to see the following.
\begin{lem}\label{lem:auto}
Let $\psi_{0, V_1V_2}^{(i)}$ and $\psi_{0, V_1V_2}^{(i')}$ be lifts of
 $\varphi_0$ on $\mathscr D_{v_1v_2}$, as described in Lemma \ref{lem:v_1v_2}.
Then, there is a unique covering transformation $Q$ relative to the toric morphism $P_{\Phi}$,
 and a unique element $g\in G_{V_1V_2}\cong \Bbb C^*$
 corresponding to a suitable root of unity, satisfying
\[
g\circ Q\circ \psi_{0, V_1V_2}^{(i)} = \psi_{0, V_1V_2}^{(i')}.
\]
In particular, for any maps $\psi_{0, V_1V_2}^{(i)}$ and  $\psi_{0, V_1V_2}^{(i')}$,
 there is a unique automorphism $\overline Q$ of $Y_{0, V_1V_2}^s$ satisfying
\[
P_{\Phi}\circ \overline Q = P_{\Phi}
\]
 on $Y_{0, V_1V_2}^s$, and
\[
\overline Q\circ \psi_{0, V_1V_2}^{(i)} = \psi_{0, V_1V_2}^{(i')}
\]
 on $\mathscr D_{v_1v_2}$.
\qed
\end{lem}

\begin{rem}\label{rem:Q}
Note that the action of $G_{V_1V_2}$ fixes the component $Y_{0, 1}^s$ of $Y_{0, V_1V_2}^s$.
Therefore, while it acts on the space $Y_{0, V_1V_2}^s$, it is not a toric automorphism in the sense that it
 does not extend to an action on the larger space $\mathcal Y_{V_1V_2}^s$
 (see Remark \ref{rem:lift_log}).
\end{rem}

This claim holds regardless of the valences of the two adjacent vertices of $\Gamma$, 
 that is, it holds in either of the following cases: 
\begin{itemize}
\item One vertex is 3-valent, and the other is 2-valent.
\item Both vertices are 2-valent.
\end{itemize}

%

\subsubsection{The log normal sheaf associated with the lifts $\psi_{0, V_1V_2}$}\label{subsec:psinormal}
In this subsection, we construct an invertible sheaf $\mathcal N$ on $C_0$.
Ultimately, the sheaf $\mathcal N$ turns out to be
 isomorphic to the already mentioned log normal sheaf $\mathcal N_{\varphi_0}$
 of the map $\varphi_0$, see Lemma \ref{lem:NN_phi}.
However, the point is that we construct $\mathcal N$ from the data of local lifts $\psi_{0, V_1V_2}$.
Thus, we can regard a cocycle constructed from $\psi_{0, V_1V_2}$ by tracing the calculation in 
 Subsection \ref{subsubsec:stdcocycle}, as an $\mathcal N_{\varphi_0}$-valued cocycle.

Let $\{v_1, v_2\}$ and $\{v_1, v_3\}$ be pairs of adjacent vertices of $\Gamma$, where $v_2\neq v_3$.
Let 
\[
P_{\Phi}\colon \mathcal Y_{V_1V_2}^s\to \mathcal Y_{v_1v_2}
\]
 and 
\[
P_{\Phi'}\colon \mathcal Y_{V_1V_3}^s\to \mathcal Y_{v_1v_3}
\]
 be the toric maps as described in Definition \ref{def:Pphi}.  
Let $Y_{0, V_1V_2}^s$ and $Y_{0, V_1V_3}^s$ be the central fibers of 
 $\mathcal Y_{V_1V_2}^s$ and $\mathcal Y_{V_1V_3}^s$, respectively.
%

Recall that we have an open covering $\{\mathscr D_{vv'}\}$ of $C_0$, see Definition \ref{def:Dv1v2}.
Let $\psi_{0, V_1V_2}\colon \mathscr D_{v_1v_2}\to Y_{0, V_1V_2}^s$
 be an arbitrary lift of $\varphi_0|_{\mathscr D_{v_1v_2}}$ that is compatible with the 
 log structure in the sense of Remark \ref{rem:lift_log} (2).
We choose such a lift for every element of $\{\mathscr D_{vv'}\}$.
The following result is straightforward.
\begin{lem}
Let $\mathcal N_{v_1v_2}$ be the log normal sheaf of $\psi_{0, V_1V_2}$.
Then, the map $P_{\Phi}$ induces an isomorphism
\[
\mathcal N_{v_1v_2}\cong \mathcal N_{\varphi_0}|_{\mathscr D_{v_1v_2}}.
\]
\end{lem}
\proof
Note that the map $P_{\Phi}$ induces a map between the log tangent sheaves of $Y_{0, V_1V_2}^s$
 and $Y_{0, v_1v_2}$, which is a fiberwise isomorphism.
Therefore, it induces an isomorphism
\[
\begin{array}{ll}
\mathcal N_{v_1v_2} & = \psi_{0, V_1V_2}^*\Theta_{Y_{0, V_1V_2}^s}/\Theta_{\mathscr D_{v_1v_2}}\\
  &\hspace{-.5in} \to \mathcal N_{\varphi_0}|_{\mathscr D_{v_1v_2}}
   = (\varphi_0|_{\mathscr D_{v_1v_2}})^*\Theta_{Y_{0, v_1v_2}}/\Theta_{\mathscr D_{v_1v_2}}
   = \psi_{0, V_1V_2}^*P_{\Phi}^*\Theta_{Y_{0, v_1v_2}}/\Theta_{\mathscr D_{v_1v_2}}.
\end{array}
\]
This proves the claim.
\qed\\

Similarly, there is a natural isomorphism 
\[
\mathcal N_{v_1v_3}\cong \mathcal N_{\varphi_0}|_{\mathscr D_{v_1v_3}}.
\]
Thus, we can glue $\mathcal N_{v_1v_2}$
 and $\mathcal N_{v_1v_3}$
 using the given gluing between $\mathcal N_{\varphi_0}|_{\mathscr D_{v_1v_2}}$
 and $\mathcal N_{\varphi_0}|_{\mathscr D_{v_1v_3}}$ on $\mathscr D_{v_1v_2}\cap \mathscr D_{v_1v_3}$.
If $v_1$ is 3-valent and there is another open subset $\mathscr D_{v_1v_4}$, 
 the gluing between $\mathcal N_{v_1v_2}$, $\mathcal N_{v_1v_3}$ and $\mathcal N_{v_1v_4}$
 satisfy the cocycle condition by construction.
Therefore, we obtain an invertible sheaf $\mathcal N$ on $C_0$.
The following result is immediate from the construction.
\begin{lem}\label{lem:NN_phi}
The sheaf $\mathcal N$ is isomorphic to $\mathcal N_{\varphi_0}$.\qed
\end{lem}

Note that in the construction of $\mathcal N$, we have fixed a lift $\psi_{0, V_1V_2}$ of 
 $\varphi_0|_{\mathscr D_{v_1v_2}}$, and we have also fixed an isomorphism
\[
\iota_{\psi_{0, V_1V_2}}\colon \mathcal N_{\psi_{0, V_1V_2}}\cong \mathcal N_{\varphi_0}|_{\mathscr D_{v_1v_2}}.
\]
Here, we write $\mathcal N_{v_1v_2}$ as $\mathcal N_{\psi_{0, V_1V_2}}$
 to emphasize that it is the log normal sheaf of $\psi_{0, V_1V_2}$. 
Recall that if $\psi_{0, V_1V_2}'$ is another lift of $\varphi_0|_{\mathscr D_{v_1v_2}}$, 
 then there is a unique covering map $Q$ relative to $P_{\Phi}$ satisfying
\[
\psi_{0, V_1V_2} = Q\circ \psi_{0, V_1V_2}'.
\]
The differential $DQ$ of $Q$ induces a natural isomorphism 
\[
\mathcal N_{\psi_{0, V_1V_2}'} \cong \mathcal N_{\psi_{0, V_1V_2}}.
\]

For future reference, we note the following simple claim.
\begin{lem}\label{lem:compatibility}
The collection of isomorphisms $\{\iota_{\psi_{0, V_1V_2}}\}$ is compatible with the isomorphisms $DQ$
 in the sense that 
\[
\iota_{\psi_{0, V_1V_2}} \circ DQ = \iota_{\psi_{0, V_1V_2}'}
\]
 holds.
\end{lem}
\proof
This follows directly from the defining property of $Q$, namely, $P_{\Phi} = P_{\Phi}\circ Q$,
 and the fact that the isomorphisms $\iota_{\psi_{0, V_1V_2}}$ and $\iota_{\psi_{0, V_1V_2}'}$
 are induced by $P_{\Phi}$.\qed

\subsubsection{Construction of obstruction cocycles via lifts}\label{subsec:constcocycle}
In this subsection, we construct an obstruction cocycle for deforming $\varphi_n$ by utilizing the local lifts
 $\psi_{n, VV'}$.
To apply the result from Subsection \ref{subsec:pairing}, we restrict the domain of 
 $\psi_{n, VV'}$ to the smaller open subsets $\mathscr U_i$ introduced there, see Definition \ref{def:scrU}.
We then apply the construction described in Subsection \ref{subsubsec:stdcocycle}.
To do so, we first need to take defining equations of the images of $\psi_{n, VV'}|_{\mathscr U_i}$.
To obtain a well-defined cocycle, we need to carefully handle the choices of 
 such equations.

Let $p$ be the node of $C_0$ corresponding to the edge $e_{v_1v_2}$.
Let $\mathscr D_{v_1v_2}$ be the open subset of 
 $C_{0, v_1}\cup C_{0, v_2}$ introduced in Definition \ref{def:Dv1v2}.
It can also be described as
\[
\mathscr D_{v_1v_2}
  = (C_{0, v_1}\cup C_{0, v_2})\cap \varphi_0^{-1}
       (\mathcal Y_{v_1v_2}).
\]
By Lemma \ref{lem:v_1v_2},
 the restriction $\varphi_0|_{\mathscr D_{v_1v_2}}$
 admits a lift to a map 
\[
\psi_{0, V_1V_2}
  \colon \mathscr D_{v_1v_2}\to Y_{0, V_1V_2}^s.
\]
As in the previous subsection, we choose $\psi_{0, V_1V_2}$ so that it is compatible
 with the given log structure on $C_0$
 in the sense of Remark \ref{rem:lift_log}.

There are several such lifts, and we choose any one of them.
Let $\mathscr U_i$ be the unique element of the open covering $\{\mathscr U_i\}$
 of $C_0$ which contains the node $p$.
Assume that there exist other open subsets $\mathscr U_j$ and $\mathscr U_k$ of $C_0$ 
 that do not contain nodes, and whose images
 under $\varphi_0$ are contained in the component of $Y_{0, v_1v_2}$
 defined by $y = 0$.
Here, $Y_{0, v_1v_2} \cong \Spec\Bbb C[x, y, z^{\pm 1}]/(xy)$ as before.
Assume $\mathscr U_i\cap \mathscr U_j\cap \mathscr U_k\neq\emptyset$.

We apply the construction from Subsection \ref{subsubsec:stdcocycle} to these open subsets.
However, we use the lift $\psi_{n, V_1V_2}$ in place of the map $\varphi_n$.
To do this, we need to fix defining equations of the images of $\mathscr U_i, \mathscr U_j$ and $\mathscr U_k$
 by $\psi_{n, V_1V_2}$.
When the open subset contains a node, a slightly different argument is required.
So, we first consider the case of the open subsets $\mathscr U_j$ and $\mathscr U_k$, which do not contain a node.

Recall that we take $\mathscr U_j$ and $\mathscr U_k$ sufficiently small so that 
 $\varphi_0|_{\mathscr U_j}$ and $\varphi_0|_{\mathscr U_k}$
 are embeddings, and their images are contained in the
 open subsets $\mathscr W_j$ and $\mathscr W_k$ of $Y_{0, v_1v_2}$, respectively (see Definition \ref{def:scrW}).
Each of these open subsets of $Y_{0, v_1v_2}$ is a product of discs disjoint from the toric boundary. 
Then, the inverse images $P_{\Phi}^{-1}(\mathscr W_j)$ 
 and $P_{\Phi}^{-1}(\mathscr W_k)$ are disjoint unions of open subsets
 isomorphic to $\mathscr W_j$ and $\mathscr W_k$, respectively.
Among these, there exist unique connected components 
\begin{equation}\label{eq:scrW}
\mathscr W_{j, V_1V_2}^s, \;\; \mathscr W_{k, V_1V_2}^s
\end{equation} 
 of $P_{\Phi}^{-1}(\mathscr W_j)$
 and $P_{\Phi}^{-1}(\mathscr W_k)$,
 which contain 
 the images of 
 $\psi_{0, V_1V_2}|_{\mathscr U_j}$
 and $\psi_{0, V_1V_2}|_{\mathscr U_k}$, respectively.

Assume there is an $n$-th order deformation
 $\varphi_n\colon C_n\to X_n$ of the map $\varphi_0$.
By Proposition \ref{prop:liftmap}, $\varphi_n|_{\mathscr D_{v_1v_2}}$ admits a lift to a map 
 $\psi_{n, V_1V_2}$,
 which is a deformation of $\psi_{0, V_1V_2}$.
Let 
\[
F_{j, n} = 0, \;\; F_{k, n} = 0
\]
 be defining equations of the images of $\varphi_n|_{\mathscr U_{j, n}}$
 and $\varphi_n|_{\mathscr U_{k, n}}$ in $\mathscr W_{j, n}$ and $\mathscr W_{k, n}$, respectively.
Then, the equations
\[
P_{\Phi}^*F_{j, n}|_{\mathscr W_{j, n, V_1V_2}^s} = 0,\;\; P_{\Phi}^*F_{k, n}|_{\mathscr W_{k, n, V_1V_2}^s} = 0
\]
 provide defining equations of 
 the images of $\psi_{n, V_1V_2}|_{\mathscr U_{j, n}}$
 and $\psi_{n, V_1V_2}|_{\mathscr U_{k, n}}$.
Here, 
\begin{equation}\label{eq:scrW2}
\mathscr W_{j, n, V_1V_2}^s,\;\; \mathscr W_{k, n, V_1V_2}^s
\end{equation}
 are the restrictions of 
 $\mathcal Y_{V_1V_2}^s\times_{\Spec\Bbb C[t]}\Spec\Bbb C[t]/(t^{n+1})$,
 seen as an analytic ringed space,
 to $\mathscr W_{j, V_1V_2}^s$ and $\mathscr W_{k, V_1V_2}^s$, respectively.
This fixes defining equations of the images of $\mathscr U_{j, n}$ and $\mathscr U_{k, n}$ for any choice of 
 a lift $\psi_{n, V_1V_2}$ of $\varphi_n|_{\mathscr D_{v_1v_2}}$.

Next, consider the case of $\mathscr U_i$, which contains the node $p$.
In this case, the image of $\varphi_0|_{\mathscr U_i}$ is an analytic subset of the open subset $\mathscr W_i$
 of $Y_{0, v_1v_2}$, which is a normal crossing union of products of discs, see Definition \ref{def:scrW}.
In this case too, there is a unique connected component 
\[
\mathscr W_{i, V_1V_2}^s
\]
 of $P_{\Phi}^{-1}(\mathscr W_i)$,
 which contains the image of $\psi_{0, V_1V_2}|_{\mathscr U_i}$.
However, the map 
\[
P_{\Phi}|_{\mathscr W_{i, V_1V_2}^s}\colon \mathscr W_{i, V_1V_2}^s\to \mathscr W_i
\]
 is a branched covering with covering degree equal to the weight 
 $W_e$ of the edge $e$ of $\Gamma$ corresponding to the node $p$.
In particular, there are $W_e$ lifts of $\varphi_0|_{\mathscr U_i}$,
 each with its image contained in $\mathscr W_{i, V_1V_2}^s$.
Note that the images of these lifts
 need not be disjoint.
In fact, if both vertices $v_1$ and $v_2$ are 2-valent, all these lifts have the same image, but they are distinguished by 
 the action given by multiplying $W_e$-th roots of unity, see Remark \ref{rem:lift_log}.
One of these lifts is the restriction to $\mathscr U_i$ of the map $\psi_{0, V_1V_2}$ we choose.

Given a deformation $\varphi_n$ of $\varphi_0$, let $\psi_{n, V_1V_2}|_{\mathscr U_{i, n}}$ be the unique lift of 
 $\varphi_n|_{\mathscr U_{i, n}}$ which reduces to $\psi_{0, V_1V_2}|_{\mathscr U_{i}}$ over $\Bbb C[t]/(t)$.
Let
\[
\widetilde F_{i, n, V_1V_2} = 0
\]
 be a defining equation of the image of $\psi_{n, V_1V_2}|_{\mathscr U_{i, n}}$.

Suppose we choose a different lift $\psi_{n, V_1V_2}'$ of $\varphi_n|_{\mathscr D_{v_1v_2}}$, 
 whose image is contained in a connected component $(\mathscr W_{i, V_1V_2}^s)'$ 
 of $P_{\Phi}^{-1}(\mathscr W_i)$.
Then, 
 the defining equation of the image of $\psi_{n, V_1V_2}'|_{\mathscr U_{i, n}}$
 is taken to be the one obtained as
\[
Q^*\widetilde F_{i, n, V_1V_2} = 0,
\]
 where $Q$ is the unique covering transformation relative to $P_{\Phi}$ which sends $\psi_{0, V_1V_2}'$
 to $\psi_{0, V_1V_2}$, see Lemma \ref{lem:auto}.

Thus, we have fixed a defining equation of the image of any lift $\psi_{n, V_1V_2}|_{\mathscr U_i}$
 of $\varphi_n|_{\mathscr U_i}$, 
 for any open subset $\mathscr U_i$ in the covering $\{\mathscr U_i\}$, 
 and for any pair of adjacent vertices $v_1$ and $v_2$
 of $\Gamma$.

Then, for any pair of open subsets $\mathscr U_i$ and $\mathscr U_j$ satisfying
 $\mathscr U_i\cap \mathscr U_j\neq\emptyset$,
 we attach a local section 
\[
\nu_{ij, n+1}
\] 
 of the sheaf $\mathcal N$ 
 by utilizing these defining equations and the construction in Subsection \ref{subsubsec:stdcocycle}. 
To elaborate a bit more, note that for any two intersecting open subsets
 $\mathscr U_i$ and  $\mathscr U_j$ of $C_0$,
 there is an open subset $\mathscr D_{v_1v_2}$ containing
 both.
Then, we choose a lift 
\[
\psi_{n, V_1V_2}\colon \mathscr D_{v_1v_2}\to \mathcal Y_{V_1V_2}^s
\]
 of $\varphi_n|_{\mathscr D_{v_1v_2}}$. 
By the above construction, we have fixed the defining equations of
 the images of $\psi_{n, V_1V_2}|_{\mathscr U_i}$ and $\psi_{n, V_1V_2}|_{\mathscr U_j}$.
Using these equations and the construction in Subsection \ref{subsubsec:stdcocycle}, 
 we obtain the local section $\nu_{ij, n+1}$ of $\mathcal N$ on $C_0$.

However, as is evident from the description of the construction, 
 there are multiple	choices for these local sections.
Namely, given two open subsets $\mathscr U_i$ and $\mathscr U_j$, 
 there are several possible choices of 
 $\mathcal Y_{v_1v_2}$ containing both.
For different choices of $\mathcal Y_{v_1v_2}$ and $\mathcal Y_{v_3v_4}$ containing these open subsets, 
 we need to use different lifts $\psi_{n, V_1V_2}$ and $\psi_{n, V_3V_4}$.
Moreover, for a fixed $\mathcal Y_{v_1v_2}$,
 there are multiple
 choices of lifts such as $\psi_{n, V_1V_2}$ and $\psi_{n, V_1V_2}'$ as discussed above.

These different choices might yield different sections $\nu_{ij, n+1}$ for a fixed pair of open subsets,
 $\mathscr U_i$ and $\mathscr U_j$.
To obtain a well-defined 1-cocycle, we need to verify that this does not occur.
This is the content of the next lemma.
\begin{lem}\label{lem:cocycle}
The local sections  $\{\nu_{ij, n+1}\}$ of the log normal sheaf $\mathcal N_{\varphi_0}$ of $\varphi_0$,
 which is identified with the sheaf $\mathcal N$,
 obtained by this construction form a well-defined \v{C}ech 1-cocycle with values in $\mathcal N_{\varphi_0}$
 on $C_0$
 with respect to the covering $\{\mathscr U_i\}$.
\end{lem}
\proof
We verify that, for any pair of intersecting open subsets $\mathscr U_i$ and $\mathscr U_j$
 in $\{\mathscr U_i\}$, 
 the associated section does not depend on
\begin{itemize}
\item the choice of the covering
 $P_{\Phi}\colon\mathcal Y_{V_1V_2}^s\to \mathcal Y_{v_1v_2}$
 to which the maps $\varphi_0|_{\mathscr U_i}$ and $\varphi_0|_{\mathscr U_j}$ are lifted, and
\item the specific choice of a lift of $\varphi_0$ once
 $P_{\Phi}$ is fixed.
\end{itemize}
In fact, once this is verified, Lemma \ref{lem:obst} ensures that the cocycle condition is satisfied.

First, we address the dependence on the choice of $P_{\Phi}$.
For each $\mathcal Y_{v_1v_2}$, 
 we choose a lift $\psi_{n, V_1V_2}$ of 
 $\varphi_n|_{\mathscr D_{v_1v_2}}$ as above.
When an open subset $\mathscr U_i$ contains a node, there is a unique 
 $\mathcal Y_{v_1v_2}$ which contains its image.
Therefore, in this case, there is no choice for $P_{\Phi}$, and we can remove such open subsets from consideration.

Now, let $\mathscr U_j$ and $\mathscr U_k$ be open subsets
 of $C_0$ such that $\mathscr U_j\cap\mathscr U_k\neq \emptyset$, and
 neither contains a node.
Furthermore, assume that their images are contained in 
 both $\mathcal Y_{v_1v_2}$
 and $\mathcal Y_{v_1v_3}$.
Let 
\[
P_{\Phi}\colon \mathcal Y_{V_1V_2}^s\to \mathcal Y_{v_1v_2},\;\; P_{\Phi'}\colon \mathcal {Y}_{V_1V_3}^s\to \mathcal Y_{v_1v_3}
\]
 be the maps from toric varieties associated with standard tropical curves with two vertices.

Let 
\[
F_{j, n}(x, y, z, t) = 0
\]
 be the defining equation of the image of $\varphi_n|_{\mathscr U_j}$ chosen in the 
 construction above, expressed in the coordinate system of $\mathcal Y_{v_1v_2}$.
Let 
\[
F_{j, n}(x', y', z', t) = 0
\]
 be the same equation expressed
 in the coordinate system of $\mathcal Y_{v_1v_3}$.
The functions $F_{k, n}(x, y, z, t)$ and $F_{k, n}(x', y', z', t)$ are determined similarly.
Then, by the aforementioned construction, 
 we take the defining equation of the image of $\psi_{n, V_1V_2}|_{\mathscr U_j}$ as
\[
P_{\Phi}^*F_{j, n} = 0.
\]
Similarly, the image of $\psi_{n, V_1V_2}|_{\mathscr U_k}$ is given by 
\[
P_{\Phi}^*F_{k, n} = 0.
\]
Consequently, the section $\nu_{jk, n+1}$ is defined by 
\[
t^{n+1}\nu_{jk, n+1} = P_{\Phi}^*F_{j, n} -  P_{\Phi}^*G_{jk, n}P_{\Phi}^*F_{k, n}
\]
 on $\mathscr U_j\cap \mathscr U_k$,
 as in \cref{eq:nu}.
Here, $G_{jk, n}$ is an analytic function satisfying the condition
\[
F_{j, n} = G_{jk, n}F_{k, n}
\]
 over $\Bbb C[t]/(t^{n+1})$, and is extended to a function over $\Bbb C[t]/(t^{n+2})$
 by the procedure in Subsection \ref{subsec:split}.

Therefore, the section $\nu_{jk, n+1}$ is in fact the pullback by $P_{\Phi}$ of 
 the section $\nu_{jk, n+1}^{(1)}$ defined by 
\[
t^{n+1}\nu_{jk, n+1}^{(1)} = F_{j, n}- G_{jk, n}F_{k, n}
\]
 for $\varphi_n$.
Thus, if this section is independent of the choice of the subvarieties of the form $\mathcal Y_{v_1v_2}$ which contain 
 $\mathscr U_j$ and $\mathscr U_k$, the desired independence follows.
However, this is precisely the content of Corollary \ref{cor:welldef}.
Explicitly, if $G_{jk, n}(x', y', z', t)$ is the function obtained from $G_{jk, n}$ by the coordinate
 transformation from $\mathcal Y_{v_1v_2}$ to $\mathcal Y_{v_1v_3}$, and regarded as a function defined over
 $\Bbb C[t]/(t^{n+2})$, then the section $\nu_{jk, n+1}^{(2)}$ defined by
\[
t^{n+1}\nu_{jk, n+1}^{(2)} = F_{j, n}(x', y', z', t) - G_{jk, n}(x', y', z', t)F_{k, n}(x', y', z', t)
\]
 is also obtained from $\nu_{jk, n+1}^{(1)}$ by the coordinate transformation 
 from $\mathcal Y_{v_1v_2}$ to $\mathcal Y_{v_1v_3}$.

Next, we consider the dependence on the choices of $\psi_{n, V_1V_2}$ after fixing $\mathcal Y_{v_1v_2}$.
By Lemma \ref{lem:mapcover}, different lifts are related by covering transformations relative to $P_{\Phi}$.
Furthermore, the defining equations of the images of $\psi_{n, V_1V_2}|_{\mathscr U_i}$ have been chosen equivariantly
 with respect to these covering transformations.
Then, the claim is a consequence of Lemma \ref{lem:compatibility}. \qed\\

Let $\{\nu_{ij, n+1}\}$ be the \v{C}ech 1-cocycle on $C_0$ with values in $\mathcal N$
 obtained in this manner.
Now, we establish the following result.
\begin{prop}\label{prop:locallifts}
The cohomology class of $\{\nu_{ij, n+1}\}$ represents the obstruction to deforming $\varphi_n$ to a map over
 $\Bbb C[t]/(t^{n+2})$.
\end{prop}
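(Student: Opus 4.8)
The plan is to realize $\{\nu_{ij,n+1}\}$ as the \v{C}ech $1$-cocycle of differences attached to a system of local liftings of $\varphi_n$, and then to appeal to the standard description of the obstruction class. First I would recall the general mechanism (cf.\ \cite{Kodaira_Spencer_Semiregularity_1959} and the discussion following \cref{lem:obst}): having chosen, for each member $U_i$ of the covering $\{U_i\}$ of $C_0$, a lifting $\varphi_{i,n+1}$ of $\varphi_n|_{U_{i,n}}$ to a map over $\Bbb C[t]/t^{n+2}$ --- which exists because $U_i$ is affine --- the differences $\varphi_{i,n+1}-\varphi_{j,n+1}$ over the overlaps $U_i\cap U_j$ are sections of the log normal sheaf $\cN$ of $\varphi_0$, form a \v{C}ech $1$-cocycle, have a cohomology class in $H^1(C_0,\cN)$ independent of the chosen liftings, and this class vanishes precisely when $\varphi_n$ admits a global lifting over $\Bbb C[t]/t^{n+2}$; by definition it is the obstruction. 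So it suffices to exhibit local liftings $\{\varphi_{i,n+1}\}$ whose difference cocycle is the cocycle $\{\nu_{ij,n+1}\}$ constructed above.

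Such liftings are produced by the very recipe that defined $\{\nu_{ij,n+1}\}$. For each $U_i$, choose a pair of adjacent vertices $v,v'$ with $\varphi_0(U_i)$ in the image of $P_{\Phi}\colon\mathcal Y_{vv'}^s\to\mathcal Y_{\overline{\mathcal C}_{h_{vv'}(\Gamma_{vv'})}}$, and lift $\varphi_n|_{U_{i,n}}$ to $\psi_{n,vv'}|_{U_{i,n}}$ --- this is immediate when $U_i$ avoids a node, where $P_{\Phi}$ restricts to an isomorphism onto $W_i$, and is \cref{prop:liftmap} when $U_i$ contains a node. Take a defining equation $\widetilde F_{i,n}=0$ of the image in the standard coordinates, raise $\widetilde F_{i,n}$ to a function over $\Bbb C[t]/t^{n+2}$ by the canonical splitting of Subsection~\ref{subsec:lift}, let $\psi_{i,n+1}$ be the corresponding deformation of $\psi_{n,vv'}|_{U_{i,n}}$, and set $\varphi_{i,n+1}\coloneqq P_{\Phi}\circ\psi_{i,n+1}$, which is a lifting of $\varphi_n|_{U_{i,n}}$. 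When $U_i$ avoids a node, $\varphi_{i,n+1}$ does not depend on the auxiliary choices: by Subsection~\ref{subsec:lift} the splitting is insensitive to the choice of $v,v'$ --- here is where $(\ast)$ enters, the coordinate changes $x\mapsto X^mZ^R$, $y\mapsto Y^mZ^{-R}$, $z\mapsto Z^{R'}$ and the transitions between different $P_{\Phi}$'s being $t$-independent --- and, as in the proof of \cref{lem:cocycle}, covering transformations leave $P_{\Phi}\circ\psi_{i,n+1}$ unchanged; when $U_i$ contains a node there is a single $P_{\Phi}$ available.

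Finally I would compute the difference cocycle of $\{\varphi_{i,n+1}\}$. On a non-empty overlap $U_i\cap U_j$ the relevant $P_{\Phi}$ is an isomorphism onto its image, so it identifies the log normal sheaves of $\psi_0$ and $\varphi_0$ there, and under this identification $\varphi_{i,n+1}-\varphi_{j,n+1}$ corresponds to $\psi_{i,n+1}-\psi_{j,n+1}$. Comparing the defining equations $\widetilde F_{i,n}$ and $\widetilde F_{j,n}$ over $\Bbb C[t]/t^{n+2}$ by exactly the computation in the proof of \cref{lem:obst} --- performed now in the standard coordinates, where the edge $e_{vv'}$ has weight one --- identifies this difference with the section $\nu_{ij,n+1}$. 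Hence $\{\nu_{ij,n+1}\}$ is the difference cocycle of the local liftings $\{\varphi_{i,n+1}\}$, so by the first step its class is the obstruction to deforming $\varphi_n$ to a map over $\Bbb C[t]/t^{n+2}$. The genuinely delicate point --- and the reason for Subsection~\ref{subsec:std}, Subsection~\ref{subsec:lift}, \cref{lem:obst}, \cref{lem:cocycle} and \cref{prop:liftmap} --- is the behaviour at the nodes of $C_0$, where $\varphi_0$ is ramified: one must know both that near a node $\varphi_n$ is lifted through the straightening $P_{\Phi}$ and that the order-raising splitting is compatible with the toric coordinate changes above, which is where $(\ast)$ is indispensable; away from the nodes everything reduces to the classical Kodaira--Spencer bookkeeping already carried out in \cref{lem:obst}.
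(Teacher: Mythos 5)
Your proposal is correct in substance and its core move --- realizing $\{\nu_{ij,n+1}\}$ as the difference cocycle of the explicit local liftings $\varphi_{i,n+1}=P_{\Phi}\circ\psi_{i,n+1}$ and then invoking the standard identification of such a difference cocycle with the obstruction class --- is the same identification the paper is after; the well-definedness issues you isolate (uniqueness of $P_\Phi$ at nodes, $t$-independence of the coordinate changes under $(\ast)$, invariance under covering transformations) are exactly the ones handled by \cref{lem:cocycle} and Subsection~\ref{subsec:lift}. Where you diverge is in how the two directions of the equivalence are discharged. You fold both into ``the general mechanism,'' whereas the paper proves them directly, and essentially all of its work goes into the sufficiency direction: given a cobounding $0$-cochain $\eta_i-\eta_j=\nu_{ij,n}$ with $\eta_i\in H^0(U_i,\cN)$, one must realize each $\eta_i$ as an actual correction of the local lifting. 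The paper does this at the level of defining equations, lifting $\eta_i$ through the sequence $0\to\mathcal O_{W^s_{i,0,vv'}}\to\mathcal O_{W^s_{i,0,vv'}}(\psi_{0,vv'}(U_i))\to\cN_{\psi_{0,vv'}}|_{U_i}\to 0$ to a meromorphic function $h_i$, which requires $H^1(W^s_{i,0,vv'},\mathcal O)=0$; this is immediate for a cylinder but needs the Mayer--Vietoris/extension argument when $U_i$ contains a node and $W^s_{i,0,vv'}$ is a normal crossing union of two cylinders. Your appeal to the classical Kodaira--Spencer formalism silently assumes this correction step goes through for the singular, logarithmic, relative target; if you instead correct the liftings as \emph{maps}, you need surjectivity of $H^0(U_i,\varphi_0^*\Theta_{X_0})\to H^0(U_i,\cN)$ and a way to exponentiate log vector fields on $X_0$, which is a comparable amount of unstated verification. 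So: same skeleton, but a complete write-up along your lines should still include the paper's $H^1$-vanishing computation (or an equivalent) to justify the ``coboundary $\Rightarrow$ global lifting'' half; what your packaging buys in return is that the ``non-coboundary $\Rightarrow$ no lifting'' half, which the paper dismisses as easy, becomes genuinely automatic, since any global lifting restricts to a system of local liftings with vanishing difference class.
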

\proof
By Lemma \ref{lem:NN_phi}, we regard $\{\nu_{ij, n+1}\}$ as a 1-cocycle with values in $\mathcal N_{\varphi_0}$.
Assume that the cohomology class of $\{\nu_{ij, n+1}\}$ vanishes.
This implies the existence of a section
 $\eta_i$ of the restriction $\mathcal N_{\varphi_0}|_{\mathscr U_i}$ 
 such that 
\[
\eta_i-G_{ij, 0}\eta_j = \nu_{ij, n+1}
\]
 on each $\mathscr U_i\cap \mathscr U_j$.
Recall that $G_{ij, 0}$ is a holomorphic function on  $\mathscr W_{i, 0}\cap \mathscr W_{j, 0}$ which restricts to 
 the transition function of the log normal sheaf of $\varphi_0$ on $\mathscr U_i\cap \mathscr U_j$.

By definition of $\mathcal N$ and Lemma \ref{lem:compatibility}, we can regard $\eta_i$ as a section of
 the log normal sheaf $\mathcal N_{\psi_{0, V_1V_2}}|_{\mathscr U_i}$ 
 of any lift $\psi_{0, V_1V_2}|_{\mathscr U_i}$ of $\varphi_0|_{\mathscr U_i}$.
When $\mathscr U_i$ does not contain a node, we have an exact sequence
\[
0\to \mathcal O_{\mathscr W_{i, 0, V_1V_2}^s}\to \mathcal O_{\mathscr W_{i, 0, V_1V_2}^s}(\psi_{0, V_1V_2}(\mathscr U_i))
 \to \mathcal N_{\psi_{0, V_1V_2}}|_{\mathscr U_i}\to 0.
\]
Here, $\mathcal O_{\mathscr W_{i, 0, V_1V_2}^s}(\psi_{0, V_1V_2}(\mathscr U_i))$ 
is the sheaf of analytic functions on $\mathscr W_{i, 0, V_1V_2}^s$
 which can have a first-order pole along $\psi_{0, V_1V_2}(\mathscr U_i)$.
Since we take $\mathscr W_i$ to be biholomorphic to a product of discs,
 we have 
\[
H^1(\mathscr W_{i, 0, V_1V_2}^s, \mathcal O_{\mathscr W_{i, 0, V_1V_2}^s})=0
\]
 by the Poincar\'e Lemma for holomorphic functions.
Thus, a section of $\mathcal N_{\psi_{0, V_1V_2}}|_{\mathscr U_i}$ can be extended to a section 
 of $\mathcal O_{\mathscr W_{i, 0, V_1V_2}^s}(\psi_{0, V_1V_2}(\mathscr U_i))$.

When $\mathscr U_i$ contains a node, we have a similar sequence, but now $\mathscr W_{i, 0, V_1V_2}^s$
 is the normal crossing union of two products of discs.
If $\mathscr W_{1}^s$ and $\mathscr W_{2}^s$ are the irreducible components of $\mathscr W_{i, 0, V_1V_2}^s$, 
 we have an exact sequence
\[
0\to \mathcal O_{\mathscr W_{i, 0, V_1V_2}^s}\to 
 (i_1)_*\mathcal O_{\mathscr W_{1}^s}\oplus (i_2)_*\mathcal O_{\mathscr W_2^s}
 \to (i_{12})_*\mathcal O_{\mathscr W_{1}^s\cap \mathscr W_2^s}\to 0, 
\]
 where 
\[
i_1\colon \mathscr W_1^s\to \mathscr W_{i, 0, V_1V_2}^s,\;\; i_2\colon \mathscr W_2^s\to \mathscr W_{i, 0, V_1V_2}^s,\;\;
 i_{12}\colon \mathscr W_1^s\cap \mathscr W_2^s\to \mathscr W_{i, 0, V_1V_2}^s
\]
 are inclusions.
Since 
\[
H^1(\mathscr W_{i, 0, V_1V_2}^s, (i_1)_*\mathcal O_{\mathscr W_1^s}\oplus (i_2)_*\mathcal O_{\mathscr W_2^s})
 \cong H^1(\mathscr W_1^s, \mathcal O_{\mathscr W_1^s})\oplus H^1(\mathscr W_2^s, \mathcal O_{\mathscr W_2^s}),
\]
 and $H^1(\mathscr W_i^s, \mathcal O_{\mathscr W_i^s})$ vanishes by the Poincar\'e Lemma, 
 the cohomology group 
\[
H^1(\mathscr W_{i, 0, V_1V_2}^s, (i_1)_*\mathcal O_{\mathscr W_1^s}\oplus (i_2)_*\mathcal O_{\mathscr W_2^s})
\]
 also vanishes.
Moreover, the map 
\[
H^0(\mathscr W_{i, 0, V_1V_2}^s, (i_1)_*\mathcal O_{\mathscr W_1^s}\oplus (i_2)_*\mathcal O_{\mathscr W_2^s})
 \to H^0(\mathscr W_{i, 0, V_1V_2}^s, (i_{12})_* \mathcal O_{\mathscr W_1^s\cap W_2^s})
\]
 is a surjection, since
 a holomorphic function on $\mathscr W_1^s\cap \mathscr W_2^s$ 
 can be extended to both $\mathscr W_1^s$ and $\mathscr W_2^s$
 as they are products of discs.
Therefore, we obtain
\[
H^1(\mathscr W_{i, 0, V_1V_2}^s, \mathcal O_{\mathscr W_{i, 0, V_1V_2}^s}) = 0.
\]
It follows that a section of $\mathcal N_{\psi_{0, V_1V_2}}|_{\mathscr U_i}$
 can be extended to a section in $\mathcal O_{\mathscr W_{i, 0, V_1V_2}^s}(\psi_{0, V_1V_2}(\mathscr U_i))$.
 
Let $h_i$ be an element in $\mathcal O_{W_{i, 0, V_1V_2}^s}(\psi_{0, V_1V_2}(\mathscr U_i))$
 which extends $\eta_i$. 
Assume that $\varphi_0(\mathscr U_i)$ is contained in another $\mathcal Y_{v_1v_3}$.
In this case, the open subset $\mathscr U_i$ does not contain a node of $C_0$.
Then, as in the proof of Lemma \ref{lem:cocycle}, the restrictions of the maps $P_{\Phi}\colon \mathcal Y_{V_1V_2}^s\to \mathcal Y_{v_1v_2}$
 and $P_{\Phi'}\colon \mathcal Y_{V_1V_3}^s\to \mathcal Y_{v_1v_3}$ 
 to the open subsets $\mathscr W_{i, 0, V_1V_2}^s$ and $\mathscr W_{i, 0, V_1V_3}^s$, respectively,  are isomorphisms onto the 
 same image $\mathscr W_i$.
Here, $\mathscr W_{i, 0, V_1V_3}^s$ is a lift of $\mathscr W_i$ to 
$\mathcal Y_{V_1V_3}^s$ containing the image of $\psi_{0, V_1V_3}|_{\mathscr U_i}$.
Therefore, there is a natural isomorphism between the sheaves $\mathcal O_{\mathscr W_{i, 0, V_1V_2}^s}(\psi_{0, V_1V_2}(\mathscr U_i))$ 
and $\mathcal O_{\mathscr W_{i, 0, V_1V_3}^s}(\psi_{0, V_1V_3}(\mathscr U_i))$.
We can take the extensions of $\eta_i$ compatibly in the sense that under this isomorphism,
 these extensions coincide.

Let $F_{i, n} = 0$ be the defining equation of the image 
 of $\psi_{n, V_1V_2}|_{\mathscr U_{i, n}}$, regarded as an equation over $\Bbb C[t]/(t^{n+2})$.
The the modified equation 
\[
F_{i, n} - t^{n+1}h_i=0
\]
 gives the image of a local deformation 
 $\psi_{n+1, V_1V_2}|_{\mathscr U_{i, n+1}}$ of $\psi_{n, V_1V_2}|_{\mathscr U_{i, n}}$.
By the above calculation, this equation is well-defined in the sense that it
 does not depend on the choices of $\mathcal Y_{v_1v_2}$
 and $\psi_{0, V_1V_2}$.
It then follows that the local deformation $\psi_{n+1, V_1V_2}|_{\mathscr U_{i, n+1}}$ is uniquely determined
 up to isomorphisms, independent of these choices.
By construction, 
 this deformation 
 has the property that the images of the composition $P_{\Phi}\circ \psi_{n+1, V_1V_2}|_{\mathscr U_{i, n+1}}$
 glue on $X_{n+1} = \mathcal X\times_{\Spec\Bbb C[t]}\Spec\Bbb C[t]/(t^{n+2})$.
Namely, we have
\[
(F_{i, n} - t^{n+1}h_i) - G_{ij, n}(F_{j, n} - t^{n+1}h_j)
 = t^{n+1}\nu_{ij, n+1} - t^{n+1}(h_i-G_{ij, 0}h_j).
\]
Since $h_i$ and $h_j$ are extensions of $\eta_i$ and $\eta_j$, respectively, it follows that
\[
t^{n+1}(h_i-G_{ij, 0}h_j) = t^{n+1}(\eta_i-G_{ij, 0}\eta_j) = t^{n+1}\nu_{ij, n+1}
\]
 on $\mathscr U_i\cap\mathscr U_j$.
Thus, we have
\[
F_{i, n} - t^{n+1}h_i = F_{j, n} - t^{n+1}h_j
\]
 on $\mathscr U_i\cap\mathscr U_j$.

This implies that the differences of the local deformations 
\[
P_{\Phi}\circ \psi_{n+1, V_1V_2}|_{\mathscr U_{i, n+1}}
\]
 of $\varphi_n|_{\mathscr U_{i, n}}$ give a representative of a class of $H^1(C_0, \Theta_{C_0})$.
Therefore, if we take an appropriate deformation 
 $C_{n+1}$ of $C_n$, 
 then, not only the images, but the maps $P_{\Phi}\circ \psi_{n+1, V_1V_2}|_{\mathscr U_{i, n+1}}$ themselves glue.
This defines an $(n+1)$-th order deformation of the given $\varphi_n$.
In other words, if the cohomology class of $\{\nu_{ij, n+1}\}$ vanishes, 
 an $(n+1)$-th order deformation of $\varphi_n$ exists.

Conversely, it is straightforward to see that if the cohomology class of $\{\nu_{ij, n+1}\}$
 does not vanish, then 
 an $(n+1)$-th order deformation of $\varphi_n$ does not exist. \qed\\

By this result, combined with Proposition \ref{prop:coupling}, 
 we conclude that the calculation of the obstruction to deforming $\varphi_n$ can be entirely done 
 on $\mathcal Y_{V_1V_2}^s$, as if these are coordinate neighborhoods of $\mathcal X$.
The merit of this is that since the lift $\psi_{0, V_1V_2}\colon U_a\to \mathcal Y_{V_1V_2}^s$ is an embedding, 
 the study of \cite{Nishinou_Obstruction_2018} can be applied.
In the following subsection, we apply this to prove our main result, Theorem \ref{thm:main}.

\subsection{Vanishing of the obstructions}\label{subsec:maincal}
Assuming that an $n(\geq 0)$-th order deformation $\varphi_n\colon C_n\to X_n$ of the pre-log curve 
 $\varphi_0\colon C_0\to X_0$ exists, 
 we proceed to compute the obstruction to deforming it to the $(n+1)$-th order.
The argument follows that of \cite{Nishinou_Obstruction_2018}, which, 
 in turn, was modeled on 
 \cite{Kodaira_Spencer_Semiregularity_1959}.
In the latter, the unobstructedness of 
 deformations of hypersurfaces in a compact complex manifold was shown, under the assumption that 
 the hypersurface satisfies the so-called semiregularity condition.
In the case of a curve $C$ in a surface $S$, the semiregularity means that the natural map 
\[
H^0(S, \mathcal K_S)\to H^0(C, \mathcal N_C^{\vee}\otimes \omega_C)
\]
 is surjective, where $\mathcal K_S$ is the canonical sheaf of $S$, $\mathcal N_C^{\vee}$ is the conormal sheaf of $C$, 
 and $\omega_C$ is the dualizing sheaf of $C$.
Here, we assume $C$ is reduced.
The key to the proof is to construct a 1-cocycle with values in $\mathcal O_S(C)$
 on the surface $S$ which restricts to the obstruction 1-cocycle on the curve.
Then, under the semiregularity assumption, the vanishing of the obstruction follows directly.

In \cite{Nishinou_Obstruction_2018}, the notion of semiregularity was extended to immersions, 
 and a similar unobstructedness result was proved.
However, due to possible singularities of the image arising from the non-injectivity of the map, 
 it is no longer possible to construct a 1-cocycle on the ambient space which restricts to 
 the obstruction class on the curve.
To circumvent this difficulty, we developed a method to represent the obstruction as the sum of 
 local contributions.
Fortunately, this approach also provides a way to handle the present case, where the ambient space $X_0$ is singular.
Specifically, we compute the local contribution to the obstruction via certain contour integral around a loop 
 encircling the singular point of the curve, and since such a loop is mapped to the smooth locus of 
 $X_0$, our method remains applicable.\\


Let $\{\mathscr U_i\}_{i\in I}$ be an open covering of $C_0$ as in the previous subsection.
Let $\mathscr U_{i, n}$ denote the ringed space which is the restriction of $C_n$ to $\mathscr U_i$.
Also, 
 let $\mathscr W_{i,n}$ denote the ringed space which is the restriction of $X_n$ to $\mathscr W_i$. 
As in the previous subsection, based on Proposition \ref{prop:locallifts}, we carry out all the calculations in the standard 
 setting described in Subsection \ref{subsec:std}.
We take a pair $v_1, v_2$ of adjacent vertices of $\Gamma$, and consider the associated objects 
 $\mathcal Y_{V_1V_2}^s$, $\psi_{0, V_1V_2}$, etc.,
 as before.
Let $P_{\Phi}\colon \mathcal Y_{V_1V_2}^s\to \mathcal Y_{v_1v_2}$
 be a branched covering described in Definition \ref{def:Pphi}.

By Proposition \ref{prop:liftmap}, 
 the map $\varphi_n$ locally factors through a map to $\mathcal Y_{V_1V_2}^s$.
Let 
\[
\psi_{n, V_1V_2}|_{\mathscr U_{i, n}} = \psi_{i, n}\colon \mathscr U_{i, n}\to \mathscr W_{i, n, V_1V_2}^s
\]
 be such a map on $\mathscr U_{i, n}$, 
 where $\mathscr W_{i, n, V_1V_2}^s$ is a connected component of $P_{\Phi}^{-1}(\mathscr W_{i, n})$.
Although $\psi_{i, n}$ depends on the choice of $V_1$ and $V_2$, we drop the subscript $V_1V_2$
 from the notation $\psi_{i, n}$ for simplicity, 
 since by Lemma \ref{lem:cocycle}, this choice has little impact on the following argument.
Let $F_{i, n} = 0$ be a defining equation of $\varphi_n(\mathscr U_{i, n})$ in $\mathscr W_{i,n}$.
As in the previous subsection, we take 
 $P_{\Phi}^*F_{i, n}|_{\mathscr W_{i, n, V_1V_2}^s}=0$ 
 to be the defining equation of the image of $\psi_{i, n}$ in $\mathscr W_{i, n, V_1V_2}^s$,
 when $\mathscr U_i$ does not contain a node of $C_0$,
 and take $\widetilde F_{i, n, V_1V_2} = 0$ when $\mathscr U_i$ contains a node.
For simplicity, we write both $P_{\Phi}^*F_{i, n}|_{\mathscr W_{i, n, V_1V_2}^s}$ and $\widetilde F_{i, n, V_1V_2}$
 as 
\[
\widetilde F_{i, n},
\] 
 again because 
 the choice of $V_1$ and $V_2$ plays little role.
Similarly, let $\widetilde F_{j, n} = 0$ be the defining equation of the image of $\psi_{j, n}$ in $\mathscr W_{j, n, V_1V_2}^s$,
 where $\mathscr U_i\cap \mathscr U_j\neq \emptyset$.

On the intersection ${\mathscr W_{i, n, V_1V_2}^s}\cap {\mathscr W_{j, n, V_1V_2}^s}$, 
 these functions satisfy the relation
\[
\widetilde F_{i, n} = G_{ij, n}\widetilde F_{j, n},
\]
 where $G_{ij, n}$ is a holomorphic function defined over $\Bbb C[t]/(t^{n+1})$
 whose reduction over $\Bbb C[t]/(t)$ is non-vanishing.
Recall that by extending these functions to $\Bbb C[t]/(t^{n+2})$, we obtain the difference
\[
t^{n+1}\nu_{ij, n+1} = \widetilde F_{i, n} - G_{ij, n}\widetilde F_{j, n}
\]
 on ${\mathscr W_{i, n+1, V_1V_2}^s}\cap {\mathscr W_{j, n+1, V_1V_2}^s}$.
Here, $\nu_{ij, n+1}$ can be regarded as a holomorphic function on 
 $\mathscr W_{i, 0, V_1V_2}^s\cap \mathscr W_{j, 0, V_1V_2}^s$, 
 and the pullback of these to $C_0$ form an obstruction cocycle to deforming $\varphi_n$, 
 by Proposition \ref{prop:locallifts}.

\subsubsection{Multiplicative representation of the defining equations}\label{subsec:exprep}
To see the vanishing of the cohomology class of $\{\nu_{ij, n+1}\}$, 
 we apply Proposition \ref{prop:coupling}.
To do so, we need to represent $\{\nu_{ij, n+1}\}$ 
 in the form described in Subsection \ref{subsec:pairing}.
Namely, we need to take a set of meromorphic sections $\xi=\{\xi_i\}\cup \{\xi_{i, j}\}$ of $\mathcal N$
 on the locally closed subsets $\{\mathscr U_i\}\cup \{\mathscr U_{i, j}\}$
 associated with the covering $\{\mathscr U_i\}$, so that their differences on intersections
 coincide with the cocycle $\{\nu_{ij, n+1}\}$.
Once this representation is obtained, we can compute the pairing between
 $\{\nu_{ij, n+1}\}$ and the generator $\Psi_{C_0}$ of $H^0(C_0, \mathcal N^{\vee}\otimes\omega_0)$
 introduced in Definition \ref{def:volumeform},
 via the residue calculation.

However, it is in general difficult to obtain a set of meromorphic sections with the required property.
In this paper, following \cite{Nishinou_Obstruction_2018},
 we construct an alternative $C^{\infty}$ representative of the 
 cohomology class using a transcendental approach.
The residue calculation is replaced by a more flexible contour integral, and the well-definedness of the pairing
 follows from the Stokes theorem.
Subsections \ref{subsec:exprep} to \ref{subsec:fiberintegral} are devoted to this construction.\\

We can write the function $\widetilde F_{i, n}$, extended to $\Bbb C[t]/(t^{n+2})$, in the form
\[
\widetilde F_{i, n} = \widetilde F_{i, 0}\exp(f_{i, n+1}),
\]
 where $f_{i, n+1}$ is a function on $\mathscr W_{i, n+1, V_1V_2}^s$
 which can have poles along $\{\widetilde F_{i, 0} = 0\}$, 
 and vanishes when reduced over $\Bbb C[t]/(t)$.
Here, the function $\widetilde F_{i, 0}$ is also regarded as a function on 
  $\mathscr W_{i, n+1, V_1V_2}^s$
  by iteratively applying the construction in Subsection \ref{subsec:lift}.
Explicitly, since 
 the quotient $\frac{\widetilde F_{i, n}}{\widetilde F_{i, 0}}$ reduces to the constant 1 over $\Bbb C[t]/(t)$,
 we can take $f_{i, n+1}$ to be the logarithm $\log\left(\frac{\widetilde F_{i, n}}{\widetilde F_{i, 0}}\right)$, considered over
 $\Bbb C[t]/(t^{n+2})$.
Similarly, we define $f_{j, n+1}$.

\begin{defin}\label{def:f()}
Let 
\[
f_{i}(n+1),\;\; f_{j}(n+1)
\]
 be the coefficients of $t^{n+1}$ in 
 $f_{i, n+1}$ and $f_{j, n+1}$, respectively.
\end{defin}
Note that these functions
 can naturally be regarded
 as meromorphic functions on $\mathscr W_{i, 0, V_1V_2}^s$ and $\mathscr W_{j, 0, V_1V_2}^s$. 

We aim to establish a relationship between the obstruction cocycle $\{\nu_{ij, n+1}\}$ and the functions 
 $\{f_{i, n+1}\}$.
When neither $\mathscr U_i$ nor $\mathscr U_j$ contains a node, we can apply the calculation in 
 \cite[Lemma 14]{Nishinou_Obstruction_2018} directly.
See Lemma \ref{lem:residue} below. 
Assume that $\mathscr U_i$ contains a node $p$.
We have an edge of $\Gamma$ corresponding to the node $p$, and let $v_1$ and $v_2$ be the ends of that edge.
Recall that we take at least one of $v_1$ and $v_2$ to be a 2-valent vertex.
For the present, we assume $v_1$ is a 3-valent vertex, and $v_2$ is a 2-valent vertex.
Using the previous notation, there is a covering 
 $\mathcal Y_{V_1V_2}^s\to \mathcal Y_{v_1v_2}$, 
 where $\mathcal Y_{V_1V_2}^s$ is isomorphic to $\Spec\Bbb C[X, Y, Z^{\pm 1}, t]/(XY-t)$.
Recall that the varieties of the form $\mathcal Y_{VV'}^s$ are associated with the standard tropical curve with two 
 vertices (see Definition \ref{def:std}), whose unique bounded edge has lattice length one.
Consequently, the exponent of the parameter $t$ is one.
We assume that the component of $C_0$ corresponding to $v_1$ maps to
 the irreducible component of $Y_{0, V_1V_2}^s$ (the central fiber of $\mathcal Y_{V_1V_2}^s$)
 given by $Y = 0$.
By definition of $\nu_{ij, n+1}$, we have
\begin{equation}\label{eq:star}
t^{n+1}\nu_{ij, n+1} = \widetilde F_{i, 0}\exp(f_{i, n+1}) - G_{ij, n}\widetilde F_{j, 0}\exp(f_{j, n+1}),\;\;\;\;\text{mod $t^{n+2}$}.
\end{equation}
From the definition of the standard tropical curve with two vertices (see Subsection \ref{subsec:std}), 
 we can take the equation $\widetilde F_{i, 0} = 0$ in the form 
\[
X+bZ+c = 0,
\]
 where $b$ and $c$ are non-zero complex numbers.

Recall that $\mathscr U_j$ is an open subset of $C_0$ which intersects $\mathscr U_i$.
Note that we take $\mathscr U_j$ so that it does not contain a node of $C_0$. 
First, assume that the image of $\mathscr U_j$
 is contained in the irreducible component of $Y_{0, V_1V_2}^s$
 given by $Y = 0$.
Dividing \cref{eq:star} by $\widetilde F_{i, 0}\exp(f_{j, n+1})$, we obtain
\begin{equation}\label{eq:Nu1}
\frac{t^{n+1}\nu_{ij, n+1}}{\widetilde F_{i, 0}}
= \exp(f_{i, n+1}-f_{j, n+1}) - G_{ij, n}\frac{\widetilde F_{j, 0}}{\widetilde F_{i, 0}},\;\;\;\;\text{mod $t^{n+2}$}
\end{equation}
on $\mathscr W_{i, n+1, V_1V_2}^s\cap \mathscr W_{j, n+1, V_1V_2}^s$.
Note that since the function $\exp(f_{j, n+1})$ reduces to the constant 1 over $\Bbb C[t]/(t)$,
 dividing by it does not
affect the left hand side, as the equation is defined over $\Bbb C[t]/(t^{n+2})$.
Moreover,
 the function $G_{ij, n}\frac{\widetilde F_{j, 0}}{\widetilde F_{i, 0}}$ is a holomorphic function on 
 $\mathscr W_{i, n+1, V_1V_2}^s\cap \mathscr W_{j, n+1, V_1V_2}^s$.
In this case, we can again apply the calculation in \cite[Lemma 14]{Nishinou_Obstruction_2018}, 
 and we deduce Lemma \ref{lem:residue} below. 

Next, assume that $\mathscr U_j$ is contained in the irreducible component of $Y_{0, V_1V_2}^s$
 given by $X = 0$.
In this case, the image of $\mathscr U_j$ by $\psi_{j, 0}$ is defined by the equation 
 $\widetilde F_{j, 0} = 0$, which we fixed to define $\nu_{ij, n+1}$.
The function $\widetilde F_{j, 0}$ can be expressed as the product of a non-vanishing holomorphic function
 and another function $\widetilde F_{j, 0, Y}$, where
\[
\widetilde F_{j, 0, Y} = bZ+c.
\]
Here, $b$ and $c$ are non-zero complex numbers.
\personal{This point will be used in Lemma \ref{lem:int1}}

Dividing the equation (\ref{eq:star}) by $\widetilde F_{j, 0, Y}\exp(f_{j, n+1})$, we obtain
\begin{equation}\label{eq:Nu2}
\frac{t^{n+1}\nu_{ij, n+1}}{\widetilde F_{j, 0, Y}}
= \frac{\widetilde F_{i, 0}}{\widetilde F_{j, 0, Y}}\exp(f_{i, n+1}-f_{j, n+1}) 
    - G_{ij, n}\frac{\widetilde F_{j, 0}}{\widetilde F_{j, 0, Y}}.
\end{equation}
 on $\mathscr W_{i, n+1, V_1V_2}^s\cap \mathscr W_{j, n+1, V_1V_2}^s$.
Since $\mathscr W_{j, n+1, V_1V_2}^s$ is contained in the component of $Y_{0, V_1V_2}^s$
 given by $X = 0$, we substitute $\frac{t}{Y}$ to the variable $X$.
Note that over $\Bbb C[t]/(t^{n+2})$, we have
\[
\frac{t^{n+1}\nu_{ij, n+1}}{\widetilde F_{j, 0, Y}} = \frac{t^{n+1}\nu_{ij, n+1}}{\widetilde F_{i, 0}},
\]
 after substituting $X = \frac{t}{Y}$.

Now, we have
\[
\frac{\widetilde F_{i, 0}}{\widetilde F_{j, 0, Y}} = 1+\frac{X}{\widetilde F_{i, 0, Y}} = 1+\frac{t}{Y\widetilde F_{j, 0, Y}}.
\]
Furthermore, 
\[
1+\frac{t}{Y\widetilde F_{j, 0, Y}} = \exp(\log(1+\frac{t}{Y\widetilde F_{j, 0, Y}}))
= \exp(\sum_{l=1}^{n+1}\frac{(-1)^{l+1}}{l}(\frac{t}{Y\widetilde F_{j, 0, Y}})^l).
\]
Therefore, we have
\[
\frac{\widetilde F_{i, 0}}{\widetilde F_{j, 0, Y}}\exp(f_{i, n+1}-f_{j, n+1}) 
= \exp(\overline f_{i, n+1} - f_{j, n+1}),
\]
where 
\[
\overline f_{i, n+1} = f_{i, n+1} + \sum_{l=1}^{n+1}\frac{(-1)^{l+1}}{l}(\frac{t}{Y\widetilde F_{j, 0, Y}})^l.
\]

Recall that the function $f_{i, n+1}$ belongs to $\Bbb C[X, Y, Z^{\pm 1}, t, \frac{1}{\widetilde F_{i, 0}}]/(XY-t, t^{n+2})$.
Substituting $X = \frac{t}{Y}$ and expanding $\frac{1}{\widetilde F_{i, 0}}$ as
\[
\frac{1}{\widetilde F_{i, 0}} = \frac{1}{\widetilde F_{j, 0, Y}}\frac{1}{1+\frac{t}{Y\widetilde F_{j, 0, Y}}}
= \frac{1}{\widetilde F_{j, 0, Y}}\sum_{l=0}^{n+1}(-1)^l(\frac{t}{Y\widetilde F_{j, 0, Y}})^l,
\]
we write 
\[
\overline f_{i, n+1} = \sum_{l=1}^{n+1}t^l\widetilde f_{i}(l), 
\]
where $\widetilde f_i(l)$ belongs to $\Bbb C[Y^{\pm 1}, Z^{\pm 1}, \frac{1}{\widetilde F_{j, 0, Y}}]$.
Using these expressions, \cref{eq:Nu2} is written as
\begin{equation}\label{eq:Nu3}
\frac{t^{n+1}\nu_{ij, n+1}}{\widetilde F_{i, 0}}
 = \exp(\overline f_{i, n+1} - f_{j, n+1}) - G_{ij, n}\frac{\widetilde F_{j, 0}}{\widetilde F_{j, 0, Y}}.
\end{equation}
After rewriting \cref{eq:star} in this form, we can once again apply the 
 argument of \cite[Lemma 14]{Nishinou_Obstruction_2018}, and we obtain the following result.
We continue to use the notation introduced above.
\begin{lem}\label{lem:residue}
\begin{enumerate}
\item
	When neither $\mathscr U_i$ nor $\mathscr U_j$ contains a node,
	the equation
	\[
	f_{i}(n+1)-f_{j}(n+1) = \frac{\nu_{ij, n+1}}{\widetilde F_{i, 0}} + \kappa
	\]
	holds on $W_{i, 0, V_1V_2}^s\cap W_{j, 0, V_1V_2}^s$. 
	Here, $f_i(n+1)$ and $f_j(n+1)$ are functions on $\mathscr W_{i, 0, V_1V_2}^s$ and $\mathscr W_{j, 0, V_1V_2}^s$, 
	introduced in Definition \ref{def:f()}, and
	$\kappa$ is a holomorphic function.\\
	
\item
When $\mathscr U_i$ contains a node and $\mathscr U_j$ is contained in 
	the irreducible component of $Y_{0, V_1V_2}^s$
	 given by $Y = 0$, we again obtain the equation
	 \[
	 	f_{i}(n+1)-f_{j}(n+1) = \frac{\nu_{ij, n+1}}{\widetilde F_{i, 0}} + \kappa
	 	\]
	 	on $\mathscr W_{i, X, V_1V_2}^s\cap \mathscr W_{j, 0, V_1V_2}^s$.
	 	Here, $f_i(n+1)$ and $f_j(n+1)$ are functions on $\mathscr W_{i, X, V_1V_2}^s$ and 
	 	$\mathscr W_{j, 0, V_1V_2}^s$, respectively, 
	 	and $\mathscr W_{i, X, V_1V_2}^s$ is the irreducible component of $\mathscr W_{i, 0, V_1V_2}^s$ given by $Y = 0$.\\
	
\item
Finally, when $\mathscr U_i$ contains a node and $\mathscr U_j$ is contained in 
	the irreducible component of $Y_{0, V_1V_2}^s$
	 given by $X = 0$,
	the equation
\[
\widetilde f_{i}(n+1)-f_{j}(n+1) = \frac{\nu_{ij, n+1}}{\widetilde F_{i, 0}} + \kappa
\]
 holds on $\mathscr W_{i, Y, V_1V_2}^s\cap \mathscr W_{j, 0, V_1V_2}^s$.
Here, $\widetilde f_i(n+1)$ and $f_j(n+1)$ are functions on $\mathscr W_{i, Y, V_1V_2}^s$ 
 and $\mathscr W_{j, 0, V_1V_2}^s$, respectively, 
 and $\mathscr W_{i, Y, V_1V_2}^s$ is the irreducible component of $\mathscr W_{i, 0, V_1V_2}^s$ given by $X = 0$.
\end{enumerate}
\end{lem}
\proof
This was proved in \cite[Lemma 14]{Nishinou_Obstruction_2018}
 in the case of deformations of maps from curves into a fixed regular surface.
However, if we have expressions as Eqs.(\ref{eq:Nu1}) and (\ref{eq:Nu3}), 
 the proof is a simple calculation based solely on these equations, independent of any geometric data.
For the reader's convenience, we reproduce the proof.

We deal with the case of \cref{eq:Nu1}.
The case of \cref{eq:Nu3} follows by an entirely analogous argument.
Since the term $G_{ij, n}\frac{\widetilde F_{j, 0}}{\widetilde F_{i, 0}}$  
 is holomorphic, the divergent terms of 
 $\exp(f_{i, n+1}-f_{j, n+1})$ coincide with those of
 $\frac{t^{n+1}\nu_{ij, n+1}}{\widetilde F_{i, 0}}$.
It follows that the function $f_{i, n+1}-f_{j, n+1}$ has no divergent term when reduced over
 $\Bbb C[t]/(t^{n+1})$.
Thus, the divergent terms of $f_{i, n+1}-f_{j, n+1}$ coincide with those of 
 $\frac{t^{n+1}\nu_{ij, n+1}}{\widetilde F_{i, 0}}$.\qed\\

When both vertices $v_1$ and $v_2$ are 2-valent, a similar (and simpler) claim holds.
In particular, in this case we do not need to introduce $\widetilde f_i(n+1)$, and the equation in the third case
 is also $f_{i}(n+1)-f_{j}(n+1) = \frac{\nu_{ij, n+1}}{\widetilde F_{i, 0}} + \kappa$.

\subsubsection{Poincar\'e residue as dual pairing}
%
Let $\Psi_{X_0}$ be the logarithmic volume form on $X_0$ introduced in Definition \ref{def:volumeform}.
The Poincar\'e residue of the 2-form 
\[
\frac{\nu_{ij, n+1}}{\widetilde F_{i, 0}}P_{\Phi}^*\Psi_{X_0}
\]
 along 
\[
\{\widetilde F_{i, 0} = 0\}\cap \mathscr W_{i, 0, V_1V_2}^s\cap \mathscr W_{j, 0, V_1V_2}^s
\] 
 is defined to be the pullback to $\mathscr U_i\cap \mathscr U_j$
 of the 1-form $\zeta_{ij, n+1}$ on $\mathscr W_{i, 0, V_1V_2}^s\cap \mathscr W_{j, 0, V_1V_2}^s$
 determined by the equation
\begin{equation}\label{eq:4}
\frac{\nu_{ij, n+1}}{\widetilde F_{i, 0}}P_{\Phi}^*\Psi_{X_0} 
 = \zeta_{ij, n+1}\wedge \frac{d\widetilde F_{i, 0}}{\widetilde F_{i, 0}}.
\end{equation}
Here, $P_{\Phi}$ is the branched covering introduced in Definition \ref{def:Pphi}.
From this definition, it is clear that the pullback of $\zeta_{ij, n+1}$ to $\mathscr U_i\cap \mathscr U_j$
 coincides with the fiberwise pairing between 
 $\nu_{ij, n+1}$ and 
\[
\Psi_{C_0}|_{\mathscr U_i\cap \mathscr U_j} = \psi_{0, V_1V_2}^*P_{\Phi}^*\Psi_{X_0}.
\]
Here, recall that the pullback of $\nu_{ij, n+1}$ to $C_0$ is naturally regarded as a local section 
 of the log normal sheaf $\mathcal N_{\varphi_0}$ on $C_0$ by Lemma \ref{lem:NN_phi}.
Since 
\[
\psi_{0, V_1V_2}^*P_{\Phi}^*\Omega_{X_0}^2 \cong \mathcal N_{\varphi_0}^{\vee}\otimes\omega_{C_0},
\]
 where $\Omega_{X_0}^2$ is the sheaf of logarithmic 2-forms on $X_0$, it follows that
 $\zeta_{ij, n+1}$ naturally defines a local section of the dualizing sheaf $\omega_{C_0}$ of $C_0$.

Lemma \ref{lem:cocycle} implies that the set of sections $\{\zeta_{ij, n+1}\}$ forms a \v{C}ech 1-cocycle on $C_0$
 with values in $\omega_{C_0}$. 
The cohomology class of $\{\zeta_{ij, n+1}\}$ is given by 
 the pairing 
\[
\Psi_{C_0}\otimes \{\nu_{ij, n+1}\} \in
  H^0(C_0, \mathcal N_{\varphi_0}^{\vee}\otimes \omega_{C_0})\otimes H^1(C_0, \mathcal N_{\varphi_0})\to 
 H^1(C_0, \omega_{C_0}),
\]
 under the identification of Lemma \ref{lem:NN_phi}.
Then, by Propositions \ref{prop:dimension_of_obstruction}
 and \ref{prop:locallifts}, we obtain the following result.
\begin{prop}\label{prop:obstrep}
The vanishing of the cohomology class of $\{\zeta_{ij, n+1}\}$ 
 is equivalent to the vanishing of the obstruction to deforming $\varphi_n$.\qed
\end{prop}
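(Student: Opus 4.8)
The plan is to present the statement as a formal consequence of Serre duality on the connected, at worst nodal curve $C_0$, exploiting that both sides of that duality are one--dimensional. First I would recall from \cref{prop:locallifts} that the obstruction to extending $\varphi_n$ to a map over $\bbC[t]/t^{n+2}$ is precisely the \v{C}ech class $[\{\nu_{ij,n+1}\}]\in H^1(C_0,\cN)$. Hence it suffices to prove the cohomological equivalence: $[\{\zeta_{ij,n+1}\}]=0$ in $H^1(C_0,\omega_{C_0})$ if and only if $[\{\nu_{ij,n+1}\}]=0$ in $H^1(C_0,\cN)$.

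Next I would identify $[\{\zeta_{ij,n+1}\}]$ as the image of the obstruction class under the cup product. By the construction preceding the statement, $\zeta_{ij,n+1}$ is the $1$--form obtained as the Poincar\'e residue in \eqref{eq:4}, and its pullback to $U_i\cap U_j$ equals the fibrewise pairing of the local section $\nu_{ij,n+1}$ of $\cN$ against the generator $\psi$; moreover \cref{lem:obst} supplies the cocycle relation satisfied by $\{\nu_{ij,n+1}\}$ relative to the normal--sheaf transition functions. Together these show that $\{\zeta_{ij,n+1}\}$ is a genuine \v{C}ech $1$--cocycle with values in $\omega_{C_0}$ and that its class is the image of $[\{\nu_{ij,n+1}\}]$ under the map
\[
\psi\cup(-)\colon H^1(C_0,\cN)\longrightarrow H^1(C_0,\omega_{C_0})
\]
induced by $\psi\in H^0(C_0,\cN^\vee\otimes\omega_{C_0})$; concretely this is exactly the residue pairing of \cref{prop:coupling}.

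Finally I would invoke duality. On the connected curve $C_0$ with dualizing sheaf $\omega_{C_0}$ one has $H^1(C_0,\omega_{C_0})\cong\bbC$ together with a perfect pairing $H^0(C_0,\cN^\vee\otimes\omega_{C_0})\times H^1(C_0,\cN)\to H^1(C_0,\omega_{C_0})$. By \cref{prop:dimension_of_obstruction} the space $H^0(C_0,\cN^\vee\otimes\omega_{C_0})$ is one--dimensional, hence so is $H^1(C_0,\cN)$; since $\psi$ is a nonzero generator, the map $\psi\cup(-)$ above is a nonzero linear map between one--dimensional spaces, therefore an isomorphism. Consequently $[\{\zeta_{ij,n+1}\}]=\psi\cup[\{\nu_{ij,n+1}\}]$ vanishes if and only if $[\{\nu_{ij,n+1}\}]$ vanishes, which by \cref{prop:locallifts} is the vanishing of the obstruction. (Equivalently, one may quote \cref{rem:coupling}(4), which already records this one--dimensional duality.)

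The only point that genuinely requires care — and it is already addressed by the discussion around \eqref{eq:4} and by \cref{lem:obst} — is the verification that the locally defined Poincar\'e residues $\zeta_{ij,n+1}$ patch into an honest $\omega_{C_0}$--valued \v{C}ech cocycle representing the cup product, rather than just a disparate collection of local $1$--forms; once that compatibility is granted, the rest is the formal linear algebra of a perfect pairing between one--dimensional vector spaces.
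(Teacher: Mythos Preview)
Your proposal is correct and follows essentially the same route as the paper: the proposition is stated with a \qed immediately after the sentence noting that $[\{\zeta_{ij,n+1}\}]$ is the image of $[\{\nu_{ij,n+1}\}]$ under the Serre-duality pairing, and the paper cites \cref{prop:dimension_of_obstruction} and \cref{prop:locallifts} just as you do. Your only addition is to spell out that the one-dimensionality forces $\psi\cup(-)$ to be an isomorphism, which is exactly the content the paper leaves implicit.
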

Let $C_{0, v}$ be any irreducible component of $C_0$.
The restriction of $\{\zeta_{ij, n+1}\}$ to $C_{0, v}$ gives an $\omega_{C_{0, v}}$-valued \v{C}ech 1-cocycle on it
 with respect to the open covering induced from the covering $\{\mathscr U_i\}$ of $C_0$.
\begin{rem}
We regard the restriction of $\{\zeta_{ij, n+1}\}$ to $C_{0, v}$ as an $\omega_{C_{0, v}}$-valued \v{C}ech 1-cocycle,
 rather than an $\omega_{C_0}|_{C_{0, v}}$-valued one.
Here, $\omega_{C_{0, v}}$ is the canonical sheaf of $C_{0, v}$, while $\omega_{C_0}|_{C_{0, v}}$
 is the restriction of the dualizing sheaf of $C_0$ to $C_{0, v}$.
The reason is that we have $H^1(C_{0, v}, \omega_{C_{0, v}}) \cong \Bbb C$, which is isomorphic to 
 $H^1(C_0, \omega_{C_0})$, whereas $H^1(C_{0, v}, \omega_{C_0}|_{C_{0, v}}) \cong 0$.
Using this fact, one can see that the restriction of the calculation 
 for $C_0$ and $\mathcal L = \omega_{C_0}$ in Proposition \ref{prop:coupling} to $C_{0, v}$
 gives the same result as the calculation for $C_{0, v}$ and $\mathcal L = \omega_{C_{0, v}}$.
This observation is essential for comparing the cohomology classes
 $[\eta_{ij, n+1, \delta}]$ and $[\zeta_{ij, n+1}]$ discussed in the next subsection.
\end{rem}
By applying Remark \ref{rem:coupling} (3) to the case $\mathcal L = \omega_{C_0}$,
 such a cocycle can be represented by 
 the differences of local meromorphic sections 
\[
\{\zeta_{i, n+1}\}
\]
 on $C_0$.
Moreover, the cohomology class defined by this \v{C}ech 1-cocycle depends only on the 
 residues of $\zeta_{i, n+1}$.
In particular, if such a representative of the class $[\zeta_{ij, n+1}]$ is known, 
 it becomes straightforward to determine whether $[\zeta_{ij, n+1}]$ vanishes or not.

\subsubsection{Integration along fibers and $C^{\infty}$ representative of the obstruction class}\label{subsec:fiberintegral}
In our setting, there is no immediate candidate for the meromorphic sections $\zeta_{i, n+1}$
 described above.
Instead, 
 we have a meromorphic closed 2-form, given by either
\[
f_{i}(n+1)P_{\Phi}^*\Psi_{X_0}\;\; \text{or}\;\; \widetilde f_i(n+1)P_{\Phi}^*\Psi_{X_0}
\]
 defined on each open subset
 $\mathscr W_{i, 0, V_1V_2}^s$, $\mathscr W_{i, X, V_1V_2}^s$ or $\mathscr W_{i, Y, V_1V_2}^s$.
Namely, by \cref{eq:4} and Proposition \ref{prop:obstrep}, 
 the 2-form 
\[
\frac{\nu_{ij, n+1}}{\widetilde F_{i, 0}}P_{\Phi}^*\Psi_{X_0}
\]
 essentially represents the 
 obstruction $\{\zeta_{ij, n+1}\}$.
In this framework, Lemma \ref{lem:residue} suggests that the 
 closed 2-forms $f_{i}(n+1)P_{\Phi}^*\Psi_{X_0}$ and 
 $\widetilde f_i(n+1)P_{\Phi}^*\Psi_{X_0}$ serves as analogues of $\{\zeta_{i, n+1}\}$.
In this subsection, we make this analogy precise, and construct a $C^{\infty}$ \v{C}ech 1-cocycle
 $\{\eta_{ij, n+1}\}$ on $C_0$ from $f_{i}(n+1)P_{\Phi}^*\Psi_{X_0}$ and $\widetilde f_i(n+1)P_{\Phi}^*\Psi_{X_0}$,
 such that the vanishing of its cohomology class
 is equivalent to that of $\{\zeta_{ij, n+1}\}$.

Consider the irreducible component $\overline Y_{X, V_1V_2}^s$ of $\overline Y_{0, V_1V_2}^s$ defined by $Y = 0$.
Here, $\overline Y_{0, V_1V_2}^s$ is the central fiber of $\overline{\mathcal Y}_{V_1V_2}^s$,
 which is the closure of $Y_{0, V_1V_2}^s\cong \Spec\Bbb C[X, Y, Z^{\pm 1}]/(XY)$ in $\overline{\mathcal Y}_{V_1V_2}^s$.
Let $\psi_{0, V_1V_2}(C_{0, v})$ be the irreducible component of the image of $\psi_{0, V_1V_2}$
contained in $\overline Y_{X, V_1V_2}^s$.
Fix a Riemannian metric on $\overline Y_{X, V_1V_2}^s$.
Let $\delta$ be a small positive number.
By the exponential map, we can identify the disc bundle of radius $\delta$ in the normal bundle 
 of $\psi_{0, V_1V_2}(C_{0, v})$ in $\overline Y_{X, V_1V_2}^s$ with
 a tubular neighborhood 
\[
N_{\delta}(\psi_{0, V_1V_2}(C_{0, v}))
\]
 of $\psi_{0, V_1V_2}(C_{0, v})$.
The boundary $\partial N_{\delta}(\psi_{0, V_1V_2}(C_{0, v}))$ has a structure of a circle bundle over 
 $C_{0, v}$, and let
\[
\pi_{\delta}\colon \partial N_{\delta}(\psi_{0, V_1V_2}(C_{0, v}))\to C_{0, v}
\]
 be the projection.

Assume $\mathscr W_{i, 0, V_1V_2}^s$ is contained in $\overline Y_{X, V_1V_2}^s$.
In this case, the open subset $\mathscr U_i$ of $C_0$, whose image by $\psi_{0, V_1V_2}$ is contained
 in $\mathscr W_{i, 0, V_1V_2}^s$, does not contain a node.
Then, we can integrate the closed 2-form
 $f_{i}(n+1)P_{\Phi}^*\Psi_{X_0}$ along the fibers of $\pi_{\delta}$.
Let 
\[
\eta_{i, \delta} = \frac{1}{2\pi i}\int_{\delta} f_{i}(n+1)P_{\Phi}^*\Psi_{X_0}
\] 
 denote the integration along the fibers.
This gives a closed 1-form on $\mathscr U_i$.

Next, consider the case when $\mathscr U_i$ contains a node $p$ of $C_0$.
In this case, $\mathscr U_i$ consists of two irreducible components, which we denote
 by 
\begin{equation}\label{eq:scrUXY}
\mathscr U_{i, X}\,\,\,\, \text{and}\,\,\,\, \mathscr U_{i, Y}.
\end{equation}
Each of them is isomorphic to a disc.
Correspondingly, the open subset $\mathscr W_{i, 0, V_1V_2}^s$ of $\overline Y_{0, V_1V_2}^s$
 also has two irreducible components.
We denote these by $\mathscr W_{i, X, V_1V_2}^s$ and $\mathscr W_{i, Y, V_1V_2}^s$, where
\begin{equation}\label{eq:scrWYX}
\mathscr W_{i, X, V_1V_2}^s\subset \overline Y_{X, V_1V_2}^s
\end{equation}
 and 
\begin{equation}\label{eq:scrWYY}
\mathscr W_{i, Y, V_1V_2}^s\subset \overline Y_{Y, V_1V_2}^s.
\end{equation}
Here $\overline Y_{X, V_1V_2}^s$ is defined by $Y=0$, and $\overline Y_{Y, V_1V_2}^s$ is defined by $X=0$.
Moreover, the components $\mathscr U_{i, X}$ and $\mathscr U_{i, Y}$ are mapped to 
 $\mathscr W_{i, X, V_1V_2}^s$ and $\mathscr W_{i, Y, V_1V_2}^s$
 by $\psi_{0, V_1V_2}$, respectively.

Let $\mathring{\mathscr U}_{i, X}$ be the complement of a small closed disc $B_p$ around $p$ in 
 $\mathscr U_{i, X}$.
We can also integrate the closed 2-form $f_i(n+1)P_{\Phi}^*\Psi_{X_0}$ or $\widetilde f_i(n+1)P_{\Phi}^*\Psi_{X_0}$
 along the fibers of $\pi_{\delta}^{-1}(\mathring{\mathscr U}_{i, X})$,
 since these forms do not diverge there. \personal{i.e., the locus $X = 0$ and $\widetilde F_{i,0} = 0$}
As before, we denote the resulting integral by $\eta_{i, \delta}$.
This is a closed 1-form on $\mathring{\mathscr U}_{i, X}$. 

The differences of these $C^{\infty}$ closed 1-forms 
\[
\eta_{ij, n+1, \delta} = \eta_{i, \delta}-\eta_{j, \delta}
\]
 on $\mathscr U_i\cap \mathscr U_j$
 give a \v{C}ech 1-cocycle on each irreducible component of $C_0$ with values in closed 1-forms.
Moreover, on each irreducible component $C_{0, v}\cong \Bbb P^1$ of $C_0$, 
 we have an isomorphism
\[
H^1(C_{0, v}, \mathcal C^1)\cong H^2(C_{0, v}, \Bbb C)\cong H^1(C_{0, v}, \omega_{C_{0, v}}),
\]
 where $\mathcal C^1$ is the sheaf of $C^{\infty}$ closed 1-forms and
 $\omega_{C_{0, v}}$ is the canonical sheaf.
\personal{Since its difficult to consider duality of $C^{\infty}$-forms on a nodal curve, we restrict to each component}

We now review the argument in \cite[Section 3.3]{Nishinou_Obstruction_2018} 
 comparing the class $[\eta_{ij, n+1, \delta}]$ with the obstruction class  
 $[\zeta_{ij, n+1}]$.
See \cite[Section 3.3]{Nishinou_Obstruction_2018} for full details.
First, the sections $\eta_{i, \delta}$ can be regarded as an 
 analogue of the representation of classes in $H^1(C_0, \omega_{C_0})$
 by meromorphic sections on locally closed subsets, as discussed in Subsection \ref{subsec:pairing}.
In particular, the cohomology class of $\{\eta_{ij, n+1, \delta}\}$ depends only on the 
 value of 
\[
\int_{\gamma_p}\eta_{i, \delta},
\]
 where $\gamma_p$ is a contour encircling $B_p$
 in the positive direction with respect to the complex structure, for each node $p$ of $C_0$.
The value of this contour integral is an analogue of the residues of the meromorphic sections  
 discussed in Subsection \ref{subsec:pairing}.
In particular, the following analogue of Proposition \ref{prop:coupling} holds.
\begin{prop}
The cohomology class defined by $\eta_{i, \delta}$ on each component $C_{0, v}$ can be identified with 
 the sum of the values of the contour integrals around the nodes of $C_0$ contained in $C_{0, v}$ 
 under the identification 
\[
H^1(C_{0, v}, \mathcal C^1)\cong H^2(C_{0, v}, \Bbb C)
 \cong H^0(C_{0, v}, \Bbb C)^{\vee}\cong \Bbb C.
\]\qed
\end{prop}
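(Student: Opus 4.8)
The plan is to read off the class of the \v{C}ech $1$-cocycle $\{\eta_{ij,n+1,\delta}\}$ on $C_{0,v}\cong\bbP^1$ through the two isomorphisms in the statement, keeping track only of the data that survive, namely the periods of the $\eta_{i,\delta}$ around the nodes lying on $C_{0,v}$. First I would fix the identifications explicitly. On the compact genus $0$ curve $C_{0,v}$ one has the short exact sequence of sheaves $0\to\bbC\to\cC^0\xrightarrow{d}\cC^1\to 0$, where $\cC^0$ is the sheaf of smooth functions (exactness on the left because a closed $0$-form is locally constant, on the right by the Poincar\'e lemma); since $\cC^0$ is fine, the connecting map gives $\partial\colon H^1(C_{0,v},\cC^1)\xrightarrow{\ \sim\ }H^2(C_{0,v},\bbC)$, and $H^2(C_{0,v},\bbC)\cong H^0(C_{0,v},\bbC)^{\vee}\cong\bbC$ is Poincar\'e duality, realized by $[\omega]\mapsto\int_{C_{0,v}}\omega$. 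The connecting map $\partial$ is computed concretely as follows: on each overlap $U_i\cap U_j$, which may be arranged to be contractible, choose a smooth primitive $g_{ij}$ with $dg_{ij}=\eta_{ij,n+1,\delta}$; then the locally constant $2$-cocycle $(\delta g)_{ijk}=g_{jk}-g_{ik}+g_{ij}$ represents $\partial[\{\eta_{ij,n+1,\delta}\}]$.

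Next I would localize this $2$-cocycle at the nodes $p_1,\dots,p_r$ of $C_{0,v}$ (here $r\le 3$). For every open $U_i$ not containing a node, $\eta_{i,\delta}$ is a genuine smooth closed $1$-form on the contractible $U_i$, so $\eta_{i,\delta}=dh_i$ for some smooth $h_i$; taking $g_{ij}=h_i-h_j$ on all overlaps of such opens makes $(\delta g)_{ijk}$ vanish on every triple not involving one of the finitely many opens $U_{i_1},\dots,U_{i_r}$ that carry the nodes. On the punctured neighbourhood $U_{i_k}\setminus\{p_k\}$, which is an annulus, we may write
\[
\eta_{i_k,\delta}=\lambda_k\,\tfrac{1}{2\pi i}\tfrac{dz_k}{z_k}+dh_{i_k},\qquad \lambda_k=\oint_{\gamma_{p_k}}\eta_{i_k,\delta},
\]
with $z_k$ a coordinate centred at $p_k$ and $h_{i_k}$ smooth on the annulus; and on each contractible overlap $U_{i_k}\cap U_j$, which misses $p_k$, one has $\tfrac{1}{2\pi i}\tfrac{dz_k}{z_k}=d\theta_{i_kj}$ for a branch $\theta_{i_kj}$ of $\tfrac{1}{2\pi i}\log z_k$, so we put $g_{i_kj}=\lambda_k\theta_{i_kj}+h_{i_k}-h_j$. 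A short computation then gives $(\delta g)_{i_kjj'}=\lambda_k(\theta_{i_kj}-\theta_{i_kj'})$, that is, the obstruction $2$-cocycle equals $\sum_{k=1}^{r}\lambda_k$ times the standard $2$-cocycle attached to the point $p_k$.

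Finally I would identify that standard cocycle with $1\in\bbC$. The form $\tfrac{1}{2\pi i}\tfrac{dz_k}{z_k}$ generates $H^1$ of a small punctured disc around $p_k$ and maps, under the same connecting homomorphism, to the generator of the local cohomology $H^2_{p_k}(C_{0,v},\bbC)\cong\bbC$; its image in $H^2(C_{0,v},\bbC)\cong\bbC$ under excision is the Poincar\'e dual of $[p_k]$, whose integral over $C_{0,v}$ is $1$. Summing over $k$, the class of $\{\eta_{ij,n+1,\delta}\}$ in $\bbC$ equals $\sum_k\lambda_k=\sum_{p}\oint_{\gamma_p}\eta_{i,\delta}$, the sum of the contour integrals around the nodes of $C_{0,v}$, which is the assertion.

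The step I expect to be the real obstacle is the passage to the annular model at a node: the forms $\eta_{i_k,\delta}$ are only defined away from the fibre over the node, where the relevant $\tilde f_{i_k}(n+1)$ may have poles, so one has to check that the decomposition displayed above is compatible with the \v{C}ech data coming from the neighbouring $\psi_{n,vv'}$ and that the normalisation of the generator comes out exactly as stated. By \cref{lem:cocycle} and the fact that all the computations have been reduced to the standard charts $\mathcal Y_{vv'}^s$, this is the same verification as in the fixed-target situation of \cite{Nishinou_Obstruction_2018}, which I would invoke for the details.
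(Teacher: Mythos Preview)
Your argument is correct. The paper itself does not prove this proposition: it is stated with a bare \qed, as part of a passage that explicitly says ``we review the argument in \cite{Nishinou_Obstruction_2018} \ldots\ See \cite{Nishinou_Obstruction_2018} for the full details.'' So there is nothing in the paper to compare against beyond that reference; you have supplied the details the paper omits.

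Your route---compute the connecting map $H^1(C_{0,v},\cC^1)\to H^2(C_{0,v},\bbC)$ in \v{C}ech terms, kill the contributions away from the nodes using primitives on contractible opens, and at each node write $\eta_{i_k,\delta}$ on the punctured disc as $\lambda_k\,\tfrac{1}{2\pi i}\tfrac{dz_k}{z_k}+dh_{i_k}$---is exactly the standard argument, and your identification of the resulting $2$-cocycle $\lambda_k(\theta_{i_kj}-\theta_{i_kj'})$ with $\lambda_k$ times the local generator dual to $[p_k]$ is right. The only point worth flagging is cosmetic: the passage from the \v{C}ech $2$-cocycle $(\delta g)_{ijk}\in H^2(C_{0,v},\bbC)$ to a number via ``$[\omega]\mapsto\int_{C_{0,v}}\omega$'' silently goes through the \v{C}ech--de~Rham comparison, but since you immediately reinterpret it as pairing with the fundamental class (Poincar\'e dual of a point integrates to $1$), nothing is lost. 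Your closing caveat about the annular model near the node is well placed and matches the paper's own caution that $\eta_{i,\delta}$ is only defined on $U_{i,X}\setminus B_p$; the decomposition you use is valid precisely because $\eta_{i_k,\delta}$ is a smooth closed $1$-form there.
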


Note that the contour integral equals the integral of the closed 2-form $f_i(n+1)P_{\Phi}^*\Psi_{X_0}$ or
 $\widetilde f_i(n+1)P_{\Phi}^*\Psi_{X_0}$ along the torus $\pi_{\delta}^{-1}(\gamma_p)$ with an appropriate orientation.
By the Stokes theorem, this integral does not depend on $\delta$.
Therefore, for any $\delta$, the cohomology class defined by $\eta_{i, \delta}$ remains unchanged.

On the other hand, on the intersection $\mathscr U_i\cap \mathscr U_j$, 
 the fiberwise integral 
\[
\frac{1}{2\pi i}\int_{\delta}(f_i(n+1)-f_j(n+1))P_{\Phi}^*\Psi_{X_0} 
 = \frac{1}{2\pi i}\int_{\delta}\zeta_{ij, n+1}\wedge\frac{d\widetilde F_{i, 0}}{\widetilde F_{i, 0}}
\]
 or
\[
\frac{1}{2\pi i}\int_{\delta}(\widetilde f_i(n+1)-f_j(n+1))P_{\Phi}^*\Psi_{X_0} 
 = \frac{1}{2\pi i}\int_{\delta}\zeta_{ij, n+1}\wedge\frac{d\widetilde F_{i, 0}}{\widetilde F_{i, 0}}
\]
 converges to $\zeta_{ij, n+1}$ as $\delta$ goes to zero.
Combining this with the observation in the above paragraph, it follows that the cohomology classes 
 defined by $\eta_{ij, n+1, \delta}$ and by $\zeta_{ij, n+1}$ coincide on each $C_{0, v}$.

Note that while $\eta_{ij, n+1, \delta}$ defines a cohomology class on each component of $C_0$, 
 $\zeta_{ij, n+1}$ determines a cohomology class on $C_0$.
However, recalling that the cohomology class determined by $\zeta_{ij, n+1}$ only depends on the
 residues at the nodes, and that the contour integrals of $\eta_{i, \delta}$ correspond to 
 these residues, it follows that $\eta_{i, \delta}$ also canonically determines a cohomology class on $C_0$, 
 which coincides with the cohomology class
 determined by $\zeta_{ij, n+1}$, see \cite[Subsection 3.3.2]{Nishinou_Obstruction_2018}.
Again, analogously to Proposition \ref{prop:coupling}, we obtain the following.
\begin{prop}
The class of $H^1(C_0, \omega_{C_0})$ determined by $\eta_{i, \delta}$
 is identified with the sum of all the contour integrals around the nodes of $C_0$
 under the identification
\[
H^1(C_0, \omega_{C_0})\cong H^0(C_0, \mathcal O_{C_0})^{\vee}\cong \Bbb C.
\]\qed
\end{prop}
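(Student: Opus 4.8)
The plan is to read off the global statement from Proposition \ref{prop:coupling} once we know that $H^1(C_0,\omega_{C_0})$ is one dimensional. First I would note that, $\Gamma$ being connected, $C_0$ is connected, so $H^0(C_0,\cO_{C_0})=\bbC$; since a nodal curve is Gorenstein, Serre duality gives $H^1(C_0,\omega_{C_0})\cong H^0(C_0,\cO_{C_0})^\vee\cong\bbC$, which is the stated identification. Under it the isomorphism $H^1(C_0,\omega_{C_0})\to\bbC$ is precisely the pairing $\langle 1,-\rangle$ of Proposition \ref{prop:coupling} in the case $\mathcal L=\omega_{C_0}$: then $\mathcal L^\vee\otimes\omega_{C_0}=\cO_{C_0}$, the generator $\psi$ of its space of global sections is the constant function $1$, and the fiberwise pairing of $1$ with a local section of $\omega_{C_0}$ returns that $1$-form unchanged.

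Next I would feed the cocycle $\{\eta_{ij,n+1,\delta}\}$, viewed through its local representatives $\eta_{i,\delta}$, into the $C^\infty$ analogue of the residue formula of Proposition \ref{prop:coupling}, which is the version set up in the paragraphs above. Away from the nodes each $\eta_{i,\delta}$ is a smooth closed $1$-form defined on all of $U_i$, and every nonempty intersection of two members of $\{U_i\}$ disjoint from the nodes is a disc; hence the only terms contributing to $\langle 1,[\eta_{ij,n+1,\delta}]\rangle$ are those attached to the nodes of $C_0$, and at a node $p$ the relevant residue is, by construction, the contour integral $\tfrac{1}{2\pi i}\int_{\gamma_p}\eta_{i,\delta}$ (independent of $\delta$ by Stokes, and independent of the open set $U_i$ containing $p$ by the cocycle relations of Lemma \ref{lem:obst} and Lemma \ref{lem:cocycle}). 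This yields
\[\langle 1,[\eta_{ij,n+1,\delta}]\rangle=\sum_{p}\frac{1}{2\pi i}\int_{\gamma_p}\eta_{i,\delta},\]
the sum being over all nodes of $C_0$, which is the assertion.

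As a consistency check, and an alternative way to organize the same computation, one can restrict to each component: the inclusion $H^0(C_0,\cO_{C_0})\hookrightarrow\bigoplus_v H^0(C_{0,v},\cO_{C_{0,v}})$ is the diagonal, so dually the trace on $C_0$ is the sum of the traces on the components $C_{0,v}$, and each of those is computed by the preceding Proposition as the sum of the contour integrals around the nodes lying on $C_{0,v}$; summing over $v$ recovers exactly the sum over all nodes of $C_0$.

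The hard part will be the bookkeeping at the nodes: one must verify that the ``residue at $p$'' produced by the global representative $\{\eta_{i,\delta}\}$ is the very contour integral that enters the per-component statement, in particular that the two branches meeting at $p$, which carry opposite orientations, do not cancel, and that the splitting conventions of Subsection \ref{subsec:lift} — where the hypothesis $(\ast)$ is used to render the local forms $\eta_{i,\delta}$ unambiguous near $p$ — are honoured throughout. Granting these compatibilities, the identification $H^1(C_0,\omega_{C_0})\cong H^0(C_0,\cO_{C_0})^\vee\cong\bbC$ sends $[\eta_{ij,n+1,\delta}]$ to $\sum_p\tfrac{1}{2\pi i}\int_{\gamma_p}\eta_{i,\delta}$, as claimed.
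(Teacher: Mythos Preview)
Your proposal is correct and follows exactly the line the paper intends: the paper states this proposition with an immediate \qed, preceded only by the remark that it is ``analogous to Proposition \ref{prop:coupling}''. Your argument—identify $H^1(C_0,\omega_{C_0})\cong\bbC$ via Serre duality, then read the smooth analogue of the residue formula with $\mathcal L=\omega_{C_0}$ and $\psi=1$, or equivalently sum the per-component statement of the preceding proposition—is precisely that analogy spelled out.

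One small clarification: the ``hard part'' you flag at the end, namely whether the two contour integrals at a node might cancel, is not actually a difficulty in proving \emph{this} proposition. The proposition only asserts that the image in $\bbC$ is the sum of all contour integrals (one per branch, hence two per node); whether those pairs cancel is the content of the \emph{subsequent} computation (Lemmas \ref{lem:int1} and \ref{lem:int2}), used to conclude vanishing of the obstruction. So your formula $\sum_p\tfrac{1}{2\pi i}\int_{\gamma_p}\eta_{i,\delta}$ should be read as a sum over all $(p,\text{branch})$ pairs, matching the $\sum\res_{p_{i,j}}$ in Proposition \ref{prop:coupling}, and no further verification about cancellation is needed here.
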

Then, by Proposition \ref{prop:obstrep}, we have the following.
\begin{cor}\label{cor:obstsum}
The obstruction to deforming $\varphi_n$ vanishes if and only if 
 the sum of all the contour integrals of $\eta_{i, \delta}$ around the nodes of $C_0$ vanishes.\qed
\end{cor}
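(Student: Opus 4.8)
The plan is to obtain the corollary by chaining together the identifications assembled in the preceding paragraphs, so the argument will be short (as the \qed in the statement already indicates). First I would invoke Proposition \ref{prop:obstrep}, which reduces the vanishing of the obstruction to deforming $\varphi_n$ to the vanishing of the cohomology class of the \v{C}ech $1$-cocycle $\{\zeta_{ij,n+1}\}$ in $H^1(C_0,\omega_{C_0})$. Next I would recall the comparison established just above: on each irreducible component $C_{0,v}$ of $C_0$ the cocycles $\{\zeta_{ij,n+1}\}$ and $\{\eta_{ij,n+1,\delta}\}$ define the same class, since the fiberwise integral $\frac{1}{2\pi i}\int_{\delta}\bigl(f_i(n+1)-f_j(n+1)\bigr)P_{\Phi}^*\psi$ converges to $\zeta_{ij,n+1}$ as $\delta\to 0$, and moreover $\{\eta_{ij,n+1,\delta}\}$ glues to a class on all of $C_0$ which agrees with that of $\{\zeta_{ij,n+1}\}$.

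Then I would apply the Proposition stated just before the corollary, which, under the canonical isomorphism $H^1(C_0,\omega_{C_0})\cong H^0(C_0,\mathcal O_{C_0})^{\vee}\cong\Bbb C$, identifies the class determined by the $\eta_{i,\delta}$ with the sum of all contour integrals of the $\eta_{i,\delta}$ around the nodes of $C_0$ (this is itself the analogue, for $\mathcal L=\omega_{C_0}$, of the pairing formula in Proposition \ref{prop:coupling}). Putting these facts in sequence yields the claimed equivalence: the obstruction to deforming $\varphi_n$ vanishes if and only if $[\zeta_{ij,n+1}]=0$, if and only if $[\eta_{ij,n+1,\delta}]=0$, if and only if the total sum of the contour integrals of the $\eta_{i,\delta}$ around the nodes of $C_0$ vanishes. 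Along the way I would note that this sum is well defined independently of the auxiliary radius $\delta$, which is already guaranteed by Stokes' theorem as observed above, so that the statement of the corollary is unambiguous.

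There is essentially no hard step here: the corollary is a bookkeeping statement that repackages Proposition \ref{prop:obstrep} and the two Propositions preceding it into a single computable criterion. The only point deserving a moment's care is the passage from a class on each component $C_{0,v}$ to a class on the whole of $C_0$ --- that is, checking that summing the contour integrals node by node over all of $C_0$, rather than component by component, is what corresponds to the class in $H^1(C_0,\omega_{C_0})$ rather than in $\bigoplus_v H^1(C_{0,v},\omega_{C_{0,v}})$. This is precisely the content of the final Proposition, so once that is in hand the corollary follows at once.
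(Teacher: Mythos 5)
Your argument is correct and is essentially the paper's own: the corollary is stated with \qed precisely because it follows by chaining Proposition \ref{prop:obstrep} with the comparison of the classes $[\zeta_{ij,n+1}]$ and $[\eta_{ij,n+1,\delta}]$ and with the proposition identifying the latter class with the sum of contour integrals around the nodes under $H^1(C_0,\omega_{C_0})\cong\bbC$. Your remark about passing from component-wise classes to a class on all of $C_0$ is also exactly the point the paper flags before stating the final proposition, so nothing is missing.
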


Thus, the problem of proving the existence of deformations reduces
 to checking the vanishing of the sum of the contour integrals of $\eta_{i, \delta}$.
Note that at each node of $C_0$, there are two contour integrals associated with it, 
 corresponding to the branches of $C_0$ at the node.
Thus, to verify the vanishing of the obstruction, it suffices to show that the sum of each pair of contour integrals
 associated with each node vanishes.
We will prove this in the rest of this section.

\subsubsection{Vanishing of the sum of integrals}
Recall that the value of the integral along a contour $\gamma_p$ around
 a node is equal to the integral of the closed 2-form $f_i(n+1)P_{\Phi}^*\Psi_{X_0}$ or 
 $\widetilde f_i(n+1)P_{\Phi}^*\Psi_{X_0}$ over the torus $\pi^{-1}_{\delta}(\gamma_p)$.
We first recall the relevant notations. 
The functions $f_i(n+1)$ and $\widetilde f_i(n+1)$ are the coefficients of $t^{n+1}$ in the expansions of the functions 
 $f_{i, n+1}$ and 
\[
\overline f_{i, n+1} = f_{i, n+1}+ \sum_{l=1}^{n+1}\frac{(-1)^{l+1}}{l}(\frac{at}{Y\widetilde F_{j, 0, Y}})^l,
\] 
 respectively.
Here, $f_{i, n+1}$ is determined by the condition that 
\[
\widetilde F_{i, 0}\exp(f_{i, n+1})=0
\]
 is the defining equation of the image of $\psi_{i, n}\colon \mathscr U_{i, n}\to \mathcal Y_{V_1V_2}^s$, 
 and 
\[
\widetilde F_{i, 0} = X+bZ+c
\]
 is a function defining the image of $\psi_{i, 0}$.
Here, $X, Y$ and $Z$ are functions on $\mathcal Y_{V_1V_2}^s\cong \Spec\Bbb C[X, Y, Z^{\pm 1}, t]/(XY-t)$.
Similarly, 
\[
\widetilde F_{j, 0, Y} = bZ+c.
\]
Here, we assume one of the vertices $V_1$ and $V_2$ 
 is 3-valent.
Otherwise, both vertices are 2-valent and we can take $\widetilde F_{i, 0} = bZ+c$,
 and there is no need to consider $\widetilde f_i(n+1)$.
This case is simpler and is omitted.
Moreover, $P_{\Phi}^*\Psi_{X_0}$ 
 is given by 
\[
\frac{dX}{X}\wedge \frac{dZ}{Z} = -\frac{dY}{Y}\wedge \frac{dZ}{Z}
\]
 up to a multiplicative constant.
For notational simplicity, we set this constant to be one.

First, we calculate the contribution from the part 
 $\sum_{l=1}^{n+1}\frac{(-1)^{l+1}}{l}(\frac{at}{Y\widetilde F_{j, 0, Y}})^l$ of $\overline f_{i, n+1}$.
This part is relevant only on the irreducible component of $Y_{0, V_1V_2}^s$ given by $X = 0$, 
 and we assume that the torus $\pi_{\delta}^{-1}(\gamma_p)$ is contained in this component.
\begin{lem}\label{lem:int1}
The integral 
 of the two form 
 $(\frac{at}{Y\widetilde F_{j, 0, Y}})^lP_{\Phi}^*\Psi_{X_0}$
 along the torus $\pi_{\delta}^{-1}(\gamma_p)$
 is zero for any positive integer $l$.
\end{lem}
\proof
We can take 
\[
\widetilde Z = bZ+c
\]
 as one of the coordinates on the component of $Y_{0, V_1V_2}^s$
 given by $X = 0$.
Then, the image of $\psi_{0, V_1V_2}$ coincides with the $Y$-axis in the $(Y, \widetilde Z)$-plane.
Recall that we fix a metric on $Y_{0, V_1V_2}^s$ and take a tubular neighborhood 
$N_{\delta}(\psi_{0, V_1V_2}(\mathscr U_{i, Y}))$ of a part of the image of $\psi_{0, V_1V_2}$,
 where $\mathscr U_{i, Y}$ is defined in Eq.(\ref{eq:scrUXY}).
By taking the metric suitably, we can assume that 
around the point 
\[
(Y, \widetilde Z) = (0, 0),
\]
 $N_{\delta}(\psi_{0, V_1V_2}(\mathscr U_{i, Y}))$ is the product
of the $Y$-axis and a disc on the $\widetilde Z$-axis, and 
the projection 
\[
\pi_{\delta}\colon N_{\delta}(\psi_{0, V_1V_2}(\mathscr U_{i, X}))\to \mathscr U_{i, X}
\]
is compatible with this product structure.

We have 
\[
 (\frac{at}{Y\widetilde F_{i, 0, Y}})^lP_{\Phi}^*\Psi_{X_0}  = 
 -(\frac{at}{Y\widetilde Z})^l \frac{dY}{Y}\wedge \frac{d\widetilde Z}{\widetilde Z-c}
\]
Since we have $l\geq 1$, this contains only terms with poles of order at least two with respect to $Y$, 
 so the integration with respect to $Y$ vanishes.
This proves the claim.\qed\\

Therefore, we need to show that the sum of the integrals of $f_i(n+1)P_{\Phi}^*\Psi_{C_0}$
 over the two 2-tori corresponding to $\gamma_p$ and $\gamma_p'$ cancels.
Here, $\gamma_p$ and $\gamma_p'$ are two contours around a node $p$ on different branches of $C_0$.
This can also be seen by direct calculation, but there is a much simpler geometric argument
 as follows. 
\begin{lem}\label{lem:int2}
The sum of the integrals of $f_i(n+1)P_{\Phi}^*\Psi_{C_0}$
 over the two 2-tori corresponding to $\gamma_p$ and $\gamma_p'$ is zero.
\end{lem}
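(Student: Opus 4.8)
The plan is to avoid the explicit residue computations of Lemma~\ref{lem:int1} and the preceding lemma, and instead to invoke Stokes' theorem on a nearby smooth fibre, exploiting the fact that the relevant meromorphic $2$-form is \emph{closed} there and has polar locus exactly the image curve. Recall the local picture near the node $p$: in the standard chart $\Spec\Bbb C[X,Y,Z^{\pm1},t]/(XY-t)$ of $\mathcal Y_{vv'}^s$ one has $P_{\Phi}^*\psi=\tfrac{dX}{X}\wedge\tfrac{dZ}{Z}=-\tfrac{dY}{Y}\wedge\tfrac{dZ}{Z}$ relative to the base, the two branches of $C_0$ through $p$ lie on the toric divisors $\{Y=0\}$ and $\{X=0\}$, and $F_{i,n}=F_{i,0}\exp(f_{i,n+1})$, so that $f_{i,n+1}$ is single-valued with polar locus contained in $\{F_{i,0}=0\}$. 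Put $\omega:=f_{i,n+1}\,P_{\Phi}^*\psi$; this is the form whose fibrewise integrals over the two tori we must show add to zero.

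First I would restrict everything to the fibre $\mathcal Y_{vv',t}^s$ over a fixed small $t\neq0$. There $XY=t\neq0$, so the divisors $\{X=0\}$, $\{Y=0\}$ (and $\{Z=0\}$, as $Z$ is a unit) do not meet the fibre; hence on the smooth complex surface $\mathcal Y_{vv',t}^s$ the form $\omega$ is holomorphic away from the curve $D_t:=\{F_{i,n}=0\}=\{F_{i,0}=0\}$, and, being a holomorphic $2$-form on a surface, it is $d$-closed on $\mathcal Y_{vv',t}^s\setminus D_t$. Next I would exhibit the cobounding chain. On $D_t$ the node has been smoothed to an annular neck; taking $X$ as a local coordinate on the branch through $C_{0,v}$, the positively oriented loops $\gamma_p$ and $\gamma_p'$ used in Subsection~\ref{subsec:maincal} become the circles $\{|X|=\epsilon\}$ and $\{|X|=|t|/\epsilon'\}$ on $D_t$, and with these orientations $\gamma_p+\gamma_p'=\partial A$, where $A\subset D_t$ is the neck annulus $\{|t|/\epsilon'\le|X|\le\epsilon\}$. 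Consequently the two $2$-tori $\pi_\delta^{-1}(\gamma_p)$ and $(\pi'_\delta)^{-1}(\gamma_p')$, realised as the restriction of the boundary $\partial N_\delta(D_t)$ of a tubular neighbourhood of $D_t$ over $\gamma_p$ and over $\gamma_p'$ respectively, together bound the $3$-chain $\Sigma:=\partial N_\delta(D_t)|_A$, a circle bundle over $A$ contained in $\mathcal Y_{vv',t}^s\setminus D_t$. Stokes' theorem then gives
\[
\int_{\pi_\delta^{-1}(\gamma_p)}\omega+\int_{(\pi'_\delta)^{-1}(\gamma_p')}\omega=\int_{\partial\Sigma}\omega=\int_{\Sigma}d\omega=0.
\]
Since this holds for every small $t\neq0$ and both integrals are polynomial in $t$ modulo $t^{n+2}$, the identity persists identically in $t$; comparing coefficients of $t^{n+1}$ yields the assertion of the lemma.

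The step I expect to be the main obstacle is the orientation bookkeeping at the neck: one must check, with the conventions fixed in Subsections~\ref{subsec:pairing} and~\ref{subsec:maincal} for the contours $\gamma_p,\gamma_p'$ and for the fibrewise integrations $\eta_{i,\delta}$, that one genuinely obtains $\partial\Sigma=\pi_\delta^{-1}(\gamma_p)+(\pi'_\delta)^{-1}(\gamma_p')$, so that the two contributions add up (and hence cancel) rather than adding up to twice one of them. Beyond that there is the routine verification that the $2$-tori built from the central-fibre components $\mathcal Y_{X,vv'}^s$ and $\mathcal Y_{Y,vv'}^s$ are, for small $t$, isotopic inside $\mathcal Y_{vv',t}^s\setminus D_t$ to the restrictions of $\partial N_\delta(D_t)$ used above, so that replacing the former by the latter does not change the value of $\int\omega$; this is again where the analyticity in $t$ and the fibrewise closedness of $\omega$ enter.
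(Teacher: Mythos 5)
Your proof is correct and is essentially the same argument as the paper's: both rest on Stokes' theorem applied to the closed meromorphic $2$-form $f_{i,n+1}P_{\Phi}^*\psi$, using that the two tori cobound a $3$-chain avoiding the polar locus and that the contours $\gamma_p$ and $\gamma_p'$ carry opposite orientations around the neck. Your fiberwise formulation (restricting to $\{XY=t\}$, where closedness of a holomorphic $2$-form on a surface is automatic, then letting $t$ vary and comparing coefficients) is a slightly more careful implementation of the paper's one-line appeal to a cobounding oriented $3$-manifold inside $\mathcal Y_{vv'}^s$.
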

\proof
Recall that the function $f_{i, n+1}$, in particular its coefficient $f_i(n+1)$ of $t^{n+1}$, belongs to 
 $\Bbb C[X, Y, Z^{\pm 1}, t, \frac{1}{\widetilde F_{i, 0}}]/(XY-t, t^{n+1})$.
By regarding it as an element in $\Bbb C[X, Y, Z^{\pm 1}, t, \frac{1}{\widetilde F_{i, 0}}]/(XY-t)$
 as in Subsection \ref{subsec:lift}, 
 the 2-form $f_i(n+1)P_{\Phi}^*\Psi_{\mathcal X}$ defines a meromorphic 2-form on $\mathcal Y_{V_1V_2}^s$, 
 which extends $f_i(n+1)P_{\Phi}^*\Psi_{X_0}$ on $Y_{0, V_1V_2}^s$.
In the space $\mathcal Y_{V_1V_2}^s$, the two 2-tori $\pi_{\delta}^{-1}(\gamma_p)$ and
 $(\pi_{\delta}')^{-1}(\gamma'_p)$ are homologous, but their orientations are opposite, as
  the contours $\gamma_p$ and $\gamma_p'$ are oriented oppositely.
Here, 
\[
\pi_{\delta}'\colon N_{\delta}(\psi_{0, V_1V_2}(\mathscr U_{i, Y}))\to \mathscr U_{i, Y}
\]
 is the projection.
Note that $f_i(n+1)P_{\Phi}^*\Psi_{\mathcal X}$
 is a closed 2-form.
Moreover, we can take an oriented 3-manifold $K$ in $\mathcal Y_{V_1V_2}^s$
 with 
\[
\partial K = \pi_{\delta}^{-1}(\gamma_p)\cup (\pi_{\delta}')^{-1}(\gamma'_p)
\]
 such that $f_i(n+1)P_{\Phi}^*\Psi_{\mathcal X}$ does not diverge on $K$. 
Therefore, the claim follows from the Stokes theorem.\qed\\

We finally obtain the following.

\begin{thm}\label{thm:main}
Any pre-log curve $\varphi_0\colon C_0\to X_0$ constructed in Theorem \ref{thm:pre-log}
 can be deformed to a generic fiber of $\mathcal X$.
\end{thm}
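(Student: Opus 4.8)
The plan is to prove \cref{thm:main} by induction on the order $n$ of the deformation, feeding the local computations of this section into the obstruction-theoretic reduction of \cref{cor:obstsum}. The base case $n=0$ is precisely \cref{prop:firstorder}, which produces a first-order deformation $\varphi_1\colon C_1\to X_1$ of $\varphi_0$. For the inductive step I would assume an $n$-th order deformation $\varphi_n\colon C_n\to X_n$ has been built for some $n\ge 1$, and aim to extend it over $\bbC[t]/t^{n+2}$. By \cref{prop:obstrep} together with \cref{cor:obstsum}, the obstruction vanishes as soon as the sum over all nodes of $C_0$ of the contour integrals of the forms $\eta_{i,\delta}$ vanishes; since each node carries exactly one pair of such integrals (one per branch), it suffices to check the cancellation node by node.

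So fix a node $p$ of $C_0$ corresponding to an edge $e_{vv'}$ and work in the standard local model $\mathcal Y_{vv'}^s$ via \cref{prop:liftmap} and \cref{prop:locallifts} (the case where both endpoints of $e_{vv'}$ are $2$-valent is handled exactly as in the final paragraph of the proof of \cref{prop:firstorder}, so one may assume $v$ is $3$-valent). The contour integral attached to each branch at $p$ extracts the $t^{n+1}$-coefficient of $\bar f_{i,n+1}$, paired against $P_\Phi^*\psi$ and integrated over the corresponding $2$-torus. Using the splitting $\bar f_{i,n+1}=f_{i,n+1}+\sum_{l=1}^{n+1}\tfrac{(-1)^{l+1}}{l}\bigl(\tfrac{at}{XF_{i,0,X}}\bigr)^l$ on the $Y=0$ branch and its counterpart (with $X,F_{i,0,X}$ replaced by $Y,F'_{i,0,Y}$) on the $X=0$ branch, the contributions of the two correction sums are given by the lemma preceding \cref{lem:int1} and by \cref{lem:int1}, and they differ only by a sign, hence cancel; the contributions of $f_{i,n+1}$ from the two branches cancel by the homology/Stokes argument of \cref{lem:int2}. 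Thus the pair of contour integrals at $p$ sums to zero, and summing over all nodes and invoking \cref{cor:obstsum} gives that the obstruction vanishes, so $\varphi_{n+1}$ exists. This closes the induction, and choosing the lifts compatibly yields a formal deformation of $\varphi_0$ over $\bbC\llb t\rrb$ lying over the formal disc in $B$.

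Finally, I would upgrade this formal deformation to an honest analytic curve in a generic fiber of $\mathcal X$. Since $\mathcal X\to B$ is proper and the $\varphi_n$ assemble into a formal family of stable maps to $\mathcal X\to B$, the formal deformation defines a formal arc into the (proper, locally finite type) moduli space of such maps sitting over the formal completion of $B$ at $0$; by Artin approximation—equivalently, by effectivity of formal deformations in the proper setting—this is matched to any finite order by a convergent analytic arc, i.e.\ by a family of parametrized curves in $\mathcal X\to B$ over a small disc whose central member is $\varphi_0$ and whose members over $B^\times$ lie in the fibers $X_t$ with $t\neq 0$. Since the combinatorial data recorded in \cref{def:pre-log} are invariant under such a deformation (cf.\ the topological description of edge weights in \cref{sec:tropicalization}), these generic-fiber curves tropicalize back to $h$. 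This gives \cref{thm:main}.

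Given how much has been front-loaded into the lemmas of this section, the only genuinely delicate points above are the legitimacy of the node-by-node reduction—which rests on the cocycle property of $\{\zeta_{ij,n+1}\}$ and on the independence of all auxiliary choices established in \cref{lem:cocycle}—and the convergence step of the last paragraph, which is the one place that must appeal to properness of the moduli space (or to Artin approximation) rather than to anything proved in this section. I expect this convergence/effectivity step to be the main obstacle; everything else is bookkeeping on top of the established lemmas.
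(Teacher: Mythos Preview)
Your proposal is correct and follows essentially the same route as the paper: the theorem is stated with a \qed\ immediately after the sentence ``Combining Lemmas \ref{lem:int1} and \ref{lem:int2} with Corollary \ref{cor:obstsum} and also with Proposition \ref{prop:firstorder}, we finally have the following,'' so the paper's argument is exactly the induction you describe, with the node-by-node cancellation supplied by those lemmas. The one point on which you go beyond the paper is the final convergence/algebraization paragraph; the paper does not spell this out and treats the passage from all finite orders to an actual curve in the generic fiber as standard (it is used again implicitly at the end of \cref{sec:multiplicity}), so your appeal to Artin approximation/properness of the moduli of stable maps is a reasonable way to fill what the paper leaves tacit rather than a departure from its strategy.
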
 
\proof 
By combining Lemmas \ref{lem:int1} and \ref{lem:int2} with Corollary \ref{cor:obstsum},
 we obtain a formal family of deformations of $\varphi_0$.
By applying the implicit function theorem \cite[Proposition 1.5]{Kosarew_1991}, 
 we obtain the claim.\qed\\

Conversely, a family of curves $\mathcal X^{\times}\to B^{\times} = B\setminus\{0\}$
 gives rise to a pre-log curve in a suitable $X_0$, as described in Section \ref{sec:tropicalization}.
This pre-log curve, in turn, corresponds a tropical curve on $S$.
However, the resulting tropical curve may have 4- or higher- valent vertices in general.
To ensure that the resulting tropical curve is 3-valent, we need to impose incidence conditions.
Namely, we take a set of general sections $\{x_1, \dots, x_g\}$ of the fibration
 $\mathcal X\to B$ which gives a set of general integer points $\{p_1, \dots, p_g\}$.
See the next section
 for details.
Then, we have the following.
\begin{prop}\label{prop:rev}
Any family of holomorphic curves on $\mathcal X^{\times}\to B^{\times}$
 which contains the sections $\{x_1, \dots, x_g\}$ restricted to $B^{\times}$,
 gives rise to a 3-valent tropical curve on $S$ satisfying the condition of Theorem \ref{thm:pre-log}.
\end{prop}
\proof
As mentioned above, a family of holomorphic curves induces a tropical curve on $S$.
By the incidence condition, the resulting tropical curve must pass through the points $\{p_1, \dots, p_g\}$.
On the other hand, by
 \cite[Proposition 3.20]{Blomme_Enumeration_of_curves_1},
 if a tropical curve of genus $g$ on $S$ contains a vertex with valence at least 4, 
 then its deformation is at most of dimension $g-1$.
Since the set $\{p_1, \dots, p_g\}$ is general, it cannot occur.
Thus, the resulting tropical curve is 3-valent.
Since this tropical curve is realizable, it satisfies the condition of Theorem \ref{thm:pre-log}.\qed

\section{Multiplicity of enumeration: Proof of Theorem \ref{thm:intro_multiplicity}} \label{sec:multiplicity}

In this section, we study the enumeration of parametrized holomorphic curves that give rise to a fixed realizable parametrized tropical curve as described in Section \ref{sec:tropicalization},
 and we provide a proof of \cref{thm:intro_multiplicity}.

Let 
\[
h\colon\Gamma\to S
\]
 be a 3-valent parametrized tropical curve of genus $g$ passing through $g$ integer points
\[
p_1,\dots,p_g\in S
\]
 which is realizable in the sense of Definition \ref{def:realizable}.
Note that such a tropical curve must be 3-valent by Proposition \ref{prop:rev}.
Assume that the preimages $h\inv(p_i)$ lie in the interior of the edges of $\Gamma$, and that these constraints make the tropical curve rigid (see Remark \ref{rem:expdim}).
Choose a point $v_i$ from each $h\inv(p_i)$
  (see Remark \ref{rem:double_edge} below).

\begin{defin}\label{def:'G}
Let $\Gamma'$ denote the graph $\Gamma$ subdivided by $v_i$.
\end{defin}
Let $\mathcal D$ be the polyhedral subdivision of $S$ in \cref{sec:pre-log}.
After refining $\mathcal D$ if necessary, we can assume that each $p_i = h(v_i)$ is a 0-cell of $\mathcal D$.
The subdivision $\mathcal D$ induces a subdivision of $\Gamma$.
\begin{defin}\label{def:hatG}
Let $\hGamma$ be the subdivision of $\Gamma$ induced by $\mathcal D$.
\end{defin}

\begin{rem}\label{rem:double_edge}
Even if the points $p_1, \dots, p_g$ are generic, we cannot assume that the inverse image 
 $h\inv(p_i)$ is a single point.
See Figure \ref{fig:1}.

\begin{figure}[h]
\includegraphics[height=11cm]{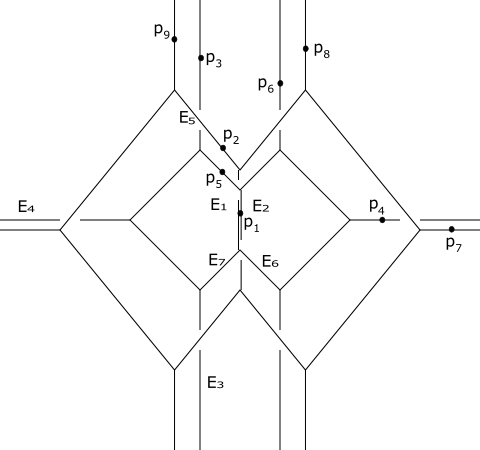}
\caption{A part of a periodic tropical curve in a rectangular fundamental domain.
The dots are the points $p_1, \dots, p_g$, which impose rigidity on the tropical curve.
Note that although the edges $E_1$ and $E_2$ are drawn slightly offset for visual clarity, 
 they actually overlap.
Even if we perturb the points $p_1, \dots, p_g$, this overlap remains unresolved.}\label{fig:1}
\end{figure}
This figure represents a part of a periodic plane tropical curve in a rectangular fundamental domain of the action of 
 $\overline{\Lambda}$.
It gives a tropical curve of genus 9 on $S = \Bbb N_{\Bbb R}/\overline{\Lambda}$,
 which has 24 edges and 16 vertices.
The curve passes through the 9 points $\{p_1, \dots, p_9\}$, which makes it rigid.
Note that the edges $E_1$ and $E_2$ passes through the point $p_1$, so that 
 $h^{-1}(p_1)$ consists of two points.
One can observe that if $p_1$ is perturbed slightly, this property persists, as explained below.
By a similar argument, one can see that this is also true if we perturb other points $p_2, \dots, p_g$.
This suggests that this is a generic phenomenon.

Now, consider shifting $p_1$ slightly to the left, while maintaining the condition that
 the edge $E_1$ continues to pass through $p_1$,
 with all other incidence conditions $\{p_2, \dots, p_9\}$ remaining fixed.
Since the incidence condition $p_2$ must be preserved, we cannot move the edge $E_5$
 except changing its length.
As a result, after shifting $p_1$ and $E_1$, the local configuration near $E_1$ appears as depicted in Figure \ref{fig:2}.

\begin{figure}[h]
\begin{center}
\includegraphics[height=8.9cm]{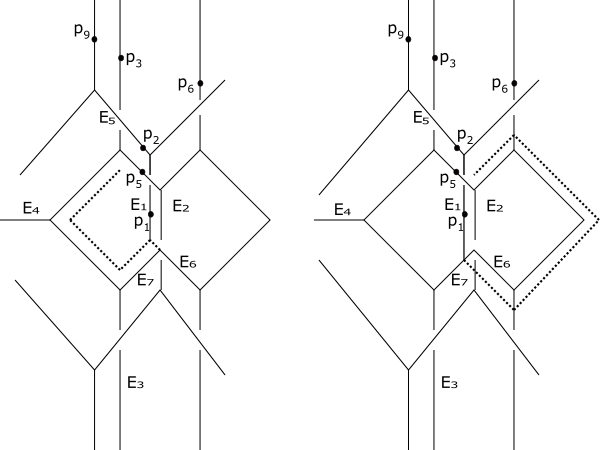}
\end{center}
\caption{A neighborhood of the edge $E_1$ after the shift.
To preserve the incidence condition $p_2$, the edge $E_5$ must remain fixed, except changing its length.
Consequently, in order to move $E_1$, it is necessary to adjust either $E_6$, $E_7$, 
 or both.
}\label{fig:2}
\end{figure}
 
Since $E_5$ must remain fixed, moving $E_1$
 requires shifting either $E_6$, $E_7$, 
 or both.
The picture on the left in Figure \ref{fig:2}
 illustrates the case when the edge $E_7$ moves while $E_6$ remains fixed.
Since the incidence conditions $p_3$ and $p_4$ must also be preserved, we cannot move $E_3$ or $E_4$ except
 changing their lengths 
 (note that $p_4$ is outside the frame, but lies on $E_4$).
Thus, the square-like part containing $E_7$ shrinks, as indicated by the dotted lines in the picture.
However, in this case, the incidence condition $p_5$ cannot be maintained.
Similarly, if both $E_6$ and $E_7$ move, the incidence conditions still cannot be preserved for the same reason.

The picture on the right in Figure \ref{fig:2} illustrates the case when $E_6$ moves while $E_7$ remains fixed.
In this case, to preserve the incidence conditions $p_6$ and $p_7$ (where $p_7$ lies outside the picture), 
 the square-like part containing $E_6$ expands, as indicated by the dotted lines in the picture.
To maintain the incidence condition $p_5$, we need to move $E_2$ to the left, so that 
 the edges $E_1$ and $E_2$ continue to share the point $p_1$,
 ensuring that the original overlapping configuration is preserved.
\end{rem}

We also introduce incidence conditions on $\mathcal X$ as described in the introduction.
One example of such incidence conditions can be constructed following \cite[Section 3]{Nishinou_Toric_2006}.
Namely, let 
\[
x_1, \dots, x_g
\]
 be general points on $\widetilde{\mathcal X}$.
Recall that $h\colon\Gamma\to S$ is the quotient of a $\overline\Lambda$-periodic tropical curve 
\[
\widetilde h\colon \widetilde\Gamma\to N_{\Bbb R}.
\]
Choose inverse images 
\[
\widetilde p_1\dots, \widetilde p_g\in N
\]
 of $p_1,\dots,p_g\in S$.
Each ray in $N_{\Bbb R}\times \Bbb R$ spanned by $(\widetilde p_i, 1)$ gives a one-parameter subgroup 
 acting on $\widetilde{\mathcal X}$.
The intersection of the closure of the orbit of $x_i$ under this group with the central fiber 
 $\widetilde X_0$ of $\widetilde{\mathcal X}$
 is a point in the interior (that is, the complement of the toric divisors) of the component $\widetilde X_{0, \widetilde p_i}$
 (see \cite[Section 3]{Nishinou_Toric_2006}).
The images of these orbit closures in $\mathcal X$ give incidence conditions on $\mathcal X$.
For notational simplicity, we continue to denote these orbit closures by $x_1, \dots, x_g$.
These are sections of the fibration $\mathcal X\to B$.



We fix an orientation on every edge of $\Gamma$.
We put induced orientations on the edges of the subdivisions.
For each edge $e$ of $\Gamma$, or any of its subdivisions, let $\partial^- e$ and $\partial^+ e$ denote the two endpoints  
 according to the fixed orientation, and let $N_e\subset N$ denote the sublattice generated by the weight vector 
 $w_e$ of $e$.
We introduce the following maps of abelian groups.
\begin{align*}
F\colon\Map(V(\Gamma),N)
&\xrightarrow{\mathmakebox[3ex]{}} \bigoplus_{e \in E(\Gamma)} (N/N_{e})\\
\phi &\xmapsto{\mathmakebox[3ex]{}} \Big(\big(\phi(\partial^+e)-\phi(\partial^-e)\big)_e\Big),\\
G\colon \Map(V(\Gamma'),N)
&\xrightarrow{\mathmakebox[3ex]{}} \bigoplus_{e \in E(\Gamma')} (N/N_{e})\ \oplus\ \bigoplus_{i=1}^g N\\
\phi &\xmapsto{\mathmakebox[3ex]{}}
\Big(\big(\phi(\partial^+e)-\phi(\partial^-e)\big)_e,\big(\phi(v_i)\big)_i\Big),\\
\hF\colon\Map(V(\hGamma),N)
&\xrightarrow{\mathmakebox[3ex]{}} \bigoplus_{e \in E(\hGamma)} (N/N_{e}) \\
\phi &\xmapsto{\mathmakebox[3ex]{}} \Big(\big(\phi(\partial^+e)-\phi(\partial^-e)\big)_e\Big),\\
\hG\colon \Map(V(\hGamma),N)
&\xrightarrow{\mathmakebox[3ex]{}} \bigoplus_{e \in E(\hGamma)} (N/N_{e})\ \oplus\ \bigoplus_{i=1}^g N\\
\phi &\xmapsto{\mathmakebox[3ex]{}}
\Big(\big(\phi(\partial^+e)-\phi(\partial^-e)\big)_e,\big(\phi(v_i)\big)_i\Big).
\end{align*}

For any abelian group $A$, we define
\[F_A \coloneqq F\otimes 1_A\colon\Map(V(\Gamma),N)\otimes A
\xrightarrow{\mathmakebox[3ex]{}} \bigoplus_{e \in E(\Gamma)} (N/N_{e})\otimes A.\]
We define $G_A$, $\hF_A$, and $\hG_A$ similarly.

\personal{For a 3-valent tropical curve $(\Gamma,h)$, the germ of $\Ker b(\Gamma)_\bbR$ at 0 gives a universal infinitesimal deformation.}

\begin{lem}\label{lem:deformation_tropical}
	We have
	\[
	\rk \Ker F = g - 1 + \rk \Coker F,
	\]
	where $\rk$ denotes the rank of abelian group.
\end{lem}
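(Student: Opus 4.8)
The plan is to read the statement off the tautological four-term exact sequence
\[
0 \longrightarrow \Ker F \longrightarrow \Map(V(\Gamma),N) \xrightarrow{\ F\ } \bigoplus_{e\in E(\Gamma)}(N/N_e) \longrightarrow \Coker F \longrightarrow 0,
\]
for which the alternating sum of the ranks of the four terms vanishes. First I would compute the ranks of the two middle terms. Since $N\cong\bbZ^2$, the group $\Map(V(\Gamma),N)\cong N^{V(\Gamma)}$ has rank $2\abs{V(\Gamma)}$; and for each edge $e$ the weight vector is a nonzero element of $N$, so $N_e$ is a rank-one sublattice, $N/N_e$ has rank $1$, and hence $\bigoplus_{e\in E(\Gamma)}(N/N_e)$ has rank $\abs{E(\Gamma)}$. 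Feeding these into the exact sequence gives
\[
\rk\Ker F - \rk\Coker F = 2\abs{V(\Gamma)} - \abs{E(\Gamma)}.
\]

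Next I would rewrite the right-hand side in terms of $g$. Since $\Gamma$ sits inside the compact torus $S$, it is a finite connected graph with no infinite vertices, and the balancing condition rules out $1$-valent vertices; as $h$ is $3$-valent, every vertex has valence exactly $3$, so counting edge-endpoints yields $2\abs{E(\Gamma)} = 3\abs{V(\Gamma)}$. On the other hand the genus of $\Gamma$ is its first Betti number, so by connectedness $g = \abs{E(\Gamma)} - \abs{V(\Gamma)} + 1$. Eliminating $\abs{V(\Gamma)}$ and $\abs{E(\Gamma)}$ between these two relations gives $2\abs{V(\Gamma)} - \abs{E(\Gamma)} = g - 1$, and substituting into the identity above completes the proof.

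The argument is essentially a bookkeeping computation with Euler characteristics, so I do not expect a genuine obstacle. The only points meriting a line of justification are the identification of the genus $g$ with $b_1(\Gamma) = \abs{E(\Gamma)} - \abs{V(\Gamma)} + 1$ (which uses connectedness of $\Gamma$) and the verification that $\Gamma$ has no $1$-valent vertices, so that the $3$-valence hypothesis really gives $2\abs{E(\Gamma)} = 3\abs{V(\Gamma)}$; granting these, the conclusion $2\abs{V(\Gamma)} - \abs{E(\Gamma)} = g-1$ is immediate.
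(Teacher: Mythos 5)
Your proof is correct and follows essentially the same route as the paper: compute $\rk\Ker F - \rk\Coker F = 2\abs{V(\Gamma)} - \abs{E(\Gamma)}$ from the ranks of the source and target of $F$, then combine the 3-valence relation $3\abs{V(\Gamma)}=2\abs{E(\Gamma)}$ with the Euler characteristic identity $\abs{V(\Gamma)}-\abs{E(\Gamma)}=1-g$ to get $2\abs{V(\Gamma)}-\abs{E(\Gamma)}=g-1$. Your extra remarks on connectedness and the absence of $1$-valent vertices are sound but do not change the argument.
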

\begin{proof}
	We have
	\begin{align}
	& \rk \Ker F\nonumber\\
	=& \rk\bigg(\bigoplus_{v \in V(\Gamma)} N \bigg) - \rk \bigg( \bigoplus_{e \in E(\Gamma)} (N/N_{e}) \bigg) + \rk \Coker F \nonumber\\
	=& 2\abs{V(\Gamma)} - \abs{E(\Gamma)} + \rk \Coker F \label{eq:dim_trop_1}
	\end{align}
	Since $\Gamma$ is 3-valent, we have
	\[ 3\abs{V(\Gamma)} = 2\abs{E(\Gamma)}.\]
	By computing the Euler characteristic of $\Gamma$, we have
	\[\abs{V(\Gamma)} - \abs{E(\Gamma)} = 1 - g.\]
	Taking the difference of the previous two equations gives
	\[2\abs{V(\Gamma)}-\abs{E(\Gamma)} = g-1.\]
	Substituting it into \cref{eq:dim_trop_1}, we obtain the result.
\end{proof}

\begin{prop}\label{prop:cohomologies}
	Let $\varphi_0\colon C_0\to X_0$ be any pre-log curve associated with the tropical curve $h\colon\hGamma\to S$, and we equip it with a log structure as in \cref{sec:deformation}.
	Let $\mathcal N_{\varphi_0}=\varphi_0^*\Theta_{X_0}/\Theta_{C_0}$ be the log normal sheaf of $\varphi_0$.
	The derived global section $R\Gamma(\mathcal N_{\varphi_0})$ is quasi-isomorphic to the complex
	\[\bigoplus_{v\in V(\Gamma)} N_\bbC \xrightarrow{\ F_\bbC\ } \bigoplus_{e\in E(\Gamma)} (N/N_e)_\bbC\]
	concentrated in degrees 0 and 1.
	In particular, we have
	$H^0(C_0,\mathcal N_{\varphi_0})\simeq\Ker F_\bbC$ and $H^1(C_0,\mathcal N_{\varphi_0})\simeq\Coker F_\bbC$.
\end{prop}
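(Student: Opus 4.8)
The plan is to reduce the computation to a purely combinatorial one in three moves: trivialise the pulled-back logarithmic tangent sheaf, pass to the normalization of $C_0$, and then contract the $2$-valent vertices that the subdivision $\hGamma\to\Gamma$ introduces. First, since $\cX$ is a quotient of the (infinite type) toric variety $\tcX$, we have $\Theta_{X_0}\simeq N\otimes\cO_{X_0}$, hence $\varphi_0^*\Theta_{X_0}\simeq N\otimes\cO_{C_0}$ and a short exact sequence
\[0\longrightarrow\Theta_{C_0}\longrightarrow N\otimes\cO_{C_0}\longrightarrow\cN\longrightarrow0.\]
I would begin by checking that $\cN$ is an invertible sheaf, i.e.\ that $\Theta_{C_0}\to N\otimes\cO_{C_0}$ is fibrewise injective. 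Away from the nodes this holds because each $\varphi_0|_{C_{0,v}}$ is an immersion there: by \cref{lem:prod} (and its analogue for $2$-valent vertices) it factors as a linear embedding of $\bbP^1$ into $\bbP^2$ minus three points followed by a finite toric morphism, which is étale on the open torus. At a node $p_e$, writing $C_0=\{xy=0\}$ with $C_{0,v}=\{y=0\}$ and using that $\varphi_0^*$ of the monomial with exponent $u\in N^\vee$ is $x^{\langle u,\,w_{v,e}\rangle}\cdot(\text{unit})$, one gets $d\varphi_0(x\partial_x)=w_{v,e}+O(x)$, nonzero at $p_e$. Thus $\cN$ is a line bundle; since $\Theta_{C_0}$ restricts on $C_{0,v}$ to the log tangent sheaf $\cO_{\bbP^1}(2-k_v)$ of $\bbP^1$ marked at its $k_v$ nodal points ($k_v\in\{2,3\}$ the valence of $v$ in $\hGamma$), we obtain $\cN|_{C_{0,v}}\simeq\cO_{\bbP^1}(k_v-2)$, and taking fibres of $N\otimes\cO_{C_0}\twoheadrightarrow\cN$ at $p_e$ yields a canonical isomorphism $\cN|_{p_e}\simeq N_\bbC/(N_e)_\bbC=(N/N_e)_\bbC$.

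Next I would use the normalization of $C_0$: for a line bundle on a nodal curve there is a short exact sequence
\[0\to\cN\to\bigoplus_{v\in V(\hGamma)}(\iota_v)_*\bigl(\cN|_{C_{0,v}}\bigr)\to\bigoplus_{e\in E(\hGamma)}(\iota_{p_e})_*\bigl(\cN|_{p_e}\bigr)\to0,\]
with $\iota_v,\iota_{p_e}$ the inclusions and second map the difference of the two branch values at each node. Since $H^1(\bbP^1,\cO_{\bbP^1}(d))=0$ for $d\ge0$ and the skyscrapers are acyclic, this presents $R\Gamma(\cN)$ as the two-term complex $\bigl[\bigoplus_{v}H^0(C_{0,v},\cN|_{C_{0,v}})\to\bigoplus_{e\in E(\hGamma)}(N/N_e)_\bbC\bigr]$ in degrees $0$ and $1$. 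Applying $H^0$ to $0\to\cO_{\bbP^1}(2-k_v)\to N\otimes\cO_{\bbP^1}\to\cN|_{C_{0,v}}\to0$, together with $H^\bullet(\bbP^1,\cO_{\bbP^1}(-1))=0$ when $k_v=3$ and $H^0(\bbP^1,\cO_{\bbP^1})=\bbC$ (with image $(N_e)_\bbC$, the line of the common edge direction) when $k_v=2$, identifies $H^0(C_{0,v},\cN|_{C_{0,v}})$ with $N_\bbC$ in the $3$-valent case and with $(N/N_e)_\bbC$ in the $2$-valent case, the section attached to $\phi(v)\in N_\bbC$ having value $[\phi(v)]$ at every adjacent node.

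Finally, each $e_0\in E(\Gamma)$ corresponds to a chain of edges of $\hGamma$ linked by $2$-valent vertices, and I would write down a map of complexes from $\bigl[\bigoplus_{v\in V(\Gamma)}N_\bbC\xrightarrow{F_\bbC}\bigoplus_{e_0\in E(\Gamma)}(N/N_{e_0})_\bbC\bigr]$ to the complex of the previous paragraph: in degree $0$, $\phi\mapsto(\sigma_v)$ with $\sigma_v=\phi(v)$ for $v\in V(\Gamma)$ and $\sigma_v=[\phi(\partial^-e_0)]$ for a $2$-valent $v$ on the chain over $e_0$; in degree $1$, send the $e_0$-summand isomorphically onto the summand indexed by the last edge of that chain. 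A short computation shows this commutes with the differentials and that its cokernel complex splits, over the chains, into telescoping complexes with unitriangular differentials, hence is acyclic; so the map is a quasi-isomorphism. Chaining the three steps gives $R\Gamma(\cN)\simeq\bigl[\bigoplus_{v\in V(\Gamma)}N_\bbC\xrightarrow{F_\bbC}\bigoplus_{e\in E(\Gamma)}(N/N_e)_\bbC\bigr]$, whence $H^0(C_0,\cN)\simeq\Ker F_\bbC$ and $H^1(C_0,\cN)\simeq\Coker F_\bbC$ by passing to cohomology.

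The hard part will be the first step: proving that $\cN$ is actually locally free means controlling $d\varphi_0$ at the nodes, where $\varphi_0$ may be ramified, and this only goes through because of the logarithmic structure. The last step is routine bookkeeping, and the answer can be cross-checked against \cref{lem:deformation_tropical}: the target complex has Euler characteristic $2\lvert V(\Gamma)\rvert-\lvert E(\Gamma)\rvert=g-1$, matching $\chi(\cN)=\chi(N\otimes\cO_{C_0})-\chi(\Theta_{C_0})=2(1-g)-3(1-g)$.
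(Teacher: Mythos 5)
Your proof is correct and follows essentially the same route as the paper's: decompose $R\Gamma(\cN)$ over the dual graph of $C_0$ (your normalization sequence is exactly the paper's exact triangle), compute the per-component cohomology from $\Theta_{C_0}|_{C_{0,v}}\simeq\cO_{\bbP^1}(2-k_v)$, and identify the result with the combinatorial two-term complex. The only difference is that you make explicit two points the paper leaves implicit---the local freeness of $\cN$ at the nodes and the telescoping contraction of the $2$-valent chains in passing from $\hGamma$ to $\Gamma$---and both are handled correctly.
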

\begin{proof}
	For every 3-valent vertex $v$ of $\hGamma$, we have
	\[\Theta_{C_0}|_{C_{0,v}}=T_{C_{0,v}}\otimes\mathcal O_{C_{0,v}}(-3)=\mathcal O_{C_{0,v}}(2)\otimes\mathcal O_{C_{0,v}}(-3)=\mathcal O_{C_{0,v}}(-1).\]
	\personal{See the computations in \cite[Examples 3.36(6)]{Gross_Tropical_2011}.}
	Since $C_{0,v}$ is rational, $R\Gamma(\mathcal O_{C_{0,v}}(-1))\simeq 0$.
	So, we have
	\[R\Gamma(\mathcal N_{\varphi_0}|_{C_{0,v}})\simeq R\Gamma(\varphi_0^*\Theta_{X_0}|_{C_{0,v}})\simeq N_\bbC.\]
	
	For every 2-valent vertex $v$ of $\hGamma$, we have
	\[\Theta_{C_0}|_{C_{0,v}}=T_{C_{0,v}}\otimes\mathcal O_{C_{0,v}}(-2)=\mathcal O_{C_{0,v}}(2)\otimes\mathcal O_{C_{0,v}}(-2)=\mathcal O_{C_{0,v}}.\]
	Moreover, the map $\Theta_{C_0}|_{C_{0,v}}\to \varphi_0^*\Theta_{X_{0}}|_{C_{0,v}}\simeq N_\bbC\otimes\mathcal O_{C_{0,v}}$ is given by the weight vector of an edge $e$ connected to $v$.
	So, we have
	\[R\Gamma(\mathcal N_{\varphi_0}|_{C_{0,v}})\simeq (N/N_e)_\bbC.\]
	Similarly, for every edge $e$ of $\hGamma$, we have
	\[R\Gamma(\mathcal N_{\varphi_0}|_{p_e})\simeq (N/N_e)_\bbC.\]
	Here, $\mathcal N_{\varphi_0}|_{p_{e}}$ is the fiber of $\mathcal N_{\varphi_0}$ over the node $p_e$ corresponding to the edge $e$.
	
	Therefore, it follows from the following exact triangle
	\[R\Gamma(\mathcal N_{\varphi_0})\xrightarrow{\mathmakebox[3ex]{}} \bigoplus_{v\in V(\hGamma)} R\Gamma(\mathcal N_{\varphi_0}|_{C_{0,v}})\xrightarrow{\mathmakebox[3ex]{}} \bigoplus_{e\in E(\hGamma)}R\Gamma(\mathcal N_{\varphi_0}|_{p_e})\xrightarrow{\mathmakebox[3ex]{+1}}\]
	that $R\Gamma(\mathcal N_{\varphi_0})$ is quasi-isomorphic to the complex
	\[\bigoplus_{v\in V(\Gamma)} N_\bbC \xrightarrow{\ F_\bbC\ } \bigoplus_{e\in E(\Gamma)} (N/N_e)_\bbC\]
	concentrated in degrees 0 and 1.
\end{proof}

\begin{cor} \label{cor:rank}
	We have $\rk \Coker F=1$ and $\rk \Ker F = g$.
\end{cor}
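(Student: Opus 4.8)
The plan is to combine the purely homological identity of \cref{lem:deformation_tropical} with the cohomological interpretation of \cref{prop:cohomologies} and the one-dimensionality established in \cref{prop:dimension_of_obstruction}. Since \cref{lem:deformation_tropical} already gives $\rk\Ker F = g-1+\rk\Coker F$, everything reduces to proving that $\rk\Coker F = 1$.

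First I would reduce the integral statement to a statement over $\bbC$. As $\bbC$ is flat over $\bbZ$, applying $-\otimes_\bbZ\bbC$ to the defining two-term sequence of $F$ is exact, so $\Coker F_\bbC\cong(\Coker F)\otimes_\bbZ\bbC$ and hence $\rk\Coker F = \dim_\bbC\Coker F_\bbC$. By \cref{prop:cohomologies} we have $\Coker F_\bbC\cong H^1(C_0,\cN)$, so it suffices to show $\dim_\bbC H^1(C_0,\cN)=1$.

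Next I would invoke Serre duality — equivalently, the nondegenerate pairing recalled in \cref{rem:coupling}(3). The curve $C_0$ is proper over $\bbC$ with at worst nodal singularities, hence Gorenstein with dualizing sheaf $\omega_{C_0}$, and $\cN$ is an invertible sheaf on it (as already used in \cref{subsec:pairing}); therefore $H^1(C_0,\cN)$ is dual to $H^0(C_0,\cN^\vee\otimes\omega_{C_0})$. By \cref{prop:dimension_of_obstruction} the latter space is one-dimensional, so $\dim_\bbC H^1(C_0,\cN)=1$, and thus $\rk\Coker F = 1$. Substituting this into the identity of \cref{lem:deformation_tropical} yields $\rk\Ker F = g-1+1 = g$, which finishes the proof.

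I do not expect any serious obstacle: the mathematical content is entirely carried by the earlier results, and the only points requiring a word of care are the flatness argument identifying $\rk\Coker F$ with $\dim_\bbC\Coker F_\bbC$ and the (standard) applicability of Serre duality on the nodal Gorenstein curve $C_0$ to the invertible sheaf $\cN$.
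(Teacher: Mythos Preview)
Your proposal is correct and follows essentially the same route as the paper: the paper's proof invokes \cref{prop:dimension_of_obstruction}, Serre duality, and \cref{prop:cohomologies} to obtain $\rk\Coker F=1$, and then \cref{lem:deformation_tropical} to obtain $\rk\Ker F=g$. You have simply spelled out the flatness step and the applicability of Serre duality a bit more explicitly.
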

\begin{proof}
	The first equation follows from \cref{prop:dimension_of_obstruction}, Serre duality and \cref{prop:cohomologies}.
	The second equation follows from the first by \cref{lem:deformation_tropical}.
\end{proof}

\begin{rem}
	We can also obtain \cref{cor:rank} purely combinatorially, for example by considering dual spaces, as in the proof of \cref{prop:dimension_of_obstruction}.
\end{rem}

\begin{rem}\label{rem:rigidity}
	Note that the kernel $\Ker F_{\Bbb R}$ (resp.\ $\Ker \hF_{\Bbb R}$) describes
	 the space of infinitesimal deformations of the tropical curve $h\colon\Gamma\to S$ (resp.\ $h\colon\hGamma\to S$).
	Recall that the constraints $h(v_i)=p_i$ make the tropical curve rigid by our assumption.
	Hence, the evaluation map
	\begin{align*}
		\Ker F_\bbR &\longrightarrow \bigoplus_{i=1}^g (N/N_{e_i})_\bbR\\
		\phi&\longmapsto\Big(\big(\phi(\partial^- e_i)\big)_i\Big)
	\end{align*}
	is injective, where $e_i$ denotes the edge of $\Gamma$ containing $v_i$.
	Then, it follows from the dimension counting in \cref{cor:rank} that the map above is, in fact, an isomorphism.
	We deduce that the evaluation map
	\begin{align*}
		\Ker \hF_\bbR &\longrightarrow \bigoplus_{i=1}^g N_\bbR\\
		\phi&\longmapsto\Big(\big(\phi(v_i)\big)_i\Big)
	\end{align*}
	is surjective.
\end{rem}

Recall that we constructed incidence conditions $x_1, \dots, x_g$, which are sections of $\mathcal X\to B$ such that
 the limit $x_i(0)\in X_0$ lies in the smooth locus of $X_0$.
Let 
\[
\PreLog
\]
 denote the set of pre-log curves associated with the tropical curve $h\colon\hGamma\to S$
 with marked points $s_i\in C_{0,v_i}$ for $i=1,\dots,g$.
Let 
\[
\PreLog'\subset\PreLog
\]
 denote the subset consisting of marked pre-log curves $\varphi_0\colon C_0\to X_0$ 
 satisfying the conditions $\varphi_0(s_i)=x_i(0)$ for $i=1,\dots,g$.

\begin{prop} \label{prop:pre-log_torsor}
	The set $\PreLog'$ has a natural structure of a $(\Ker G_{\bbC^*})$-torsor.
	In particular, there exist exactly $\abs{\Ker G_{\bbC^*}}$ isomorphism classes of pre-log curves $\varphi_0\colon C_0\to X_0$ associated with the tropical curve $h\colon\hGamma\to S$ together with marked points $s_i\in C_{0,v_i}$ satisfying $\varphi_0(s_i)=x_i(0)$ for $i=1,\dots,g$.
\end{prop}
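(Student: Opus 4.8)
The plan is to identify the pre-log curves (with marked points) concretely via the moduli-of-local-pieces picture used in the proof of Theorem~\ref{thm:pre-log}, and to show that the ambiguity in gluing them is precisely parametrized by $\Ker G_{\bbC^*}$. Recall that in \cref{sec:pre-log} a pre-log curve $\varphi_0\colon C_0\to X_0$ associated with $h\colon\hGamma\to S$ is built by choosing, for each vertex $v$ of $\hGamma$, a pre-log curve $\psi_v\colon P_v=\bbP^1\to X_{0,h(v)}$ (of the appropriate local type $g_v$), and then gluing the $P_v$ along nodes dictated by the edges of $\hGamma$. By \cref{lem:prod} (and the easy $2$-valent case), the local pieces at a fixed vertex $v$ form, after prescribing the isomorphism type of $P_v$, a torsor under the torus $\Ker\big((N/N_{e})_{\bbC^*}\text{-data at }v\big)$; concretely, the data of $\psi_v$ up to automorphism is exactly the tuple of boundary values $(\mu_{v,e})_{e\ni v}\in\prod_{e\ni v}(N/N_e)_{\bbC^*}$ subject to the relation of \cref{lem:prod}, and this solution set is a torsor under $\Map(\{v\},N_{\bbC^*})$ acting through the $v$-component of the map $\hF_{\bbC^*}$.

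First I would set up the bookkeeping. For a vertex $v$, the automorphisms of $X_{0,h(v)}$ coming from its torus act on pre-log curves of type $g_v$, and the subgroup fixing the isomorphism class of $\psi_v$ is trivial for $3$-valent $v$ and one-dimensional for $2$-valent $v$; the net effect is that the set of $\psi_v$ up to reparametrization of $P_v$ is a torsor under $N_{\bbC^*}$ acting via $\hF_{\bbC^*}$ into $\bigoplus_{e\ni v}(N/N_e)_{\bbC^*}$. Gluing the $P_v$ into $C_0$ with dual graph $\hGamma$ requires the matching condition: for each edge $e=(v,v')$ the boundary value $\mu_{v,e}$ and $\mu_{v',e}$, transported across the node, agree (with the coordinate-change twists coming from the $\Lambda$-action on $\cX$ bundled into a fixed constant, exactly as in the proof of \cref{lem:fill}). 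Collecting all vertices and all edges, the set of pre-log curves $\varphi_0$ associated with $h\colon\hGamma\to S$ (up to isomorphism) is identified with the fiber over a fixed point of the homomorphism
\[
\hF_{\bbC^*}\colon \Map(V(\hGamma),N_{\bbC^*})\longrightarrow \bigoplus_{e\in E(\hGamma)}(N/N_e)_{\bbC^*},
\]
hence — once Theorem~\ref{thm:pre-log} guarantees the fiber is nonempty — a torsor under $\Ker\hF_{\bbC^*}$. Adding the marked points $s_i\in C_{0,v_i}$ with $\varphi_0(s_i)=x_i(0)$ imposes $g$ further conditions valued in $\bigoplus_{i=1}^g N_{\bbC^*}$: since $x_i(0)$ lies in the smooth locus of $X_{0,h(v_i)}$, prescribing its preimage on $P_{v_i}$ pins down the remaining torus freedom at $v_i$ through the evaluation map $\phi\mapsto(\phi(v_i))_i$. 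Thus $\PreLog'$ is identified with the fiber of exactly the map $\hG_{\bbC^*}=G_{\bbC^*}$ (the two agree because subdividing $\Gamma$ to $\hGamma$, as well as $\Gamma'$, does not change the kernel: a $2$-valent vertex contributes an $N_{\bbC^*}$ to the source and an $(N/N_e)_{\bbC^*}$ to the target with the difference map an isomorphism onto a direct summand, so the kernel is unchanged — this is the content already implicit in \cref{prop:cohomologies} and \cref{rem:rigidity}). Therefore $\PreLog'$ is a torsor under $\Ker G_{\bbC^*}$, and nonemptiness of the torsor follows from Theorem~\ref{thm:pre-log} together with \cref{rem:rigidity}, which shows the incidence evaluation $\Ker\hF_\bbR\to\bigoplus_i N_\bbR$ is surjective (so the analogous statement over $\bbC^*$ has nonempty fibers). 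Finiteness of $\abs{\Ker G_{\bbC^*}}$ is \cref{cor:rank}: $\Ker G$ has rank $0$ since $\rk\Ker F=g$ is killed by the $g$ incidence conditions, so $\Ker G$ is a finite group and $\Ker G_{\bbC^*}=\Ker G\otimes\bbC^*$ is finite of the same order.

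The main obstacle I expect is the careful identification of the \emph{gluing ambiguity} with $\Ker G_{\bbC^*}$ rather than merely showing it is torsion of the right rank: one must check that (i) the twisted matching conditions across edges really assemble into the homomorphism $\hF_{\bbC^*}$ and not some affine-linearly-modified version, which requires tracking the $\Lambda$-action constants from \cref{lem:fill} and verifying they only shift the base point of the torsor, and (ii) the residual automorphisms of each $P_v$ (the reparametrizations of $\bbP^1$ preserving the marked boundary structure) are exactly accounted for, so that the torsor structure is free and transitive and not just a surjection with finite kernel. Once the dictionary ``pre-log curve data $=$ point of a fiber of $\hG_{\bbC^*}$'' is nailed down, the count $\abs{\Ker G_{\bbC^*}}$ is immediate from \cref{cor:rank} and the identification $\Ker\hG_{\bbC^*}=\Ker G_{\bbC^*}$.
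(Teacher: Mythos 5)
Your overall strategy --- realize $\PreLog'$ as an orbit of the fibrewise torus actions on the local pieces and then pin it down by the incidence conditions via \cref{rem:rigidity} --- is the same as the paper's, and the nonemptiness argument matches. But there is a genuine error at the step where you assert $\Ker\hG_{\bbC^*}=\Ker G_{\bbC^*}$ on the grounds that a $2$-valent vertex ``contributes an $N_{\bbC^*}$ to the source and an $(N/N_e)_{\bbC^*}$ to the target with the difference map an isomorphism onto a direct summand.'' That map $N_{\bbC^*}\to(N/N_e)_{\bbC^*}$ is not injective: its kernel contains the one-dimensional subtorus $N_e\otimes\bbC^*$, and already integrally one has $\rk\Ker\hG=\rk\Ker G+\#\big(V(\hGamma)\setminus V(\Gamma')\big)$. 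So $\Ker\hG_{\bbC^*}$ is a positive-dimensional torus whenever $\hGamma$ has $2$-valent vertices outside $\Gamma'$, while $\PreLog'$ is finite; consequently your identification of $\PreLog'$ with a fiber of $\hG_{\bbC^*}$ cannot be a bijection either. The extra $\bbC^*$-directions act by reparametrizing the $\bbP^1$'s over the auxiliary $2$-valent vertices and produce isomorphic pre-log curves, which is exactly the issue you flagged as obstacle (ii) but then resolved incorrectly.

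The correct resolution, which is the paper's, is that the action of $\Ker\hG_{\bbC^*}$ on $\PreLog'$ is transitive (transitivity of $\Ker\hF_{\bbC^*}$ on $\PreLog$ is quoted from \cite[Lemma 5.5]{Nishinou_Toric_2006} rather than rederived from the gluing description) but \emph{not} free: the stabilizer of any point is $\bigoplus_{v\in V(\hGamma)\setminus V(\Gamma')}N_v\otimes\bbC^*$, and the short exact sequence
\[
0\longrightarrow\bigoplus_{v\in V(\hGamma)\setminus V(\Gamma')}N_v\otimes\bbC^*\longrightarrow\Ker\hG_{\bbC^*}\longrightarrow\Ker G_{\bbC^*}\longrightarrow 0
\]
from \cite[Proposition 2.18]{Tyomkin_Tropical_2012} is precisely what converts this into a free transitive action of $\Ker G_{\bbC^*}$. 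You need this stabilizer computation and exact sequence (or an equivalent quotient argument); the claim that the two kernels coincide is false.
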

\begin{proof}
	To begin, we first prove that the set $\PreLog'$ is non-empty.
	For every vertex $v$ of $\hGamma$, the toric variety $X_{0,h(v)}$ admits an action by $N_{\Bbb C^*}$.
	Now, consider an edge $e\in E(\hGamma)$ with endpoints $v$ and $v'$.
	If the $N_{\bbC^*}$-action on $X_{0,h(v)}$ and the $N_{\bbC^*}$-action on $X_{0,h(v')}$ coincide on the stratum $X_{0,h(e)}=X_{0,h(v)}\cap X_{0,h(v')}$ for every $e$, then, starting 
	from a given pre-log curve associated with the tropical curve $h\colon\hGamma\to S$, 
	we can obtain other pre-log curves via these actions.
	In other words, $\Ker \hF_{\bbC^*}$ acts naturally on the set $\PreLog$.
	
	We identify all the domain curves $C_0$ in the set $\PreLog$.
	For each $i=1,\dots,g$, we choose a closed point $s_i$ in the open stratum $C_{0,v_i}^\circ$ associated with $v_i$.
	Since $v_i$ is 2-valent, the open stratum $C_{0,v_i}^\circ$ is $\bbP^1_\bbC$ minus two points.
	Hence, the choice of $s_i\in C_{0,v_i}^\circ$ is unique up to isomorphisms.
	Now, evaluating at the points $s_i$ gives an evaluation map
	\[\PreLog\longrightarrow\prod_{i=1}^g X^\circ_{0,h(v_i)}.\]
	Note that the group $\Map(V(\hGamma),N)\otimes\bbC^*$ naturally acts on $\prod_{i=1}^g X^\circ_{0,h(v_i)}$.
	In particular, $\Ker\hF_{\bbC^*}$ also acts on $\prod_{i=1}^g X^\circ_{0,h(v_i)}$, 
	and it commutes with the evaluation map.	
	Here, both $\Ker\hF_{\bbC^*}$ and $\prod_{i=1}^g X^\circ_{0,h(v_i)}$ are isomorphic to algebraic tori.
	The derivative of the $\Ker\hF_{\bbC^*}$ action on $\prod_{i=1}^g X^\circ_{0,h(v_i)}$ is equal to the map
	\begin{align*}
	\Ker\hF_\bbC &\longrightarrow \bigoplus_{i=1}^g N_\bbC\\
	\phi&\longmapsto\Big(\big(\phi(v_i)\big)_i\Big),
	\end{align*}
	which is surjective by \cref{rem:rigidity}.
	This implies that the action of $\Ker\hF_{\bbC^*}$ on $\prod_{i=1}^g X^\circ_{0,h(v_i)}$ is transitive.
	Therefore, starting from any pre-log curve associated with $h\colon\hGamma\to S$, we can use the action of
	 $\Ker\hF_{\bbC^*}$ to obtain a pre-log curve satisfying the given constraints.
	Thus, the set $\PreLog'$ is non-empty.
	
	By \cite[Proposition 5.5]{Nishinou_Toric_2006}, the action of $\Ker\hF_{\bbC^*}$ on $\PreLog$ is transitive.
	Hence, the action of the subgroup $\Ker\hG_{\bbC^*}\subset\Ker\hF_{\Bbb C^*}$ on the subset $\PreLog'\subset\PreLog$ is also transitive.
	However, the action is not free due to the presence of 2-valent vertices in $\hGamma$ other than $v_i$.
	The stabilizer of any point in $\PreLog'$ is isomorphic to
	\[\bigoplus_{v\in V(\hGamma)\setminus V(\Gamma')} N_v\otimes\bbC^*,\]
	where $N_v\subset N$ denotes the sublattice generated by the weight vector of any edge connected to the 2-valent vertex $v$.
	By \cite[Proposition 2.18]{Tyomkin_Tropical_2012}, there is a short exact sequence
	\[ 0 \longrightarrow \bigoplus_{v \in V(\hGamma)\setminus V(\Gamma')} N_v\otimes \bbC^* 
	\longrightarrow \Ker \hG_{\bbC^*} \longrightarrow \Ker G_{\bbC^*} \longrightarrow 0.\]
	It follows that $\Ker G_{\bbC^*}$ acts freely and transitively on the set $\PreLog'$ of pre-log curves 
	satisfying the given constraints.
\end{proof}

\begin{lem}\label{lem:Psi}
	In the context of \cref{prop:cohomologies}, assume we have $s_i\in C_{0,v_i}$ for $i=1,\dots,g$ such that $\varphi_0(s_i)=x_i(0)$.
	Then, the map
	\[\Psi\colon H^0(C_0,\mathcal N_{\varphi_0})\longrightarrow\bigoplus_{i=1}^g \Theta_{X_0}(\varphi_0(s_i))/\varphi_{0*}(\Theta_{C_0}(s_i))\]
	given by evaluation of the sections of $\mathcal N_{\varphi_0}$ at the marked points $s_i$ is an isomorphism.
\end{lem}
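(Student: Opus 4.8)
The plan is to reduce $\Psi$ to the purely combinatorial evaluation map that was already analyzed in \cref{rem:rigidity}, and then finish with a dimension count. First I would record that the two sides of $\Psi$ are $\bbC$-vector spaces of the same dimension $g$: for the source this is \cref{cor:rank} combined with \cref{prop:cohomologies} (which gives $H^0(C_0,\cN)\simeq\Ker F_\bbC$), and for the target one uses that each $s_i$ lies in the interior of the component $C_{0,v_i}$ attached to the $2$-valent vertex $v_i$, so $\Theta_{X_0}(\varphi_0(s_i))\simeq N_\bbC$ (because $X_0$ is a quotient of a toric variety and hence $\Theta_{X_0}\simeq N\otimes\cO_{X_0}$), while $f_{0*}(\Theta_{C_0}(s_i))$ is the line spanned by the weight vector of the edge $e_i$ of $\Gamma$ through $v_i$ — this is exactly the description of $\Theta_{C_0}|_{C_{0,v_i}}\to\varphi_0^*\Theta_{X_0}|_{C_{0,v_i}}$ used in the proof of \cref{prop:cohomologies}. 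Hence each summand of the target equals $(N/N_{e_i})_\bbC=\cN(s_i)\simeq\bbC$, and $\Psi$ is simply the direct sum of the fibrewise evaluation maps $H^0(C_0,\cN)\to\cN(s_i)$.

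Next I would trace through the isomorphism $H^0(C_0,\cN)\simeq\Ker F_\bbC$ of \cref{prop:cohomologies} to identify $\Psi$ explicitly. A global section of $\cN$ is the datum of a map $\phi\colon V(\Gamma)\to N_\bbC$ satisfying $\phi(\partial^+ e)-\phi(\partial^- e)\in N_{e,\bbC}$ for every edge $e$ of $\Gamma$; on a $2$-valent component $C_{0,v}$ lying over $e$, the corresponding section of $\cN|_{C_{0,v}}\simeq(N/N_e)_\bbC$ is the \emph{constant} section with value $\phi(\partial^- e)\bmod N_e$. Evaluating at $s_i\in C_{0,v_i}^\circ$ therefore returns $\phi(\partial^- e_i)\bmod N_{e_i}$, which is independent of the choice of $s_i$. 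Thus, under these identifications, $\Psi$ is exactly the complexification $\Psi_\bbR\otimes_\bbR\bbC$ of the evaluation map
\[\Psi_\bbR\colon\Ker F_\bbR\longrightarrow\bigoplus_{i=1}^g(N/N_{e_i})_\bbR,\qquad\phi\longmapsto\big(\phi(\partial^- e_i)\bmod N_{e_i}\big)_i\]
considered in \cref{rem:rigidity}.

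By \cref{rem:rigidity}, $\Psi_\bbR$ is injective (this is the rigidity hypothesis) and therefore an isomorphism by the dimension count of \cref{cor:rank}; since base change preserves isomorphisms, $\Psi=\Psi_\bbR\otimes_\bbR\bbC$ is an isomorphism as well. The one point that requires genuine care is the bookkeeping in the middle step: after the identification of \cref{prop:cohomologies}, one must verify that the restriction of a global section of $\cN$ to the $2$-valent component $C_{0,v_i}$ really is a constant section, so that its value at $s_i$ is well defined and equals $\phi(\partial^- e_i)\bmod N_{e_i}$; once this is pinned down, the rest is a dimension count and an appeal to \cref{rem:rigidity}.
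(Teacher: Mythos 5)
Your proposal is correct and follows essentially the same route as the paper: identify both sides via \cref{prop:cohomologies}, recognize $\Psi$ as the combinatorial evaluation map $\phi\mapsto(\phi(\partial^- e_i)\bmod N_{e_i})_i$ on $\Ker F_\bbC$, and conclude by \cref{rem:rigidity} together with the dimension count of \cref{cor:rank}. The only difference is that you spell out more carefully why the restriction of a section to a $2$-valent component is constant, which the paper leaves implicit.
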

\begin{proof}
	The range of $\Psi$ is isomorphic to $\bigoplus_{i=1}^g (N/N_{e_i})_\bbC$, where $e_i$ denotes the edge of $\Gamma$ containing $v_i$.
	By \cref{prop:cohomologies}, the domain of $\Psi$ is isomorphic to $\Ker F_\bbC$.
	So, we can identify the map $\Psi$ with
	\begin{align*}
	\Ker F_\bbC &\longrightarrow \bigoplus_{i=1}^g (N/N_{e_i})_\bbC\\
	\phi&\longmapsto\Big(\big(\phi(\partial^- e_i)\big)_i\Big).
	\end{align*}
	Therefore, $\Psi$ is an isomorphism by \cref{rem:rigidity}.	
\end{proof}


\begin{prop}\label{prop:smoothing_with_constraints}
	Let $\varphi_0\colon C_0\to X_0$ be a pre-log curve associated with the tropical curve $h\colon\hGamma\to S$, equipped with a log structure.
	Assume we have $s_i\in C_{0,v_i}$ for $i=1,\dots,g$ such that $\varphi_0(s_i)=x_i(0)$.
	Then, for every $n\in\bbZ_{\ge 0}$, there exists a unique pointed curve 
	\[
	\big(C_n,(s_1^n,\dots,s_g^n)\big)
	\]
	 over 
	$\bbC[t]/(t^{n+1})$ extending $\big(C_0,(s_1,\dots,s_g)\big)$, 
	and a $\bbC[t]/(t^{n+1})$-morphism 
	\[
	\varphi_n\colon C_n\to X_n
	\] 
	extending $\varphi_0\colon C_0\to X_0$, such that 
	\[
	\varphi_n\circ s_i^n=x_i
	\] 
	for $i=1,\dots, g$. 
\end{prop}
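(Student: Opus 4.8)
The plan is to run an induction on $n$, the case $n=0$ being the hypothesis, and at each step to first produce an unconstrained extension and then correct it using the isomorphism $\Psi$ of \cref{lem:Psi}. So assume the pointed map $\big(C_n,(s_1^n,\dots,s_g^n),\varphi_n\big)$ over $k_n$ with $\varphi_n\circ s_i^n=x_i$ has been constructed. The obstruction to extending $\varphi_n$ from a map to $X_n$ to a map to $X_{n+1}$ is, by the analysis of \cref{sec:deformation}, the class in $H^1(C_0,\cN)$ represented in \cref{cor:obstsum}, and it vanishes by \cref{lem:int1}, \cref{lem:int2} and \cref{prop:firstorder} (this is exactly the content behind \cref{thm:main}). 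Hence there is a log curve $C_{n+1}'$ over $k_{n+1}$ extending $C_n$ and a log morphism $\varphi_{n+1}'\colon C_{n+1}'\to X_{n+1}$ extending $\varphi_n$; the set of such extensions of $\varphi_n$ is a torsor under $H^0(C_0,\cN)$ by log deformation theory. Since each $v_i$ is $2$-valent, the marked point $s_i$ lies in the smooth locus $C_{0,v_i}^\circ$ of $C_0$, disjoint from the nodes and from the support of the log structure; thus $C_{n+1}'$ is smooth over $k_{n+1}$ near $s_i$ and $s_i^n$ lifts to a section $\widetilde s_i^{\,n+1}$ of $C_{n+1}'$, the lifts forming a torsor under $t^{n+1}T_{s_i}C_{0,v_i}$.

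Next I would measure the failure of the incidence conditions. For each $i$, both $\varphi_{n+1}'(\widetilde s_i^{\,n+1})$ and $x_i$ are $k_{n+1}$-points of $X_{n+1}$ lying over $x_i(0)$ (a point of the smooth locus of $X_0$) and agreeing modulo $t^{n+1}$; using $\Theta_{X_0}\simeq N\otimes\cO_{X_0}$, their difference is an element $\delta_i\in t^{n+1}N_\bbC=t^{n+1}\Theta_{X_0}(\varphi_0(s_i))$. Now track the effect of the two remaining freedoms on $\delta_i$: replacing $\widetilde s_i^{\,n+1}$ by another lift changes $\delta_i$ by an element of $t^{n+1}\varphi_{0*}\big(\Theta_{C_0}(s_i)\big)$, while replacing $\varphi_{n+1}'$ by another extension — i.e.\ acting by $\phi\in H^0(C_0,\cN)$ via the torsor structure, $\widetilde s_i^{\,n+1}$ held fixed — changes the class $\overline\delta_i$ of $\delta_i$ in $\cN(s_i)=\Theta_{X_0}(\varphi_0(s_i))/\varphi_{0*}\Theta_{C_0}(s_i)$ precisely by $\phi(s_i)$, the value of $\phi$ at $s_i$ (the indeterminacy in lifting $\phi$ to $\varphi_0^*\Theta_{X_0}$ near $s_i$ being absorbed by the marked‑point freedom). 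That is, the action on $(\overline\delta_i)_i$ is exactly via the evaluation map $\Psi$ of \cref{lem:Psi}.

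Finally I would solve, and solve uniquely. By \cref{lem:Psi} the map $\Psi\colon H^0(C_0,\cN)\to\bigoplus_{i=1}^g\cN(s_i)$ is an isomorphism, so there is a unique $\phi\in H^0(C_0,\cN)$ with $\Psi(\phi)=-(\overline\delta_i)_i$; replacing $\varphi_{n+1}'$ by the corresponding extension $\varphi_{n+1}\colon C_{n+1}\to X_{n+1}$ forces each $\delta_i$ into $t^{n+1}\varphi_{0*}\big(\Theta_{C_0}(s_i)\big)$. Since $\varphi_0$ is an immersion near $s_i$ — there its image is a linear curve, as in \cref{subsec:firstorder} — the map $\varphi_{0*}\colon T_{s_i}C_{0,v_i}\to\Theta_{X_0}(\varphi_0(s_i))$ is injective, so there is a unique further lift $s_i^{n+1}$ of $s_i^n$ with $\varphi_{n+1}(s_i^{n+1})=x_i$ over $k_{n+1}$, completing the inductive step. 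For uniqueness, any two solutions of the constrained problem differ by some $\phi\in H^0(C_0,\cN)$ on the map and by tangent vectors $\epsilon_i\in t^{n+1}T_{s_i}C_{0,v_i}$ on the marked points; subtracting the two incidence relations yields $\Psi(\phi)=0$, whence $\phi=0$ and $(C_{n+1},\varphi_{n+1})$ is unique up to unique isomorphism, and then $\varphi_{0*}(\epsilon_i)=0$ gives $\epsilon_i=0$. The main obstacle is not geometric but bookkeeping: one must pin down that order‑$(n+1)$ deformations of the log map $\varphi_n$ form an $H^0(C_0,\cN)$‑torsor compatibly with evaluation at the $s_i$, and verify that the marked points, lying away from the nodes, leave the log structures undisturbed; granting this, the proposition is the combination of the unobstructedness from \cref{thm:main} with the fact that $\Psi$ is an isomorphism.
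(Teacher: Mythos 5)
Your proof is correct and follows essentially the same route as the paper: induction on $n$, existence of an unconstrained extension from Theorem \ref{thm:main}, the torsor structure over $H^0(C_0,\cN)\oplus\bigoplus_i\Theta_{C_0}(s_i)$, and the isomorphism of Lemma \ref{lem:Psi} to enforce the incidence conditions uniquely. The only cosmetic difference is that you correct the map and the marked points in two successive steps, whereas the paper packages both into the single isomorphism $\Psi\oplus f_{0*}$.
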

\begin{proof}
	The proposition holds for $n=0$ by assumption.
	Now, assuming the claim holds for $n$, let us prove it for $n+1$.
	By Theorem \ref{thm:main}, there exists a $\bbC[t]/(t^{n+2})$-curve $C_{n+1}$ extending $C_n$ and a $\bbC[t]/(t^{n+2})$-morphism 
	\[
	\varphi_{n+1}\colon C_{n+1}\to X_{n+1}
	\]
	 extending $\varphi_n\colon C_n\to X_n$.
	By deformation theory, once one extension exists, the set of all extensions is a torsor over $H^0(C_0,\mathcal N_{\varphi_0})$.
	Note that the deformations of a marked point $s_i$ from order $n$ to $n+1$ is a torsor over $\Theta_{C_0}(s_i)\simeq\bbC$.
	Hence, the set of all extensions $\varphi_{n+1}\colon C_{n+1}\to X_{n+1}$ together with the marked points is a torsor over 
	\[
	H^0(C_0,\mathcal N_{\varphi_0})\oplus\bigoplus_{i=1}^g\Theta_{C_0}(s_i).
	\]
	Consider the evaluation map
	\[
	\begin{array}{l}
	\Psi\oplus {\varphi_0}_*\colon H^0(C_0,\mathcal N_{\varphi_0})\oplus\bigoplus_{i=1}^g\Theta_{C_0}(s_i)\\
	\hspace{1in}\longrightarrow \bigoplus_{i=1}^g\big( \Theta_{X_0}(\varphi_0(s_i))/\varphi_{0*}(\Theta_{C_0}(s_i))\oplus \varphi_{0*}(\Theta_{C_0}(s_i))\big).
	\end{array}
	\]
	It is an isomorphism by \cref{lem:Psi}.
	Hence, there exists a unique extension $\varphi_{n+1}\colon C_{n+1}\to X_{n+1}$ together with marked points $s_i^{n+1}$ such that 
	\[
	\varphi_{n+1}\circ s_i^{n+1}=x_i
	\]
     for $i=1,\dots,g$, completing the proof.
\end{proof}

To conclude, \cref{prop:pre-log_torsor} shows that  there are exactly $\abs{\Ker G_{\bbC^*}}$ isomorphism classes of pre-log curves $\varphi_0\colon C_0\to X_0$ associated with the tropical curve $h\colon\hGamma\to S$ together with marked points $s_i\in C_{0,v_i}$ satisfying $\varphi_0(s_i)=x_i(0)$ for $i=1,\dots,g$.
For each such pre-log curve $\varphi_0\colon C_0\to X_0$, \cite[Proposition 7.1]{Nishinou_Toric_2006} 
 (see also \cite[Proposition 4.23]{Gross_Tropical_2011}) asserts that the number of isomorphism classes of
 log structures on $C_0$ is equal to 
\[
\prod_{e\in E(\Gamma')}W_e, 
\]
 the product of all the scalar edge weights of $\Gamma'$.
Once the log structure is fixed, \cref{prop:smoothing_with_constraints}, together with
 the implicit function theorem \cite[Proposition 1.5]{Kosarew_1991},
 guarantees a unique deformation to the generic fiber satisfying the constraints $x_i$.
Therefore, 
 the total number of parametrized holomorphic curves in $\mathcal X^{\times}$ associated with the realizable tropical curve $h\colon\Gamma\to S$ satisfying the constraints $x_i$ is equal to the product
\[
\abs{\Ker G_{\bbC^*}}\cdot\prod_{e\in E(\Gamma')} W_e.
\]
This completes the proof of \cref{thm:intro_multiplicity}.

\bibliographystyle{myamsalpha}
\bibliography{dahema}

\end{document}